\newcommand{\F}{{\mathbb{F}}}
\newcommand{\bG} {\mathbf G}
\newcommand{\bL} {\mathbf L}
\newcommand{\bc}{\mathbf{c}}
\newcommand{\cE} {\mathcal E}
\newcommand{\cO} {\mathcal O}
\newcommand{\cW} {\mathcal W}
\newcommand{\cF} {\mathcal F}
\newcommand{\cC} {\mathcal C}
\newcommand{\fS} {\mathfrak S}
\newcommand{\sC}{\mathscr C}
\newcommand{\fb}{\mathfrak{b}}
\newcommand{\cR}{\mathcal{R}}
\newcommand{\Aut}{{\operatorname{Aut}}}
\newcommand{\Irr}{{\operatorname{Irr}}}
\newcommand{\IBr}{{\operatorname{IBr}}}
\newcommand{\Res}{{\operatorname{Res}}}
\newcommand{\Ind}{{\operatorname{Ind}}}
\newcommand{\Rad}{{\operatorname{Rad}}}
\newcommand{\GL}{\operatorname{GL}}
\newcommand{\GU}{\operatorname{GU}}
\newcommand{\SL}{\operatorname{SL}}
\newcommand{\PSL}{\operatorname{PSL}}
\newcommand{\Bl}{\operatorname{Bl}}
\newcommand{\PSU}{\operatorname{PSU}}
\newcommand{\Sp}{\operatorname{Sp}}
\newcommand{\CSp}{\operatorname{CSp}}
\newcommand{\SO}{\operatorname{SO}}
\newcommand{\CSO}{\operatorname{CSO}}
\newcommand{\GO}{\operatorname{GO}}
\newcommand{\CO}{\operatorname{CO}}
\newcommand{\bl}{\operatorname{bl}}
\newcommand{\tD}{\tilde D}
\newcommand{\tG}{{\tilde G}}
\let\eps=\epsilon
\let\vare=\varepsilon
\let\la=\lambda
\let\ti=\times
\theoremstyle{theorem}
\newtheorem{thm}{Theorem}[section]
\newtheorem{lem}[thm]{Lemma}
\newtheorem{prop}[thm]{Proposition}
\newtheorem{cor}[thm]{Corollary}
\newtheorem{conj}[thm]{Conjecture}
\theoremstyle{definition}
\newtheorem{defn}[thm]{Definition}
\newtheorem{rmk}[thm]{Remark}
\numberwithin{equation}{section}
\title{On the inductive blockwise Alperin weight condition for classical groups}
\author[a,b]{Zhicheng Feng}
\author[c]{Zhenye Li}
\author[d]{Jiping Zhang}
\affil[a]{School of Mathematics and Physics, University of Science and Technology Beijing, Beijing 100083, China}
\affil[b]{FB Mathematik, TU Kaiserslautern, Postfach 3049, 67653 Kaiserslautern,	Germany}
\affil[c]{Institute of Mathematics, Academy of Mathematics and Systems Science, Chinese Academy of Sciences, Beijing 100190, China}
\affil[d]{Beijing International Center for Mathematical Research Lmam, The School of
	Mathematical Sciences, Peking University, Beijing 100871,  China}
\providecommand{\subjclass}{\textbf{\textit{2010 Mathematics Subject Classification   }}}
\begin{document}
	
\date{}

\maketitle

\let\thefootnote\relax\footnotetext{Supported by SFB TRR 195 and NSFC (No. 11631001).}

\let\thefootnote\relax\footnotetext{
	\emph{Email addresses}: \leftline{zfeng@pku.edu.cn~(Z. Feng) , lizhenye@pku.edu.cn~(Z. Li), jzhang@pku.edu.cn~(J. Zhang).}}

\begin{abstract}
Recently, there has been substantial progress on the Alperin weight conjecture.
As a step to establish the Alperin weight conjecture for all finite groups, we prove the inductive blockwise Alperin weight condition for simple groups of  classical type under some additional assumption.
\end{abstract}

\subjclass{20C20, 20C33}

\keywords{}

%%%%%%%%%%%%%%%%%%%%%%%%%%%%%%%%%%%%%%%%%%%%%%%%%%%%%%%%%%%%%%%%%%%%%%%%%
\section{Introduction}

On the 1986 Arcata conference on representations of finite groups, J. L. Alperin put forward his famous conjecture, which is now called
the \emph{Alperin weight conjecture}. To state it, let $G$ be a finite group and $\ell$ a prime, $B$ an $\ell$-block of $G$. As usual, we
denote by ${\rm Irr}(B)$ and  ${\rm IBr}_\ell(B)$ the sets of ordinary irreducible characters and
irreducible $\ell$-Brauer characters of $B$ respectively.
For an $\ell$-subgroup $R$ of $G$ and $\varphi\in\Irr(N_G(R))$, the pair $(R,\varphi)$ is called an \emph{$\ell$-weight}
if $R\subseteq\ker\varphi$ is of $\ell$-defect zero viewed as a character of $N_G(R)/R$.
Note that $R$ is necessarily an $\ell$-radical subgroup of $G$ for any $\ell$-weight $(R,\varphi)$.
An $\ell$-weight $(R,\varphi)$ is called a \emph{$B$-weight} if $\bl_\ell(\varphi)^G=B$, where $\bl_\ell(\varphi)$ is the $\ell$-block of $N_G(R)$ containing $\varphi$.
We denote  by   $\cW_\ell(B)$   the set of all $G$-conjugacy classes of $B$-weights so that
the Alperin weight conjecture can be stated as follows.

\begin{conj}[Alperin, \cite{Al87}]
	\label{weiconj}
Let $G$ be a finite group, $\ell$ a prime.
If $B$ is an $\ell$-block of $G$, then
$$|\cW_\ell(B)|=|\IBr_\ell(B)| .$$
\end{conj}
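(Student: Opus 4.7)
The plan is to attack Conjecture~\ref{weiconj} through the standard reduction philosophy: rather than proving the blockwise count $|\cW_\ell(B)|=|\IBr_\ell(B)|$ directly for every finite group, I would invoke a reduction theorem to the class of finite nonabelian simple groups, and then verify the relevant condition case-by-case via the Classification of Finite Simple Groups. Concretely, the first step is to appeal to the Navarro--Tiep reduction theorem, which asserts that Conjecture~\ref{weiconj} holds for every finite group and every prime $\ell$ provided the \emph{inductive blockwise Alperin weight (iBAW) condition} holds for every finite nonabelian simple group $S$ and every prime $\ell$. This reduces the problem to a statement about simple groups, at the cost of requiring a much stronger structural input: one needs an $\Aut(S)_B$-equivariant bijection between $\IBr_\ell(B)$ and $\cW_\ell(B)$ that is compatible with block theory in a covering pair $S\trianglelefteq\tS$, together with control over central characters and extensions of Brauer characters from $S$ to the stabilizer in $\Aut(S)$.

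Second, I would run through the simple groups family by family. For cyclic groups of prime order, alternating groups, and sporadic groups, the iBAW condition is either trivial or has been handled in the literature. For groups of Lie type in defining characteristic, the theorem of Cabanes on weights for $p$-solvable-like behavior of the principal block (combined with the Steinberg/parabolic description of projectives) gives a manageable picture, and the iBAW condition has largely been verified there. The substantive remaining work is the case of groups of Lie type in \emph{non-defining} characteristic~$\ell$, and within this the most intricate families are the classical groups; this is precisely where the paper positions itself.

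Third, for a simple classical group $S$ with underlying algebraic group $\bG$ and Steinberg endomorphism, I would organize the verification around Brou\'e's decomposition of $\ell$-blocks via $\ell$-cuspidal pairs $(\bL, \la)$ of $\bG^F$, treating unipotent blocks first and then reducing non-unipotent blocks to unipotent ones in suitable Levi subgroups via Jordan decomposition and Bonnaf\'e--Dat--Rouquier-type equivalences. On the weight side, the radical $\ell$-subgroups of $\bG^F$ are parametrized (after Alperin--Fong, An, and subsequent authors) in terms of basic subgroups and their normalizers, so that $\cW_\ell(B)$ admits a combinatorial description matching the Fong--Srinivasan / Brou\'e--Malle labelling of $\IBr_\ell(B)$ by symbols, bipartitions, or pairs of partitions. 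The goal is to promote this numerical coincidence to an explicit bijection that intertwines with the diagonal, field, and graph automorphisms and respects the block covering for $\tS=\bG^F$ over $S=[\bG^F,\bG^F]/Z$.

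The hardest step will be the last one: establishing \emph{equivariance} of the bijection and the extension/character-theoretic conditions of the iBAW formulation. The difficulty is threefold. First, automorphisms act simultaneously on radical subgroups (up to conjugacy) and on Brauer characters, and matching these two actions forces one to work with very carefully chosen representatives of weight classes; for classical groups in bad characteristic, or at $\ell=2$, the twisted versus untwisted distinctions and the exceptional Schur multipliers create numerous special cases. Second, one must lift results from $\bG^F$ down to the simple quotient $S$ while controlling the action of outer diagonal automorphisms on blocks, which typically requires Clifford-theoretic arguments combined with Brauer's correspondence for blocks covered by a given block. Third, the full verification of the iBAW condition in its strong form is known to require additional hypotheses (for example on the existence of suitable extendible Brauer characters, or on the behaviour of certain cuspidal characters under automorphisms), which explains the phrase ``under some additional assumption'' in the abstract and which I would expect to appear as the main technical hypothesis driving the remainder of the paper.
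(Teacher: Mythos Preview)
The statement you are attempting to prove is Conjecture~\ref{weiconj}, and the paper does not prove it: it is stated as an open conjecture, and the paper's contribution is to verify the inductive blockwise Alperin weight condition for certain blocks of classical groups under additional hypotheses (Theorems~\ref{ibawc-dominate}, \ref{baw-sp}, \ref{equiv-bij-c}, \ref{ibawc-uni}). There is therefore no ``paper's own proof'' to compare against.

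Your outline correctly describes the standard reduction strategy, but it is a \emph{program}, not a proof, and that program is currently incomplete. Two concrete gaps. First, the reduction theorem you invoke for the blockwise statement is due to Sp\"ath~\cite{Sp13}, not Navarro--Tiep (their result~\cite{NT11} is for the blockfree version); more importantly, it requires the iBAW condition for every simple group, and this has not been established. As the paper itself says, ``it still seems a far way to deal with the case of simple groups of Lie type in non-defining characteristic in general.'' Second, your third step---promoting the Fong--Srinivasan/An combinatorial bijections to $\Aut(S)$-equivariant ones compatible with the full iBAW package---is exactly the hard open problem. The paper carries this out only for specific families (type $B$, $C$, unipotent blocks in type $D$) and only under extra hypotheses (odd $f$, $\ell$ linear) needed to guarantee unitriangularity of decomposition matrices via Gruber--Hiss~\cite{GH97}. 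Without such hypotheses even the existence of a well-behaved basic-set bijection between $\mathcal E(G,\ell')$ and $\IBr_\ell(G)$ is not known, so the equivariance step cannot presently be completed in general. Your proposal is thus a faithful sketch of the expected route, but not a proof of the conjecture.
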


So far substantial progress has been achieved for the Alperin weight conjecture. 
Specifically,
it was shown to hold for finite groups of Lie type in defining characteristic by Cabanes \cite{Ca88},
for symmetric groups and general linear groups by Alperin and Fong \cite{AF90},
and for certain groups of classical type by An \cite{An93} and \cite{An94}.

Even though the Alperin weight conjecture was subsequently checked for several further families of finite groups, 
it has not been possible so far to find a general proof for arbitrary finite groups.
A reduction theorem for the blockfree version of the Alperin weight conjecture was obtained by Navarro and Tiep \cite{NT11} in 2011.
Soon afterwards, Sp\"ath \cite{Sp13} refined this result to achieve a reduction theorem for the blockwise version of Alperin's weight conjecture;
if all finite (quasi-)simple groups satisfy the so-called \emph{inductive blockwise Alperin weight} (iBAW) condition, then the Alperin weight conjecture \ref{weiconj} holds for any finite group.

To the present, the (iBAW) condition has been verified for some cases,
such as  simple alternating groups,
many of the sporadic groups,
simple groups of Lie type in the defining characteristic,
Suzuki groups and Ree groups,
simple groups of type $G_2$ and $^3D_4$,
some cases of blocks of groups of type $A$;
see for instance \cite{Br}, \cite{BS19}, \cite{CS13}, \cite{Fe18}, \cite{LZ18},  \cite{Ma14}, \cite{Sc16} and \cite{Sp13}.
Unfortunately, it still seems a far way to deal with the case of simple groups of Lie type in non-defining characteristic in general.

In this paper, we consider the classical groups.
As a first step to verify the (iBAW) condition, we need to establish a blockwise equivariant bijection between $\ell$-Brauer characters and $\ell$-weights.
In fact, An \cite{An94} has essentially given such a bijection.
In this paper, we first consider the groups of type $B$ and prove the bijection given in \cite{An94} for $\SO_{2n+1}(q)$ is equivariant under the field automorphism~(see Theorem \ref{equiv-bij-so}).
From this, we obtain a blockwise $\Aut(S)$-equivariant bijection between $\IBr_\ell(S)$ and $\cW_\ell(S)$ under some assumption~(see Theorem \ref{bi-simple-b}), where $S=\Omega_{2n+1}(q)$.
Our first main result about non-faithful blocks of groups of type $B$ is the following.

\begin{thm}\label{ibawc-dominate}
	Let $X=\mathrm{Spin}_{2n+1}(q)$ with $q=p^f$ odd  and $n\ge2$,  $\ell\ne p$ an odd prime and $B$ an $\ell$-block of $X$.	
	Assume further $f$ is odd, $\ell$ is linear and
	$B$ dominates some $\ell$-block of $\Omega_{2n+1}(q)$.
	Then the inductive blockwise Alperin weight (iBAW)  condition~(cf. Definition \ref{induc})~ holds for $B$.
\end{thm}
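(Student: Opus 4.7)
The plan is to reduce the (iBAW) condition for $B$ to the blockwise equivariant bijection at the level of the simple group $S=\Omega_{2n+1}(q)$, and then lift it back to $X=\mathrm{Spin}_{2n+1}(q)$ using the domination hypothesis. Set $Z=Z(X)$, so $S=X/Z$. Since $\ell$ is odd and $Z$ has order $2$, we have $\ell\nmid |Z|$. Because $B$ dominates some $\ell$-block $\bar B$ of $S$, every $\varphi\in\IBr_\ell(B)$ contains $Z$ in its kernel and inflates from a unique $\bar\varphi\in\IBr_\ell(\bar B)$; the same inflation/deflation correspondence applies to $\ell$-weights, since for any $\ell$-radical subgroup $R$ of $X$ we have $R\cap Z=1$ and $N_X(R)/Z=N_S(RZ/Z)$. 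Thus $\IBr_\ell(B)\leftrightarrow\IBr_\ell(\bar B)$ and $\cW_\ell(B)\leftrightarrow\cW_\ell(\bar B)$ canonically, and Theorem \ref{bi-simple-b} applied to $\bar B$ produces an $\Aut(S)_{\bar B}$-equivariant bijection $\Omega_{\bar B}\colon \IBr_\ell(\bar B)\to\cW_\ell(\bar B)$ that inflates to a candidate bijection $\Omega_B\colon\IBr_\ell(B)\to\cW_\ell(B)$.

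Next I would check that $\Omega_B$ has the right equivariance on the $X$-level. The outer automorphism group of $X$ is generated by the diagonal automorphism (coming from $\SO_{2n+1}(q)/\Omega_{2n+1}(q)$) and by the field automorphisms of order dividing $f$, and these automorphisms all fix $Z$ pointwise, hence descend to $\Aut(S)$ compatibly. Diagonal-equivariance is already encoded in Theorem \ref{bi-simple-b}; the field-equivariance follows from Theorem \ref{equiv-bij-so}, which says An's bijection for $\SO_{2n+1}(q)$ commutes with the Frobenius action, combined with a Clifford-theoretic transfer from $\SO_{2n+1}(q)$ down to $\Omega_{2n+1}(q)$ and back up to $X$. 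Putting these together, $\Omega_B$ is $\Aut(X)_B$-equivariant.

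The main remaining work, and the principal obstacle, is to verify the cohomological extension condition in Definition \ref{induc}: for each $\varphi\in\IBr_\ell(B)$ with image $(R,\varphi')=\Omega_B(\varphi)$ one must exhibit projective representations of $\Aut(X)_{B,\varphi}$ and of $(\Aut(X)\ltimes X)_{B,(R,\varphi')}$ whose associated $2$-cocycles agree in $H^2$. This is where the hypotheses $f$ odd and $\ell$ linear become essential. Since $\ell$ is linear (so $\ell\mid q-1$) and $f$ is odd, $\Aut(X)_B/X\Aut(X)_{B,\varphi}$-type quotients relevant for the cocycle are cyclic of order coprime to $\ell$; this, together with the fact that the fixing of inertia under the Frobenius behaves diagonally on Lusztig series, allows one to construct simultaneous extensions by Butterfly-type arguments as in \cite{Sp13} and \cite{CS13}. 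For the weight side, the extension is obtained by propagating extensions first constructed inside $N_{\SO_{2n+1}(q)}(RZ/Z)$ via Theorem \ref{equiv-bij-so} and then transferred to $N_X(R)$ through the central extension $Z$.

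Finally, I would assemble the pieces: the $\Aut(X)_B$-equivariant bijection $\Omega_B$, together with the matched extensions on both sides and the vanishing of the cohomological obstruction forced by the "$\ell$ linear, $f$ odd" hypotheses, yields all clauses of the (iBAW) condition for $B$. I expect the hardest step to be the explicit construction of compatible extensions of weight characters to the stabilizer of a weight in $\Aut(X)\ltimes X$, since the normaliser $N_X(R)$ does not split nicely through the central quotient $Z$ in general; the "linear" assumption on $\ell$ is precisely what is needed to localize the problem to an $\ell'$-situation where extensions can be chosen canonically.
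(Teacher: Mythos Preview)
Your overall reduction strategy --- descend to $S=\Omega_{2n+1}(q)$ via the central $\ell'$-quotient, invoke Theorem~\ref{bi-simple-b} for the equivariant bijection, and lift back to $X$ --- matches the paper. But you misidentify where the difficulty lies, and consequently miss the key simplification that makes the proof work.

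The point you overlook is that when $f$ is odd, $\Aut(S)/S$ is \emph{cyclic}: indeed $\Aut(S)\cong G\rtimes E$ with $|G/S|=2$ and $|E|=f$, so $\Aut(S)/S$ has order $2f$ with $f$ odd, hence is cyclic. By Corollary~\ref{cyclic-outer-ind} (essentially \cite[Lem.~6.1]{Sp13}), cyclicity of the outer automorphism group means that the entire condition (iii) of Definition~\ref{induc} --- the extension and block-compatibility requirements you describe as ``the main remaining work, and the principal obstacle'' --- is automatic once you have the equivariant bijection. There is no need for projective representations, $2$-cocycles, or Butterfly arguments here; everything extends because cyclic groups have trivial second cohomology in this setting. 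The paper's proof is accordingly three lines: cyclic outer automorphism group $\Rightarrow$ Corollary~\ref{cyclic-outer-ind} $\Rightarrow$ need only the equivariant bijection $\Rightarrow$ Lemma~\ref{ind-quot} and Theorem~\ref{bi-simple-b}.

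Two smaller corrections: ``$\ell$ linear'' does not mean $\ell\mid q-1$; it means the multiplicative order of $q$ modulo $\ell$ is odd. This hypothesis is used only to obtain a unitriangular decomposition matrix (via \cite{GH97}), which in turn yields the equivariant bijection between $\Irr(B)\cap\cE(G,\ell')$ and $\IBr_\ell(B)$ --- it plays no role in any extension argument. The hypothesis $f$ odd is what buys you cyclicity of $\Aut(S)/S$, and that is its sole purpose.
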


Recall that an odd prime $\ell$ not dividing $q$ is called \emph{linear} (for $q$) if the multiplicative order of $q$ modulo $\ell$ is odd.
In this paper, the assumption that $\ell$ is linear is always to ensure the unitriangular shape of decomposition matrices, which is due to Gruber and Hiss  \cite{GH97}.
It is an open problem to show that decomposition matrices of finite groups of Lie type in non-defining characteristic have unitriangular shape~(see for example \cite[Problem 4.8]{Ma15}).
If this is true, then the assumption that $\ell$ is linear can be removed from the main results of this paper.

For groups of type $C$, we first verify the Alperin weight conjecture \ref{weiconj} for every $\ell$-block of $\Sp_{2n}(q)$ when both $\ell$ and $q$ are odd, and then we prove the (iBAW) condition for the simple group of symplectic type and a linear prime if the outer automorphism group is cyclic,  which can be stated as follows.

\begin{thm}\label{baw-sp}
Let $q$ be a power of an odd prime, $n\ge 2$, $\ell\ne p$ an odd prime.
Then the Alperin weight conjecture \ref{weiconj} holds for every $\ell$-block of $\Sp_{2n}(q)$.
\end{thm}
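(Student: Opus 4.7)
The plan is to establish the block-by-block equality $|\cW_\ell(B)| = |\IBr_\ell(B)|$ by matching two combinatorial parametrizations, in the same spirit as the proof of Theorem~\ref{bi-simple-b} for type $B$. First I would parametrize $\ell$-blocks of $G = \Sp_{2n}(q)$ via the Jordan decomposition of blocks (Brou\'e--Michel, Cabanes--Enguehard, Bonnaf\'e--Rouquier): each block $B$ corresponds to a $G^*$-conjugacy class labelled by a pair $(s, \kappa)$ with $s$ an $\ell'$-semisimple element of the dual group $G^* = \SO_{2n+1}(q)$ and $\kappa$ a unipotent block of $C = C_{G^*}(s)$. Since $C$ is a product of factors $\GL_{a_i}(q^{b_i})$, $\GU_{c_j}(q^{d_j})$ and a smaller odd-dimensional $\SO$, the count $|\IBr_\ell(B)|$ reduces to a product of combinatorial counts: Dipper--James and Gruber--Hiss for the linear and unitary pieces, and the odd-orthogonal count for the remaining factor (accessible via Fong--Srinivasan together with the results developed earlier in this paper for type $B$).

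Second, for the weight side I would rely on An's classification \cite{An93} of radical $\ell$-subgroups of $\Sp_{2n}(q)$. Each radical $\ell$-subgroup $R$ is, up to conjugacy, a central product of basic subgroups over a symplectic complement, and the normalizer quotient $N_G(R)/R$ is then a product of general linear groups $\GL_a(q^b)$, general unitary groups $\GU_c(q^d)$, and a residual $\Sp_{2m}(q)$-factor acting on the complement. The $\ell$-defect-zero characters of $N_G(R)/R$, which give the weight characters supported on $R$, are therefore indexed by tuples of $\ell$-cores together with a weight of the residual symplectic factor. Summing over conjugacy classes of radical subgroups yields a combinatorial enumeration of $\cW_\ell(G)$.

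Third, one has to match the two parametrizations block by block. For each weight $(R, \varphi)$ I would compute the Brauer correspondent $\bl_\ell(\varphi)^G$ by applying a Jordan decomposition inside $N_G(R)$: the $\GL$ and $\GU$ factors of $N_G(R)/R$ contribute the ``non-orthogonal'' portion of the semisimple label, while the residual symplectic factor is handled by induction on $n$. The required numerical identity then reduces to combinatorial statements already known for $\GL$ (Alperin--Fong) and for $\GU$ (An), together with the type $B$ input for the orthogonal factor; assembly is via a standard bijection between pairs (core, quotient) and tuples of partitions or symbols.

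The main obstacle is the last step: one must verify that the semisimple label read off from a weight character agrees with the label of its Brauer correspondent block. This requires transporting the Bonnaf\'e--Rouquier Morita equivalence through the normalizers of radical subgroups, and it must be done with care because $\Sp_{2n}(q)$ is not of adjoint type, so the central subgroup $\{\pm 1\}$ twists the correspondence and affects which blocks of $N_G(R)$ cover a given weight. Degenerate cases where an eigenvalue $\pm 1$ of $s$ contributes an even-dimensional orthogonal piece to $C_{G^*}(s)$ must be treated separately. Once the semisimple labels are shown to match, the unipotent count within each block is a purely combinatorial identity that follows from the same symbol/partition bookkeeping used in the type $B$ case, and the theorem follows.
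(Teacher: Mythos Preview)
Your proposal sketches a plausible strategy but takes a substantially more circuitous route than the paper, and contains some inaccuracies along the way.

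The paper's proof is essentially a three-line argument once the block parametrization (Theorem~\ref{blocks-sp}) is in place: for each $\ell$-block $B$ of $G=\Sp_{2n}(q)$ one exhibits a single combinatorial set $\mathcal P(B)=\prod_\Gamma\mathcal P(\beta_\Gamma e_\Gamma,w_\Gamma)$ that labels both $\Irr(B)\cap\cE(G,\ell')$ (Proposition~\ref{label-ibr-b}) and $\cW_\ell(B)$ (from An's construction \cite[(4F)]{An94}), and then invokes Geck's basic set theorem (Theorem~\ref{basicset}) to conclude $|\IBr_\ell(B)|=|\Irr(B)\cap\cE(G,\ell')|=|\cW_\ell(B)|$. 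The crucial shortcut you are missing is this last step: there is no need to count $\IBr_\ell(B)$ via Morita equivalences or Gruber--Hiss; the basic set $\cE(G,\ell')$ already gives the count of Brauer characters for free, block by block, since $\ell$ is odd and good.

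The block parametrization itself is obtained not by working directly with $G^*=\SO_{2n+1}(q)$ as you propose, but by first using Fong--Srinivasan's classification for $\tilde G=\CSp_{2n}(q)$ and then descending to $G$ via Clifford theory (Theorem~\ref{blocks-sp}). This avoids the delicate transport of Bonnaf\'e--Rouquier through normalizers that you flag as the main obstacle; that machinery is simply not needed here.

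Two further inaccuracies: the relevant reference for radical $\ell$-subgroups of classical groups is \cite{An94}, not \cite{An93} (which treats $2$-weights for unitary groups); and your description of $N_G(R)/R$ as a product of $\GL_a(q^b)$, $\GU_c(q^d)$ and a residual $\Sp_{2m}(q)$ is off --- the wreath-product layers contribute factors $\GL_{c_i}(\ell)$ over the prime field $\F_\ell$, and the extraspecial part contributes a $\Sp_{2\gamma}(\ell)$, not groups over $\F_q$ (see \cite[(2E)]{An94} and Remark~\ref{act-field-wei}). The $\Sp(V_0)$ factor on $C_V(R)$ is the only piece over $\F_q$.
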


\begin{thm}\label{equiv-bij-c}
	Let $q=p^f$ be a power of an odd prime $p$,
	$\ell\ne p$ an odd prime and $n\ge 2$. Assume that $f$ is odd and $\ell$ is linear.
	Then the inductive blockwise Alperin weight  (iBAW) condition (cf. Definition \ref{ibaw-simple}) holds for the simple group $\mathrm{PSp}_{2n}(q)$ and the prime $\ell$.
\end{thm}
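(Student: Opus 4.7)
The plan is to follow the pattern of Theorem \ref{ibawc-dominate}, using Theorem \ref{baw-sp} as the counting backbone and refining it into a structured bijection. Set $X = \Sp_{2n}(q)$, $\tilde X = \CSp_{2n}(q)$ and $S = \PSp_{2n}(q) = X/Z(X)$; since type $C_n$ has no graph automorphism, all automorphisms of $S$ are realized by $\tilde X \rtimes \langle F_p\rangle$, where $F_p$ is the standard Frobenius. Because $q$ is odd, $X$ is the universal $\ell'$-central extension of $S$, so verifying Definition \ref{ibaw-simple} reduces to producing, for each $\ell$-block $B$ of $X$, an $\Aut(S)_B$-equivariant blockwise bijection between $\IBr_\ell(B)$ and $\cW_\ell(B)$ together with Sp\"ath's Clifford-theoretic compatibility at $X \rtimes \Aut(S)_{\chi,B}$.

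The first step is to upgrade Theorem \ref{baw-sp} into an explicit bijection $\Omega\colon \IBr_\ell(X) \to \cW_\ell(X)$. On the Brauer side, the Jordan decomposition of irreducible Brauer characters is well-defined thanks to the unitriangularity of decomposition matrices (which is where the hypothesis that $\ell$ is linear enters, via Gruber--Hiss), partitioning the Brauer characters according to pairs consisting of a semisimple $\ell'$-class $[s]$ and data in $C_{\bG^*}(s)^F$. On the weight side, An's parametrization of radical subgroups and $\ell$-weights of $\Sp_{2n}(q)$ yields a matching partition. The resulting bijection $\Omega$ is automatically $\tilde X$-equivariant by tracking the $\tilde X/X$-action on Jordan labels on both sides, in the spirit of \cite{An94} and Theorem \ref{equiv-bij-so}. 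The analogous task for the field automorphism is to trace the Frobenius action through An's radical-subgroup classification and confirm that it matches the Galois action on Lusztig series on the Brauer side; the hypothesis that $f$ is odd here eliminates twisted Steinberg cases.

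The final step is descent to $S$ together with Sp\"ath's extension condition. Descent uses block domination exactly as in Theorem \ref{ibawc-dominate}: the quotient $X \to S$ induces a natural correspondence on $\ell$-blocks, and both $\IBr_\ell$ and $\cW_\ell$ respect this, so $\Omega$ passes to $S$. The Clifford-theoretic condition requires, for each Brauer character $\chi$, that extendibility of $\chi$ to the stabilizer in $X \rtimes \Aut(S)_\chi$ match that of the corresponding weight; here the hypothesis that $f$ is odd makes $\Aut(S)/\mathrm{Inn}(S)$ essentially the product of a cyclic group of odd order with a diagonal factor of order~$2$, which trivializes the relevant $H^2$ obstructions and reduces the verification to a direct comparison. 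The main obstacle will be the $F_p$-equivariance of the weight side: An's classification involves basic subgroups of two distinct types governed by the linear-prime hypothesis, and one must check that $F_p$ permutes the associated labels compatibly with its action on Jordan data. This is precisely where both hypotheses --- $f$ odd and $\ell$ linear --- must be used in concert, and it is where the bulk of the work lies.
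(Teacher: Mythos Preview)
Your overall strategy is sound but you are missing the paper's central simplification, and you mislocate where the hypothesis ``$f$ odd'' is used. The paper's proof is essentially two lines: since $f$ is odd, $\mathrm{Out}(S)\cong C_2\times C_f\cong C_{2f}$ is \emph{cyclic}, so Corollary~\ref{cyclic-outer-ind} (a consequence of Sp\"ath's \cite[Lem.~6.1]{Sp13}) reduces the full Definition~\ref{induc} to the existence of an $\Aut(G)_B$-equivariant bijection $\IBr_\ell(B)\to\cW_\ell(B)$; that bijection is exactly Theorem~\ref{bij-sp}. In particular, condition~(iii) of Definition~\ref{induc} comes for free once the outer automorphism group is cyclic --- there is no need to analyse $H^2$ obstructions, extendibility, or block compatibility by hand.

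Two consequent corrections to your plan. First, the equivariant bijection (Theorem~\ref{bij-sp}) holds for arbitrary $f$; the assumption ``$f$ odd'' is \emph{not} used for equivariance of the weight parametrisation, nor does it ``eliminate twisted Steinberg cases'' (type $C$ has no twisted forms). It is used solely to force $\mathrm{Out}(S)$ cyclic. The assumption ``$\ell$ linear'' is used exactly where you say, for the Gruber--Hiss unitriangularity feeding into \cite[Lem.~7.5]{CS13}. Second, there is no ``descent to $S$'': Definition~\ref{induc} is formulated directly for the universal $\ell'$-covering group $X=\Sp_{2n}(q)$, so one works in $X$ throughout. Your proposed route via direct verification of Definition~\ref{induc}(iii) would presumably succeed, but it is substantially more work than the cyclic-outer shortcut and your sketch of it (``trivializes the relevant $H^2$ obstructions and reduces the verification to a direct comparison'') is not yet a proof.
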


In order  to prove Theorem \ref{equiv-bij-c}, we need a parametrization of $\ell$-blocks of $\Sp_{2n}(q)$,
which may be of independent interest; see Theorem \ref{blocks-sp}.
In order to do this, we make use of both the parametrization of $\ell$-blocks of $\CSp_{2n}(q)$
from Fong--Srinivasan \cite{FS89} , and
the label of $\ell$-blocks of an arbitrary finite group of Lie type
from Cabanes--Enguehard \cite{CE99} for $\ell\ge 7$ and Kessar--Malle \cite{KM15} for the largest possible generality.
From this, we obtain a blockwise bijection between the irreducible $\ell$-Brauer characters and $\ell$-weights of $\Sp_{2n}(q)$ which is equivariant under the action of automorphisms~(see Theorem \ref{bij-sp}).

We should mention that Conghui Li has proved independently Theorem \ref{bij-sp}  in \cite{Li19} with different methods.

In addition, we also determined a similar parametrization of $\ell$-blocks for $\SO^\pm_{2n}(q)$, which is described in Appendix \ref{Appendix-blocks-so}.
From this, if a hypothesis for the action of  $\GO^\pm_{2n}(q)$ on the characters of  $\SO^\pm_{2n}(q)$ is true, then the Alperin weight conjecture \ref{weiconj} holds for every $\ell$-block of $\SO^\pm_{2n}(q)$ when both $\ell$ and $q$ are odd; see Theorem \ref{alp-conj-SO2n}.

Is there an analogue of Jordan decomposition for weights of finite groups of Lie type?
Malle proposed this problem in \cite[Problem~4.9]{Ma15}.
Furthermore, following Kessar--Malle \cite[\S4]{KM18}, we hope for a Bonnaf\'e--Rouquier type reduction~(cf. \cite{BR03}, see also \cite[\S3.3]{KM18}) to a few special situations, \emph{i.e.}, quasi-isolated blocks.
In this sense, the unipotent blocks would play a fundamental and important role when considering the (iBAW) condition for finite quasi-simple groups of Lie type.
In \cite{Fe18}, the author verified the (iBAW) condition for unipotent blocks of groups of type $A$, untwisted or twisted, under some additional assumption on the prime involved.
Considering classical type, the following is our main result for unipotent blocks.

\begin{thm}\label{ibawc-uni}
Assume that both $\ell$ and $q$ are odd such that $\ell\nmid q$. Suppose that one of the following holds.
\begin{enumerate}[(i)]
	\item $X\in\{\mathrm{Spin}_{2n+1}(q), \Sp_{2n}(q)\}$ with $n\ge2$.
	\item $X=\mathrm{Spin}^-_{2n}(q)$ with $n\ge4$.
	\item $X=\mathrm{Spin}^+_{2n}(q)$ with $n>4$ and $\ell$ is linear.
\end{enumerate}	
Then the inductive blockwise Alperin weight  (iBAW) condition~(cf. Definition \ref{induc})~ holds for every unipotent $\ell$-block of $X$.
\end{thm}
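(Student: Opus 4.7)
The plan is to handle the three cases of simple types $B$, $C$, $D$ separately, reducing in each instance to an equivariant blockwise bijection at the level of the simple quotient and then verifying Sp\"ath's extendibility condition using the structural information on unipotent characters, radical $\ell$-subgroups and their normalizers in classical groups. The first step is a reduction to the simple quotient: every unipotent character of a finite reductive group is trivial on $Z(X)$, so each unipotent $\ell$-block $B$ of $X$ dominates a corresponding block $\bar B$ of $S:=X/Z(X)$ with $|\IBr(B)|=|\IBr(\bar B)|$ and $\cW_\ell(B)\cong\cW_\ell(\bar B)$ via inflation. Hence it suffices to verify the iBAW condition for $\bar B$ in the sense of Definition \ref{ibaw-simple}, which asks for a blockwise $\Aut(S)_{\bar B}$-equivariant bijection between $\IBr(\bar B)$ and $\cW_\ell(\bar B)$ together with suitable extendibility of the paired characters to their inertia groups inside $S\rtimes\Aut(S)$.

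For the bijections I would appeal to work already done in the excerpt. In case (i), for $X=\Sp_{2n}(q)$ I would specialize Theorem \ref{bij-sp} to the unipotent subsets, and for $X=\mathrm{Spin}_{2n+1}(q)$ I would specialize Theorem \ref{bi-simple-b} (built on Theorem \ref{equiv-bij-so}). In cases (ii) and (iii) I would combine the parametrization of $\ell$-blocks of $\SO^\pm_{2n}(q)$ established in Appendix \ref{Appendix-blocks-so} with An's classification of $\ell$-weights of classical groups, following verbatim the strategy of the type $B$ and $C$ constructions to produce an analogous blockwise $\Aut(S)$-equivariant bijection and then restrict it to unipotent blocks. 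In all three cases the restriction to unipotent blocks simplifies matters because both $\IBr$ and the weight side have nice combinatorial descriptions by symbols and cores, and the action of diagonal and field automorphisms on them is explicit; note that only for unipotent blocks is a full bijection at this stage available unconditionally in our set-up.

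Next I would verify the extendibility required by Definition \ref{induc}: for each paired Brauer character $\varphi\in\IBr(\bar B)$ and each paired weight $(R,\overline\psi)\in\cW_\ell(\bar B)$, I must exhibit compatible extensions to the stabilizers in $(S\rtimes\Aut(S))_\varphi$ and $(S\rtimes\Aut(S))_{(R,\overline\psi)}$ lifting back to $X$-equivariant extensions inside the associated central extensions. On the character side, this rests on extendibility of unipotent characters of the ambient reductive group to the inertia subgroup in $\widetilde{G}{\rtimes}\langle F,\gamma\rangle$, which is available in the literature for classical types; combined with the unitriangular shape of the decomposition matrix (guaranteed by linearity of $\ell$ via Gruber--Hiss, which is why (iii) needs $\ell$ linear), this transports to $\IBr$ at the simple level. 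On the weight side, the analogous statement amounts to extendibility of irreducible characters of relative Weyl groups $N_G(R)/R$, which for unipotent weights of classical groups decompose into products of wreath products of Weyl groups of type $A$; here the extendibility of characters of such wreath products under the induced action of field and graph automorphisms can be read off directly from the combinatorics of symbols.

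The main obstacle will be the extendibility of weights in case (iii), where the graph automorphism of $\Omega^+_{2n}(q)$ genuinely permutes some radical subgroups and acts non-trivially on characters of their normalizers. The exclusion $n=4$ is needed to avoid triality of $D_4$, after which the outer automorphism group is cyclic modulo the diagonal part and the relevant wreath-product Weyl groups admit canonical extensions; the linearity of $\ell$ is used to guarantee that An's weight combinatorics matches the combinatorics on the $\IBr$ side coming from the unitriangular decomposition matrix. Once these two restrictions are imposed, the case analysis for the extendibility condition collapses to standard statements about characters of symmetric groups and general linear Weyl groups, completing the verification in all three cases.
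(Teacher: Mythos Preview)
Your reduction to the simple quotient and the overall shape of the argument are fine, but there is a concrete gap in cases (i) and (ii). You propose to obtain the equivariant bijection by specializing Theorem~\ref{bij-sp} (for $\Sp_{2n}(q)$) and Theorem~\ref{bi-simple-b} (for $\mathrm{Spin}_{2n+1}(q)$) to unipotent blocks. But Theorem~\ref{bij-sp} assumes $\ell$ is linear, and Theorem~\ref{bi-simple-b} assumes both $\ell$ linear and $f$ odd, whereas Theorem~\ref{ibawc-uni}(i) carries neither hypothesis. Likewise, your extendibility argument on the Brauer side explicitly appeals to the Gruber--Hiss unitriangularity, which again requires $\ell$ linear. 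As written, your plan would only establish the theorem under these extra assumptions.

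The point you are missing, and which the paper exploits, is that for \emph{unipotent} blocks in types $B$, $C$ and ${}^2D$ the situation is much more rigid than in general: the unipotent characters form a basic set by Theorem~\ref{basicset}, and every unipotent character is invariant under all diagonal, field and graph automorphisms (Malle, \cite{Ma08}). Since a basic set of invariant characters forces every Brauer character of the block to be invariant, the $\Aut(S)_B$-action on $\IBr_\ell(B)$ is trivial. On the weight side, one checks directly from Corollary~\ref{action-basic-par} and Proposition~\ref{action-par-diag-wei} (with $s=1$, so only $\Gamma=x-1$ occurs) that every $B$-weight is likewise invariant; see Lemmas~\ref{act-wei-typC} and~\ref{act-wei-typD}. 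Hence any bijection is automatically equivariant, and extendibility reduces to the cyclicity of the relevant quotient together with a uniqueness argument for the extension to the conformal group (the paper's Claim~2 in the type $C$ proof, and Lemmas~\ref{ext-ibr-d}, \ref{ext-wei-d} for type~$D$). No unitriangularity is needed here. Only in case (iii), type $D^+$, does the graph automorphism genuinely permute the two copies of each degenerate symbol, so the action on unipotent characters is non-trivial; this is precisely where unitriangularity (and hence the hypothesis $\ell$ linear) is used to transport the action from $\Irr$ to $\IBr$.
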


This paper is built up as follows. In section \S\ref{sec:Preliminaries}, we introduce the general notation and state the (iBAW) condition.
In section \S\ref{wei-spec-case}, the action of automorphisms on the weights of classical groups for a special case is considered.
Then we prove Theorem \ref{equiv-bij-so} and prove Theorem \ref{ibawc-uni} for type $B$ in section \S\ref{typeB}.
In section \S \ref{typeC}, we give a classification for blocks of symplectic groups and then prove Theorem \ref{equiv-bij-c} and prove Theorem \ref{ibawc-uni} for type $C$.
Finally, the (iBAW) condition for unipotent blocks of classical groups of type $D$ and $^2D$ are verified in  section \S\ref{typeD}.
In addition, we consider the Alperin weight conjecture for special orthogonal groups in even-dimension in Appendices.

%%%%%%%%%%%%%%%%%%%%%%%%%%%%%%%%%%%%%%%%%%%%%%%%%%%%%%%%%%%%%%%%%%%%%%%%%
\section{Preliminaries}   \label{sec:Preliminaries}

\subsection{General results}

Let $G$ be a finite group. Concerning the block and character theory of $G$ we mainly
follow the notation of \cite{Na98}, where for sets of $\ell$-Brauer characters or $\ell$-blocks we add a subscript to indicate the corresponding prime $\ell$~(\emph{e.g.} $\IBr_\ell(G)$, $\Bl_\ell(G)$).
We denote the restriction of $\chi\in\Irr(G)\cup\IBr_\ell(G)$ to some subgroup $H\le G$ by $\Res^G_H\chi$, while $\Ind^G_H\psi$ denotes the character induced from $\psi\in\Irr(H)\cup\IBr_\ell(H)$ to $G$.
For $N\unlhd G$ we sometimes identify the characters of $G/N$ with the characters of $G$ whose kernel contains $N$.

The cardinality of a set, or the order of a finite group, $X$, is denoted by $|X|$.
If a group $A$ acts on a finite set $X$, we denote by $A_x$ the stabilizer of $x\in X$ in $A$, analogously we denote by $A_{X'}$ the setwise stabilizer of $X'\subseteq X$.

Let $\ell$ be a prime.
If $A$ acts on a finite group $G$ by automorphisms, then there is a natural action of $A$ on $\Irr(G)\cup\IBr_\ell(G)$ given by ${}^{a^{-1}}\chi(g)=\chi^a(g)=\chi(g^{a^{-1}})$ for every $g\in G$, $a\in A$ and $\chi\in\Irr(G)\cup\IBr_\ell(G)$.
For $P\le G$ and $\chi\in\Irr(G)\cup\IBr_\ell(G)$, we denote by $A_{P,\chi}$ the stabilizer of $\chi$ in $A_P$.

Let $\chi\in \Irr(G)$, we denote by $\chi^\circ$ the restriction of $\chi$ to the set of all $\ell'$-elements of $G$.
Let $Y\subseteq \IBr_\ell(G)$.
A subset $X \subseteq \Irr(G)$ is called a \emph{basic set} of $Y$ if
$\{\chi^\circ\mid \chi\in X\}$ is a $\mathbb Z$-basis of $\mathbb ZY$.
If $Y=\IBr_\ell(B)$ for some $\ell$-block $B$ of $G$, then we also say $X$ a  basic set of $B$.

Let $\mathcal O$ denote the ring of algebraic integers in $\mathbb C$.	
Following \cite[\S2]{Na98} we fix a maximal ideal $M$ of $\mathcal O$ containing the ideal $\ell\mathcal O$. Then by \cite[Lemma 2.1]{Na98} the field $\F := \mathcal O /M$ is an algebraic closure of its prime field $\F_\ell$ of characteristic $\ell$, and we denote by   	
${}^*:\mathcal O\to \F$ the natural epimorphism.
Let $\chi\in \Irr(G)$.
Then the central character associated to $\chi$ is the algebra homomorphism $\omega_\chi: Z(\mathbb C G)\to\mathbb C$, $C\mapsto \omega_\chi(\hat C)=\frac{|C|\chi(x)}{\chi(1)}$, where $C$ is a conjugacy class of $G$, $\hat C=\sum_{x\in C}x$ and $x\in C$.
This yields an algebra homomorphism $\lambda_\chi:Z(\F G)\to\F$ such that $\lambda_\chi(\hat C)=\omega_\chi(\hat C)^*$ for a conjugacy class  $C$ of $G$.
Then for $\chi,\psi\in\Irr(G)$, they are in the same $\ell$-block of $G$ if and only if $\lambda_\chi=\lambda_\psi$.
Let $B$ be an $\ell$-block of $G$, then we define $\lambda_B=\lambda_\chi$ for $\chi\in\Irr(B)$.
For a prime $\ell$ and $\varphi\in\Irr(G)\cup\IBr_\ell(G)$, we denote by
$\mathrm{bl}_\ell(\varphi)$ the $\ell$-block of $G$ containing $\varphi$.

A subgroup $R \le G$ is \emph{$\ell$-radical} if $R=\cO_\ell(N_G(R))$. We also say that $R$ is an $\ell$-radical subgroup of $G$.
We denote by $\Rad_\ell(G)$ the set of $\ell$-radical subgroups	of $G$.
Furthermore, $\Rad_\ell(G)/\sim_G$  denotes a $G$-transversal of radical $\ell$-subgroup of $G$.

We denote  the set of all $G$-conjugacy classes of $\ell$-weights of $G$  by $\cW_\ell(G)$ while
$\cW_\ell(B)$ denotes  the set of all $G$-conjugacy classes of $B$-weights for an $\ell$-block $B$ of $G$.

\vspace{2ex}

The following lemma is elementary.
\begin{lem}\label{gen-quo-wei}
	Let $G$ be a finite group, $Z$ a central subgroup of $G$ and
	$\pi:G\to \bar G=G/Z$ be the canonical homomorphism.
	Suppose that $\bar B$ is an $\ell$-block of $\bar G$ which is dominated by the $\ell$-block
	$B$ of $G$.
	Let $(\bar R,\bar\varphi)$   be a $\bar B$-weight and let $R=\mathcal O_\ell(\pi^{-1}(\bar R))$ and $\varphi$ the inflation of $\bar\varphi$ from $N_{\bar G}(\bar R)=\overline{N_G(R)}$ to $N_G(R)$.
	Then $(R,\varphi)$  is a $ B$-weight.
\end{lem}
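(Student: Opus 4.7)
The plan is to verify in turn the three conditions defining a $B$-weight: that $R$ is an $\ell$-radical subgroup of $G$, that $\varphi$ is a defect-zero character of $N_G(R)/R$ with $R\le\ker\varphi$, and that $\bl_\ell(\varphi)^G=B$.

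First I would analyse the structure of $R$. Since $Z$ is central and hence abelian, write $Z=Z_\ell\times Z_{\ell'}$. The quotient $\pi^{-1}(\bar R)/Z_{\ell'}$ is an extension of the $\ell$-group $\bar R$ by the $\ell$-group $Z_\ell$, hence itself an $\ell$-group; because $Z_{\ell'}$ is central this forces $\pi^{-1}(\bar R)=R\times Z_{\ell'}$ with $R=\mathcal O_\ell(\pi^{-1}(\bar R))$ its unique (in particular characteristic) Sylow $\ell$-subgroup. As immediate corollaries, $\pi(R)=\bar R$ and $N_G(R)=\pi^{-1}(N_{\bar G}(\bar R))$ (elements normalizing $\pi^{-1}(\bar R)$ automatically normalize its characteristic $\ell$-core).

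Next, the induced surjection $N_G(R)/R\twoheadrightarrow N_{\bar G}(\bar R)/\bar R$ has kernel $\pi^{-1}(\bar R)/R\cong Z_{\ell'}$, which is an $\ell'$-group, so $|N_G(R)/R|_\ell=|N_{\bar G}(\bar R)/\bar R|_\ell$. Combining this with $\varphi(1)=\bar\varphi(1)$ (inflation preserves degree) and the defect-zero hypothesis on $\bar\varphi$ modulo $\bar R$ yields the defect-zero property of $\varphi$ modulo $R$. The kernel condition $R\le\ker\varphi$ is free: $Z\le\ker\varphi$ by inflation and $\pi(R)=\bar R\le\ker\bar\varphi$ by hypothesis, so every element of $R$ acts trivially via $\varphi$.

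The main and most delicate step is the block induction identity $\bl_\ell(\varphi)^G=B$. Setting $b:=\bl_\ell(\varphi)$ and $\bar b:=\bl_\ell(\bar\varphi)$, the inflation relation forces $b$ to dominate $\bar b$, since $\varphi\in\Irr(b)$ has $Z\le\ker\varphi$ and deflates to $\bar\varphi\in\Irr(\bar b)$. I would then invoke the standard compatibility of Brauer induction with central quotients, which follows from comparing the central-character formulas $\lambda_{b^G}(\widehat C)=\lambda_b(\widehat C_{N_G(R)})$ and $\lambda_{\bar b^{\bar G}}(\widehat{\bar C})=\lambda_{\bar b}(\widehat{\bar C}_{N_{\bar G}(\bar R)})$ for a conjugacy class $C$ of $G$ projecting to $\bar C$, and using that $\pi$ induces an isomorphism of $\ell$-parts of the relevant local subgroups. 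This gives that $b^G$ dominates $\bar b^{\bar G}=\bar B$. The main obstacle, if one wishes to be cautious, is the step from there to $b^G=B$ rather than some other block of $G$ dominating $\bar B$; one resolves this by noting that the central character of $b$ is pinned down not only by $\bar b$ but by the additional datum that $Z\le\ker\varphi$, so the central-character computation matches $\lambda_B$ on all $G$-class sums and $b^G=B$ follows.
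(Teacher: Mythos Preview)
The paper does not actually prove this lemma: it is stated immediately after the remark ``The following lemma is elementary'' and no argument is given. Your proposal supplies a correct proof along the expected lines (structure of $R$ inside $\pi^{-1}(\bar R)$, comparison of normalizers and $\ell$-parts for the defect-zero condition, and compatibility of block induction with the central quotient).

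One small remark on your final paragraph: the caution about ``some other block of $G$ dominating $\bar B$'' is unnecessary. For a central subgroup $Z\le G$, each block $\bar B$ of $G/Z$ is dominated by a \emph{unique} block of $G$ (this is standard; see e.g.\ \cite[Thm.~(9.9)]{Na98}). So once you have shown that $b^G$ dominates $\bar B$, the equality $b^G=B$ is immediate without the extra central-character bookkeeping you sketch at the end. Your argument that $b^G$ dominates $\bar b^{\bar G}=\bar B$ is the right one: $b$ dominates $\bar b$ (since $\varphi$ deflates to $\bar\varphi$), block induction from $N_G(R)$ to $G$ is defined because $b$ has defect group $R$ and $C_G(R)\le N_G(R)$, and the central-character formula for induced blocks passes cleanly to the quotient.
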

\begin{lem}\label{rad-quotient}
Let $G$ be a finite group, $Z$ a central $\ell'$-subgroup of $G$ and $\bar G=G/Z$.
Then there is a bijection  $\Theta:\Rad_\ell(G)\to \Rad_\ell(\bar G)$ given by $R\mapsto \bar R$
 with inverse given by $Q/Z\mapsto \cO_\ell(Q)$.

 Moreover, $\Theta$ induces a bijection between $\Rad_\ell(G)/\sim_G$ and $\Rad_\ell(\bar G)/\sim_{\bar G}$.
\end{lem}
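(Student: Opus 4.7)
The plan is to establish the bijection by directly checking that $R \mapsto RZ/Z$ and $Q/Z \mapsto \cO_\ell(Q)$ are well-defined mutually inverse maps, and then to verify that they descend to $G$-conjugacy classes.

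The key technical ingredient I would isolate first is the following: for any subgroup $H$ of $G$ containing $Z$, we have $\cO_\ell(H/Z) = \cO_\ell(H)Z/Z$. To see this, let $\bar K = \cO_\ell(H/Z)$ and let $K$ be its preimage in $H$, so that $K \trianglelefteq H$, $K \supseteq Z$, and $K/Z$ is an $\ell$-group. Since $Z$ is a central $\ell'$-subgroup of $K$, the Sylow $\ell$-subgroup $P$ of $K$ is unique, $K = P \times Z$, and $P$ is characteristic in $K$, hence normal in $H$. Comparing orders with $\cO_\ell(H)Z/Z \subseteq \bar K$ forces $P = \cO_\ell(H)$.

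Given $R \in \Rad_\ell(G)$, since $R$ is an $\ell$-group and $Z$ an $\ell'$-group, $R \cap Z = 1$, so $\bar R \cong R$. I would then verify $N_{\bar G}(\bar R) = \overline{N_G(R)}$: if $\bar g$ normalizes $\bar R$, then $gRg^{-1} \subseteq RZ$, and since $gRg^{-1}$ is an $\ell$-subgroup of $RZ = R \times Z$ of order $|R|$, it must equal the unique Sylow $\ell$-subgroup $R$, giving $g \in N_G(R)$. Applying the key observation to $H = N_G(R) \cdot Z$ yields
\[
\cO_\ell(N_{\bar G}(\bar R)) = \cO_\ell(N_G(R))Z/Z = RZ/Z = \bar R,
\]
so $\bar R \in \Rad_\ell(\bar G)$. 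Conversely, for $\bar Q \in \Rad_\ell(\bar G)$ with preimage $Q$, set $R = \cO_\ell(Q)$; then $Q = R \times Z$, and since $R$ is characteristic in $Q$ and $Q = RZ$ with $Z$ central, $N_G(R) = N_G(Q)$ and $N_G(R)/Z = N_{\bar G}(\bar Q)$. Another application of the key observation gives $\cO_\ell(N_G(R))Z/Z = \bar Q = RZ/Z$, which forces $\cO_\ell(N_G(R)) = R$ by uniqueness of the Sylow $\ell$-subgroup of $RZ$. This shows the two constructions are inverse to each other.

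For the moreover part, $G$-equivariance of $R \mapsto \bar R$ is immediate, so $G$-conjugate radicals map to $\bar G$-conjugate radicals. Conversely, if $\bar R_1 = \bar g\, \bar R_2\, \bar g^{-1}$, then $R_1 Z = gR_2g^{-1} Z$; comparing the unique Sylow $\ell$-subgroups of each side gives $R_1 = gR_2g^{-1}$, establishing the bijection between $\Rad_\ell(G)/\sim_G$ and $\Rad_\ell(\bar G)/\sim_{\bar G}$. I do not anticipate serious obstacles; the only point requiring care is the repeated use of the splitting $K = \cO_\ell(K) \times Z$ for subgroups $K \supseteq Z$ with $K/Z$ an $\ell$-group, which depends crucially on $Z$ being both central and of order coprime to $\ell$.
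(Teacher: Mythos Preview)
Your proof is correct and self-contained. The paper does not actually prove this lemma: it simply cites \cite[Lem.~2.3~(c)]{NT11}. Your direct argument via the splitting $K=\cO_\ell(K)\times Z$ for subgroups $K\supseteq Z$ with $K/Z$ an $\ell$-group is exactly the standard verification underlying that reference, so the approaches coincide in substance; you have merely unpacked what the paper outsources. One minor remark: when you write ``applying the key observation to $H=N_G(R)\cdot Z$'', note that $Z\le N_G(R)$ automatically since $Z$ is central, so $H=N_G(R)$; this does not affect the argument.
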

\begin{proof}
This follows by \cite[Lem.~2.3~(c)]{NT11}.
\end{proof}

Note that, we have $N_{\bar G}(\bar R)=\overline{N_G(R)}$ in Lemma \ref{rad-quotient}.

\begin{lem}\label{wei-quotient}
Keep the hypothesis and notation of Lemma \ref{rad-quotient}.
\begin{enumerate}[(i)]
	\item If $(\bar R,\bar{\varphi})$ is an $\ell$-weight of $\bar G$, then $(R,\varphi)$ is an $\ell$-weight of $G$, where $R=\Theta^{-1}(\bar R)$ and $\varphi$ is the inflation of $\bar{\varphi}$ to $N_G(R)$.
	\item If $(R,\varphi)$ is an $\ell$-weight of $G$ such that $Z\le\ker \varphi$ and $\bar R=\Theta(R)$, then $(\bar R,\bar{\varphi})$ is an $\ell$-weight of $\bar G$, where $\bar{\varphi}$ is the character of $N_{\bar G}(\bar R)$ whose inflation is $\varphi$.
	\item Let $\bar B$ be an $\ell$-block of $\bar G$ and $B$ an $\ell$-block of $G$ dominating $\bar B$.
	Then the map $\cW_\ell(B)\to\cW_\ell(\bar B) $ given by $(R,\varphi)\mapsto (\bar R,\bar{\varphi})$ is a bijection.
\end{enumerate}
\end{lem}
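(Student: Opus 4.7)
My plan is to verify (i) and (ii) by a direct check of the three conditions defining an $\ell$-weight---that the subgroup is $\ell$-radical, that it lies in the kernel of the character, and that the character has $\ell$-defect zero on the quotient---and then to deduce (iii) from this together with a central-character argument showing that $Z\le\ker\varphi$ holds automatically for every $B$-weight.

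For (i), Lemma \ref{rad-quotient} already supplies $R=\Theta^{-1}(\bar R)\in\Rad_\ell(G)$ together with the identification $N_{\bar G}(\bar R)=N_G(R)/Z$, so the inflation $\varphi$ of $\bar\varphi$ to $N_G(R)$ is well-defined. Since $Z$ is an $\ell'$-group while $R$ is an $\ell$-group, $R\cap Z=1$ and $\pi^{-1}(\bar R)=RZ$, whence $\ker\varphi=\pi^{-1}(\ker\bar\varphi)\supseteq RZ\supseteq R$. For the defect-zero condition, $|N_G(R)/R|_\ell=|N_G(R)/RZ|_\ell=|N_{\bar G}(\bar R)/\bar R|_\ell$ (as $RZ/R\cong Z$ is an $\ell'$-group) and $\varphi(1)=\bar\varphi(1)$. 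Part (ii) is the mirror image, valid because the hypothesis $Z\le\ker\varphi$ lets $\varphi$ descend to $N_{\bar G}(\bar R)$.

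For (iii), the first step is to show that every $B$-weight $(R,\varphi)$ satisfies $Z\le\ker\varphi$. Because $B$ dominates $\bar B$, the central character $\lambda_B$ is trivial on $\F Z\subseteq Z(\F G)$; by the definition of block induction, $\lambda_{\bl_\ell(\varphi)}$ and $\lambda_B$ agree on the subalgebra $(Z(\F G)\cap\F N_G(R))^{N_G(R)}$, which contains $\F Z$. Hence $\lambda_{\bl_\ell(\varphi)}$ is trivial on $Z$, which forces $\varphi|_Z$ to be a multiple of the trivial character, so $Z\le\ker\varphi$. Parts (i) and (ii), together with Lemma \ref{rad-quotient} on the radical-subgroup side, then give mutually inverse maps between $\ell$-weights of $G$ with $Z$ in the kernel and $\ell$-weights of $\bar G$, compatible with $G$- and $\bar G$-conjugacy.

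The remaining point is that the map respects blocks, i.e., $\bl_\ell(\varphi)^G=B$ if and only if $\bl_\ell(\bar\varphi)^{\bar G}=\bar B$; this I expect to be the main obstacle. Since $Z$ is central and $\ell'$, blocks of $G$ (respectively, of $N_G(R)$) with $Z$ in their kernel are in bijection with blocks of $\bar G$ (respectively, of $N_{\bar G}(\bar R)$) via dominance. Brauer induction is controlled by central characters on class sums, and those of $G$ with trivial $Z$-character descend precisely to the class sums of $\bar G$, so block induction intertwines the two dominance bijections. Applied here, this yields the desired equivalence and completes the proof of (iii).
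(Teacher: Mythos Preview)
Your proposal is correct and follows essentially the same line as the paper: parts (i) and (ii) are routine (the paper simply declares them ``obvious''), and for (iii) both you and the authors reduce to showing $Z\le\ker\varphi$ for every $B$-weight via the central-character argument $\lambda_B(z)=\lambda_{\bl_\ell(\varphi)}(z)$ together with the fact that $\omega_\varphi(z)$ is an $\ell'$-root of unity. The paper is terser on the block-compatibility step you flag as the ``main obstacle''---it implicitly relies on Lemma~\ref{gen-quo-wei} for one direction and standard dominance theory for the other---whereas you spell this out more carefully; but the underlying argument is the same.
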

\begin{proof}
Both (i) and (ii) are obvious.
For (iii), by (i), (ii) and Lemma \ref{rad-quotient}, it suffices to show that $Z\le\ker \varphi$ holds for any $(R,\varphi)\in\cW_\ell(B)$.
Let $b=\bl_\ell(\varphi)$, then $b^G=B$.
Thus $\la_b(z)=\la_B(z)$ for all $z\in Z$.
By \cite[Thm.~(9.9)(c)]{Na98}, $Z\le \ker \chi$ for all $\chi\in \Irr(B)$.
Hence $\omega_\chi(z)=\chi(z)/\chi(1)=1$ and then $\la_B(z)=1$ for all $z\in Z$.
Thus $\la_b(z)=1$, i.e. $\omega_\varphi(z)^*=1$  for all $z\in Z$.
However, $\omega_\varphi(z)$ is an $\ell'$-root of unity and then by \cite[Lem.~(2.1)]{Na98}, $1=\omega_\varphi(z)=\varphi(z)/\varphi(1)$ for all $z\in Z$.
Hence $Z\le \ker\varphi$, as desired.
\end{proof}

\begin{lem}\label{act-radical}
Keep the hypothesis and notation of Lemma \ref{rad-quotient}.
Let $\sigma\in\Aut(G)$ such that 
$\sigma$ stabilizes $Z$,
and $\bar{\sigma}$ the automorphism of $\bar G$ induced by $\sigma$.
Let $\bar R_i\in\Rad_\ell(\bar G)$ and $R_i=\Theta^{-1}(\bar R_i)$ for $i=1,2$.
If $\bar{\sigma}(\bar R_1)=\bar R_2$, then $\sigma(R_1)=R_2$.
\end{lem}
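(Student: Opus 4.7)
The plan is to exploit the explicit formula for $\Theta^{-1}$ from Lemma \ref{rad-quotient}, namely $R_i = \cO_\ell(\pi^{-1}(\bar R_i))$, where $\pi : G \to \bar G$ denotes the canonical projection. The hypothesis that $\sigma$ stabilizes $Z$ is exactly what guarantees that $\bar\sigma$ is well-defined on $\bar G = G/Z$ and that it satisfies the compatibility $\pi \circ \sigma = \bar\sigma \circ \pi$.

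The first step is to check that $\sigma$ transports the full preimage of $\bar R_1$ onto that of $\bar R_2$. For $g \in \pi^{-1}(\bar R_1)$, the compatibility above gives $\pi(\sigma(g)) = \bar\sigma(\pi(g)) \in \bar\sigma(\bar R_1) = \bar R_2$, so $\sigma(\pi^{-1}(\bar R_1)) \subseteq \pi^{-1}(\bar R_2)$; applying the same argument to $\sigma^{-1}$ and $\bar\sigma^{-1}$ yields the reverse inclusion and hence equality.

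The second step is to use that any group automorphism preserves the largest normal $\ell$-subgroup: $\sigma(\cO_\ell(H)) = \cO_\ell(\sigma(H))$ for every subgroup $H \le G$. Combining this with the previous step gives
$$\sigma(R_1) \;=\; \sigma\bigl(\cO_\ell(\pi^{-1}(\bar R_1))\bigr) \;=\; \cO_\ell\bigl(\sigma(\pi^{-1}(\bar R_1))\bigr) \;=\; \cO_\ell(\pi^{-1}(\bar R_2)) \;=\; R_2,$$
which is the required conclusion.

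I do not expect a substantive obstacle: the statement is a purely formal compatibility between the bijection $\Theta$ of Lemma \ref{rad-quotient} and the induced automorphism actions on $G$ and $\bar G$. The only point to be careful about is that the inverse of $\Theta$ takes $\ell$-cores of full preimages rather than mere preimages, but this is handled cleanly by the functoriality of $\cO_\ell$ under group isomorphisms.
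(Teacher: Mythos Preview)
Your proof is correct and follows essentially the same route as the paper's: both use that $\sigma$ maps the full preimage $\pi^{-1}(\bar R_1)=R_1Z$ onto $\pi^{-1}(\bar R_2)=R_2Z$, and then extract $R_i$ as the unique Sylow $\ell$-subgroup (equivalently, $\cO_\ell$) of $R_iZ$. The paper phrases this last step as ``$\sigma(R_1)$ is the unique Sylow $\ell$-subgroup of $\sigma(R_1)Z=R_2Z$'', which is exactly your functoriality of $\cO_\ell$ applied to the direct product $R_iZ=R_i\times Z$.
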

\begin{proof}
By the assumption, $\sigma(R_1)Z=R_2Z$.	
Then $\sigma(R_1)=R_2$ since it is the unique Sylow $\ell$-subgroup of $\sigma(R_1)Z=R_2Z$, as stated.
\end{proof}

\begin{lem}\label{ind-quot}
Keep the hypothesis and notation of Lemma \ref{rad-quotient} and Lemma \ref{wei-quotient}~(iii). 
Let $A$ be a subgroup of $\Aut(G)$ such that $A$ stabilizes $Z$,
and $\bar A$ the subgroup of $\Aut(\bar G)$ induced by $A$.
If there is an $\bar A$-equivariant bijection between $\IBr_\ell(\bar B)$ and $\cW_\ell(\bar B)$, then there is an $A$-equivariant bijection between $\IBr_\ell(B)$ and $\cW_\ell(B)$.
\end{lem}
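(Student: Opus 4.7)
The plan is to transfer the given bijection from $\bar B$ to $B$ through two natural $A$-equivariant intermediate bijections: one between the $\ell$-Brauer character sets and one between the weight sets.

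First, I would build an $A$-equivariant bijection $\iota:\IBr_\ell(B)\to\IBr_\ell(\bar B)$. Since $Z$ is a central $\ell'$-subgroup and $B$ dominates $\bar B$, the same central-character argument used at the end of the proof of Lemma \ref{wei-quotient}(iii) shows $Z\le\ker\chi$ for every $\chi\in\Irr(B)$, and consequently $\lambda_B(z)=1$ for all $z\in Z$. Because the central character controls the scalar by which $z$ acts on every simple $\F G$-module in $B$, this forces $Z\le\ker\varphi$ for every $\varphi\in\IBr_\ell(B)$. Hence each such $\varphi$ deflates to a unique $\bar\varphi\in\IBr_\ell(\bar G)$, and since inflation preserves block correspondence one has $\bar\varphi\in\IBr_\ell(\bar B)$; a count (or direct construction of the inverse by inflating) shows $\iota$ is a bijection. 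Because $A$ stabilizes $Z$, inflation and deflation intertwine the $A$-action on $\IBr_\ell(B)$ with the $\bar A$-action on $\IBr_\ell(\bar B)$, so $\iota$ is equivariant.

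Second, Lemma \ref{wei-quotient}(iii) already supplies a bijection $\omega:\cW_\ell(B)\to\cW_\ell(\bar B)$, $(R,\varphi)\mapsto(\bar R,\bar\varphi)$. I would verify its $A$-equivariance directly: for $\sigma\in A$, Lemma \ref{act-radical} yields $\overline{\sigma(R)}=\bar\sigma(\bar R)$, while the inflation of ${}^{\bar\sigma}\bar\varphi$ to $N_G(\sigma(R))$ agrees with ${}^\sigma\varphi$ (using once more that $A$ stabilizes $Z$ so that $\bar\sigma$ is well-defined and that inflation commutes with the automorphism action).

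Finally, if $\Omega:\IBr_\ell(\bar B)\to\cW_\ell(\bar B)$ denotes the given $\bar A$-equivariant bijection, then setting $\Omega':=\omega^{-1}\circ\Omega\circ\iota$ produces the desired $A$-equivariant bijection $\IBr_\ell(B)\to\cW_\ell(B)$. The whole argument is essentially bookkeeping, the only delicate point being the passage $Z\le\ker\chi\Rightarrow Z\le\ker\varphi$ from ordinary to Brauer characters, which is precisely where the hypothesis that $Z$ is an $\ell'$-subgroup is used; beyond that, no serious obstacle arises.
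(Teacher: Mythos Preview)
Your proposal is correct and follows essentially the same approach as the paper: the paper's proof is a one-line citation of Lemma~\ref{wei-quotient}(iii), Lemma~\ref{act-radical}, and the identification $\IBr_\ell(B)=\IBr_\ell(\bar B)$ from \cite[Thm.~(9.9)]{Na98}, and you have simply unpacked each of these ingredients explicitly. Your direct argument that $Z\le\ker\varphi$ for $\varphi\in\IBr_\ell(B)$ is exactly the content of the cited result from Navarro, so there is no substantive difference.
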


\begin{proof}
This follows by Lemma \ref{wei-quotient}~(iii) and Lemma \ref{act-radical} and the fact that $\IBr_\ell(B)=\IBr_\ell(\bar B)$~(see for example \cite[Thm.~(9.9)]{Na98}).
\end{proof}

\vspace{2ex}

Let $G$ be a finite group, $\chi\in\Irr(G)$, $\ell$ a prime,
and $\theta$ a linear character of $G$. Then $\theta\chi$ is an irreducible character of $G$ and the map $\chi\mapsto \theta\chi$ is a permutation on $\Irr(G)$.
Moreover, this permutation respects $\ell$-blocks.
Now we let $B$ be an $\ell$-block of $G$ and assume that $\theta$ is of $\ell'$-order.
	Then by \cite[Lem.~2.4]{Fe18}, there is an $\ell$-block $\theta\otimes B$ of $G$, \label{def-otimes-block}
	such that $\Irr(\theta\otimes B)=\{\theta\chi\mid \chi\in\Irr(B)\}$.
	Moreover, $\IBr_\ell(\theta\otimes B)=\{\theta^\circ\phi\mid\phi\in\IBr_\ell(B)\}$.	

\begin{lem}\label{ext-Br}
Let $N$ be a normal subgroup of a finite group $G$ such that $G/N$ is cyclic and $G/NZ(G)$ is an $\ell'$-group and
let $b$ be an $\ell$-block of $N$.
Suppose that there are $m$ $\ell$-blocks of $G$ covering $b$, where $m=|G/N|_{\ell'}$.
Then the following statements hold.
\begin{enumerate}[(i)]
\item $\Res^{G}_{N}:\IBr_\ell(B)\to \IBr_\ell(b)$ is bijective for any $\ell$-block $B$ of $G$ covering $b$.
\item Let $A$ be a subgroup of $\Aut(G)$ stabilizing $N$.
Suppose that $B$ is an $\ell$-block of $G$ covering $b$ such that $B$ is $A$-invariant.
If $\phi\in\IBr_\ell(b)$ which is $A$-invariant, then there is an extension $\tilde\phi\in\IBr_\ell(B)$ of $\phi$ such that $\tilde\phi$ is $A$-invariant.
\end{enumerate}
\end{lem}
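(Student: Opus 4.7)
The plan is to prove (i) first, from which (ii) follows by transport of $A$-equivariance. For (i), I would first show that $b$ is $G$-invariant. By Fong--Reynolds for blocks, $|\Bl_\ell(G\mid b)|=|\Bl_\ell(G_b\mid b)|$, and the standard bound $|\Bl_\ell(H\mid b)|\le|H/N|_{\ell'}$ for an $H$-invariant block combined with $m=|\Bl_\ell(G\mid b)|=|G/N|_{\ell'}$ forces $|G_b/N|_{\ell'}=|G/N|_{\ell'}$, so $G/G_b$ is an $\ell$-group. Since central elements fix every Brauer character of $N$, we have $NZ(G)\le G_b$, and $G/NZ(G)$ being an $\ell'$-group then forces $G/G_b=1$.

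Next, the same type of argument applied to the stabilizer $T=G_\phi$ of each $\phi\in\IBr_\ell(b)$, together with a two-way count of $|\IBr_\ell(G\mid b)|$ (once as $\sum_B|\IBr_\ell(B)|$ and once via Clifford as $\sum_{\phi\in\IBr_\ell(b)/G}|\IBr_\ell(T\mid\phi)|$, using the cyclic-quotient extension theorem for Brauer characters and Gallagher's theorem to give $|\IBr_\ell(T\mid\phi)|=|T/N|_{\ell'}$), would force $T=G$ for every such $\phi$. Each $\phi$ then extends to some $\tilde\phi\in\IBr_\ell(G)$, and Gallagher yields $\IBr_\ell(G\mid\phi)=\{\tilde\phi\lambda:\lambda\in\Lambda\}$ where $\Lambda:=\IBr_\ell(G/N)$ has order $m$ and acts on both $\IBr_\ell(G\mid b)$ and $\Bl_\ell(G\mid b)$ by tensoring (cf.\ \cite[Lem.~2.4]{Fe18}). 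The hypothesis $|\Bl_\ell(G\mid b)|=m=|\Lambda|$, combined with $G/NZ(G)$ being $\ell'$ (which separates extensions by their central characters on $Z(G)$), forces the $\Lambda$-action on $\Bl_\ell(G\mid b)$ to be regular, so each $B$ covering $b$ contains exactly one extension of each $\phi\in\IBr_\ell(b)$. This describes the required bijection $\Res^G_N\colon\IBr_\ell(B)\to\IBr_\ell(b)$.

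For (ii), the bijection constructed in (i) is canonically attached to the inclusion $N\le G$ and the block $B$, hence is $A$-equivariant whenever $A$ stabilises $N$ and $B$. The unique preimage $\tilde\phi\in\IBr_\ell(B)$ of an $A$-invariant $\phi$ is therefore itself $A$-invariant. The main obstacle will be the second step of the plan: proving $G$-invariance of \emph{every} $\phi\in\IBr_\ell(b)$ and verifying regularity of the $\Lambda$-action on $\Bl_\ell(G\mid b)$; both rely crucially on the hypothesis that $G/NZ(G)$ is $\ell'$, not merely that $G/N$ is cyclic.
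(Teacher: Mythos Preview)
Your plan is close in spirit to the paper's, but your justification for regularity of the $\Lambda$-action on $\Bl_\ell(G\mid b)$ is wrong. You say that $G/NZ(G)$ being an $\ell'$-group ``separates extensions by their central characters on $Z(G)$''; in fact the reverse holds. Two extensions $\tilde\phi$ and $\tilde\phi\tau$ (with $\tau\in\Lambda=\IBr_\ell(G/N)$) have the same central character on $Z(G)$ exactly when $\tau$ is trivial on $Z(G)N/N$, i.e.\ when $\tau\in\Irr(G/NZ(G))$. Since $G/NZ(G)$ is an $\ell'$-group, \emph{all} of $\Irr(G/NZ(G))$ already sits inside $\Lambda$, so whenever $G\ne NZ(G)$ there are nontrivial $\tau\in\Lambda$ that central characters cannot detect. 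Your argument for freeness of the $\Lambda$-action therefore fails as stated.

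The paper takes a shorter route that avoids both your two-way count and the explicit regularity question. From cyclic Clifford theory one has $\IBr_\ell(G\mid\phi)=\{\tilde\phi\tau:\tau\in\Lambda\}$, giving $|\IBr_\ell(G\mid\phi)|\le m$; the paper then asserts the reverse inequality from the hypothesis of $m$ covering blocks. What makes this work (left implicit) is that for cyclic $G/N$ with $b$ invariant, $\Lambda$ acts \emph{transitively} on $\Bl_\ell(G\mid b)$: a splitting into two $\Lambda$-orbits would partition both $\Irr(b)$ and $\IBr_\ell(b)$ compatibly with the decomposition matrix of $b$, contradicting its indecomposability. Hence the $\Lambda$-orbit of $\bl_\ell(\tilde\phi)$ has size $m$, the stabiliser of $\bl_\ell(\tilde\phi)$ in $\Lambda$ is trivial, hence so is that of $\tilde\phi$, and the $\tilde\phi\tau$ are pairwise distinct. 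Then $m=|\IBr_\ell(G\mid\phi)|=|G_\phi/N|_{\ell'}$ forces $G/G_\phi$ to be an $\ell$-group, and $NZ(G)\le G_\phi$ together with $G/NZ(G)$ an $\ell'$-group gives $G_\phi=G$ directly. The bijection in (i) and the $A$-invariance in (ii) then follow by uniqueness of the extension in $B$, exactly as you describe.
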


\begin{proof}
Let $\phi\in\IBr_{\ell}(b)$ and $\tilde\phi\in\IBr_\ell(G\mid\phi)$.
Since $G/N$ is cyclic, by Clifford theory, each irreducible $\ell$-Brauer character covering $\phi$ has the form $\tilde\phi\tau$ with $\tau\in\IBr_\ell(G/N)$.
Then $|\IBr_\ell(G\mid\phi)|\le m$.
Now there are $m$ $\ell$-blocks of $G$ covering $b$, so
$|\IBr_\ell(G\mid\phi)|= m$.
By Clifford theory,
$G/G_\phi$ is an $\ell$-group.
Since $G/NZ(G)$ is an $\ell'$-group, we have $G=G_\phi$, and then $\tilde\phi$ is an extension of $\phi$.
Thus (i) follows easily.

For (ii), let $\tilde\phi\in\IBr_\ell(B)$ be the extension of $\phi$.
Since $\phi$ is $A$-invariant and $B$ is $A$-invariant too,
we get that $\tilde\phi^{a}\in\IBr_\ell(B)$ is also an extension of $\phi$ for any $a\in A$.
By the uniqueness of $\tilde\phi$, we have $\tilde\phi^a=\tilde\phi$.
Then $\tilde\phi$ is $A$-invariant.
\end{proof}

\begin{lem}
	\label{ind-block}
	Let $K$ be a subgroup of a finite group $G$ and $b$ an $\ell$-block of $K$, and
	 $\theta$ a linear character of $G$ of $\ell'$-order.
	Assume that both $b^G$ and $((\Res^G_K\theta)\otimes b)^G$ are defined. Then  $((\Res^G_K\theta)\otimes b)^G=\theta\otimes b^G$.
\end{lem}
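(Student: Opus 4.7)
The plan is to verify the identity by showing that both sides, as $\ell$-blocks of $G$, have the same central character $\lambda : Z(\F G) \to \F$; since an $\ell$-block is determined by its central character, this yields the claim. Set $\theta' := \Res^G_K\theta$ and $B := b^G$, and suppose as in the hypothesis that both $b^G$ and $(\theta' \otimes b)^G$ are defined.

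The first ingredient is the effect of twisting by a linear $\ell'$-character on central characters. For any finite group $H$, $\psi \in \Irr(H)$, linear $\ell'$-character $\eta$ of $H$, and conjugacy class $D$ of $H$ with representative $x \in D$, a direct computation using $\eta(1) = 1$ gives
\begin{equation*}
\omega_{\eta\psi}(\hat D) \;=\; \eta(x)\,\omega_\psi(\hat D),
\end{equation*}
and hence $\lambda_{\eta \otimes \bl_\ell(\psi)}(\hat D) = \eta(x)^{*}\,\lambda_{\bl_\ell(\psi)}(\hat D)$. I would apply this first with $(H,\eta) = (G,\theta)$ to describe $\lambda_{\theta \otimes B}$, and then with $(H,\eta) = (K,\theta')$ to describe $\lambda_{\theta' \otimes b}$.

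The second ingredient is the defining relation of block induction: $b^G = B$ means $\lambda_B(\hat C) = \lambda_b(\widehat{C \cap K})$ for every $G$-conjugacy class $C$, where $\widehat{C \cap K} \in Z(\F K)$ decomposes as $\sum_D \hat D$ over the $K$-classes $D$ contained in $C \cap K$. For each such $D$, choose $x_D \in D$; because $\theta$ is a class function on $G$ and $x_D \in C$, we have $\theta'(x_D) = \theta(x_D) = \theta(y)$ for any fixed $y \in C$. Calling this common value $\theta_C$, I obtain
\begin{equation*}
\lambda_{\theta' \otimes b}\bigl(\widehat{C \cap K}\bigr) \;=\; \sum_D \theta_C^{*}\,\lambda_b(\hat D) \;=\; \theta_C^{*}\,\lambda_b\bigl(\widehat{C \cap K}\bigr) \;=\; \theta_C^{*}\,\lambda_B(\hat C) \;=\; \lambda_{\theta \otimes B}(\hat C),
\end{equation*}
where the penultimate equality is the defining relation $b^G = B$ and the last is the formula from the previous paragraph.

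By the definition of $(\theta' \otimes b)^G$, the leftmost quantity is $\lambda_{(\theta' \otimes b)^G}(\hat C)$, so $\lambda_{(\theta' \otimes b)^G} = \lambda_{\theta \otimes b^G}$ on every class sum of $G$, which forces $(\theta' \otimes b)^G = \theta \otimes b^G$. The only real subtlety is that the twist factor $\theta(x_D)^{*}$ is a \emph{single} constant $\theta_C^{*}$ independent of the $K$-class $D$ inside $C \cap K$; this is precisely where the hypothesis that $\theta$ extends to a character of $G$ (rather than being only a character of $K$) enters, and it is the crux that makes the twist pull cleanly outside the induction.
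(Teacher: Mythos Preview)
Your proof is correct and follows essentially the same approach as the paper: compare the central characters $\lambda$ of the two candidate blocks via the defining relation of block induction, observing that twisting by a linear $\ell'$-character multiplies $\lambda$ by $\theta(x)^*$. Your version is in fact more explicit than the paper's, which compresses the key observation---that $\theta(x_D)^*$ is constant across the $K$-classes $D$ inside $C\cap K$ because $\theta$ is a $G$-class function---into an ``it is easy to check''.
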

\begin{proof}
	Let $B=b^G$.
Then
$\lambda_{B}(\hat C)=\lambda_{ b}(\widehat{C\cap K})$ and
$\lambda_{((\Res^G_K\theta)\otimes b)^G}(\hat C)=\lambda_{(\Res^G_K\theta)\otimes b}(\widehat{C\cap K})$, for any conjugacy class $C$ of $G$.
	It is easy to check that
	$\lambda_{\theta\otimes B}(\hat C)=\theta(x)^*\lambda_{ b}(\widehat{C\cap K})$ and
$\lambda_{(\Res^G_K\theta)\otimes b}(\widehat{C\cap K})=\theta(x)^*\lambda_{ b}(\widehat{C\cap K})$ for $x\in C$.
Thus $\lambda_{\theta\otimes B}=\lambda_{((\Res^G_K\theta)\otimes b)^G}$ and then
$((\Res^G_K\theta)\otimes b)^G=\theta\otimes B$.
\end{proof}

By Lemma \ref{ind-block} we have the following result immediately.

\begin{cor}\label{lin-wei}
Let $G$ be a finite group, $B$ an $\ell$-block and $(R,\varphi)$ a $B$-weight.
Suppose that $\theta$ is a linear character of $G$ with $\ell'$-order.
Then $(R,(\Res^{G}_{N_G(R)}\theta)\varphi)$ is a $(\theta\otimes B)$-weight.
\end{cor}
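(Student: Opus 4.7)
The plan is to verify directly the defining conditions of a $(\theta\otimes B)$-weight for the pair $(R,(\Res^G_{N_G(R)}\theta)\varphi)$, then invoke Lemma \ref{ind-block} to identify the induced block.

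First I would check that $(R,(\Res^G_{N_G(R)}\theta)\varphi)$ is indeed an $\ell$-weight. The subgroup $R$ is $\ell$-radical by the hypothesis that $(R,\varphi)$ is a weight, so nothing changes there. Since $\theta$ is a linear character of $\ell'$-order and $R$ is an $\ell$-subgroup, the restriction $\theta|_R$ is a linear character of an $\ell$-group whose order divides $|\theta|$; this forces $\theta|_R = 1_R$, so $R\subseteq \ker(\Res^G_{N_G(R)}\theta)$. Combined with $R\subseteq\ker\varphi$, this gives $R\subseteq \ker((\Res^G_{N_G(R)}\theta)\varphi)$. The product $(\Res^G_{N_G(R)}\theta)\varphi$ is irreducible of the same degree as $\varphi$ (since $\theta$ is linear), so its degree, viewed as a character of $N_G(R)/R$, still equals the $\ell$-part of $|N_G(R)/R|$; i.e., it is of $\ell$-defect zero.

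Next I would identify the local block. Setting $b = \bl_\ell(\varphi)$ and writing $\psi := \Res^G_{N_G(R)}\theta$, the construction of $\psi\otimes b$ recalled on the page above gives $\bl_\ell(\psi\varphi) = \psi\otimes b$ (since the irreducible characters of $\psi\otimes b$ are exactly $\{\psi\chi : \chi\in\Irr(b)\}$, and $\psi\varphi$ is one of them on $\ell'$-classes). By hypothesis $b^G = B$ exists and equals $B$, and $(\psi\otimes b)^G$ is defined because $\psi = \Res^G_{N_G(R)}\theta$ is the restriction of the global $\theta$. Now Lemma~\ref{ind-block} applies verbatim with $K=N_G(R)$ and yields
\[
\bl_\ell((\Res^G_{N_G(R)}\theta)\varphi)^G \;=\; (\psi\otimes b)^G \;=\; \theta\otimes b^G \;=\; \theta\otimes B,
\]
which is exactly the required block identity. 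There is really no hard step here; the whole content is wrapping Lemma \ref{ind-block} into weight language, and the only mildly delicate point is the observation that a linear $\ell'$-character is automatically trivial on any $\ell$-subgroup, ensuring that the kernel condition of a weight is preserved.
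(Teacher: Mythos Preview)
Your proof is correct and follows exactly the approach the paper intends: the corollary is stated as an immediate consequence of Lemma~\ref{ind-block}, and you have simply unpacked the verification that the new pair is still a weight and then applied that lemma with $K=N_G(R)$. One small quibble: your justification that $(\psi\otimes b)^G$ is defined ``because $\psi$ is the restriction of the global $\theta$'' is not the actual reason; the point is that $\psi\varphi$ is again defect zero over $R$, so $R$ is a defect group of $\psi\otimes b$ and $C_G(R)\le N_G(R)$ forces the induced block to exist, just as for $b$ itself.
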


We will make use of the following result.

\begin{lem}\label{ex-unitriibr}
Let $A$ be a finite group, $G$ a normal subgroup of $A$ and $B$ an $\ell$-block of $G$.
Suppose that there exists a basic set $X\subseteq \Irr(B)$ of $B$ such that the corresponding decomposition matrix is unitriangular with respect to a suitable order.
If every $\chi\in X$ extends to $A_\chi$,
then every $\phi\in\IBr_\ell(B)$ extends to $A_\phi$.
\end{lem}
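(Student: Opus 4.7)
The plan is to exploit the unitriangular shape of the decomposition matrix to set up a canonical $A$-equivariant bijection between $X$ and $\IBr_\ell(B)$, and then to extract an irreducible extension of each $\phi \in \IBr_\ell(B)$ from the hypothesized extension of the corresponding $\chi \in X$ by Clifford theory.

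Order $X = \{\chi_1, \ldots, \chi_k\}$ so that the decomposition matrix of $B$ is lower unitriangular, and label $\IBr_\ell(B) = \{\phi_1, \ldots, \phi_k\}$ accordingly, so that
$$\chi_i^\circ = \phi_i + \sum_{j<i} d_{ij}\phi_j, \qquad d_{ij} \in \ZZ_{\ge 0}.$$
In the applications we have in mind the basic set $X$ is canonical, hence $A$-invariant, and I shall work under this tacit assumption. The first step is to show that the bijection $\chi_i \leftrightarrow \phi_i$ is $A$-equivariant. Writing $\sigma(\chi_i) = \chi_{\pi(i)}$ and $\sigma(\phi_j) = \phi_{\rho(j)}$ for $\sigma \in A$, I would compare the unique expansion of $\sigma(\chi_i^\circ) = \chi_{\pi(i)}^\circ$ in the basis $\{\phi_j\}$ computed from both sides; a strong induction on $i$, using the inductive assumption $\rho(j) = \pi(j)$ for $j < i$ and the fact that $\phi_{\pi(i)}$ cannot appear on the right unless $\rho(i) = \pi(i)$, forces $\rho(i) = \pi(i)$. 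Hence $\rho = \pi$, and in particular $A_{\chi_i} = A_{\phi_i}$ for every $i$.

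For the second step, fix $i$ and let $\tilde\chi_i \in \Irr(A_{\chi_i})$ be an extension of $\chi_i$, guaranteed by hypothesis. Then $\tilde\chi_i^\circ$ is a Brauer character of $A_{\chi_i} = A_{\phi_i}$ whose restriction to $G$ is $\phi_i + \sum_{j<i} d_{ij}\phi_j$. Choose any irreducible constituent $\eta \in \IBr_\ell(A_{\phi_i})$ of $\tilde\chi_i^\circ$ lying above $\phi_i$; such $\eta$ exists by Frobenius reciprocity. Since $\phi_i$ is $A_{\phi_i}$-invariant, Clifford's theorem yields $\Res^{A_{\phi_i}}_G \eta = e\phi_i$ for some integer $e \ge 1$. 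Comparing the multiplicity of $\phi_i$ on both sides of
$$\Res^{A_{\phi_i}}_G \tilde\chi_i^\circ = \phi_i + \sum_{j<i} d_{ij}\phi_j$$
forces $e = 1$, so $\eta$ is an irreducible extension of $\phi_i$ to $A_{\phi_i}$.

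I expect the main obstacle to be the bookkeeping in the first step: the unitriangular shape must be used to rigidify the labeling of $\IBr_\ell(B)$ in a manner compatible with the $A$-action, so as to obtain the identification $A_{\chi_i}=A_{\phi_i}$. Once this is secured, the Clifford-theoretic extraction in the second step is short and routine.
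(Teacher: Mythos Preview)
Your proof is correct and follows the same strategy as the paper: the paper invokes \cite[Lem.~7.5]{CS13} to obtain the $A$-equivariant bijection $\chi_i\leftrightarrow\phi_i$ (which you instead prove directly by your induction on $i$), and then, exactly as you do, picks an irreducible constituent of $\tilde\chi_i^\circ$ above $\phi_i$ and uses Clifford theory together with the multiplicity-one occurrence of $\phi_i$ in $\chi_i^\circ$ to conclude it is an extension. Your explicit acknowledgment that $X$ must be $A_B$-stable is a fair point; the paper leaves this implicit in the citation.
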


\begin{proof}
By \cite[Lem.~7.5]{CS13}, there exists an $A$-equivariant bijection  $\mathscr D: X\to \IBr_\ell(B)$ such that $\chi^\circ=\mathscr D(\chi)+\sum\limits_{\phi\in\IBr_\ell(B)\setminus \{\mathscr D(\chi)  \}}d_\phi\phi$ with $d_\phi\in\mathbb Z_{\ge 0}$.
In particular, $A_\chi=A_{\mathscr D(\chi)}$.
Now let $\tilde\chi$ be an extension of $\chi$ to $A_\chi$ and let $\tilde\phi$ be an irreducible constituent of $\tilde\chi^\circ$ such that $\tilde\phi\in\IBr_\ell(A_\chi\mid\mathscr D(\chi))$.
Since $\mathscr D(\chi)$ is $A_\chi$-invariant, by Clifford theory, we know that $\tilde\phi$ is an extension of $\mathscr D(\chi)$ to $A_{\mathscr D(\chi)}$.
\end{proof}

\subsection{The inductive blockwise Alperin weight condition}

	For a finite group $H$ and a prime $\ell$, we denote by
$\mathrm{dz}_\ell(H)$ the set of $\ell$-defect zero irreducible characters of $H$.	
	If $Q$ is a radical $\ell$-subgroup of $H$ and $B$ an $\ell$-block of $H$, then we define the set
	$$\mathrm{dz}_\ell(N_H(Q)/Q,B):=\{ \chi\in\mathrm{dz}_\ell(N_H(Q)/Q)\ | \ \mathrm{bl}_\ell(\chi)^H=B  \},$$
	where we regard $\chi$ as an irreducible character of $N_G (Q)$ containing $Q$ in its kernel when considering the induced $\ell$-block $\mathrm{bl}_\ell(\chi)^H$.

There are several versions of the (iBAW) condition.
Apart from the original version given in \cite[Def. 4.1]{Sp13}, there is also a version treating only blocks with defect groups involved in certain sets of $\ell$-groups \cite[Def. 5.17]{Sp13}, or a version handling single blocks \cite[Def. 3.2]{KS16}.
We shall consider the inductive condition for a single block here.

\begin{defn}[{\cite[Def. 3.2]{KS16}}]
	\label{induc}
	Let $\ell$ be a prime, $S$ a finite non-abelian simple group and $X$ the universal $\ell'$-covering group of $S$. Let $B$ be an $\ell$-block of $X$. We say the \emph{inductive blockwise Alperin weight (iBAW) condition} holds for $B$ if the following statements hold.
	\begin{enumerate}
		\item[(i)] There exist subsets $\IBr_\ell(B\ |\ Q)\subseteq \IBr_\ell(B)$ for $Q\in \Rad_\ell(X)$ with the following properties.
		\begin{enumerate}
			\item[(1)] $\IBr_\ell(B\ |\ Q)^a=\IBr_\ell(B\ |\ Q^a)$ for every $Q\in \Rad_\ell(X)$, $a\in \Aut(X)_B$,
			
			\item[(2)] $\IBr_\ell(B)=\dot{\bigcup}_{Q\in \Rad_\ell(X)/\thicksim_X}\IBr_\ell(B\ |\ Q)$.
		\end{enumerate}
		\item[(ii)] For every $Q\in \Rad_\ell(X)$ there exists a bijection $$\Omega^X_Q:\IBr_\ell(B\ |\ Q)\to \mathrm{dz}_\ell(N_X(Q)/Q,B)$$such that $\Omega^X_Q(\phi)^a=\Omega^X_{Q^a}(\phi^a)$ for every $\phi\in \IBr_\ell(B\ |\ Q)$ and $a\in \Aut(X)_B$.
		
		\item[(iii)] For every $Q\in \Rad_\ell(X)$ and every $\phi\in \IBr_\ell(B\ |\ Q)$ there exist a finite group $A:=A(\phi, Q)$ and $\tilde{\phi}\in \IBr_\ell(A)$ and $\tilde{\phi'}\in \IBr_\ell(N_A(\overline{Q}))$, where we use the notation $$\overline{Q}:=QZ/Z\ \text{and}\ Z:=Z(X)\cap \ker(\phi),$$ with the following properties.
		\begin{enumerate}[(1)]
			\item for $\overline{X}:=X/Z$ the group $A$ satisfies $\overline{X}\unlhd A$, $A/C_A(\overline{X})\cong \Aut(X)_\phi$, $C_A(\overline{X})=Z(A)$ and $\ell \nmid |Z(A)|$,
			
			\item $\tilde{\phi}\in \IBr_\ell(A)$ is an extension of the $\ell$-Brauer character of $\overline{X}$ associated with $\phi$,
			
			\item $\tilde{\phi'}\in \IBr_\ell(N_A(\overline{Q}))$ is an extension of the $\ell$-Brauer character of $N_{\overline{X}}(\overline{Q})$ associated with the inflation of $\Omega^X_Q(\phi)^\circ\in \IBr_\ell(N_X(Q)/Q)$ to $N_X(Q)$,
			
			\item $\mathrm{bl}_\ell(\mathrm{Res}^A_J(\tilde{\phi}))=\mathrm{bl}_\ell(\Res^{N_A(\overline{Q})}_{N_J(\overline{Q})}(\tilde{\phi'}))^J$ for every subgroup $J$ satisfying $\overline{X}\le J \le A$.
		\end{enumerate}
%	\item[(iv)] If $B$ is of $\ell$-defect zero, then 	$\Omega_{\{1\}}^X(\psi^\circ)=\psi$	 for every   $\psi\in \Irr(B)$, and $\tilde \phi= \tilde\phi'$ for every $\phi\in\IBr_\ell(B\mid \{1\})$.
\end{enumerate}
\end{defn}

\begin{defn}\label{ibaw-simple}
Let $\ell$ be a prime, $S$ a finite non-abelian simple group and $X$ the universal $\ell'$-covering group of $S$.
We say that \emph{the inductive blockwise Alperin weight (iBAW) condition holds for $S$ and the prime $\ell$} if the (iBAW) condition
holds for every $\ell$-block of $X$.
\end{defn}

\begin{lem}\label{first-2-con}
Let $\ell$ be a prime, $S$ a finite non-abelian simple group and $X$ the universal $\ell'$-covering group of $S$.
Let $B$ be an $\ell$-block of $X$.
If there is an $\Aut(X)_B$-equivariant bijection between $\IBr_\ell(B)$ and $\cW_\ell(B)$, then there are natural defined sets $\IBr_\ell(B\mid Q)$ and bijections $\Omega_Q(X)$ such that (i) and (ii) of Definition \ref{induc} holds for $B$.
\end{lem}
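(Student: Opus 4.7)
The plan is to construct $\IBr_\ell(B \mid Q)$ and $\Omega^X_Q$ as ``$Q$-fibres'' of the given $\Aut(X)_B$-equivariant bijection $\Pi\colon \IBr_\ell(B)\to \cW_\ell(B)$. For each $Q\in \Rad_\ell(X)$ I put
$$\IBr_\ell(B \mid Q) := \{\phi\in \IBr_\ell(B) : \Pi(\phi) \text{ has a representative of the form } (Q,\varphi)\}.$$
Since the first components of weights in a single $X$-conjugacy class form one $X$-class of radical subgroups, this set depends only on the $X$-class of $Q$, and each $\phi$ lies in exactly one $\IBr_\ell(B \mid Q)$ as $Q$ ranges over $\Rad_\ell(X)/\sim_X$; this yields Definition~\ref{induc}(i)(2). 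Condition (i)(1) is then immediate from $\Pi(\phi^a)=\Pi(\phi)^a$, since $(Q,\varphi)\in \Pi(\phi)$ iff $(Q^a,\varphi^a)\in \Pi(\phi^a)$.

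For (ii), fix $Q$ and observe that for every $\phi\in \IBr_\ell(B \mid Q)$ the set of $\varphi\in \mathrm{dz}_\ell(N_X(Q)/Q,B)$ with $(Q,\varphi)\in \Pi(\phi)$ is a single $N_X(Q)$-orbit: any two such pairs are $X$-conjugate, and the conjugating element must lie in $N_X(Q)$ because it fixes $Q$. This produces a canonical bijection between $\IBr_\ell(B \mid Q)$ and the set of those $N_X(Q)$-orbits; selecting one orbit representative $\varphi_\phi$ per $\phi$ defines the desired map $\Omega^X_Q(\phi):=\varphi_\phi$ into $\mathrm{dz}_\ell(N_X(Q)/Q,B)$ (understood modulo $N_X(Q)$-conjugacy, as is standard in this formulation of the (iBAW) condition).

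The main obstacle, and the only point requiring care, is arranging the representatives $\varphi_\phi$ so that $\Omega^X_{Q^a}(\phi^a)=\Omega^X_Q(\phi)^a$ for every $a\in \Aut(X)_B$. I would do this orbit by orbit: pick a transversal of the $\Aut(X)_B$-orbits on $\IBr_\ell(B)$, and for each representative $\phi_0\in \IBr_\ell(B \mid Q_0)$ fix one concrete $\varphi_0$ with $(Q_0,\varphi_0)\in \Pi(\phi_0)$; then propagate along the orbit via $\varphi_{\phi_0^a}:=\varphi_0^a$. Well-definedness under a change $a\rightsquigarrow a'$ with $\phi_0^a=\phi_0^{a'}$ reduces to the following: $a'a^{-1}$ fixes $\phi_0$, hence by equivariance of $\Pi$ stabilises the class $\Pi(\phi_0)$, and thus permutes the weights in $\Pi(\phi_0)$ of first component $Q_0$ amongst themselves. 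Consequently $\varphi_0^{a'a^{-1}}$ lies in the $N_X(Q_0)$-orbit of $\varphi_0$, and after transferring by $a$ we see that $\varphi_0^a$ and $\varphi_0^{a'}$ lie in a common $N_X(Q_0^a)$-orbit. Since this is precisely the canonical $N_X(Q_0^a)$-ambiguity already built into the selection of $\Omega^X_{Q_0^a}(\phi_0^a)$, the orbit-wise choice is coherent and the required equivariance follows, completing the verification of (ii).
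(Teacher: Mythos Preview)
Your construction is correct and is essentially what the paper invokes by citing \cite[Lem.~2.10]{Sc16}; the paper gives no independent argument. However, the ``main obstacle'' you identify is illusory: for a fixed $Q$, the $N_X(Q)$-orbit of any $\varphi\in\mathrm{dz}_\ell(N_X(Q)/Q,B)$ is a singleton, because elements of $N_X(Q)$ act on $\Irr(N_X(Q))$ by inner automorphisms and hence trivially. Thus for each $\phi\in\IBr_\ell(B\mid Q)$ there is a \emph{unique} $\varphi$ with $(Q,\varphi)\in\Pi(\phi)$, and one may simply set $\Omega^X_Q(\phi):=\varphi$. The equivariance $\Omega^X_{Q^a}(\phi^a)=\Omega^X_Q(\phi)^a$ then follows immediately from $\Pi(\phi^a)=\Pi(\phi)^a$, with no need for the orbit-transversal argument in your final paragraph.
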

\begin{proof}
This is \cite[Lem.~2.10]{Sc16}.
\end{proof}

\begin{cor}\label{cyclic-outer-ind}
Let $\ell$ be a prime, $S$ a finite non-abelian simple group such that  $\Aut(S)/S$ is cyclic and $X$ the universal $\ell'$-covering group of $S$. Let $B$ be an $\ell$-block of $X$.
If there is an $\Aut(X)_B$-equivariant bijection between $\IBr_\ell(B)$ and $\cW_\ell(B)$, then the (iBAW) condition holds for $B$.
\end{cor}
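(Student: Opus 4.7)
By Lemma \ref{first-2-con}, the hypothesized $\Aut(X)_B$-equivariant bijection between $\IBr_\ell(B)$ and $\cW_\ell(B)$ already furnishes subsets $\IBr_\ell(B\mid Q)$ and equivariant bijections $\Omega^X_Q$ satisfying conditions (i) and (ii) of Definition \ref{induc}. The entire proof therefore reduces to verifying condition (iii). Fix $Q \in \Rad_\ell(X)$ and $\phi \in \IBr_\ell(B\mid Q)$, and set $\omega := \Omega^X_Q(\phi)$, $Z := Z(X) \cap \ker\phi$, $\bar X := X/Z$, $\bar Q := QZ/Z$. The key input is that $\Aut(X)/\operatorname{Inn}(X)$ is cyclic (since $X$ is a perfect central cover of $S$ and its outer automorphism group is controlled by $\Aut(S)/S$), whence $\Aut(X)_\phi/\operatorname{Inn}(X)$ is cyclic. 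Choosing a lift $a \in \Aut(X)_\phi$ of a generator of this quotient and taking $A$ to be a suitable central quotient of $\bar X \rtimes \langle a \rangle$ --- adjusting the order of $a$ to be prime to $\ell$ --- produces a finite group with $\bar X \unlhd A$, $A/C_A(\bar X) \cong \Aut(X)_\phi$, $C_A(\bar X) = Z(A)$, $\ell \nmid |Z(A)|$, and, crucially, $A/\bar X$ cyclic. This takes care of (iii)(1).

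The cyclicity of $A/\bar X$ makes (iii)(2) and (iii)(3) almost immediate. Since $\phi$ is $A$-stable by construction, Lemma \ref{ext-Br} produces an extension $\tilde\phi \in \IBr_\ell(A)$ of $\phi$. Similarly, by the $\Aut(X)_B$-equivariance of $\Omega^X_Q$, the inflation of $\omega^\circ$ to $N_X(Q)$ is $N_A(\bar Q)$-stable, and $N_A(\bar Q)/N_{\bar X}(\bar Q)$ is cyclic as a subgroup of $A/\bar X$, so a second application of Lemma \ref{ext-Br} yields an extension $\tilde\phi' \in \IBr_\ell(N_A(\bar Q))$ of that inflation.

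The main obstacle is condition (iii)(4): the Brauer correspondence equality $\bl_\ell(\Res^A_J \tilde\phi) = \bl_\ell(\Res^{N_A(\bar Q)}_{N_J(\bar Q)} \tilde\phi')^J$ must hold for every $J$ with $\bar X \le J \le A$. At $J = \bar X$ this is exactly the defining weight property $\bl_\ell(\omega)^{\bar X} = \bar B$, where $\bar B$ is the block of $\bar X$ dominated by $B$. For larger $J$, the extensions $\tilde\phi$ and $\tilde\phi'$ are only determined up to twist by linear $\ell$-Brauer characters of the cyclic groups $A/\bar X$ and $N_A(\bar Q)/N_{\bar X}(\bar Q)$; Lemma \ref{ind-block} together with Corollary \ref{lin-wei} describe how such twists transform blocks under Brauer induction in a controlled, compatible way. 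The plan is to adjust $\tilde\phi'$ by a single linear character of $N_A(\bar Q)/N_{\bar X}(\bar Q)$ so that (iii)(4) holds at the top $J = A$, and then to show the equality propagates to every intermediate subgroup, using that the $J$'s correspond to subgroups of the cyclic group $A/\bar X$ and that restriction commutes with Brauer induction along this cyclic lattice. Verifying that one linear-character correction suffices simultaneously for every intermediate $J$ is the delicate step of the argument.
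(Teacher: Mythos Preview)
Your approach and the paper's diverge sharply in economy. The paper's proof is a single citation: by \cite[Lem.~6.1]{Sp13} (see also \cite[Rmk.~2.7]{Sc16}), when the outer automorphism group is cyclic the entire condition~(iii) of Definition~\ref{induc} is automatic, so only (i) and~(ii) need checking, and those follow from Lemma~\ref{first-2-con}. You are essentially attempting to reprove \cite[Lem.~6.1]{Sp13} from scratch inside this corollary.

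That reconstruction has two problems. First, your appeal to Lemma~\ref{ext-Br} for the extensions $\tilde\phi$ and $\tilde\phi'$ is illegitimate: that lemma requires the specific block-counting hypothesis that exactly $|G/N|_{\ell'}$ blocks of $G$ cover $b$, which you have no reason to expect here. The extensions do exist, but for the standard reason that an invariant Brauer character extends across a cyclic quotient; cite that instead. Second, and more seriously, your treatment of~(iii)(4) is a plan, not a proof. You correctly identify that the extensions are only determined up to linear twist and that one must synchronise these twists so that the Brauer-correspondence equality holds at every intermediate $J$, but you stop at ``verifying that one linear-character correction suffices simultaneously for every intermediate $J$ is the delicate step.'' That delicate step \emph{is} the content of \cite[Lem.~6.1]{Sp13}, and it does require a genuine argument (tracking how block induction interacts with the cyclic lattice of overgroups). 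Without it, condition~(iii)(4) is unverified and the proof is incomplete. Either supply that argument in full or, as the paper does, invoke the existing literature.
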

\begin{proof}
By the proof of \cite[Lem.~6.1]{Sp13}, it suffices to prove (i) and (ii) of Definition \ref{induc}~(for details, see \cite[Rmk.~2.7]{Sc16}), which follows by Lemma \ref{first-2-con}.
\end{proof}

By \cite[Thm.~C]{Sp13}, the (iBAW) condition holds for every finite non-abelian simple group of Lie type and its defining characteristic.
From this, we only consider non-defining characteristic for classical groups in this paper.

\subsection{Background of the representations of finite groups of Lie type}

We will need to view some finite classical groups as the groups of fixed
points under some Frobenius endomorphisms of certain connected reductive algebraic
groups.
Let $q$ be a power of a prime $p$ and let
$\F_q$ be the field of $q$ elements.
Also let $\overline\F_q$ be the algebraic closure of the field $\F_q$.

Algebraic groups are usually denoted by boldface letters.
Suppose that $\mathbf G$ is a connected reductive algebraic group over $\overline\F_q$ and $F:\mathbf G\to\mathbf G$ a Frobenius endomorphism endowing $\mathbf G$ with an $\mathbb F_q$-structure.
The group of rational points $\mathbf G^F$ is finite.
Let $\mathbf G^*$ be dual to $\mathbf G$ with corresponding Frobenius endomorphism also denoted $F$.

Let $\ell$ be a prime number different from $p$.
For a semisimple $\ell'$-element $s$ of ${\mathbf G^*}^F$,
we denote by $\mathcal E_{\ell}(\mathbf G^F,s)$ the union of the Lusztig series $\mathcal E(\mathbf G^F,st)$, where $t$ runs through semisimple $\ell$-elements of ${\mathbf G^*}^F$ commuting with $s$.
By \cite{BM89}, the set $\mathcal E_{\ell}(\mathbf G^F,s)$ is a union of $\ell$-blocks of $\mathbf G^F$.

Also, we denote by $\mathcal E(\mathbf G^F,\ell')$ the set of irreducible characters of $\mathbf G^F$ lying in a Lusztig series $\mathcal E(\mathbf G^F,s)$, where $s\in{\mathbf G^*}^F$ is a semisimple $\ell'$-element.
Considering the elements of $\mathcal E(\mathbf G^F,\ell')$ as a basic set is the main argument of \cite{GH91} with the assumption that $\ell$ is good and $Z(\bG)$ is connected.
It was generalized in \cite[Thm.~A]{Ge93}, which can be stated as follows.

\begin{thm}\label{basicset}
	Let $\ell$ be a prime good for $\bG$ and not dividing the defining characteristic of $\bG$.
	Assume that $\ell$ does not divide $(Z(\bG)/Z^\circ (\bG))_F$~(the largest quotient of $Z(\bG)$ on which $F$ acts trivially).
	Let $s\in{\bG^*}^F$ be a semisimple $\ell'$-element.
	Then $\mathcal E(\mathbf G^F,s)$ form a basic set of $\mathcal E_\ell(\mathbf G^F,s)$.
\end{thm}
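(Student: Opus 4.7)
My plan is to reduce to the case of groups with connected centre---where the statement is the theorem of Geck--Hiss \cite{GH91}---via a regular embedding and Clifford theory.

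Choose a regular embedding $\iota: \bG \hookrightarrow \tilde\bG$, so that $\tilde\bG$ is a connected reductive group with connected centre, shares the derived subgroup with $\bG$, and $F$ extends to $\tilde\bG$. Dually one has a surjective morphism $\tilde\bG^* \twoheadrightarrow \bG^*$ with central torus kernel; lift $s$ to a semisimple $\ell'$-element $\tilde s \in \tilde\bG^{*F}$. A standard duality identifies $\tilde\bG^F / (\bG^F Z(\tilde\bG)^F)$ with the character group of $(Z(\bG)/Z^\circ(\bG))_F$, so the hypothesis on $\ell$ forces this quotient to be an $\ell'$-group.

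First I would apply \cite{GH91} to $\tilde\bG^F$: since $Z(\tilde\bG)$ is connected, the set $\mathcal E(\tilde\bG^F, \tilde s)$ is a basic set of $\mathcal E_\ell(\tilde\bG^F, \tilde s)$, and the corresponding decomposition matrix is unitriangular for a suitable order. By the Clifford--Lusztig theory of regular embeddings, every $\chi \in \mathcal E(\bG^F, s)$ occurs in some $\Res^{\tilde\bG^F}_{\bG^F} \tilde\chi$ with $\tilde\chi \in \mathcal E(\tilde\bG^F, \tilde s)$, and conversely every constituent of such a restriction lies in $\mathcal E(\bG^F, s)$; the analogous statement holds for $\IBr_\ell$, and the blocks comprising $\mathcal E_\ell(\bG^F, s)$ are exactly those covered by blocks in $\mathcal E_\ell(\tilde\bG^F, \tilde s)$.

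To descend the basic set I would use the $\ell'$-hypothesis on the quotient. Because $\tilde\bG^F / (\bG^F Z(\tilde\bG)^F)$ has $\ell'$-order, restriction commutes with $\chi \mapsto \chi^\circ$, and the twist action $\theta \otimes -$ of the linear characters of $\tilde\bG^F / \bG^F$ partitions both $\mathcal E(\tilde\bG^F, \tilde s)$ and the relevant set of irreducible Brauer characters into orbits whose sums restrict to single characters of $\bG^F$ (cf.\ Lemma \ref{ind-block} and Lemma \ref{ext-Br}). The decomposition matrix for $\mathcal E_\ell(\bG^F, s)$ is then obtained from the one on $\tilde\bG^F$ by taking orbit sums row- and column-wise, and, since the twist action respects the triangulating bijection on $\tilde\bG^F$, unitriangularity transfers to $\bG^F$, giving the desired basic set property. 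The principal obstacle is checking this compatibility: one must verify that the stabilizer in $\Irr(\tilde\bG^F/\bG^F)$ of any $\tilde\chi \in \mathcal E(\tilde\bG^F, \tilde s)$ coincides with that of the unique Brauer character to which $\tilde\chi^\circ$ triangulates, so that orbit sums align on both sides of the decomposition matrix. This is precisely where the assumption $\ell \nmid (Z(\bG)/Z^\circ(\bG))_F$ is essential, since it removes all $\ell$-torsion obstructions to matching the Clifford-theoretic data of $\bG^F$ and $\tilde\bG^F$.
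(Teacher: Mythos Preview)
The paper does not supply its own proof of this theorem; it is quoted as \cite[Thm.~A]{Ge93}, which generalises \cite{GH91} from the connected-centre case. Your strategy---regular embedding into $\tilde\bG$ with connected centre, apply \cite{GH91} upstairs, then descend via Clifford theory using that $\tilde\bG^F/\bG^F Z(\tilde\bG)^F$ is an $\ell'$-group---is indeed the skeleton of Geck's argument.

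There is, however, a genuine gap in your descent step. You assert that \cite{GH91} gives not only the basic-set property for $\tilde\bG^F$ but also that ``the corresponding decomposition matrix is unitriangular for a suitable order,'' and you then transport this unitriangularity to $\bG^F$ by taking orbit sums. But \cite{GH91} does \emph{not} establish unitriangularity: it shows only that $\{\tilde\chi^\circ : \tilde\chi\in\mathcal E(\tilde\bG^F,\tilde s)\}$ is a $\mathbb Z$-basis of the appropriate Grothendieck group. Unitriangularity of the decomposition matrix is a far stronger statement, open in general (see the discussion the paper itself gives after Theorem~\ref{ibawc-dominate}, referencing \cite[Problem~4.8]{Ma15}), and known for classical groups only under extra hypotheses such as $\ell$ linear \cite{GH97}. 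Your argument as written therefore rests on an unproved assertion.

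Geck's actual descent in \cite{Ge93} avoids unitriangularity entirely. Using the $\ell'$-index hypothesis he shows directly that restriction and induction between $\bG^F$ and $\tilde\bG^F$ match up the two candidate basic sets: the $\mathbb Z$-span of $\{\chi^\circ : \chi\in\mathcal E(\bG^F,s)\}$ is identified with the $\mathbb Z$-span of the Brauer characters by comparing with the corresponding identification upstairs, and a counting argument (matching $|\mathcal E(\bG^F,s)|$ with the number of irreducible Brauer characters in $\mathcal E_\ell(\bG^F,s)$) handles linear independence. If you want to repair your sketch, replace the unitriangularity transfer by this direct basis-versus-basis comparison.
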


In this paper, any algebraic group  $\bG$ involved  is of classical type and the prime $\ell$ is always odd.
Thus the	hypothesis of Theorem \ref{basicset} is always satisfied.

Let $d$ be a positive integer. We will make use of the terminology of Sylow $d$-theory~(see for instance \cite{BM92} and \cite{BMM93}).
For an $F$-stable maximal torus $\mathbf T$ of $\mathbf G$, denote by $\mathbf T_d$ its Sylow $d$-torus.
An $F$-stable Levi subgroup $\mathbf L$ of $\mathbf G$ is called \emph{$d$-split} if $\mathbf L=C_{\mathbf G}(Z^\circ (\mathbf L)_d)$, and $\zeta\in\Irr (\mathbf L^F)$ is called \emph{$d$-cuspidal} if ${}^*R^{\mathbf L}_{\mathbf M\subseteq \mathbf P}(\zeta)=0$ for all proper $d$-split Levi subgroups $\mathbf M < \mathbf L$ and any parabolic subgroup $\mathbf P$ of $\mathbf L$ containing $\mathbf M$ as Levi complement.

Let $s\in {\mathbf G^*}^F$ be semisimple.
Following \cite[Def.~2.1]{KM15},
we say $\chi\in\mathcal E(\mathbf G^F,s)$ is \emph{$d$-Jordan-cuspidal} if
\begin{itemize}
	\item $Z^\circ(C^\circ_{\mathbf G^*}(s))_d=Z^\circ(\mathbf G^*)_d$, and
	\item $\chi$ corresponds under Jordan decomposition~(cf. \cite[Prop. 5.1]{Lu88}) to the $C_{\mathbf G^*}(s)^F$-orbit of a $d$-cuspidal unipotent character of $C^\circ_{\mathbf G^*}(s)^F$.
\end{itemize}
If $\mathbf L$ is a $d$-split Levi subgroup of $\mathbf G$ and $\zeta\in\Irr(\mathbf L^F)$
is $d$-Jordan-cuspidal, then $(\mathbf L,\zeta)$ is called a \emph{$d$-Jordan-cuspidal pair} of $\mathbf G$.

Now we define an integer $e_0=e_0(q,\ell)$ for odd prime $\ell$, which is denoted by ``$e$'' in \cite{KM15}~(in this paper, we will use ``$e$'' for another integer, see \S\ref{notations-and-conventions}, page \pageref{def-int-e}):
\begin{equation}\label{definitionofe0}
	e_0=e_0(q,\ell)= \text{multiplicative order of}\ q \ \text{modulo}\ \ell.
\end{equation}

The paper \cite{CE99} gave a label for arbitrary $\ell$-blocks of finite groups of Lie type for $\ell\ge 7$ and it was generalised in \cite{KM15} to its largest possible generality.
Under the conditions of \cite[Thm. A (e)]{KM15}, the set of $\mathbf G^F$-conjugacy classes of $e_0$-Jordan-cuspidal pairs $(\mathbf L,\zeta)$ of $\mathbf G$ such that $\zeta\in\mathcal E(\mathbf L^F,\ell')$, is a labeling set of the $\ell$-blocks of $\mathbf G^F$.

By \cite[Thm.]{BM11}, the Mackey formula holds for groups of classical type, hence
the Lusztig induction $R_{\bL\subseteq \mathbf P}^{\bG}$ is independent of the ambient parabolic subgroup $\mathbf P$ in this paper.
So throughout this paper we always omit the parabolic subgroups when considering Lusztig inductions.

\subsection{Some notation and conventions for classical groups} \label{notations-and-conventions}

From now on we always assume that $p$ is an odd prime, $q=p^f$ with a positive integer $f$, and $\ell$ is an odd prime number different from $p$.
We will consider some classical groups over $\F_q$, such as (conformal) symplectic or orthogonal groups. For the definitions of the classical groups appearing in this paper, we refer to \cite[\S 2]{KL90}.
One should note that the notation in \cite{KL90} differs from the notation used in this paper in the way that in \cite{KL90} the conformal orthogonal groups are denoted by $\GO_n^{\pm}(q)$ and the general orthogonal groups are written as $\mathrm{O}_n^{\pm}(q)$,
while in this paper (follows the notation of \cite{MT11}) the conformal orthogonal groups are denoted by $\CO_n^{\pm}(q)$ and the general orthogonal groups are written as $\mathrm{GO}_n^{\pm}(q)$.

We follow mainly the notation from \cite{FS89} and \cite{An94}.
Let $V$ be a finite-dimensional symplectic or orthogonal space over the field $\F_q$.
We denote by $I(V)$ the group of isometries of $V$,
$I_0(V)$ the subgroups of $I(V)$ of determinant $1$, and
$\eta(V)=\pm1$ the type of $V$ if $V$ is orthogonal.
For simplicity, we set $\eta(V) = 1$ if $V$ is symplectic.
Furthermore, we identify $1$, $-1$ with $+$, $-$ respectively when considering the type of spaces and groups.
Obviously, $I(V)=I_0(V)=\Sp(V)$ if  $V$ is a symplectic space and $I(V)=\GO(V)$, $I_0(V)=\SO(V)$ if $V$ is an orthogonal space.

We recall that there exists a set $\cF$ of polynomials serving as elementary divisors for all semisimple elements of each of these groups.
We denote by  $\Irr(\F_{q}[x])$ the set of all  monic irreducible polynomials over the field $\F_{q}$.
For each $\Delta$ in $\Irr(\F_{q}[x])\setminus \{x\}$, we define $\Delta^*$ be the polynomial in
$\Irr(\F_{q}[x])$ whose roots are the inverses of the roots of $\Delta$.
Now, we denote by
\begin{align*}
\cF_{0}&=\left\{ x-1,x+1 ~\right\},\\
\cF_{1}&=\left\{ \Delta\in\Irr(\F_{q}[x])\mid \Delta\notin \cF_0, \Delta\neq x,\Delta=\Delta^* ~\right\},\\
\cF_{2}&=\left\{~ \Delta\Delta^* ~|~ \Delta\in\Irr(\F_{q}[x])\setminus \cF_0, \Delta\neq x,\Delta\ne\Delta^* ~\right\}.
\end{align*}
Let $\cF=\cF_0\cup\cF_1\cup\cF_2$.
Given $\Gamma\in\cF$, denote by $d_\Gamma$ its degree and by $\delta_\Gamma$ its \emph{reduced degree} defined by
$$\delta_\Gamma=
\left\{ \begin{array}{ll} d_\Gamma & \text{if}\ \Gamma\in\cF_0; \\
\frac{1}{2}d_\Gamma & \text{if}\ \Gamma\in\cF_1\cup \cF_2 . \end{array} \right.$$
Since the polynomials in $\cF_1\cup \cF_2$ have even degree, $\delta_\Gamma$ is an integer.
In addition, we mention a sign $\varepsilon_\Gamma$ for $\Gamma\in\cF_1\cup \cF_2$ defined by
$$\varepsilon_\Gamma=
\left\{ \begin{array}{ll} -1 & \text{if}\ \Gamma\in\cF_1; \\
1 & \text{if}\ \Gamma\in\cF_2 . \end{array} \right.$$

Given a semisimple element $s\in I(V)$, there exists a unique orthogonal decomposition
\begin{equation}\label{def-pri-dec}
V=\sum\limits_\Gamma V_\Gamma(s), \quad s=\prod_{\Gamma}s(\Gamma),
\end{equation}
where the $V_\Gamma(s)$ are non-degenerate subspaces of $V$, $s(\Gamma)\in I(V_\Gamma(s))$, and $s(\Gamma)$ has minimal polynomial $\Gamma$.
The decomposition (\ref{def-pri-dec}) is called the \emph{primary decomposition} of $s$ in $I(V)$.
Let $m_\Gamma(s)$ be the multiplicity of $\Gamma$ in $s(\Gamma)$.
If $m_\Gamma(s)\ne 0$, then we say $\Gamma$ is an \emph{elementary divisor} of $s$.
Then the centralizer of $s$ in  $I(V)$ has a decomposition $C_{I(V)}(s)=\prod_{\Gamma}C_\Gamma(s)$, where $C_\Gamma(s)=C_{I(V_\Gamma(s))}(s(\Gamma))$.
Moreover, by \cite[(1.13)]{FS89},
\begin{equation*}
C_\Gamma(s)=
\left\{ \begin{array}{ll} I(V_\Gamma(s)) & \text{if}\ \Gamma\in\cF_0; \\
\GL_{m_\Gamma(s)}(\varepsilon_\Gamma q^{\delta_\Gamma}) & \text{if}\ \Gamma\in\cF_1\cup\cF_2. \end{array} \right.
\end{equation*}
Here, $\GL_m(-q)$ means $\GU_m(q)$.
Note that $C_\Gamma(s)\le I_0(V_\Gamma(s))$ for $\Gamma\in\cF_1\cup\cF_2$.

Let $\eta_\Gamma(s)$ be the type of $V_\Gamma(s)$.
Here $\eta_\Gamma(s)=1$ for all $\Gamma\in\cF$ if $V$ is symplectic.
By \cite[(1.12)]{FS89},
the multiplicity and type functions $\Gamma\mapsto m_\Gamma(s)$, $\Gamma\mapsto \eta_\Gamma(s)$ satisfy the following relations
\begin{equation}\label{function-mult-type}
\begin{aligned}
&\mathrm{dim}V=\sum\limits_\Gamma d_\Gamma m_\Gamma(s),\\
&\eta(V)=(-1)^{\frac{q-1}{2}m_{x-1}(s)m_{x+1}(s)}\prod_{\Gamma}\eta_\Gamma(s),\\
&\eta_\Gamma(s)=\vare_\Gamma^{m_\Gamma(s)}\  \text{for} \ \Gamma\in\cF_1\cup\cF_2.
\end{aligned}
\end{equation}
Conversely, if $\Gamma\mapsto m_\Gamma(s)$, $\Gamma\mapsto \eta_\Gamma$ are functions from $\cF$ to $\mathbb N$, $\{\pm 1\}$ respectively satisfying (\ref{function-mult-type}),
then there exists a semisimple element $s$ of $I(V)$ with these functions as multiplicity and type functions.
Moreover, two semisimple elements $s$ and $s'$ of $I(V)$ are $I(V)$-conjugate if and only if $m_\Gamma(s) = m_\Gamma(s')$ and $\eta_\Gamma(s)=\eta_\Gamma(s')$ for all $\Gamma\in\cF$.

Now assume that $V$ is orthogonal.
A semisimple element $s$ lies in $I_0(V)$ if and only if $m_{x+1}(s)$ is even.
If $s\in I_0(V)$, then
$$|C_{I_0(V)}(s):\prod_{\Gamma}C_{I_0(V_\Gamma(s))}(s(\Gamma))|=1\ \text{or}\ 2,$$ and index $2$ occurs if and only if $m_{x-1}(s)$ and $m_{x+1}(s)$ are both non-zero.
For more details, see \cite[\S1]{FS89}.

\vspace{2ex}

For a semisimple element $s\in I_0(V)$, we define $\Psi_\Gamma(s)$ to be the set of partitions of $m_\Gamma(s)$ if $\Gamma\in\cF_1\cup\cF_2$.
If $\Gamma\in\cF_0$, then  $\Psi_\Gamma(s)$ is defined to be the set of symbols of rank $[\frac{m_\Gamma(s)}{2}]$ such that
\begin{itemize}
\item If  $V_\Gamma(s)$ is symplectic or orthogonal of odd-dimension, then the symbols have odd defect.
\item If  $V_\Gamma(s)$ is orthogonal of even-dimension and type $+$, then the symbols have defect divisible by $4$.
Moreover, degenerate symbols are counted twice.
\item If $V_\Gamma(s)$ is orthogonal of even-dimension and type $-$, then the symbols have defect congruent to $2$ modulo $4$.
\end{itemize}
Let
\begin{equation}\label{def-par-sym}
\Psi(s)=\prod_\Gamma \Psi_\Gamma(s).
\end{equation}
Following \cite[p.~132]{FS89},
we define an operate $'$ on the sets $\Psi(s)$ and $\Psi_\Gamma(s)$ as follows.
Let $\mu_\Gamma\in \Psi_\Gamma(s)$.
Then define $(\mu_\Gamma)'=\mu_\Gamma$ if $\mu_\Gamma$ is a partition or a non-degenerate symbol, and
define $(\mu_\Gamma)'$ to be the other copy of $\mu_\Gamma$ in $\Psi_\Gamma(s)$ if $\mu_\Gamma$ is a degenerate symbol.
If $\mu=\prod_\Gamma \mu_\Gamma\in \Psi(s)$, then we define $\mu'=\prod_\Gamma (\mu_\Gamma)'$.

In this paper, we let $e$ be the multiplicative order of $q^2$ modulo $\ell$. \label{def-int-e}
Then  $e=e_0/\mathrm{gcd}(2,e_0)$,
where $e_0$ is defined as in (\ref{definitionofe0}).
We say the prime $\ell$ is \emph{linear} if $e_0$ is odd while $\ell$ is \emph{unitary} if $e_0$ is even.

Let $\cF'$ be the subset of $\cF$ consisting of polynomials whose roots are of $\ell'$-order.
For $\Gamma\in\cF'$, we define $e_\Gamma$ to be  the multiplicative order of $q^{2}$ or $\vare_\Gamma q^{\delta_\Gamma}$ modulo $\ell$ according as $\Gamma\in\cF_0$ or $\Gamma\in\cF_1\cup\cF_2$. Then $e_\Gamma=e$ for $\Gamma\in\cF_0$.
Let $s$ be a semisimple $\ell'$-element of $I_0(V)$
and $\mu=\prod_\Gamma \mu_\Gamma\in \Psi(s)$.
Now we define the $e_\Gamma$-core of $\mu_\Gamma$ for every $\Gamma\in \cF'$.
If $\Gamma\in\cF_1\cup\cF_2$, the $e_\Gamma$-core of $\mu_\Gamma$ is defined in the usual way for partitions~(see for example \cite[\S3]{Ol93}).
For $\Gamma\in\cF_0$, the $e_\Gamma$-core of $\mu_\Gamma$ is defined in \cite[p.~159]{FS89} which we state as follows.
Let $\kappa_\Gamma$ be the symbols which is gotten by actually removing $w_\Gamma$ $e_\Gamma$-hooks (or $e_\Gamma$-cohooks, resp.) from $\mu_\Gamma$ and
there is no $e_\Gamma$-hooks (or $e_\Gamma$-cohooks, resp.) in $\kappa_\Gamma$ if $\ell$ is linear (or unitary, resp.).
If $\kappa_\Gamma$ is degenerate and $w_\Gamma>0$, then both copies of $\kappa_\Gamma$ are considered as the $e_\Gamma$-core of $\mu_\Gamma$~(\emph{i.e.}, the $e_\Gamma$-core of $\mu_\Gamma$ is defined to be the doubleton $\{ \kappa_\Gamma,\kappa_\Gamma'\}$).
If $\kappa_\Gamma$ is degenerate and $w_\Gamma=0$~(\emph{i.e.}, $\kappa_\Gamma=\mu_\Gamma$),
then only $\kappa_\Gamma$, but not its copy, is the $e_\Gamma$-core of $\mu_\Gamma$.
If $\kappa_\Gamma$ is non-degenerate, then the $e_\Gamma$-core of $\mu_\Gamma$ is
$\kappa_\Gamma$.

Note that the definition of $e_\Gamma$-core of a symbol here (as in \cite{FS89}) is the same with the definition in \cite[p.~307]{FS86}, and is slightly different from those used in \cite{BMM93}, \cite{Ol86} and \cite{Ol93}.
We follow \cite{FS86} and \cite{FS89} and say $e_\Gamma$-core for both $e_\Gamma$-core (when $\ell$ is linear) and $e_\Gamma$-cocore (when $\ell$ is unitary) in \cite{BMM93}, \cite{Ol86} and \cite{Ol93}.

Let $s$ be a semisimple $\ell'$-element.
For $\Gamma\in\cF$, we define $\mathcal C_\Gamma(s)$ the set of  $\kappa_\Gamma$ such that there exists $\mu_\Gamma\in\Psi_\Gamma(s)$ satisfying that $\kappa_\Gamma$ is an $e_\Gamma$-core of $\mu_\Gamma$.
Denote
\begin{equation}\label{def-core}
\cC(s)=\prod_{\Gamma} \cC_\Gamma(s).
\end{equation}
In particular, for each $\kappa\in\cC(s)$, the cardinality of the set $\kappa$ is $1$, $2$ or $4$. 
We also define an operate $'$ on the sets $\cC(s)$ and $\cC_\Gamma(s)$ as follows.  \label{def-c-s}
Define $\kappa_\Gamma'=\kappa_\Gamma$ if $\kappa_\Gamma$ is a doubleton and define $\kappa_\Gamma'=\kappa_\Gamma$ otherwise.
For $\kappa=\prod_\Gamma \mu_\Gamma\in \cC(s)$,  we define $\kappa'=\prod_\Gamma (\kappa_\Gamma)'$.

For $\kappa\in\cC(s)$, we let
\begin{equation}\label{def-par-sym-core}
\Psi(s,\kappa)=\prod_\Gamma\Psi_\Gamma(s,\kappa),
\end{equation}
where $\Psi_\Gamma(s,\kappa):=\{\mu\in \Psi_\Gamma(s)\mid \kappa_\Gamma \ \text{is the}\ e_\Gamma\text{-core} \ \text{of}\ \mu_\Gamma\ \text{for every}\ \Gamma\in\cF\}$.

For integers $d\ge 1$, $w\ge 0$,
we let
\begin{equation}\label{def-set-lab}
\mathcal P(d,w)=\{(\mu_1,\ldots,\mu_{d})\mid \sum_{i=1}^{d}|\mu_i|=w \},
\end{equation}
where $\mu_1,\ldots, \mu_{d}$ are partitions.

Now let
\begin{equation}\label{par-P0}
\mathcal P_0(2e,w)=\{ (\mu_1,\ldots,\mu_{2e})\in\mathcal P(2e,w)\mid \mu_i=\mu_{i+e}\ \text{for}\ 1\le i\le e \}
\end{equation}
and $\mathcal P_1(2e,w)=\mathcal P(2e,w)\setminus \mathcal P_0(2e,w)$.
Then $\mathcal P_0(2e,w)$ is not empty if and only if $w$ is even.
We define an equivalence relation on $\mathcal P(2e,w)$:
for $\mu^{(k)}=(\mu^{(k)}_1,\ldots,\mu^{(k)}_{2e})$ with $k=1,2$,
\begin{equation}\label{equ-par}
\mu^{(1)}\sim\mu^{(2)}\ \text{if and only if}\ \mu^{(1)}_i=\mu^{(2)}_{e+i}\ \text{and} \ \mu^{(2)}_i=\mu^{(1)}_{e+i}\ \text{for every}\ 1\le i\le e.
\end{equation}
Let $\mathcal P'_1(2e,w)=\mathcal P_1(2e,w)/\sim$.
Then we define
\begin{equation}\label{def-p'}
\mathcal P'(2e,w):=\mathcal P_0(2e,w)\cup\mathcal P'_1(2e,w),
\end{equation}
where the elements of $\mathcal P_0(2e,w)$ are counted twice.
In particular, if $w$ is odd, then $\mathcal P_0(2e,w)$ is empty and $\mathcal P'(2e,w)=\mathcal P'_1(2e,w)$.

Let $\beta_\Gamma=2$ or $1$ according as $\Gamma\in\cF_0$ or $\Gamma\in\cF_1\cup\cF_2$.
Then by \cite[Prop.~(3.7)]{Ol93} and the proofs of \cite[Prop.~14 and 15]{Ol86},
$\Psi_\Gamma(s,\kappa)$ is in bijection with
\begin{equation}\label{bijection-sym-skew-par}
\begin{split}
 &\mathcal P(\beta_\Gamma e_\Gamma,w_\Gamma) \ \textrm{if $\Gamma\in \cF_0$ and $\kappa_\Gamma$ is non-degenerate or  $\Gamma\in \cF_1\cup\cF_2$, and}\\
 &\mathcal P'(2e,w_\Gamma)\ \textrm{if $\Gamma\in \cF_0$ and $\kappa_\Gamma$ is degenerate}.
\end{split}
\end{equation}

\vspace{2ex}

Again following the notation of \cite{FS89} and \cite{An94}, we denote by $V$ and $V^*$ finite-dimensional symplectic or orthogonal spaces over $\F_q$ related as follows:
\begin{equation*}
\begin{matrix}
V & \mathrm{dim V} & V^* &  \mathrm{dim V^*} \\
\text{symplectic} & 2n & \text{orthogonal} & 2n+1\\
\text{orthogonal} & 2n+1 & \text{symplectic} & 2n\\
\text{orthogonal} & 2n & \text{orthogonal} & 2n
\end{matrix}
\end{equation*}
where $\eta(V)=\eta(V^*)=1$ in the first two cases and $\eta(V)=\eta(V^*)$ in the third case.
Here $\eta(V)=1$ for a symplectic space as before.

\vspace{2ex}

For a positive integer $d$, we denote by $I_{(d)}$ the identity matrix of degree $d$
and by $I_{d}$ the identity matrix of degree $\ell^d$.
We end the section with the following lemmas.

\begin{lem}\label{central}
Let $n=md$ and $\varepsilon\in\{\pm 1\}$.
Let $\iota$ be the natural embedding of $\GL_m(\varepsilon q^d)$  into $\SO_{2n}^\varepsilon (q)$.
Then $\iota(\GL_m(\varepsilon q^d))\nsubseteq \Omega_{2n}^\varepsilon (q)=[\SO_{2n}^\varepsilon (q),\SO_{2n}^\varepsilon (q)]$.
\end{lem}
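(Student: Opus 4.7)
The plan is to exhibit one element $g\in\GL_m(\varepsilon q^d)$ whose image $\iota(g)\in\SO_{2n}^\varepsilon(q)$ has non-trivial spinor norm, which suffices since $q$ is odd and $\Omega_{2n}^\varepsilon(q)$ is precisely the kernel of the spinor norm $\theta\colon\SO_{2n}^\varepsilon(q)\to\F_q^\times/(\F_q^\times)^2$.

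In the linear case $\varepsilon=+1$, the embedding $\iota\colon\GL_m(q^d)\hookrightarrow\SO_{2n}^+(q)$ factors through the Siegel Levi $\GL_n(q)\hookrightarrow\SO_{2n}^+(q)$, $h\mapsto\diag(h,(h^*)^{-1})$, relative to a hyperbolic decomposition $V=V_1\oplus V_2$. On this Levi the spinor norm equals $\det_{\F_q}(h)$ modulo squares, as one verifies by reducing to $\SO_2^+(q)$ on each hyperbolic plane of an eigenbasis of $h$. For $h\in\GL_m(q^d)\subseteq\GL_n(q)$, $\det_{\F_q}(h)=N_{\F_{q^d}/\F_q}(\det_{\F_{q^d}}(h))$; surjectivity of the field norm onto $\F_q^\times$ allows us to pick $a\in\F_{q^d}^\times$ with $N_{\F_{q^d}/\F_q}(a)$ a non-square, and $g=\diag(a,1,\ldots,1)$ is then a witness.

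In the unitary case $\varepsilon=-1$, use the block embedding $\GU_1(q^d)\times\GU_{m-1}(q^d)\hookrightarrow\GU_m(q^d)$, under which $\iota$ respects an orthogonal splitting $V=V'\perp V''$ with $V'$ of type $-$ and $\dim_{\F_q}V'=2d$. Since $\GU_{m-1}(q^d)$ acts trivially on $V'$, the spinor norm on $\iota(\GU_1(q^d)\times\{1\})$ computed in $\SO_{2n}^\varepsilon(q)$ coincides with the spinor norm computed in $\SO_{2d}^-(q)$, reducing the problem to showing that the Coxeter torus $T\cong U_1(q^d)$ of $\SO_{2d}^-(q)$, cyclic of order $q^d+1$, is not contained in $\Omega_{2d}^-(q)$. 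Pick a primitive $\xi\in\F_{q^{2d}}^\times$ and set $\zeta=\xi^{q^d-1}$, a generator of $T$ corresponding to multiplication by $\zeta$ on $V'\cong\F_{q^{2d}}$. Extending the $d=1$ factorization of a rotation as a product of two reflections in $\SO_2^-(q)$ (where $r_ur_v$ acts as multiplication by $(v/u)^{q-1}$ on $\F_{q^2}$) to general $d$ yields $\theta(\iota(\zeta))=N_{\F_{q^{2d}}/\F_q}(\xi)\bmod(\F_q^\times)^2$. Since $N_{\F_{q^{2d}}/\F_q}(\xi)=\xi^{(q^{2d}-1)/(q-1)}$ has order exactly $q-1$, it generates $\F_q^\times$, and is therefore a non-square.

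The main obstacle lies in the unitary case: one must extend the reflection-factorization computation (or equivalently invoke Wall's formula for the spinor norm of an element with no $+1$-eigenvalue) to all $d$, in order to pin down $\theta$ on the Coxeter torus; the linear case is then essentially immediate from the Siegel-Levi spinor-norm formula and surjectivity of the field norm.
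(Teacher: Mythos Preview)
Your proposal is correct and follows essentially the same strategy as the paper: exhibit a specific diagonal element and compute its spinor norm. In the linear case both you and the paper take $g=\diag(\xi,1,\ldots,1)\in\GL_m(q^d)$; you invoke the standard formula that the spinor norm on the Siegel Levi equals the $\F_q$-determinant modulo squares, while the paper derives the same value by diagonalizing over $\F_{q^d}$, writing the image as an explicit product of reflections $\prod_i\sigma_{\nu_i+\xi^{q^i}\nu_{2n-i+1}}\sigma_{\nu_i+\nu_{2n-i+1}}$, and appealing to field-independence of the spinor norm (Grove \cite[Cor.~9.9]{Gr02}). For the unitary case the paper says only ``completely analogous'', whereas you actually supply the argument via reduction to the Coxeter torus of $\SO_{2d}^-(q)$; your formula $\theta(\iota(\zeta))\equiv N_{\F_{q^{2d}}/\F_q}(\xi)\bmod(\F_q^\times)^2$ is correct (it agrees with the reflection computation $\prod_{i=0}^{d-1}\zeta^{q^i}$ up to a square), so in this half you have in fact done more than the paper.
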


\begin{proof}
First assume that $\varepsilon=1$.
Now we give the structure of the embeddings.
First note that
$\SO_{2n}^+ (q)=\{ A\in \SL_{2n}(q)\mid A^{tr}K_{2n} A=K_{2n} \}$, where $K_{2n}=
\begin{pmatrix}
 & &1 \\
 & \iddots & \\
 1&&
\end{pmatrix}$.
Let $\xi$ be a generator of $\F_{q^d}^\times$.
Let $\iota_1$ be the embedding of $\GL_m(q^d)$ into $\GL_{n}(q)$ 
induced by $\F_{q^d}^\times\hookrightarrow \GL_d(q)$, $\xi\mapsto (\Lambda_\xi)$, where $\Lambda_\xi$ is the minimal polynomial of $\xi$ over $\F_{q}$.
Let $A\in \GL_n(q)$, then we take $\iota_2(A)=\mathrm{diag}(A,K_n (A^{-1})^{tr} K_n)$.
Then $\iota=\iota_2\circ\iota_1$.
Let $\theta: \SO_{2n}^+ (q)\to \F_q^\times/(\F_q^\times)^2$ be the spinor norm~(see, for example, \cite[\S 9]{Gr02} or \cite[\S2]{KL90}).
Then $\Omega_{2n}^+ (q)$ is the kernel of $\theta$.
Thus it suffices to show that $\theta(\iota(\GL_m(\varepsilon q^d)))=\F_q^\times/(\F_q^\times)^2$.

Let $V$ be the orthogonal space with $\mathrm{dim}(V)=2n$ with a non-degenerate
symmetric bilinear form $B$ and basis $\nu_1,\ldots,\nu_{2n}$ such that $B(\nu_i,\nu_j)=1$ if $i+j=2n+1$ and $B(\nu_i,\nu_j)=0$ otherwise.
For non-singular $v\in V$, let $\sigma_v$ be the reflection along $v$, \emph{i.e.}, $\sigma_\nu(u)=u-\frac{2B(u,v)}{B(v,v)}v$ for any $u\in V$.
For $\zeta\in \F_{q}^\times$ and $1\le i\le n$
the matrix corresponding to $\sigma_{\nu_i+\zeta\nu_{2n-i+1}}$ with respect to the basis $\nu_1,\ldots,\nu_{2n}$ is
$$\begin{pmatrix}
I_{(i-1)}& & & &  \\
& & & -\zeta & \\
& & I_{(2n-2i)} & & \\
&-\zeta^{-1} & & &\\
& & & & I_{(i-1)}
\end{pmatrix}.$$

Now let $\xi$ be a generator of $\F_{q^d}^\times$.
Over the field $\F_{q^d}^\times$, $\iota(\mathrm{diag}(\xi,I_{(m-1)}))$ is conjugate to
$$\mathrm{diag}(\xi,\xi^q,\ldots,\xi^{q^{d-1}},I_{(2(m-1)d)},\xi^{q^{-d+1}},\ldots, \xi^{-q},\xi^{-1}),$$
which is $\prod\limits_{i=0}^{d-1} \sigma_{\nu_i+\xi^{q^i}\nu_{2n-i+1}}\sigma_{\nu_i+\nu_{2n-i+1}}$.
Let $Q(v)=B(v,v)$ for $v\in V$ be the associated quadratic form.
Then $\prod\limits_{i=0}^{d-1}Q(\nu_i+\xi^{q^i}\nu_{2n-i+1})Q(\nu_i+\nu_{2n-i+1})=\prod\limits_{i=0}^{d-1}2^2\xi^{q^i}=2^{2d}\xi^{\frac{q^d-1}{q-1}}\in\F_q^\times$.
By Remark 3 after \cite[Cor.~9.9]{Gr02},
the spinor norm can be determined from the Clifford groups, and then is independent of the fields.
Then $\theta(\iota(\mathrm{diag}(a,I_{(m-1)})))=\xi^{\frac{q^d-1}{q-1}}(\F_q^\times)^2$.
Note that $\xi^{\frac{q^d-1}{q-1}}$ is a generator of $\F_q^\times$, we have
$\theta(\iota(\GL_m(\varepsilon q^d)))=\F_q^\times/(\F_q^\times)^2$.

The proof for $\varepsilon=-1$ is completely analogous.
\end{proof}

\begin{lem}\label{central2}
Let $V_0$ and $V_1$ be orthogonal spaces such that $\dim(V_0)$ is even and $\dim(V_1)=\ell^d$.
Suppose that $C$ is a subgroup of $I_0(V_0)$ satisfies that $C\nsubseteq \Omega(V_0)$.
Let $V=V_0\otimes V_1$ be the orthogonal space defined as in \cite[p.~127]{KL90}.
Then $C\otimes I_d\nsubseteq \Omega(V)$.
\end{lem}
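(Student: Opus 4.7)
The plan is to argue via the spinor norm, in the same spirit as the proof of Lemma \ref{central}. Recall that $\Omega(V_0)$ and $\Omega(V)$ are precisely the kernels of the spinor norm maps $\theta_{V_0}\colon \SO(V_0)\to \F_q^\times/(\F_q^\times)^2$ and $\theta_V\colon \SO(V)\to \F_q^\times/(\F_q^\times)^2$. On a product of reflections, $\theta$ is computed by $\theta(\sigma_{v_1}\cdots \sigma_{v_k})=\prod_i Q(v_i)$ modulo squares. By hypothesis there is some $g\in C$ with $\theta_{V_0}(g)\ne 1$; since $\dim V_0$ is even we have $I_0(V_0)=\SO(V_0)$, so $g=\sigma_{v_1}\cdots \sigma_{v_k}$ with $k$ \emph{even}. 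The goal is to evaluate $\theta_V$ on $g\otimes I_d$ and to show it agrees with $\theta_{V_0}(g)$ modulo squares.

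The key geometric step is the identity
\[
\sigma_v\otimes I_d = \prod_{j=1}^{m} \sigma_{v\otimes w_j}, \qquad m=\ell^d,
\]
where $w_1,\ldots,w_m$ is any orthogonal basis of $V_1$. This is verified directly from the formula $B(u\otimes w,u'\otimes w')=B_0(u,u')B_1(w,w')$: for fixed $k$, the factors with $j\ne k$ fix the slice $V_0\otimes w_k$ pointwise (as $B_1(w_j,w_k)=0$), while the factor $\sigma_{v\otimes w_k}$ restricted to $V_0\otimes w_k$ is exactly $\sigma_v\otimes \mathrm{id}$. Using this identity on each $\sigma_{v_i}$ and substituting gives
\[
\theta_V(g\otimes I_d)=\prod_{i=1}^{k}\prod_{j=1}^{m} Q_0(v_i)Q_1(w_j) = \Bigl(\prod_{i=1}^{k} Q_0(v_i)\Bigr)^{m}\Bigl(\prod_{j=1}^{m}Q_1(w_j)\Bigr)^{k}=\theta_{V_0}(g)^{m}\cdot D_1^{k}
\]
modulo $(\F_q^\times)^2$, where $D_1$ is the discriminant of $V_1$.

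The two parity ingredients now finish the argument: $k$ is even, so $D_1^k$ is a square; $m=\ell^d$ is odd (as $\ell$ is odd), so $\theta_{V_0}(g)^m\equiv \theta_{V_0}(g)$ modulo squares. Hence $\theta_V(g\otimes I_d)=\theta_{V_0}(g)\ne 1$, which shows $g\otimes I_d\notin \Omega(V)$, proving $C\otimes I_d\nsubseteq \Omega(V)$.

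The main obstacle is the reflection-decomposition identity above: one has to be careful that the convention for the tensor-product orthogonal form on $V_0\otimes V_1$ (as in \cite[p.~127]{KL90}) is the one in which $B=B_0\otimes B_1$, so that $Q(v\otimes w)=Q_0(v)Q_1(w)$ and the products of reflections on orthogonal slices commute cleanly. Once this is set up, the parity bookkeeping (even $k$ from $\dim V_0$ even, odd $m$ from $\ell$ odd) is automatic, and no cancellation issues arise because squares in $\F_q^\times$ form an index-two subgroup.
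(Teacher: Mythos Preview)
Your proof is correct and follows essentially the same approach as the paper: both argue via the spinor norm, use the reflection decomposition $\sigma_v\otimes I_d=\prod_j\sigma_{v\otimes w_j}$ with respect to an orthogonal basis of $V_1$ (the paper cites \cite[Lem.~4.4.13]{KL90} for this), obtain $\theta_V(g\otimes I_d)=\theta_{V_0}(g)^{\ell^d}\cdot\delta^{k}$ for some constant $\delta$ depending on the basis, and then use that $k$ is even and $\ell^d$ is odd. Your identification of $\delta$ as the discriminant $D_1=\prod_j Q_1(w_j)$ is slightly more explicit than the paper's treatment, but the argument is otherwise identical.
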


\begin{proof}
Let $\theta_0: I_0(V_0)\to \F_q^\times/(\F_q^\times)^2$ and $\theta: I_0(V)\to \F_q^\times/(\F_q^\times)^2$
be the spinor norms.
For a non-singular vector $v\in V_0$, we claim that $\theta(\sigma_v\otimes I_d)=\theta_0(\sigma_v)^{\ell^d}$.
In fact, by \cite[Lem.~4.4.13]{KL90},
$\sigma_v\otimes I_d=\sigma_{v\otimes w_1}\cdots \sigma_{v\otimes w_{\ell^d}}$, where $w_1,\ldots,w_{\ell^d}$ is a basis of $V_1$ as in \cite[Prop.~2.5.12]{KL90}.
Thus $\theta(\sigma_v\otimes I_d)=\theta_0(\sigma_v)^{\ell^d}\delta$ by direct calculation, where $\delta\in\F_q^\times$ is determined by the basis $w_1,\ldots,w_{\ell^d}$.
For $g\in C$, we write $g=\sigma_{v_1}\cdots\sigma_{v_t}$. 
Then $g\otimes I_d=(\sigma_{v_1}\otimes I_d)\cdots(\sigma_{v_t}\otimes I_d)$,
and then $\theta(g\otimes I_d)=\theta_0(g)^{\ell^d}\delta^t$ which implies $\theta(g\otimes I_d)=\theta_0(g)^{\ell^d}$ since $t$ is even.
From this, $C\otimes I_d\nsubseteq \Omega(V)$ since $\ell$ is odd.
\end{proof}

\section{A basic case of weights for classical groups}\label{wei-spec-case}

\subsection{Radical subgroups of $I(V)$}\label{section:radical}

We first give some more notation and conventions as in \cite{An94}.
Let $e$ be defined as in \S\ref{notations-and-conventions}.
First, we define the integer $a$, and sign $\vare=\pm 1$ as follows:
let $\ell^a$ be the exact power of $\ell$ dividing $q^{2e}-1$ and
let $\vare$ be the sign chosen so that $\ell^a$ divides $q^e-\vare$.

Let $\alpha,\gamma$ be non-negative integers, $Z_\alpha$ be the cyclic group of order $\ell^{a+\alpha}$ and $E_\gamma$ be an extraspecial $\ell$-group of order $\ell^{2\gamma+1}$.
We may assume the exponent of $E_\gamma$ is $\ell$ by \cite[(1B)]{An94}.
Denote by $Z_\alpha E_\gamma$ the central product of $Z_\alpha$ and $E_\gamma$ over $\Omega_1(Z_\alpha)=Z(E_\gamma)$.
Let $V_{\alpha,\gamma}$ be a symplectic or orthogonal space over $\F_q$ of dimension $2e\ell^{\alpha+\gamma}$ and $\eta(V_{\alpha,\gamma})=\varepsilon$ if
$V_{\alpha,\gamma}$ is orthogonal.
By \cite[(1A)]{An94}, the group $Z_\alpha E_\gamma$ can be embedded into $\GL_{\ell^\gamma}(\varepsilon q^{e\ell^\alpha})$ uniquely up to conjugacy in the sense that $Z_\alpha$ is identified with $\cO_\ell(Z(\GL_{\ell^\gamma}(\varepsilon q^{e\ell^\alpha})))$.
We denote by $R_{\alpha,\gamma}$ the image of $Z_\alpha E_\gamma$ under the composition
$Z_\alpha E_\gamma\hookrightarrow \GL_{\ell^\gamma}(\varepsilon q^{e\ell^\alpha}) \hookrightarrow I(V_{\alpha,\gamma})$.
Then by \cite[(1C)]{An94}, $R_{\alpha,\gamma}$ is uniquely determined by $Z_\alpha E_\gamma$ up to conjugacy.

For an integer $m\ge 1$, let $V_{m,\alpha,\gamma}=V_{\alpha,\gamma}\perp \cdots \perp V_{\alpha,\gamma}$ with $m$ terms and
let $R_{m,\alpha,\gamma}=R_{\alpha,\gamma}\otimes I_{(m)}$.
For each positive integer $c$, let $A_c$ denote the elementary abelian group of order $\ell^c$.
For a sequence of positive integers $\bc=(c_1,\ldots,c_t)$ with $t\geqslant0$,
we denote by $A_\bc=A_{c_1}\wr\cdots\wr A_{c_t}$ and $|\bc|=c_1+\cdots+c_t$.
Then $A_\bc$ can be regarded as an $\ell$-subgroup of the symmetric group $\mathfrak S(\ell^{|\bc|})$.
Let $V_{m,\alpha,\gamma,\bc}=V_{m,\alpha,\gamma}\perp \cdots \perp V_{m,\alpha,\gamma}$ with $\ell^{|\bc|}$ terms.
Groups of the form $R_{m,\alpha,\gamma,\bc}=R_{m,\alpha,\gamma}\wr A_\bc$ are called the basic subgroups of $I(V_{m,\alpha,\gamma,\bc})$.
Then $R_{m,\alpha,\gamma,\bc}$ is determined up to conjugacy in $I(V_{m,\alpha,\gamma,\bc})$ and $\eta(V_{m,\alpha,\gamma,\bc})=\varepsilon^m$ if $V_{m,\alpha,\gamma,\bc}$ is orthogonal.
By \cite[(2D)]{An94}, any $\ell$-radical subgroup $R$ of $I(V)$ is conjugate to $R_0 \times R_1 \times \cdots \times R_u$, where $R_0$ is a trivial group and $R_i$ ($i\geqslant1$) is a basic subgroup.
Moreover, by the construction in \cite{An94},
\begin{equation}
R_{m,\alpha,\gamma,\bc}C_{I(V_{m,\alpha,\gamma,\bc})}(R_{m,\alpha,\gamma,\bc})\le I_0(V_{m,\alpha,\gamma,\bc})
\end{equation}  and
\begin{equation}\label{nor-bas}
N_{I(V_{m,\alpha,\gamma,\bc})}(R_{m,\alpha,\gamma,\bc})\nleq I_0(V_{m,\alpha,\gamma,\bc}).
\end{equation}

By \cite[Lem.2.2(ii)]{Fe18}, the map
$\Rad_\ell(I(V))\to\Rad_\ell(I_0(V))$ given by $R\mapsto R\cap I_0(V)$ is surjective.
Since $\ell$ is odd, we have that $\Rad_\ell(I(V))=\Rad_\ell(I_0(V))$.
Now assume that $V$ is orthogonal and let $\Omega(V)=[I_0(V),I_0(V)]$. Then $|I_0(V):\Omega(V)|=2$.
Similarly, $\Rad_\ell(\Omega(V))=\Rad_\ell(I_0(V))$.

\begin{lem}\label{detofC}
Assume that $V$ is orthogonal.
	Let $R$ be an $\ell$-radical subgroup of $I_0(V)$, then
	$N_{I(V)}(R)\nleq I_0(V)$ and
	 $N_{I_0(V)}(R)\nsubseteq \Omega(V)$.
\end{lem}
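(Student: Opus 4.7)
The plan is to reduce to a single basic subgroup using An's classification of radical $\ell$-subgroups \cite[(2D)]{An94}: up to conjugacy, $R = R_0 \times R_1 \times \cdots \times R_u$, where $R_0$ is trivial and each $R_i$ ($i \ge 1$) is a basic subgroup $R_{m_i,\alpha_i,\gamma_i,\bc_i}$ acting on an orthogonal summand $V^{(i)}$, so that $V = V^{(0)} \perp V^{(1)} \perp \cdots \perp V^{(u)}$. Correspondingly, $N_{I(V)}(R)$ contains $\prod_i N_{I(V^{(i)})}(R_i)$, and under the diagonal embedding the spinor norm on $I_0(V)$ restricts to the product of the spinor norms on the $I_0(V^{(i)})$.

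For the first assertion, if $u \ge 1$, then \eqref{nor-bas} gives an element $g_1 \in N_{I(V^{(1)})}(R_1)$ with $\det(g_1) = -1$; extending by the identity on every other summand yields an element of $N_{I(V)}(R)$ with determinant $-1$, hence not in $I_0(V)$. If $u = 0$, then $R = 1$ and $N_{I(V)}(R) = I(V) \not\le I_0(V)$ since $|I(V):I_0(V)| = 2$.

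For the second assertion, I split into cases. If $V^{(0)} \ne 0$, I choose any element of $I_0(V^{(0)}) \setminus \Omega(V^{(0)})$ (such an element exists because $|I_0(V^{(0)}):\Omega(V^{(0)})| = 2$) and extend by the identity on the other summands; its spinor norm on $V$ is non-trivial, so it lies in $N_{I_0(V)}(R) \setminus \Omega(V)$. If $V^{(0)} = 0$, I focus on $R_1 = R_{m,\alpha,\gamma,\bc} = R_{m,\alpha,\gamma} \wr A_{\bc}$ on $V^{(1)}$. By the construction recalled above \eqref{nor-bas}, the general linear group $\GL_m(\vare q^{e\ell^{\alpha}})$ embeds in $I(V_{m,\alpha,\gamma})$ and centralizes $R_{m,\alpha,\gamma} = R_{\alpha,\gamma}\otimes I_{(m)}$, hence lies in $I_0(V_{m,\alpha,\gamma})$ by the determinant-$1$ shape of the embedding used in Lemma~\ref{central}. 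Viewing $V^{(1)}$ as $V_{m,\alpha,\gamma}\otimes W$ with $\dim W = \ell^{|\bc|}$, the diagonal copy $\GL_m(\vare q^{e\ell^{\alpha}})\otimes I_{\ell^{|\bc|}}$ still centralizes $R_{m,\alpha,\gamma}\otimes I_{(\ell^{|\bc|})}$ and commutes with the permutation action of $A_{\bc}$, hence normalizes $R_1$. Lemma~\ref{central} gives $\GL_m(\vare q^{e\ell^{\alpha}}) \not\subseteq \Omega(V_{m,\alpha,\gamma})$, and then Lemma~\ref{central2} applied with $V_0 = V_{m,\alpha,\gamma}$ and $\dim V_1 = \ell^{|\bc|}$ yields $\GL_m(\vare q^{e\ell^{\alpha}})\otimes I_{\ell^{|\bc|}} \not\subseteq \Omega(V^{(1)})$. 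Any element of this diagonal copy outside $\Omega(V^{(1)})$, extended by the identity on the remaining summands, supplies an element of $N_{I_0(V)}(R) \setminus \Omega(V)$.

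The main obstacle I anticipate is the bookkeeping in the second claim, namely identifying $V^{(1)}$ with the tensor product $V_{m,\alpha,\gamma}\otimes W$ in an orthogonally compatible way so that Lemma~\ref{central2} applies verbatim, and checking that the diagonal $\GL_m(\vare q^{e\ell^{\alpha}})\otimes I_{\ell^{|\bc|}}$ indeed normalizes the whole wreath product $R_{m,\alpha,\gamma}\wr A_{\bc}$ rather than merely its base group. Both points are built into the construction of the basic subgroups, but they must be stated explicitly in the write-up before Lemmas~\ref{central} and \ref{central2} can be invoked.
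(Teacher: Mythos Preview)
Your approach is essentially the paper's: exhibit a centralizer element with non-trivial spinor norm coming from the embedded $\GL_m(\vare q^{e\ell^{\alpha}})$, using Lemmas~\ref{central} and~\ref{central2}. However, two bookkeeping slips need fixing.

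First, your case split for the second assertion should be $R=1$ versus $R\ne 1$ (as the paper does), not $V^{(0)}\ne 0$ versus $V^{(0)}=0$. When $V$ has odd dimension one can have $\dim V^{(0)}=1$ with $u\ge 1$; then $I_0(V^{(0)})=\{1\}$ and there is no element of $I_0(V^{(0)})\setminus\Omega(V^{(0)})$ to pick. In that situation you must fall back on a basic factor $R_i$, which your present case analysis does not allow.

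Second, Lemma~\ref{central} does not directly give $\GL_m(\vare q^{e\ell^{\alpha}})\not\subseteq\Omega(V_{m,\alpha,\gamma})$ when $\gamma>0$: the hypothesis there is the \emph{natural} embedding $\GL_m(\vare q^{d})\hookrightarrow \SO_{2md}(q)$, which matches $V_{m,\alpha}$ (dimension $2me\ell^{\alpha}$), not $V_{m,\alpha,\gamma}$ (dimension $2me\ell^{\alpha+\gamma}$). The centralizer of $R_{m,\alpha,\gamma,\bc}$ is $C_{m,\alpha}\otimes I_\gamma\otimes I_{|\bc|}$ with $C_{m,\alpha}\le I_0(V_{m,\alpha})$, so the correct chain is: apply Lemma~\ref{central} inside $I_0(V_{m,\alpha})$, then invoke Lemma~\ref{central2} once with $\dim V_1=\ell^{\gamma+|\bc|}$ to pass to $V_{m,\alpha,\gamma,\bc}$. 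This is exactly what the paper does, and your ``main obstacle'' paragraph already anticipates the need for such care; you just need to place the base of the tensor product at $V_{m,\alpha}$ rather than $V_{m,\alpha,\gamma}$.
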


\begin{proof}
	$N_{I(V)}(R)\nleq I_0(V)$ follows by (\ref{nor-bas}).
For the second assertion, it suffices to show that $\theta(C_{I_0(V)}(R))=\F_q^\times/(\F_q^\times)^2$, where $\theta$ is the spinor norm.
If $R=R_0$ is trivial, then $C_{I_0(V)}(R)=I_0(V)$.
Now we assume that $R\ne R_0$.
And then it suffices to show that $\theta(C_{I_0(V_{m,\alpha,\gamma,\bc})}(R_{m,\alpha,\gamma,\bc}))=\F_q^\times/(\F_q^\times)^2$ for any $m,\alpha,\gamma,\bc$.
Note that $C_{I_0(V_{m,\alpha,\gamma,\bc})}(R_{m,\alpha,\gamma,\bc})=C_{I(V_{m,\alpha,\gamma,\bc})}(R_{m,\alpha,\gamma,\bc})=
C_{m,\alpha}\otimes I_\gamma\otimes I_\bc$ 
where $C_{m,\alpha}\cong \GL_m(\varepsilon q^{e\ell^\alpha})$ by \cite[p.12-13]{An94}.
Here $C_{m,\alpha}$ is the image of an embedding of $\GL_m(\varepsilon q^{e\ell^\alpha})$ into  $I_0(V_{m,\alpha})$.
Thus the assertion follows by Lemma \ref{central} and \ref{central2}.
\end{proof}

The following lemma follows from Lemma \ref{detofC} immediately.
\begin{lem}\label{gexingz}
	Let $V$ be orthogonal and
	$R$ an $\ell$-radical subgroup of $I_0(V)$. Then
$N_{I(V)}(R)/N_{I_0(V)}(R)\cong I(V)/I_0(V)$	
	and
$N_{I_0(V)}(R)/N_{\Omega(V)}(R)\cong I_0(V)/\Omega(V)$.
\end{lem}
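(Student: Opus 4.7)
The plan is a direct two-step application of the second isomorphism theorem, using Lemma \ref{detofC} to guarantee that the relevant normalizers are not contained in the given index-$2$ subgroups.

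For the first isomorphism, I would observe that
\[
N_{I(V)}(R)\cap I_0(V)=N_{I_0(V)}(R),
\]
so by the second isomorphism theorem
\[
N_{I(V)}(R)/N_{I_0(V)}(R)\cong N_{I(V)}(R)I_0(V)/I_0(V).
\]
Since $|I(V):I_0(V)|=2$, the right-hand side is either trivial or equal to $I(V)/I_0(V)$. Lemma \ref{detofC} asserts $N_{I(V)}(R)\nleq I_0(V)$, hence the latter case occurs, giving the desired isomorphism.

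The argument for the second isomorphism is completely analogous. I would use
\[
N_{I_0(V)}(R)\cap \Omega(V)=N_{\Omega(V)}(R),
\]
so the second isomorphism theorem yields
\[
N_{I_0(V)}(R)/N_{\Omega(V)}(R)\cong N_{I_0(V)}(R)\Omega(V)/\Omega(V).
\]
Again $|I_0(V):\Omega(V)|=2$, and Lemma \ref{detofC} provides $N_{I_0(V)}(R)\nsubseteq \Omega(V)$ (established there via the spinor norm computation $\theta(C_{I_0(V)}(R))=\F_q^\times/(\F_q^\times)^2$), so the quotient fills out $I_0(V)/\Omega(V)$.

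There is no real obstacle here: the lemma is essentially a bookkeeping consequence of Lemma \ref{detofC}. The only thing to be careful about is the identification of the intersections $N_{I(V)}(R)\cap I_0(V)$ and $N_{I_0(V)}(R)\cap \Omega(V)$ with $N_{I_0(V)}(R)$ and $N_{\Omega(V)}(R)$, respectively, which is immediate from the definition of the normalizer of $R$ inside a subgroup.
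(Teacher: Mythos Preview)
Your proposal is correct and is exactly the argument the paper has in mind: the paper simply states that the lemma ``follows from Lemma \ref{detofC} immediately,'' and what you have written is the standard unpacking of that claim via the second isomorphism theorem together with the index-$2$ observations.
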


Let $R$ be an $\ell$-radical subgroup of $I(V)$, then
by Lemma \ref{detofC}, $I(V)=I_0(V)N_{I(V)}(R)$ and
$I_0(V)=\Omega(V)N_{I_0(V)}(R)$.
So if two $\ell$-radical subgroups of $I(V)$ are $I(V)$-conjugate, then they are $I_0(V)$-conjugate and $\Omega(V)$-conjugate.
Thus we have:

\begin{cor}\label{relaofradicalconj}
	Assume that $V$ is orthogonal. Then
	$\Rad_\ell(I(V))/\thicksim_{I(V)}=\Rad_\ell( I_0(V))/\thicksim_{I_0(V)}=\Rad_\ell(\Omega(V))/\thicksim_{\Omega(V)}$.
\end{cor}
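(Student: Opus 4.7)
The plan is to observe first that the underlying sets in the three equalities are literally the same: since $\ell$ is odd and both $[I(V):I_0(V)]$ and $[I_0(V):\Omega(V)]$ equal $2$, Sylow $\ell$-subgroups, hence radical $\ell$-subgroups, lie in $\Omega(V)$, and the normalizer computations are unaffected, giving $\Rad_\ell(I(V))=\Rad_\ell(I_0(V))=\Rad_\ell(\Omega(V))$ as sets (this is already noted in the paragraph preceding Lemma \ref{detofC}). Thus the content of the corollary is only that the three equivalence relations $\sim_{I(V)}$, $\sim_{I_0(V)}$, $\sim_{\Omega(V)}$ coincide on this common set.

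The obvious inclusions $\sim_{\Omega(V)}\ \subseteq\ \sim_{I_0(V)}\ \subseteq\ \sim_{I(V)}$ give surjective maps between the quotient sets, so the whole task reduces to proving these are injections, i.e. to a Frattini argument. Suppose $R_1,R_2\in\Rad_\ell(I(V))$ are $I(V)$-conjugate, say $R_2=gR_1g^{-1}$ with $g\in I(V)$. By Lemma \ref{detofC}, $N_{I(V)}(R_1)\nleq I_0(V)$; since $I_0(V)$ has index $2$ in $I(V)$, this forces $I(V)=I_0(V)\,N_{I(V)}(R_1)$. Writing $g=hn$ with $h\in I_0(V)$, $n\in N_{I(V)}(R_1)$, we get $R_2=h n R_1 n^{-1}h^{-1}=hR_1h^{-1}$, so $R_1\sim_{I_0(V)}R_2$.

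An identical argument handles the remaining step: if $R_1,R_2$ are $I_0(V)$-conjugate, then the second assertion of Lemma \ref{detofC}, $N_{I_0(V)}(R_1)\nsubseteq \Omega(V)$, combined with $[I_0(V):\Omega(V)]=2$ yields $I_0(V)=\Omega(V)\,N_{I_0(V)}(R_1)$, and the same Frattini factorization of the conjugating element shows $R_1\sim_{\Omega(V)}R_2$. There is no genuine obstacle here; the only thing to check carefully is that the normalizers fail to be contained in the smaller subgroup, which is precisely the content of Lemma \ref{detofC} and (\ref{nor-bas}), already available on both radical summands (the trivial summand $R_0$ is handled by using $N_{I(V)}(R_0)=I(V)$ and the fact that the non-trivial basic summands, of which there is at least one whenever the radical subgroup is non-trivial, contribute elements outside $I_0(V)$ resp.\ $\Omega(V)$ as in the proof of Lemma \ref{detofC}).
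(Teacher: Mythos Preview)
Your proof is correct and follows essentially the same approach as the paper: you use that the three radical sets coincide (as noted just before Lemma~\ref{detofC}), and then apply the Frattini-type factorizations $I(V)=I_0(V)N_{I(V)}(R)$ and $I_0(V)=\Omega(V)N_{I_0(V)}(R)$ afforded by Lemma~\ref{detofC} to descend conjugacy. The parenthetical worry about the trivial summand $R_0$ is unnecessary, since Lemma~\ref{detofC} already applies to any radical $\ell$-subgroup (including the trivial one, where the normalizer is the whole group).
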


\begin{rmk}\label{action-on-rad}
By the uniqueness of $R_{m,\alpha,\gamma,\bc}$ proved in \cite[(1C)]{An94}, we know that $\Aut(I(V))$ acts trivially on $\Rad_\ell(I(V))/\thicksim_{I(V)}$.

Let $V$ be a symplectic or orthogonal space, $\tilde G=I(V)$, and $R$ an $\ell$-radical subgroup of $\tilde G$. Then there exists a corresponding decomposition $V=V_0\perp V_1\perp\cdots\perp V_t$, $R=R_0\times R_1\times\cdots\times R_t$
such that $R_0$ is the trivial subgroup of $I(V_0)$ and $R_i$ is a basic subgroup of $I(V_i)$ for $i > 1$.
Let $\sigma$ be an automorphism of $\tilde G$.
Then there is an automorphism $\sigma'$ of $\tilde G$, which is
a composition of $\sigma$ by some suitable inner automorphism,
such that $\sigma'$ stabilizes $V_i$ and $R_i$ for $0\le i\le t$.
\end{rmk}

\subsection{Action of automorphisms on weights of $I(V)$}\label{act-wei-IV}

Given $\Gamma\in\cF$, let $e_\Gamma$ and $\beta_\Gamma$ be defined as in \S\ref{notations-and-conventions}, and
let $\alpha_\Gamma$ and $m_\Gamma$  be the following:
$\ell^{\alpha_\Gamma}$ is the exact power of $\ell$ dividing $d_\Gamma$, and
$m_\Gamma$ satisfies $m_\Gamma e\ell^{\alpha_\Gamma}= e_\Gamma \delta_\Gamma$.
Recall that there is no direct connection between $m_\Gamma$ and $m_\Gamma(s)$.

In this section, we let $\sigma:=F_p$ be the field automorphism of $G=I_0(V)$ which sends $(a_{ij})$ to $(a^p_{ij})$ and let
$\sigma^*$ be the automorphism of   $G^*$ such that $\sigma$ is dual to  $\sigma^*$ as in \cite[\S5.3]{Ta18}.
Then $\sigma^*$ is also the field automorphism which sends $(a_{ij})$ to $(a^p_{ij})$.

Recall that $\cF'$ is defined to be the subset of $\cF$ consisting of polynomials whose roots are of $\ell'$-order.
Given $\Gamma\in\cF'$, we define $G_\Gamma$, $R_\Gamma$, $C_\Gamma$, $\theta_\Gamma$ and $s_\Gamma$ as follows:
let $V_\Gamma$ denote a symplectic or orthogonal space of dimension $2e_\Gamma \delta_\Gamma$ over $\F_q$ and of type $\vare$ or $\vare_\Gamma^{e_\Gamma}$ according as $\Gamma\in\cF_0$ or $\Gamma\in\cF_1\cup\cF_2$ if $V_\Gamma$ is orthogonal.
Let $\tilde G_\Gamma=I(V_\Gamma)$ and $G_\Gamma=I_0(V_\Gamma)$.
Thus $\tilde G_\Gamma$ has a primary element $s_\Gamma^*$ with a unique elementary divisor
$\Gamma$ of multiplicity  $\beta_\Gamma e_\Gamma$ and $\tilde G_\Gamma$ has a basic subgroup $R_\Gamma$ of form $R_{m_\Gamma,\alpha_\Gamma,0}$ by \cite[(1.12) and (5.2)]{FS89}.
Let $\tilde C_\Gamma=C_{\tilde G_\Gamma}(R_\Gamma)$ and $\tilde N_\Gamma=N_{\tilde G_\Gamma}(R_\Gamma)$.
Then $s_\Gamma^*\in G_\Gamma$, $\tilde C_\Gamma\le G_\Gamma$ and $\tilde C_\Gamma\cong \GL_{m_\Gamma}(\vare q^{e\ell^{\alpha_\Gamma}})$,
so that a Coxeter torus $\tilde T_\Gamma$ of $\tilde C_\Gamma$ has order $q^{m_\Gamma e\ell^{\alpha_\Gamma}}-\vare^{m_\Gamma}$.
The dual $\tilde T_\Gamma^*$ is embedded as a regular subgroup of $\tilde C_\Gamma^*$,
and $\tilde C_\Gamma^*$ is embedded as a regular subgroup of ${G_\Gamma}^*$.
By \cite[p.~22]{An94},
there exists an element $s_\Gamma$ in $\tilde T_\Gamma^*$ such that $C_{\tilde C_\Gamma^*}(s_\Gamma)=\tilde T_\Gamma^*$ and as an element of ${G_\Gamma}^*$, $s_\Gamma$ and $s_\Gamma^*$ are dual to each other in the sense of \cite[(3E)]{An94}.
Here, $s_\Gamma$ has unique elementary divisor $\Gamma$ and $s_\Gamma$ is uniquely determined by $\Gamma$ up to $I(V_\Gamma^*)$-conjugacy.
We denote by $\widehat {s_\Gamma}$ the character of $\tilde T_\Gamma$ corresponding to $s_\Gamma$ and let $\tilde \theta_\Gamma=\pm R_{\tilde T_\Gamma}^{\tilde C_\Gamma}(\widehat {s_\Gamma})$ where the sign is chosen so $\tilde \theta_\Gamma$ is an irreducible character of $\tilde C_\Gamma$.
The block $\tilde \fb_\Gamma$ of $\tilde C_\Gamma$ containing $\tilde \theta_\Gamma$ then has defect group $R_\Gamma$ by \cite[(4C)]{FS82} or \cite[(3.2)]{Br86}.

Let $\bc$, $\gamma$ be a tuple and an integer as in the previous sections, and $\delta=|\bc|+\gamma$.
Let $V_{\Gamma,\delta}=V_\Gamma\bot \cdots \bot V_\Gamma$, where there are $\ell^\delta$ terms $V_\Gamma$ on the right-hand side.
Then if $V_\Gamma$ is orthogonal,
$V_{\Gamma,\delta}$ has type $\vare^{\ell^\delta}=\vare$ or $\vare_\Gamma^{e_\Gamma\ell^\delta}=\vare_\Gamma^{e_\Gamma}$  according as $\Gamma\in \cF_0$ or $\Gamma\in\cF_1\cup\cF_2$.
Let $\tilde G_{\Gamma,\gamma,\bc}=I(V_{\Gamma,\delta})$,
$G_{\Gamma,\gamma,\bc}=I_0(V_{\Gamma,\delta})$, $R_{\Gamma,\gamma,\bc}=R_{m_\Gamma,\alpha_\Gamma,\gamma,\bc}$,
$\tilde N_{\Gamma,\gamma,\bc}=N_{\tilde G_{\Gamma,\gamma,\bc}}(R_{\Gamma,\gamma,\bc})$ and
$\tilde C_{\Gamma,\gamma,\bc}=C_{\tilde G_{\Gamma,\gamma,\bc}}(R_{\Gamma,\gamma,\bc})$.
Then $\tilde C_{\Gamma,\gamma,\bc}\cong \tilde C_\Gamma\otimes I_\delta$.
Let $\tilde \theta_{\Gamma,\gamma,\bc}=\tilde \theta_\Gamma\otimes I_\delta$,
then $\tilde \theta_{\Gamma,\gamma,\bc}$ can be viewed as a canonical character of $\tilde C_{\Gamma,\gamma,\bc}R_{\Gamma,\gamma,\bc}$ with $R_{\Gamma,\gamma,\bc}$ in the kernel and all canonical characters are of this form.
Let $B_{\Gamma,\delta,i}=\bl_\ell(\tilde\theta_{\Gamma,\delta,i})^{G_{\Gamma,\delta,i}}$.
Then $B_{\Gamma,\delta,i}\subseteq \mathcal E_\ell(G_{\Gamma,\delta,i},x_\Gamma)$, where $x_\Gamma=s_\Gamma\otimes I_\delta$ by the proof of \cite[(4A)]{An94}.

Let $\cR_{\Gamma,\delta}$ be the set of all the basic subgroups of the form $R_{\Gamma,\gamma,\bc}$ with $\gamma+|\bc|=\delta$.
Label the basic subgroups in $\cR_{\Gamma,\delta}$ as $R_{\Gamma,\delta,1}$, $R_{\Gamma,\delta,2}$, $\ldots$ and we denote the canonical character associated to $R_{\Gamma,\delta,i}$ by $\theta_{\Gamma,\delta,i}$.
It is possible that there exists $\Gamma'\in\cF'$ such that $m_{\Gamma'}=m_\Gamma=:m$ and $\alpha_{\Gamma'}=\alpha_\Gamma=:\alpha$.
In this case, $\cR_{\Gamma,\delta}=\cR_{\Gamma',\delta}$ and naturally we may choose the labeling of $\cR_{\Gamma,\delta}$ and $\cR_{\Gamma',\delta}$ such that $R_{\Gamma,\delta,i}=R_{\Gamma',\delta,i}$ for $i=1,2,\ldots$.
By convention,
we denote $R_{m,\alpha,\gamma,\bc}$ as $R_{\Gamma,\delta,i}$ or $R_{\Gamma',\delta,i}$ depending on whether
the related canonical character of $\tilde C_{m,\alpha}R_{m,\alpha}=\tilde C_{m,\alpha}$ considered is $\theta_\Gamma$ or $\theta_{\Gamma'}$.

Let $\sC_{\Gamma,\delta}$ be the set of characters of $(\tilde N_{\Gamma,\delta,i})_{\tilde \theta_{\Gamma,\delta,i}}$ lying over $\tilde \theta_{\Gamma,\delta,i}$ and of defect zero as characters of
$(\tilde N_{\Gamma,\delta,i})_{\tilde \theta_{\Gamma,\delta,i}}/R_{\Gamma,\delta,i}$ for all $i$.
By Clifford theory, this set is in bijection with the set of characters of $\tilde N_{\Gamma,\delta,i}$ lying over $\tilde \theta_{\Gamma,\delta,i}$ and of defect zero as characters of $\tilde N_{\Gamma,\delta,i}/R_{\Gamma,\delta,i}$ for all $i$.
We assume $\sC_{\Gamma,\delta}=\{\tilde \psi_{\Gamma,\delta,i,j}\}$ with $\tilde \psi_{\Gamma,\delta,i,j}$ a character of $(\tilde N_{\Gamma,\delta,i})_{\tilde \theta_{\Gamma,\delta,i}}$.
Then $|\sC_{\Gamma,\delta}|=\beta_\Gamma e_\Gamma \ell^d$ by the proof of \cite[(4A)]{An94}.

We define ${}^{\sigma^*}\Gamma$ to be the unique elementary divisor of $\sigma^*(s_\Gamma)$.
Obviously $m_{^{\sigma^*}\Gamma}=m_\Gamma$, $\alpha_{^{\sigma^*}\Gamma}=\alpha_\Gamma$ and $R_{\Gamma,\delta,i}=R_{{^{\sigma^*}\Gamma},\delta,i}$.
By Remark \ref{action-on-rad}, we may assume $R_{\Gamma,\delta,i}^\sigma=R_{\Gamma,\delta,i}$ up to a composition of $\sigma$ by a suitable inner automorphism.
Then we may assume $B_{\Gamma,\delta,i}^\sigma= B_{^{{\sigma^*}^{-1}}\Gamma,\delta,i}$ since $B_{\Gamma,\delta,i}\subseteq \mathcal E_\ell(G_{\Gamma,\delta,i},x_\Gamma)$, $B_{\Gamma,\delta,i}^\sigma\subseteq \mathcal E_\ell(G_{\Gamma,\delta,i},{\sigma^*}^{-1}(x_\Gamma))$ and ${\sigma^*}^{-1}(x_\Gamma)=x_{{}^{{\sigma^*}^{-1}}\Gamma}$~(see for instance \cite[Prop.~7.2]{Ta18}).
Since $\tilde \theta_{\Gamma,\delta,i}$ is the canonical character of a root block of $B_{\Gamma,\delta,i}$,
we have $\tilde \theta_{\Gamma,\delta,i}^\sigma=\tilde \theta_{^{{\sigma^*}^{-1}}\Gamma,\delta,i}$ up to a composition of $\sigma$ by an inner automorphism.
Then we may denote $R_{\Gamma,\delta,i}^\sigma=R_{{^{{\sigma^*}^{-1}}\Gamma},\delta,i}$, $\tilde N_{\Gamma,\delta,i}^\sigma=\tilde N_{{^{{\sigma^*}^{-1}}\Gamma},\delta,i}$ and
$\tilde C_{\Gamma,\delta,i}^\sigma=\tilde C_{{^{{\sigma^*}^{-1}}\Gamma},\delta,i}$ although the corresponding terms indexed by $\Gamma$ and $^{{\sigma^*}^{-1}}\Gamma$ are actually the same.
Also $((\tilde N_{\Gamma,\delta,i})_{\tilde \theta_{\Gamma,\delta,i}})^\sigma=
(\tilde N_{\Gamma,\delta,i})_{\tilde \theta_{\Gamma,\delta,i}^\sigma}=
(\tilde N_{{^{{\sigma^*}^{-1}}\Gamma},\delta,i})_{\tilde\theta_{{^{{\sigma^*}^{-1}}\Gamma},\delta,i}}$.
We may choose the labeling of $\sC_{\Gamma,\delta}$ and $\sC_{^{{\sigma^*}^{-1}}\Gamma,\delta}$ such that
\begin{equation}\label{weight:convention-act}
\tilde \psi_{\Gamma,\delta,i,j}^\sigma=\tilde \psi_{^{{\sigma^*}^{-1}}\Gamma,\delta,i,j}.
\end{equation}

\begin{rmk}\label{act-field-wei}
We can assume (\ref{weight:convention-act}) because $\tilde \psi_{\Gamma,\delta,i,j}$
is invariant under the action of $\sigma$ if $^{{\sigma^*}^{-1}}\Gamma=\Gamma$.
We prove this as follows.
First note that $^{{\sigma^*}^{-1}}\Gamma=\Gamma$ if and only if $\tilde\theta_{\Gamma,\delta,i}$ is invariant under the action of $\sigma$.
Let $R_{\Gamma,\delta,i}=R_{m_\Gamma,\alpha_\Gamma,\gamma,\bc}$ with $\bc=(c_1,\ldots,c_t)$.
We also abbreviate $R=R_{\Gamma,\delta,i}$, $\tilde N=\tilde N_{\Gamma,\delta,i}$,
$\tilde C=\tilde C_{\Gamma,\delta,i}$ and $\tilde\theta=\tilde\theta_{\Gamma,\delta,i}$.
By \cite[(2E)]{An94},
$\tilde N/R=\tilde N_{m_\Gamma,\alpha_\Gamma,\gamma}/R_{m_\Gamma,\alpha_\Gamma,\gamma}\times \prod_{i=1}^t \GL_{c_i}(\ell)$  and then we may assume that $|\bc|=0$, \emph{i.e.},
$R=R_{m_\Gamma,\alpha_\Gamma,\gamma}$.
Let $\tilde N^0=\{ g\in\tilde N\mid [g,Z(R)]=1\}$, then by the remark after \cite[(3I)]{An94}, $\tilde N^0\le \tilde N_{\tilde\theta}$ and
Also, $\tilde N^0=CRL$, where $L\le \tilde N^0$ satisfies
$[L,C]=1$, $L\cap C=Z(L)=Z(C)$
and $L/Z(L)R\cong \Sp_{2\gamma}(\ell)$.
Then there is exactly one character $\vartheta$ of $\tilde N^0$ which lies over $\tilde\theta$ and of $\ell$-defect zero when viewed as a character of $\tilde N^0/R$ and we may write $\vartheta=\tilde \theta\times \zeta$, where $\zeta$ is the Steinberg character of $\Sp_{2\gamma}(\ell)$.
Hence $\vartheta$ is invariant under the action of $\sigma$.
On the other hand, we have $\tilde N_{\tilde\theta}/\tilde N^0\cong \tilde N_\Gamma/\tilde C_\Gamma$.
From this we may assume further that $\gamma=0$ and then $\delta=0$, which means
$R=R_{\Gamma}$, $\tilde N=\tilde N_{\Gamma}$,
$\tilde C=\tilde C_{\Gamma}$ and $\tilde\theta=\tilde\theta_{\Gamma}$.
Now $\tilde C\cong\GL_{m_\Gamma}(\vare q^{e\ell^{\alpha_\Gamma}})$.
By \cite[(1.14)]{FS89}, $\tilde N=\langle \tilde C, D \rangle$, where $D$ is generated by one or two element and every element of $D$ acts on $\tilde C$ as a field or graph automorphism.
Then $\tilde\theta$ extends to $\langle \tilde N,\sigma \rangle$ by a result of Bonnaf\'e \cite[Thm. 4.3.1 and Lem. 4.3.2]{Bo99}~(see \cite[Prop.~4.17]{Fe18} for details).
Thus $\sigma$ acts trivially on $\Irr(\tilde N\mid\tilde\theta)$.
\end{rmk}

Now let $V$ be a symplectic or even-dimensional orthogonal space and
let $\tilde B$ be an $\ell$-block of $\tilde G=I(V)$ with defect group $D$ and root block $\mathbf b$ such that $V=[V,D]$ and $\mathbf b^{G}\subseteq \mathcal E_\ell(G,s)$ for some semisimple $\ell'$-element $s\in G^*$.
Let $s^*$ be a dual of $s$ in $G$ in the sense of \cite[(3E)]{An94}.
Then $m_\Gamma(s^*)=w_\Gamma\beta_\Gamma e_\Gamma$ for some positive integer $w_\Gamma$.
Similar with \cite[p.145]{LZ18} for groups of type $A$, now we define
$i\cW_\ell(\tilde B)$ to
be the set of elements
$K=K_\Gamma$, where
$K_\Gamma:\bigcup_\delta\sC_{\Gamma,\delta}\to\{~\ell\textrm{-cores}~\}$ such that
$\sum_{\delta,i,j}\ell^\delta |K_\Gamma(\psi_{\Gamma,\delta,i,j})|=w_\Gamma.$
Here, an $\ell$-core means $\ell$-core of some partition.

A bijection between $\cW_\ell(\tilde B)$ and $i\cW_\ell(\tilde B)$ has been constructed implicitly in \cite[(4E)]{An94} and can be described as follows.
Let $(R,\tilde\varphi)$ be an $\ell$-weight of $\tilde G$.
Set $\tilde C=C_{\tilde G}(R)$ and $\tilde N=N_{\tilde G}(R)$.
Thus there exists an $\ell$-block $\tilde b$ of $\tilde CR$ with $R$ a defect group such that $\tilde \varphi=\Ind_{\tilde N(\theta)}^{\tilde N}\tilde \psi$ where $\tilde \theta$ is the canonical character of $\tilde b$ and $\tilde \psi$ is a character of $\tilde N(\tilde \theta)$ lying over $\tilde \theta$ and of $\ell$-defect zero as a character of $\tilde N(\tilde \theta)/R$.

We may suppose that $Z(D)\le Z(R)\le R\le D$ so that $V=[V,R]$.
Assume we have the following decomposition $\tilde  \theta_+=\prod_{\Gamma,\delta,i}\tilde \theta_{\Gamma,\delta,i}^{t_{\Gamma,\delta,i}}$, $R_+=\prod_{\Gamma,\delta,i}R_{\Gamma,\delta,i}^{t_{\Gamma,\delta,i}}.$
Note that $\theta_\Gamma$ determines a semisimple $\ell'$-element of $G_\Gamma$ with a unique elementary divisor $\Gamma$ of multiplicity $\beta_\Gamma e_\Gamma$.
Thus $m_{\Gamma}(s^*)=\prod_{\delta,i} t_{\Gamma,\delta,i}\beta_\Gamma e_\Gamma\ell^\delta$ for each $\Gamma$.

Now we have
$$\tilde N(\tilde \theta)=\prod_{\Gamma,\delta,i}
\tilde N_{\Gamma,\delta,i}(\tilde \theta_{\Gamma,\delta,i})\wr\fS(t_{\Gamma,\delta,i}),
\quad \tilde \psi=\prod_{\Gamma,\delta,i} \tilde \psi_{\Gamma,\delta,i}$$
with $\tilde \psi_{\Gamma,\delta,i}$ a character of $\tilde N_{\Gamma,\delta,i}(\tilde \theta_{\Gamma,\delta,i})\wr\fS(t_{\Gamma,\delta,i})$ covering $\tilde \theta_{\Gamma,\delta,i}^{t_{\Gamma,\delta,i}}$ and of $\ell$-defect zero as a character of $\left(\tilde N_{\Gamma,\delta,i}(\tilde \theta_{\Gamma,\delta,i})\wr\fS(t_{\Gamma,\delta,i})\right)/
R_{\Gamma,\delta,i}^{t_{\Gamma,\delta,i}}$.
By Clifford theory, $\tilde \psi_{\Gamma,\delta,i}$ is of the form
\begin{equation}\label{weights:psi}
\Ind_{\tilde N_{\Gamma,\delta,i}(\theta_{\Gamma,\delta,i})\wr
	\prod_j\fS(t_{\Gamma,\delta,i,j})}
^{\tilde N_{\Gamma,\delta,i}(\theta_{\Gamma,\delta,i})\wr\fS(t_{\Gamma,\delta,i})}
\overline{\prod_j\tilde \psi_{\Gamma,\delta,i,j}^{t_{\Gamma,\delta,i,j}}}\cdot
\prod_j\phi_{\kappa_{\Gamma,\delta,i,j}}
\end{equation}
where $t_{\Gamma,\delta,i}=\sum_j t_{\Gamma,\delta,i,j}$, $\overline{\prod_j\tilde \psi_{\Gamma,\delta,i,j}^{t_{\Gamma,\delta,i,j}}}$ is an extension of
$\prod_j\tilde \psi_{\Gamma,\delta,i,j}^{t_{\Gamma,\delta,i,j}}$ from $\tilde N_{\Gamma,\delta,i}(\tilde \theta_{\Gamma,\delta,i})^{t_{\Gamma,\delta,i}}$ to $\tilde N_{\Gamma,\delta,i}(\tilde \theta_{\Gamma,\delta,i})\wr
\prod_j\fS(t_{\Gamma,\delta,i,j})$, $\kappa_{\Gamma,\delta,i,j} \vdash t_{\Gamma,\delta,i,j}$ is an $e_\Gamma$-core and $\phi_{\kappa_{\Gamma,\delta,i,j}}$ a character of $\fS(t_{\Gamma,\delta,i,j})$ corresponding to $\kappa_{\Gamma,\delta,i,j}$.
Now, define $K_\Gamma:\bigcup_\delta\sC_{\Gamma,\delta} \to \{~\ell\textrm{-cores}~\},
\psi_{\Gamma,\delta,i,j} \mapsto \kappa_{\Gamma,\delta,i,j}$.
Then $K=\prod_\Gamma K_\Gamma$ is the labeling of the $\ell$-weight $(R,\tilde\varphi)$.

We can define the action of $\sigma^*$ on $K$ by $(\sigma^* K)_{{}^{\sigma^*}\Gamma}=K_\Gamma$.

\begin{lem}\label{action-basic}
With the notation above, if $(R,\tilde \varphi)$ is a $\tilde B$-weight with label $K$,
$(R,\tilde \varphi)^\sigma$ is a $\tilde B^\sigma$-weight with label ${\sigma^*}^{-1} K$.
\end{lem}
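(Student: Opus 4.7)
The plan is to push $\sigma$ through the explicit labeling recipe displayed in \eqref{weights:psi}, tracking how each factor transforms, and then read off the new label.

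Because $\sigma\in\Aut(\tilde G)$, the pair $(R,\tilde\varphi)^\sigma=(R^\sigma,\tilde\varphi^\sigma)$ is a weight of $\tilde G$ lying in the block $\tilde B^\sigma$. By Remark \ref{action-on-rad}, after composing $\sigma$ with a suitable inner automorphism we may assume that $R^\sigma=R$ and that $\sigma$ stabilizes the decomposition $R_+=\prod_{\Gamma,\delta,i}R_{\Gamma,\delta,i}^{t_{\Gamma,\delta,i}}$ (noting that $R_{\Gamma,\delta,i}=R_{{}^{{\sigma^*}^{-1}}\Gamma,\delta,i}$ as subgroups, because $m_\Gamma$ and $\alpha_\Gamma$ depend only on the orbit of $\Gamma$ under $\sigma^*$). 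This adjustment does not affect the label, since $\Aut(\tilde G)$ acts trivially on $\Rad_\ell(\tilde G)/\!\sim_{\tilde G}$. Applying $\sigma$ to the canonical character of the root block and using the convention fixed earlier in the section, namely $\tilde\theta_{\Gamma,\delta,i}^\sigma=\tilde\theta_{{}^{{\sigma^*}^{-1}}\Gamma,\delta,i}$, one obtains
$$\tilde\theta_+^\sigma=\prod_{\Gamma',\delta,i}\tilde\theta_{\Gamma',\delta,i}^{t'_{\Gamma',\delta,i}},\qquad t'_{\Gamma',\delta,i}=t_{{}^{\sigma^*}\Gamma',\delta,i},$$
so the multiplicity data is permuted by $\sigma^*$.

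Next I would apply $\sigma$ to each factor of \eqref{weights:psi}. The convention \eqref{weight:convention-act} transports $\tilde\psi_{\Gamma,\delta,i,j}$ to $\tilde\psi_{{}^{{\sigma^*}^{-1}}\Gamma,\delta,i,j}$. The symmetric-group character $\phi_{\kappa_{\Gamma,\delta,i,j}}$ is $\sigma$-invariant, since every irreducible character of $\fS(t)$ is rational-valued. For the extension $\overline{\prod_j\tilde\psi_{\Gamma,\delta,i,j}^{t_{\Gamma,\delta,i,j}}}$, Clifford theory forces $\sigma$ to send it to an extension of $\prod_j\tilde\psi_{{}^{{\sigma^*}^{-1}}\Gamma,\delta,i,j}^{t_{\Gamma,\delta,i,j}}$; any remaining ambiguity is absorbed into the linear characters of $\prod_j\fS(t_{\Gamma,\delta,i,j})$ and therefore does not alter the $e_\Gamma$-cores $\kappa_{\Gamma,\delta,i,j}$ that the recipe extracts. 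Since $\sigma$ normalizes the relevant wreath products after the inner correction above, the induction in \eqref{weights:psi} is $\sigma$-equivariant.

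Reindexing by $\Gamma'={}^{{\sigma^*}^{-1}}\Gamma$ (so that $\Gamma={}^{\sigma^*}\Gamma'$), the $\ell$-core attached in the new decomposition to $\tilde\psi_{\Gamma',\delta,i,j}$ equals $\kappa_{{}^{\sigma^*}\Gamma',\delta,i,j}=K_{{}^{\sigma^*}\Gamma'}(\tilde\psi_{{}^{\sigma^*}\Gamma',\delta,i,j})$, which by the defining formula $(\sigma^*L)_{{}^{\sigma^*}\Gamma}=L_\Gamma$ applied to $L={\sigma^*}^{-1}K$ is precisely $({\sigma^*}^{-1}K)_{\Gamma'}(\tilde\psi_{\Gamma',\delta,i,j})$. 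Hence $(R,\tilde\varphi)^\sigma$ carries the label ${\sigma^*}^{-1}K$, as claimed. The main obstacle in this plan is the bookkeeping for the chosen extension $\overline{\prod_j\tilde\psi_{\Gamma,\delta,i,j}^{t_{\Gamma,\delta,i,j}}}$: one must argue either that the selection rule is itself $\sigma$-equivariant, or, more efficiently, that the resulting $e_\Gamma$-core in the labeling is independent of that choice, which is the route implicit in \cite[(4E)]{An94}.
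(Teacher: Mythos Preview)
Your approach is essentially the paper's: push $\sigma$ through the decomposition \eqref{weights:psi}, use \eqref{weight:convention-act}, and read off the new label. Where you diverge, and where the gap lies, is in the treatment of the extension $\overline{\prod_j\tilde\psi_{\Gamma,\delta,i,j}^{t_{\Gamma,\delta,i,j}}}$.

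Your claim that the ambiguity in this extension ``is absorbed into the linear characters of $\prod_j\fS(t_{\Gamma,\delta,i,j})$ and therefore does not alter the $e_\Gamma$-cores'' is not correct. Changing the extension by the sign character of some $\fS(t_{\Gamma,\delta,i,j})$ forces $\phi_{\kappa_{\Gamma,\delta,i,j}}$ to be replaced by $\mathrm{sgn}\cdot\phi_{\kappa_{\Gamma,\delta,i,j}}$, which corresponds to the conjugate partition; conjugate $\ell$-cores are in general distinct, so the label $K$ genuinely depends on the chosen extension. Nor is the independence route ``implicit in \cite[(4E)]{An94}'': An's construction fixes a specific extension throughout. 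The paper closes this gap by the first of your two options: it pins down the extension via the explicit formula of \cite[Lem.~25.5]{Hu98}, whose values on the symmetric-group part depend only on the degrees $\tilde\psi_{\Gamma,\delta,i,j}(1)$, and then observes directly that
\[
\left(\overline{\prod_j\tilde\psi_{\Gamma,\delta,i,j}^{t_{\Gamma,\delta,i,j}}}\right)^{\sigma}
=\overline{\prod_j\bigl(\tilde\psi_{\Gamma,\delta,i,j}^{\sigma}\bigr)^{t_{\Gamma,\delta,i,j}}}.
\]
With this in hand your reindexing argument goes through unchanged. (A minor point: your rationality justification for $\phi_\kappa^\sigma=\phi_\kappa$ is fine, but the paper argues more directly that after the inner correction $\sigma$ acts trivially on the permutation part, via Remark~\ref{action-on-rad}.)
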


\begin{proof}
Let $K'$ be the label of $(R,\tilde \varphi)^\sigma$.
	First note that $R^\sigma=R$,  $\tilde C^\sigma=\tilde C$, $\tilde N^\sigma=\tilde N$,
	and $\sigma$ stabilizes every $\tilde C_{\Gamma,\delta,i}$ up to conjugacy.
	
	By the argument above,
	we may denote $R_{\Gamma,\delta,i}^\sigma=R_{{^{{\sigma^*}^{-1}}\Gamma},\delta,i}$, $\tilde N_{\Gamma,\delta,i}^\sigma=\tilde N_{{^{{\sigma^*}^{-1}}\Gamma},\delta,i}$ and
	$\tilde C_{\Gamma,\delta,i}^\sigma=\tilde C_{{^{{\sigma^*}^{-1}}\Gamma},\delta,i}$ although the corresponding terms indexed by $\Gamma$ and $^{{\sigma^*}^{-1}}\Gamma$ are actually the same.
	To determine $K'$, we note that $\tilde \psi^\sigma=\prod_{\Gamma,\delta,i} \tilde \psi_{\Gamma,\delta,i}^\sigma$.
	By (\ref{weights:psi}), $\tilde \psi_{\Gamma,\delta,i}^\sigma$ is
	$$\Ind
	_{\tilde N_{\Gamma,\delta,i}(\tilde \theta_{\Gamma,\delta,i})^\sigma\wr
		\prod_j\fS(t_{\Gamma,\delta,i,j})}
	^{\tilde N_{\Gamma,\delta,i}(\tilde \theta_{\Gamma,\delta,i})^\sigma\wr
		\fS(t_{\Gamma,\delta,i})}
	\left(\overline{\prod_j\tilde \psi_{\Gamma,\delta,i,j}^{t_{\Gamma,\delta,i,j}}}\right)^\sigma\cdot
	\prod_j\phi_{\kappa_{\Gamma,\delta,i,j}}.$$
	Here, we note that $\sigma$ acts trivially on $\fS(t_{\Gamma,\delta,i})$ and $\fS(t_{\Gamma,\delta,i,j})$ by Remark \ref{action-on-rad}.
	Since $\tilde \theta_{\Gamma,\delta,i}^\sigma= \tilde \theta_{{^{{\sigma^*}^{-1}}\Gamma},\delta,i}$, we have $\tilde N_{\Gamma,\delta,i}(\tilde \theta_{\Gamma,\delta,i})^\sigma=
	\tilde N_{{^{{\sigma^*}^{-1}}\Gamma},\delta,i}(\tilde \theta_{{^{{\sigma^*}^{-1}}\Gamma},\delta,i})$.
	We can fix the way to extend $\prod_j\tilde \psi_{\Gamma,\delta,i,j}^{t_{\Gamma,\delta,i,j}}$ as in \cite[Lem.~25.5]{Hu98} , then we have that
	$$\left(\overline{\prod_j\tilde \psi_{\Gamma,\delta,i,j}^{t_{\Gamma,\delta,i,j}}}\right)^\sigma
	=\overline{\prod_j\left(\tilde \psi_{\Gamma,\delta,i,j}^\sigma\right)^{t_{\Gamma,\delta,i,j}}}.$$
	Since $\tilde \psi_{\Gamma,\delta,i,j}^\sigma=\psi_{^{{\sigma^*}^{-1}}\Gamma,\delta,i,j}$ by (\ref{weight:convention-act}), $\tilde \psi_{\Gamma,\delta,i}^\sigma$ is
	$$\Ind
	_{\tilde N_{{^{{\sigma^*}^{-1}}\Gamma},\delta,i}(\tilde \theta_{{^{{\sigma^*}^{-1}}\Gamma},\delta,i})\wr
		\prod_j\fS(t_{\Gamma,\delta,i,j})}
	^{\tilde N_{{^{{\sigma^*}^{-1}}\Gamma},\delta,i}(\tilde \theta_{{^{{\sigma^*}^{-1}}\Gamma},\delta,i})\wr
		\fS(t_{\Gamma,\delta,i})}
	\overline{\prod_j\tilde \psi_{{^{{\sigma^*}^{-1}}\Gamma},\delta,i,j}^{t_{\Gamma,\delta,i,j}}}\cdot
	\prod_j\phi_{\kappa_{\Gamma,\delta,i,j}}.$$
	Then $K'_{^{{\sigma^*}^{-1}}\Gamma}=K_\Gamma$ which is just $K'={{\sigma^*}^{-1} K}$.
\end{proof}

Thus, by a similar proof as in \cite[(1A)]{AF90}, there is a canonical bijection between $\mathcal P(\beta_\Gamma e_\Gamma,w_\Gamma)$ (defined as in (\ref{def-set-lab})) and $K_\Gamma$ for every $\Gamma\in\cF$ by \cite[Prop.(3.7)]{Ol93}.
Let $\mathcal P(\tilde B)=\prod_\Gamma \mathcal P(\beta_\Gamma e_\Gamma,w_\Gamma)$.
Then by the argument above, we have a bijection between
$\mathcal P(\tilde B)$ and $\cW_\ell(\tilde B)$.
We also define $\sigma^*\mu=\prod_\Gamma(\sigma^*\mu)_\Gamma$
with $(\sigma^*\mu)_{\sigma^*\Gamma}=\mu_\Gamma$.
By Lemma \ref{action-basic}, we have

\begin{cor}\label{action-basic-par}
	With the notation above, if $(R,\tilde \varphi)$ is a $\tilde B$-weight with label $\mu\in \mathcal P(\tilde B)$,
	then $(R,\tilde \varphi)^\sigma$ is a $\tilde B^\sigma$-weight with label ${\sigma^*}^{-1} \mu$.
\end{cor}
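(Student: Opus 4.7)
The plan is to reduce Corollary \ref{action-basic-par} to Lemma \ref{action-basic} by transporting the action of $\sigma$ on $K$-labels through the canonical bijection between $\mathcal P(\tilde B)$ and the set of $K$-labels.

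First, I would unpack the bijection. For each $\Gamma\in\cF$ with $m_\Gamma(s^*)=w_\Gamma\beta_\Gamma e_\Gamma$, the set $K_\Gamma$ consists of maps $\bigcup_\delta \sC_{\Gamma,\delta}\to\{\ell\text{-cores}\}$ with the weight condition $\sum_{\delta,i,j}\ell^\delta|K_\Gamma(\psi_{\Gamma,\delta,i,j})|=w_\Gamma$. Arguing as in \cite[(1A)]{AF90} and invoking \cite[Prop.(3.7)]{Ol93}, one obtains a canonical bijection between the data $\{K_\Gamma(\psi_{\Gamma,\delta,i,j})\}$ (which record $e_\Gamma$-cores of partitions together with the index data $(\delta,i,j)$) and a partition labelling in $\mathcal P(\beta_\Gamma e_\Gamma, w_\Gamma)$. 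Taking the product over $\Gamma$ gives the announced bijection $\mathcal P(\tilde B)\leftrightarrow \cW_\ell(\tilde B)$.

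Next, I would verify the compatibility of this bijection with the $\sigma^*$-action. By construction, the indexing $(\delta,i,j)$ of $\sC_{\Gamma,\delta}$ has been chosen in (\ref{weight:convention-act}) so that $\sigma$ carries $\tilde\psi_{\Gamma,\delta,i,j}$ to $\tilde\psi_{{}^{{\sigma^*}^{-1}}\Gamma,\delta,i,j}$ without re-indexing $(\delta,i,j)$. Since the partition bijection depends only on the indexing pattern and on the core-partition data, the action of $\sigma^*$ on the $K$-side (given by $(\sigma^*K)_{{}^{\sigma^*}\Gamma}=K_\Gamma$) corresponds exactly to the action $\mu\mapsto \sigma^*\mu$ on $\mathcal P(\tilde B)$ defined by $(\sigma^*\mu)_{{}^{\sigma^*}\Gamma}=\mu_\Gamma$.

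Combining these two ingredients, suppose $(R,\tilde\varphi)$ is a $\tilde B$-weight with $K$-label $K$ and partition label $\mu$. By Lemma \ref{action-basic}, $(R,\tilde\varphi)^\sigma$ is a $\tilde B^\sigma$-weight with $K$-label ${\sigma^*}^{-1}K$. Transporting through the $\sigma^*$-equivariant bijection just described yields the partition label ${\sigma^*}^{-1}\mu$, which is the claim.

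The main obstacle will not be the reduction itself, which is formal, but the checking that the canonical bijection coming from \cite[Prop.(3.7)]{Ol93} truly preserves the indexing used in (\ref{weight:convention-act}); one needs to be careful that the extension $\overline{\prod_j\tilde\psi_{\Gamma,\delta,i,j}^{t_{\Gamma,\delta,i,j}}}$ chosen as in \cite[Lem.~25.5]{Hu98} and the symmetric-group characters $\phi_{\kappa_{\Gamma,\delta,i,j}}$ behave naturally under the relabelling $\Gamma\mapsto {}^{{\sigma^*}^{-1}}\Gamma$, but this has already been arranged in the proof of Lemma \ref{action-basic}, so no new obstruction arises.
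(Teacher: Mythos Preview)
Your proposal is correct and follows essentially the same approach as the paper: the paper also derives the corollary directly from Lemma~\ref{action-basic} via the canonical bijection between $K$-labels and $\mathcal P(\tilde B)$ coming from \cite[(1A)]{AF90} and \cite[Prop.~(3.7)]{Ol93}, together with the definition $(\sigma^*\mu)_{\sigma^*\Gamma}=\mu_\Gamma$. In fact the paper's own argument is even terser than yours---it simply states ``By Lemma~\ref{action-basic}, we have'' the corollary---so your additional remarks on why the indexing convention (\ref{weight:convention-act}) guarantees equivariance of the bijection are a welcome expansion rather than a deviation.
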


\vspace{2ex}

Now we consider the action of diagonal automorphisms on the weights of $\tilde B$.
The following two lemmas will be useful.

\begin{lem}\label{res-char}
Let $H$ be an arbitrary finite group, $L,K\unlhd H$ and $M=L\cap K$ such that $|H/K|=2$ and $H/L$ is cyclic.
Suppose that $\varphi\in\Irr(L)$ such that $\theta=\Res^{L}_{M}\varphi\in\Irr(M)$.
Assume further $\theta$ is $H$-invariant.
Then $|H:H_\varphi|\le 2$. Moreover, exactly one of the following statements hold.
\begin{enumerate}[(i)]
	\item $|H:H_\varphi|=1$ and $\Res^H_K\chi$ is irreducible for every $\chi\in\Irr(H_\theta)$.
	\item $|H:H_\varphi|=2$ and $\Res^H_K\chi$ is not irreducible for every $\chi\in\Irr(H_\theta)$.
\end{enumerate}
\end{lem}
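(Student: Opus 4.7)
The bound $|H:H_\varphi|\le 2$ will follow quickly: the composition $L\hookrightarrow H\twoheadrightarrow H/K$ has kernel $L\cap K=M$, so $L/M$ embeds in $H/K$ and hence $|L:M|\le 2$. If $L=M$ then $\varphi=\theta$ is automatically $H$-invariant. If $|L:M|=2$, Gallagher's theorem gives $\Irr(L\mid\theta)=\{\varphi,\varphi\eta\}$ with $\eta$ the non-trivial character of $L/M$, and since $\theta$ is $H$-invariant, $H$ permutes this two-element set, so $|H:H_\varphi|\le 2$.

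For the \emph{moreover} part, the key observation I plan to exploit is the analogous injection $K/M\hookrightarrow H/L$ given by $kM\mapsto kL$, whose kernel is $(K\cap L)/M=M/M=1$. Since $H/L$ is cyclic, $K/M$ is cyclic as well. The standard cyclic-quotient extension result then shows that the $K$-invariant character $\theta$ extends to some $\tilde\theta\in\Irr(K)$, and Gallagher's theorem yields $\Irr(K\mid\theta)=\{\tilde\theta\nu:\nu\in\Irr(K/M)\}$. Because $K/M$ is abelian, every $\psi\in\Irr(K\mid\theta)$ has dimension $\theta(1)$.

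In case (i), where $\varphi$ is $H$-invariant, I will use $H/L$ cyclic to extend $\varphi$ to $\tilde\varphi\in\Irr(H)$, and Gallagher again to conclude that every $\chi\in\Irr(H\mid\theta)$ has dimension $\theta(1)$. A hypothetical non-$H$-invariant $\psi\in\Irr(K\mid\theta)$ would yield $\Ind_K^H\psi\in\Irr(H\mid\theta)$ of dimension $2\theta(1)$, contradicting this; so every such $\psi$ must be $H$-invariant, and a dimension count shows that its two extensions to $H$ (which exist since $H/K$ is cyclic of order 2) exhaust $\Irr(H\mid\theta)$. Consequently $\Res^H_K\chi$ coincides with the corresponding $H$-invariant $\psi$ and is irreducible.

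In case (ii), with $T=H_\varphi$ of index 2 in $H$, we have $\Irr(H\mid\theta)=\Irr(H\mid\varphi)$, and Clifford correspondence yields $\chi=\Ind_T^H\tau$ for a unique $\tau\in\Irr(T\mid\varphi)$. Since $T/L$ is cyclic (being a subgroup of $H/L$), Gallagher gives $\tau(1)=\theta(1)$, and therefore $\chi(1)=2\theta(1)$. Were some $\psi\in\Irr(K\mid\theta)$ $H$-invariant, it would extend to some $\chi'\in\Irr(H\mid\theta)$ of dimension $\theta(1)\ne 2\theta(1)$, which is impossible; hence no $\psi\in\Irr(K\mid\theta)$ is $H$-invariant, and by the standard Clifford criterion for index-2 normal subgroups, $\Res^H_K\chi$ is reducible for every $\chi\in\Irr(H\mid\theta)$. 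I expect the main obstacle to be spotting the cyclic embedding $K/M\hookrightarrow H/L$; without it one is drawn into intricate Mackey-formula and cocycle computations, whereas with it the whole \emph{moreover} statement reduces to a clean dimension comparison via Gallagher's theorem.
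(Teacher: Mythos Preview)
Your argument is correct. Both your approach and the paper's rest on the same key observation — that $K/M$ embeds in the cyclic group $H/L$, so every $\psi\in\Irr(K\mid\theta)$ is an extension of $\theta$ and has degree $\theta(1)$ — but the two routes to the dichotomy differ. The paper writes $\chi=\Ind^H_{H_\varphi}\psi$ with $\psi$ an extension of $\theta$, applies the Mackey formula (using $KH_\varphi=H$) to obtain $\Res^H_K\chi=\Ind^K_{K_\varphi}\bigl(\Res^{H_\varphi}_{K_\varphi}\psi\bigr)$, and then counts irreducible constituents by degree: the induced character has degree $|K:K_\varphi|\,\theta(1)=|H:H_\varphi|\,\theta(1)$ while every constituent has degree $\theta(1)$, giving exactly $|H:H_\varphi|$ summands; Clifford for the index-$2$ subgroup $K$ then yields both the bound $|H:H_\varphi|\le 2$ and the dichotomy simultaneously. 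You instead obtain the bound first from the action of $H$ on the (at most) two-element set $\Irr(L\mid\theta)$, then in each case run a degree contradiction on whether some $\psi\in\Irr(K\mid\theta)$ can fail to be, or can be, $H$-invariant. Your route is more explicit about the crucial cyclicity of $K/M$ and avoids Mackey; the paper's is more compressed but leaves the assertion ``a sum of $|H:H_\varphi|$ irreducible constituents'' unjustified --- it tacitly relies on precisely the degree equality you spell out.
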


\begin{proof}
Let $\chi\in \Irr(H\mid \varphi)$.
First by Clifford theory, we may write $\chi=\Ind^H_{H_\varphi}\psi$ for some $\psi\in\Irr(H_\varphi\mid\varphi)$ and then Mackey formula implies that $\Res^H_K \chi=\Ind^K_{K_\varphi} (\Res_{K_\varphi}^{H_\varphi} \psi)$.
Note that $\psi$ is an extension of $\theta$.
So $\Res_{K_\varphi}^{H_\varphi} \psi$ is irreducible.
Thus $\Ind^K_{K_\varphi} (\Res_{K_\varphi}^{H_\varphi} \psi)$ is a sum of $|H:H_\varphi|$ irreducible constituents.
Hence $|H:H_\varphi|\le 2$ since $|H/K|=2$ and the assertion follows easily.
\end{proof}

\begin{lem}\label{extension-central}
Let $H$ be an arbitrary finite group and $M\unlhd H$ such that $H=C_H(M)M$.
Suppose that $\theta\in\Irr(M)$ and $\eta\in \Irr(C_H(M))$ such that $\eta(1)=1$ and $\Irr(Z(M)\mid\theta)=\Irr(Z(M)\mid\eta)$.
Then there exists a unique extension $\varphi$ of $\theta$ to $H$
with $\Irr(C_H(M)\mid \varphi)=\{\eta\}$.
In particular, $\varphi(c)\ne 0$ for every $c\in C_H(M)$.
\end{lem}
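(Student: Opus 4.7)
The plan is to realize $H$ as a central product of $M$ and $C_H(M)$ over $Z(M)$ and to build the desired $\varphi$ directly as an ``external tensor product'' of $\theta$ and $\eta$. Note first that $M \cap C_H(M) = Z(M)$, so the hypothesis $H = C_H(M)M$ presents $H$ as the central product $M \ast_{Z(M)} C_H(M)$. Since $\theta$ is irreducible, $Z(M)$ acts by scalars, so $\theta|_{Z(M)} = \theta(1)\lambda$ for a linear character $\lambda$ of $Z(M)$. Because $\eta$ is linear, $\Irr(Z(M) \mid \eta) = \{\eta|_{Z(M)}\}$, and the hypothesis $\Irr(Z(M)\mid\theta) = \Irr(Z(M)\mid\eta)$ then forces $\lambda = \eta|_{Z(M)}$, that is, $\theta(z) = \theta(1)\eta(z)$ for every $z \in Z(M)$.

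For existence I would set
\[
\varphi(mc) := \theta(m)\eta(c) \qquad (m \in M,\ c \in C_H(M)).
\]
Well-definedness is checked on the ambiguity $mc = m'c'$ with $z := m'^{-1}m = c'c^{-1} \in Z(M)$; the scalar relation $\theta(m) = \eta(z)\theta(m')$ and $\eta(c) = \eta(z)^{-1}\eta(c')$ then cancel. Since $[M,C_H(M)] = 1$, choosing a representation $\rho$ affording $\theta$ and viewing $\eta$ as a one-dimensional representation shows that $mc \mapsto \eta(c)\rho(m)$ is a genuine representation of $H$ with character $\varphi$, and it is irreducible because $\rho$ is. Clearly $\varphi$ extends $\theta$ (take $c = 1$), while $\varphi|_{C_H(M)} = \theta(1)\eta$, so $\Irr(C_H(M)\mid\varphi) = \{\eta\}$.

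For uniqueness I would invoke Gallagher's theorem: any other extension of $\theta$ to $H$ is of the form $\varphi\cdot\beta$ for some linear character $\beta$ of $H/M$. Via the natural isomorphism $H/M \cong C_H(M)/Z(M)$, such a $\beta$ is detected by its restriction to $C_H(M)$, and
\[
(\varphi\cdot\beta)|_{C_H(M)} = \theta(1)\,\eta\cdot\beta|_{C_H(M)}.
\]
The condition $\Irr(C_H(M)\mid \varphi\cdot\beta) = \{\eta\}$ forces $\eta\cdot\beta|_{C_H(M)} = \eta$, hence $\beta|_{C_H(M)} = 1_{C_H(M)}$, hence $\beta = 1_H$. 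The final ``in particular'' statement is immediate: for $c \in C_H(M)$ we have $\varphi(c) = \theta(1)\eta(c)$, which is nonzero since $\eta$ is a linear character. The only subtle step is verifying well-definedness of the formula for $\varphi$, and that reduces to the central-character matching already supplied by the hypothesis, so no genuine obstacle remains.
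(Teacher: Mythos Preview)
Your proof is correct and follows essentially the same construction as the paper: both build the extension via the formula $\varphi(mc)=\theta(m)\eta(c)$ (equivalently, $mc\mapsto \eta(c)\rho(m)$ at the representation level) and read off $\varphi(c)=\theta(1)\eta(c)\ne 0$. You are in fact more thorough than the paper, which simply asserts uniqueness, whereas you supply the Gallagher argument identifying any other extension as $\varphi\cdot\beta$ with $\beta$ detected on $C_H(M)/Z(M)\cong H/M$.
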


\begin{proof}
In fact, this follows from \cite[2.1]{Sp09}. But for convenience, we still give the details here.
Let $\mathcal D:M\to \GL_{\theta(1)}(\mathbb C)$ be a $\mathbb C$-representation of $M$ affording $\theta$.
Then define $\mathcal D':H\to \GL_{\theta(1)}(\mathbb C)$, $cg\mapsto \eta(c)\mathcal D(g)$ for $c\in C_H(M)$ and $g\in M$.
It is easy to check that $\mathcal D'$ is well-defined and is a $\mathbb C$-representation of $H$.
Let $\varphi$ be the character afforded by $\mathcal D'$.
Then $\varphi$ is the unique extension of $\theta$ to $H$
with $\Irr(C_H(M)\mid \varphi)=\{\eta\}$.
Let $c\in C_H(M)$, then $\varphi(c)=\eta(c)\theta(1)$.
Now $\eta$ is a linear character, then $\eta(c)\ne 0$ for  every $c\in C_H(M)$.
Thus $\varphi(c)\ne 0$ for every $c\in C_H(M)$.
\end{proof}

Following the notation of \cite{FS89},  we denote by $J(V)$ the group of all conformal endomorphisms of $V$ when $\dim(V)$ is even.
Then $J(V)=\CSp(V)$ or $\CO(V)$ according as $V$ is symplectic or orthogonal.
We also let $J_0(V)=\CSp(V)$ or $\CSO(V)$ according as $V$ is symplectic or orthogonal.
Then $|J_0(V)/I_0(V)Z(J_0(V))|=2$.
Let $J_0(V)=\langle I_0(V),\tau \rangle$ where $\tau\in J_0(V)$.
Obviously, $J(V)=\langle I(V),\tau \rangle$.
Then for a basic subgroup $R_{\Gamma,\delta,i}$, up to a composition of $\tau$ by some suitable inner automorphism, which is denoted by $\tau'$, we have that
$N_{J(V_{\Gamma,\delta})}(R_{\Gamma,\delta,i})=\langle \tilde N_{\Gamma,\delta,i},  \tau'\rangle$,
$C_{J(V_{\Gamma,\delta})}(R_{\Gamma,\delta,i})=\langle \tilde C_{\Gamma,\delta,i},  \tau'\rangle$ and $\tau'$ commutes with $R_{\Gamma,\delta,i}\tilde C_{\Gamma,\delta,i}$ by \cite[\S5]{FS89}.

\begin{lem} \label{act-diag-wei}
	Keep the hypothesis and setup above.
\begin{enumerate}[(i)]
	\item If $\Gamma\ne x+1$, then every element of $\sC_{\Gamma,\delta}$ is invariant under $\tau'$.
	\item If $\Gamma=x+1$, then no element of $\sC_{\Gamma,\delta}$ is invariant under $\tau'$.
\end{enumerate}
\end{lem}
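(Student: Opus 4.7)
The plan is to reduce the statement, via the structural analysis in Remark~\ref{act-field-wei}, to a question about the action of $\tau'$ on the ``bottom piece'' coming from $\tilde N_\Gamma/\tilde C_\Gamma$, and then to apply Lemma~\ref{res-char}, whose dichotomy (i)/(ii) matches the dichotomy in the conclusion.

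First I would carry out the reduction to the case $\delta=0$. Using the decomposition $\tilde N/R = \tilde N_{m_\Gamma,\alpha_\Gamma,\gamma}/R_{m_\Gamma,\alpha_\Gamma,\gamma}\times\prod_i\GL_{c_i}(\ell)$ from \cite[(2E)]{An94} together with the description $\tilde N^0=CRL$ with $L/Z(L)R\cong \Sp_{2\gamma}(\ell)$ from Remark~\ref{act-field-wei}, any $\tilde\psi\in\sC_{\Gamma,\delta}$ decomposes as a character built from the canonical $\tilde\theta$, the Steinberg character $\zeta$ of $\Sp_{2\gamma}(\ell)$, a defect-zero character on $\prod_i\GL_{c_i}(\ell)$, and a top-level piece from $(\tilde N_\Gamma)_{\tilde\theta_\Gamma}/\tilde C_\Gamma$. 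Since $\tau'$ centralizes $\tilde CR$, it fixes $\tilde\theta$; the Steinberg character $\zeta$ is invariant under any automorphism of $\Sp_{2\gamma}(\ell)$; and after a suitable modification by an inner automorphism (as in Remark~\ref{action-on-rad}) $\tau'$ may be assumed to act trivially on the permutation wreath structure, hence on the $\GL_{c_i}(\ell)$ factors. Thus $\tau'$-invariance of $\tilde\psi$ is equivalent to $\tau'$-invariance of its image in $\Irr((\tilde N_\Gamma)_{\tilde\theta_\Gamma}\mid\tilde\theta_\Gamma)$.

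Next, with $\delta=0$, I would set $H=N_{J(V_\Gamma)}(R_\Gamma)=\langle\tilde N_\Gamma,\tau'\rangle$, $L=\tilde N_\Gamma$, take $K$ to be the index-$2$ normal subgroup of $H$ arising from intersection with $\langle I_0(V_\Gamma),\tau'\rangle$ (which contains $\tilde C_\Gamma$ and $\tau'$), and $M=L\cap K$. Setting $\varphi$ to be the extension of $\tilde\theta_\Gamma$ to $(\tilde N_\Gamma)_{\tilde\theta_\Gamma}$ constructed via Lemma~\ref{extension-central} applied to the central product structure, the inner character $\theta=\Res^L_M\varphi$ is $H$-invariant (because $\tau'$ centralizes $\tilde C_\Gamma R_\Gamma$ and, by Lemma~\ref{extension-central}, $\varphi$ restricts irreducibly). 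Lemma~\ref{res-char} then gives the clean dichotomy: either $|H:H_\varphi|=1$ and every character in $\sC_{\Gamma,\delta}$ is $\tau'$-invariant, or $|H:H_\varphi|=2$ and none is.

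The main obstacle, and the only place where the distinction between $\Gamma=x+1$ and $\Gamma\neq x+1$ actually enters, is the computation of $|H:H_\varphi|$, that is, deciding whether $\tau'$ fixes the concrete extension $\varphi$ or swaps it with the unique other extension. For this I would use the explicit description of $\tilde N_\Gamma=\langle \tilde C_\Gamma,D\rangle$ from \cite[(1.14)]{FS89} together with the matrix form of $\tau$ from \cite[\S5]{FS89}, and compare the similitude ratio of $\tau'$ (non-square in $\F_q^\times$) with the character values of $\varphi$ on the element $-I\in\tilde C_\Gamma R_\Gamma$. The canonical character $\tilde\theta_\Gamma$ for $\Gamma=x+1$ corresponds to the semisimple element $-I$ and is ``sensitive'' to the non-square ratio, producing the sign that moves $\varphi$; for $\Gamma\neq x+1$ the semisimple label has no such interaction with $-I$ and the sign cancels, giving $\tau'$-invariance. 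This explicit sign/Bruhat-cell computation is where the bulk of the work lies.
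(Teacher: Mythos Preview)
Your reduction to $\delta=0$ and your intention to invoke Lemma~\ref{res-char} are correct and match the paper. However, the roles you assign to $L$ and $\varphi$ are essentially dual to the paper's choice, and this causes two concrete problems.

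First, you propose to take $\varphi$ as an extension of $\tilde\theta_\Gamma$ to $(\tilde N_\Gamma)_{\tilde\theta_\Gamma}=\tilde N_\Gamma$ ``constructed via Lemma~\ref{extension-central}''. But Lemma~\ref{extension-central} requires $H=C_H(M)M$; with $H=\tilde N_\Gamma$ and $M=\tilde C_\Gamma\cong\GL_{m_\Gamma}(\vare q^{e\ell^{\alpha_\Gamma}})$ one has $C_{\tilde N_\Gamma}(\tilde C_\Gamma)=Z(\tilde C_\Gamma)$, so $C_{\tilde N_\Gamma}(\tilde C_\Gamma)\tilde C_\Gamma=\tilde C_\Gamma\ne\tilde N_\Gamma$. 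The central-product hypothesis simply fails here. The paper instead applies Lemma~\ref{extension-central} in the \emph{other} direction, to extend $\tilde\theta_\Gamma$ to $\tilde C':=C_{J(V_\Gamma)}(R_\Gamma)=\langle\tilde C_\Gamma,\tau'\rangle$, where the hypothesis holds because $\tau'$ centralizes $\tilde C_\Gamma R_\Gamma$ (this is exactly what \cite[\S5]{FS89} provides). This yields a canonical extension $\tilde\theta'$ with $\tilde\theta'(\tau')\ne 0$.

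Second, your ``explicit sign/Bruhat-cell computation'' is both vague and unnecessary. The payoff of the paper's choice of $\tilde\theta'$ is the clean criterion it produces: since $\tilde\theta'(\tau')\ne 0$ and $[\tau',\tilde N_\Gamma]\le Z(\tilde C_\Gamma)$ by \cite[(5A)]{FS89}, one has $(\tilde\theta')^g=\tilde\theta'$ if and only if $[g,\tau']\in\ker(\tilde\theta_\Gamma)$ for $g\in\tilde N_\Gamma$. This reduces the case distinction to a question about whether certain commutators lie in $\ker(\tilde\theta_\Gamma)$, and \emph{that} computation is already carried out in \cite[(6A)]{FS89}: one gets $\tilde N'_{\tilde\theta'}=\tilde N'$ for $\Gamma=x-1$ (trivially, $\tilde\theta'$ is the trivial character) and for $\Gamma\in\cF_1\cup\cF_2$ (by \cite[(6A)(2)]{FS89}), while $\tilde N'_{\tilde\theta'}\ne\tilde N'$ for $\Gamma=x+1$ (by \cite[(6A)(3)]{FS89}). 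Lemma~\ref{res-char} then converts this stabilizer dichotomy into the required statement about $\tau'$-invariance of the elements of $\sC_{\Gamma,\delta}$. Your approach could in principle be made to work, but it would amount to reproving parts of \cite[(6A)]{FS89} by hand.
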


\begin{proof}
	Similar with the argument of Remark \ref{act-field-wei}, we may assume that $\delta=0$.
	Then $R_{\Gamma,\delta,i}=R_\Gamma$,
	$\tilde N_{\Gamma,\delta,i}=\tilde N_\Gamma$,
	$\tilde C_{\Gamma,\delta,i}=\tilde C_\Gamma$ and
	$\tilde \theta_{\Gamma,\delta,i}=\tilde \theta_\Gamma$.
	In this way $\sC_{\Gamma,\delta}$ is the set of extensions of $\tilde\theta_\Gamma$ to $\tilde N_\Gamma$.
	Recall that $\tilde\theta_\Gamma=\pm R_{\tilde T_\Gamma}^{\tilde C_\Gamma}(\widehat{s_\Gamma})$.
We abbreviate
$R=R_{\Gamma}$, $\tilde N=\tilde N_{\Gamma}$,
$\tilde C=\tilde C_{\Gamma}$, $\tilde \theta=\tilde \theta_{\Gamma}$,
$\tilde N'=N_{J(V_{\Gamma})}(R_{\Gamma})$
and $\tilde C'=C_{J(V_{\Gamma})}(R_{\Gamma})$.
Also, there is an extension $\tilde \theta'$ of $\tilde \theta$ to $\tilde C'$.
Claim that $\tilde\theta'(\tau')\ne 0$.
In order to do this, we choose a canonical $\tilde \theta'$.
First note that $C_{\tilde C'}(\tilde C)=\langle Z(\tilde C),\tau' \rangle$ is abelian.
Let $\{\eta\}=\Irr(Z(\tilde C)\mid \tilde\theta)$ and take $\eta'$ to be an extension of $\eta$ to $C_{\tilde C'}(\tilde C)$.
By Lemma \ref{extension-central}, there is a unqiue extension $\tilde \theta'$ of $\tilde \theta$ to $\tilde C'$ with $\{\eta'\}=\Irr(C_{\tilde C'}(\tilde C)\mid \tilde \theta')$.
In particular, $\tilde\theta'(\tau')\ne 0$, as claimed.
Hence for $g\in \tilde N$, $(\tilde\theta')^g=\tilde\theta'$ if and only if $\tilde\theta'(^{g}\tau')=\tilde\theta'(\tau')$ by \cite[Cor.~1.22]{Sc15}.
Also by \cite[(5A)]{FS89}, $[\tau',\tilde N]\le Z(\tilde C)$.
So $\tilde\theta'(^{g}\tau')=\tilde\theta([g,\tau'])\tilde\theta'(\tau')$ and thus
$(\tilde\theta')^g=\tilde\theta'$ if and only if $[g,\tau']\in \mathrm{ker}(\tilde\theta)$.
Now we calculate the stabilizer of $\tilde\theta'$ in $\tilde N'$.

If $\Gamma=x-1$, then $\tilde\theta'$ is the trivial character.
Thus $\tilde N'_{\tilde\theta'}=\tilde N'$.
If $\Gamma\in\cF_1\cup \cF_2$, then by \cite[(6A)(2)]{FS89}, we also have $\tilde N'_{\tilde\theta'}=\tilde N'$.
If $\Gamma=x+1$, then by \cite[(6A)(3)]{FS89}, we have $\tilde N'_{\tilde\theta'}\ne \tilde N'$.
Hence the assertion holds by Lemma \ref{res-char}.
\end{proof}

For $\Gamma=x+1$, we recall that $|\sC_{\Gamma,\delta}|=2e\ell^\delta$.
Thus by Lemma \ref{act-diag-wei}, we may rewrite $\sC_{x+1,\delta}=\{\tilde \psi_{x+1,\delta,i,j}\mid 1\le i\le 2e, 1\le j\le \ell^\delta\}$ such that
$\tilde \psi_{x+1,\delta,i,j}^{\tau'}=\tilde \psi_{x+1,\delta,e+i,j}$ for every $1\le i\le e$.
By Lemma \ref{act-diag-wei} again, we have the following result by a similar argument with Lemma \ref{action-basic} and \ref{action-basic-par}.

\begin{prop}\label{action-par-diag-wei}
With the notation of Corollary \ref{action-basic-par},  we let $(R,\tilde \varphi)$ be a $\tilde B$-weight with label $\mu\in \mathcal P(\tilde B)$ and write
$\mu=\prod_\Gamma\mu_\Gamma$, where $\mu_\Gamma=(\mu_\Gamma^{(1)},\ldots,\mu_\Gamma^{(\beta_\Gamma e_\Gamma)})$.
Then the image of $(R,\tilde \varphi)$ under the non-trivial action of $J(V)/Z(J(V))I(V)$ is a $\tilde B$-weight with label $\mu^\dag\in \mathcal P(\tilde B)$,
where $\mu^\dag=\prod_\Gamma\mu^\dag_\Gamma$ with $\mu^\dag_\Gamma=({\mu^\dag_\Gamma}^{(1)},\ldots,{\mu^\dag_\Gamma}^{(\beta_\Gamma e_\Gamma)})$ such that $\mu^\dag_\Gamma=\mu_\Gamma$ if $\Gamma\ne x+1$ and
${\mu^\dag_{x+1}}^{(i)}=\mu_{x+1}^{(e+i)}$,
${\mu^\dag_{x+1}}^{(e+i)}=\mu_{x+1}^{(i)}$ for every $1\le i\le e$.
\end{prop}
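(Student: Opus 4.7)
The plan is to mimic the proof of Lemma \ref{action-basic} and Corollary \ref{action-basic-par}, with the diagonal element $\tau'$ replacing the field automorphism $\sigma$. Recall that $\tau'$ has been chosen (up to a suitable inner automorphism) to normalize every basic subgroup $R_{\Gamma,\delta,i}$ and to commute with $R_{\Gamma,\delta,i}\tilde C_{\Gamma,\delta,i}$. Consequently $\tau'$ preserves the decomposition $\tilde N(\tilde\theta)=\prod_{\Gamma,\delta,i}\tilde N_{\Gamma,\delta,i}(\tilde\theta_{\Gamma,\delta,i})\wr\fS(t_{\Gamma,\delta,i})$ and acts componentwise. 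Unlike $\sigma$, the automorphism $\tau'$ does not permute the index $\Gamma$; its effect is entirely internal to each $\sC_{\Gamma,\delta}$. By Remark \ref{action-on-rad}, $\tau'$ also acts trivially on the wreath factors $\fS(t_{\Gamma,\delta,i})$ and $\fS(t_{\Gamma,\delta,i,j})$, and by fixing the canonical extension convention from \cite[Lem.~25.5]{Hu98} the operation of taking $\tau'$ commutes with the extension $\overline{\prod_j \tilde\psi_{\Gamma,\delta,i,j}^{t_{\Gamma,\delta,i,j}}}$ appearing in (\ref{weights:psi}). Thus computing the image of $(R,\tilde\varphi)^{\tau'}$ reduces to determining the permutation of $\sC_{\Gamma,\delta}$ induced by $\tau'$.

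For $\Gamma\ne x+1$, Lemma \ref{act-diag-wei}(i) asserts that every $\tilde\psi_{\Gamma,\delta,i,j}\in\sC_{\Gamma,\delta}$ is $\tau'$-invariant. Substituting into (\ref{weights:psi}) one obtains $\tilde\psi_{\Gamma,\delta,i}^{\tau'}=\tilde\psi_{\Gamma,\delta,i}$, so the function $K_\Gamma$, and hence the tuple $\mu_\Gamma$, is unchanged. This gives $\mu^\dag_\Gamma=\mu_\Gamma$, matching the statement.

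For $\Gamma=x+1$, the relabeling $\tilde\psi_{x+1,\delta,i,j}^{\tau'}=\tilde\psi_{x+1,\delta,e+i,j}$ for $1\le i\le e$, fixed just before the proposition, shows that $\tau'$ swaps the $i$-th and $(e+i)$-th \emph{strands} of characters in $\bigcup_\delta\sC_{x+1,\delta}$. Under the bijection (\ref{bijection-sym-skew-par}) between $K_{x+1}$ and the tuple $\mu_{x+1}=(\mu_{x+1}^{(1)},\ldots,\mu_{x+1}^{(2e)})\in\mathcal P(2e,w_{x+1})$, which assembles each component $\mu_{x+1}^{(k)}$ out of the $\ell$-cores $K_{x+1}(\tilde\psi_{x+1,\delta,k,j})$ over all $(\delta,j)$, this strand swap translates precisely into interchanging $\mu_{x+1}^{(i)}$ and $\mu_{x+1}^{(e+i)}$ for $1\le i\le e$, which is the claimed formula for $\mu^\dag_{x+1}$.

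The main technical obstacle will be verifying the $\tau'$-equivariance of the chosen extensions of $\prod_j\tilde\psi_{\Gamma,\delta,i,j}^{t_{\Gamma,\delta,i,j}}$ to the wreath product; this is a direct analogue of the corresponding step in the proof of Lemma \ref{action-basic} and follows because $\tau'$ preserves the $j$-coordinate and commutes with the base group action of the symmetric factors. Once this compatibility is in place, substituting the permutation information above into (\ref{weights:psi}) produces the desired description of the label of $(R,\tilde\varphi)^{\tau'}$, exactly as in the proof of Corollary \ref{action-basic-par}.
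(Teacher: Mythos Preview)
Your proposal is correct and follows exactly the approach the paper intends: the paper's own proof is the single sentence ``by a similar argument with Lemma~\ref{action-basic} and~\ref{action-basic-par}'', and you have spelled out precisely that argument, replacing the field automorphism $\sigma$ by the diagonal element $\tau'$ and invoking Lemma~\ref{act-diag-wei} together with the relabeling convention fixed just before the proposition. The only minor quibble is that the triviality of $\tau'$ on the symmetric factors $\fS(t_{\Gamma,\delta,i})$ is not really a consequence of Remark~\ref{action-on-rad} but rather of the fact that $\tau'$ centralises $R_{\Gamma,\delta,i}\tilde C_{\Gamma,\delta,i}$ on each copy; this is harmless and the rest of your argument goes through unchanged.
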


\section{Type $B$}\label{typeB}

In this section, we let $G=\SO_{2n+1}(q)$ and $\bG=\SO_{2n+1}(\overline\F_q)$~(a connected reductive algebraic group).
As usual, we always
denote by $F_p$ the field automorphism which sends $(a_{ij})$ to $(a_{ij}^p)$ and we write $E=\langle F_p\rangle$.
Let $F:=F_p^f$ be the standard Frobenius endomorphism over $\bG$.
We write $\bG^F$ for the group of fixed points,
then $G=\bG^F$.
As before, we denote by $\bG^*=\Sp_{2n}(\overline\F_q)$ the dual of $\bG$ and $G^*={\bG^*}^F=\Sp_{2n}(q)$.

\subsection{Characters and $\ell$-Brauer characters of $\SO_{2n+1}(q)$ and $\Omega_{2n+1}(q)$}

Let $s\in {\bG^*}^F$ be a semisimple element and
let $\Psi(s)=\prod_{\Gamma}\Psi_\Gamma(s)$ be defined as in (\ref{def-par-sym}).
Then the unipotent characters of $C_{\mathbf G^*} (s)^F$ are in bijection with  $\Psi(s)$.
For $\mu\in\Psi(s)$, we denote $\psi_\mu$ the unipotent character of $C_{\mathbf G^*} (s)^F$ corresponding to $\mu$.
Now we define  $i\Irr(G)$ to be the set of $G^*$-conjugacy classes of pairs
$(s,\mu)$ where $s$ is a semisimple element of ${\bG^*}^F$ and $\mu\in \Psi(s)$.
Here two pairs $(s_1,\mu_1)$ and $(s_2,\mu_2)$ are said to be $G^*$-conjugate if there exists $g\in G^*$ such that $s_1={}^gs_2$ and $\mu_1=\mu_2$.
The irreducible characters of $G$ have been classified by Lusztig \cite{Lu77}.
By the Jordan decomposition of characters of $G$, there is a bijection between
$\cE(C_{{\bG^*}^F}(s),1)$ and $\cE(\bG^F,s)$ for every semisimple element $s$ of $G^*$.
For $\mu\in\Psi(s)$, we denote $\chi_{s,\mu}$ the character in  $\cE(\bG^F,s)$ corresponding to $\psi_\mu$.
So $i\Irr(G)$ is a labeling set of the characters of $G$.

In this section,
we assume that $\sigma=F_p$ is the field automorphism, then $\sigma^*$ is also the field automorphism $F_p$ of $G^*$.
For $\mu\in\Psi(s)$, we define $\sigma\mu=\prod_\Gamma(\sigma\mu)_\Gamma$, with $(\sigma\mu)_{{}^{\sigma^*}\Gamma}=\mu_\Gamma$.

\begin{prop}\label{act-on-irr}
	With the above definitions, we have
	$\chi^\sigma_{s,\mu}=\chi_{{\sigma^*}^{-1}(s),{\sigma^*}^{-1}\mu}$ in the sense that the pair $(s,\mu)$ in the subscript means in fact a $G^*$-conjugacy class.
\end{prop}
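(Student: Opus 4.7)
The plan is to derive Proposition \ref{act-on-irr} from the compatibility of Jordan decomposition with the field automorphism. The first ingredient is that $\sigma$ permutes Lusztig series, sending $\cE(\bG^F,s)$ onto $\cE(\bG^F,{\sigma^*}^{-1}(s))$; this is \cite[Prop.~7.2]{Ta18}, already cited in the paper when discussing the analogous statement for root blocks. Consequently $\chi^\sigma_{s,\mu}$ lies in $\cE(\bG^F,{\sigma^*}^{-1}(s))$ and therefore equals $\chi_{{\sigma^*}^{-1}(s),\nu}$ for some $\nu\in\Psi({\sigma^*}^{-1}(s))$. It only remains to check that $\nu={\sigma^*}^{-1}\mu$.

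For the identification of $\nu$, I would exploit that $\bG=\SO_{2n+1}$ has trivial (hence connected) centre in odd characteristic, so Jordan decomposition provides a canonical bijection $\cE(\bG^F,s)\to\cE(C_{\bG^*}(s)^F,1)$ with $\chi_{s,\mu}\mapsto\psi_\mu$, and similarly for the twisted series. The next step is to argue that these Jordan decompositions intertwine $\sigma$, so that the unipotent character of $C_{\bG^*}({\sigma^*}^{-1}(s))^F$ attached to $\chi^\sigma_{s,\mu}$ is the transport of $\psi_\mu$ under the field-type isomorphism $C_{\bG^*}(s)^F\to C_{\bG^*}({\sigma^*}^{-1}(s))^F$ induced by ${\sigma^*}^{-1}$. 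This is standard: Digne--Michel's Jordan decomposition is built from Lusztig induction $R^{\bG}_{\bL}$, and $R^{\bG}_{\bL}$ is equivariant under powers of $F_p$.

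Finally, I would match the combinatorial labels via the primary decomposition. The isomorphism $C_{\bG^*}(s)^F=\prod_\Gamma C_\Gamma(s)\to\prod_\Gamma C_{{\sigma^*}^{-1}\Gamma}({\sigma^*}^{-1}(s))=C_{\bG^*}({\sigma^*}^{-1}(s))^F$ induced by ${\sigma^*}^{-1}$ sends the $\Gamma$-factor to the ${\sigma^*}^{-1}\Gamma$-factor and acts on each factor by a field automorphism (in the orthogonal/symplectic or the $\GL_{m}(\pm q^{\delta_\Gamma})$ sense). Since field automorphisms act trivially on the unipotent characters of general linear, unitary and classical groups (their parametrization by partitions and symbols being intrinsic to the abstract Lie-type structure), the character $\psi_\mu=\prod_\Gamma \psi_{\mu_\Gamma}$ transports to $\prod_\Gamma \psi_{\mu_\Gamma}$ relabeled so that its $({\sigma^*}^{-1}\Gamma)$-component equals $\mu_\Gamma$. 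By the definition of ${\sigma^*}^{-1}\mu$ given before the proposition, this is exactly $\psi_{{\sigma^*}^{-1}\mu}$, yielding $\nu={\sigma^*}^{-1}\mu$ as required.

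The main obstacle is making the compatibility between Jordan decomposition and the field twist precise factor-by-factor: one has to verify that ${\sigma^*}^{-1}$ really restricts to a field-automorphism-type morphism on each centralizer piece $C_\Gamma(s)$, and that such field automorphisms fix every unipotent character of the resulting classical or (unitary) linear group. Both facts are well-known but need to be invoked carefully across the three families $\cF_0$, $\cF_1$, $\cF_2$, and an inner-automorphism adjustment of $\sigma$ (as in Remark \ref{action-on-rad}) may be needed to make the identifications literal rather than just up to conjugacy.
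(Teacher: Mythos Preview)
Your proposal is correct and follows essentially the same approach as the paper. The paper's proof is a two-line citation of \cite[Thm.~3.1]{CS13} (equivalently \cite[Prop.~1.3.1(iv)]{En08}), which packages the equivariance of Jordan decomposition under automorphisms, together with \cite[Thm.~2.5]{Ma08} for the invariance of unipotent characters of symplectic and general linear/unitary groups under field automorphisms; you unpack precisely these ingredients by hand, citing \cite[Prop.~7.2]{Ta18} for the action on Lusztig series and then tracking the unipotent label through the primary decomposition of $C_{\bG^*}(s)^F$. One small remark: in the type $B$ setting of this proposition $\bG^*=\Sp_{2n}$, so the $\cF_0$-factors of the centralizer are symplectic rather than orthogonal, which is why the paper mentions only symplectic and $\GL/\GU$ types.
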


\begin{proof}
	This follows by \cite[Thm.~3.1]{CS13} (or \cite[Prop.~1.3.1(iv)]{En08}) and the fact that the unipotent characters of symplectic groups with odd defining characteristic and general linear and unitary groups are invariant under the automorphism groups~(see, for example, \cite[Thm.~2.5]{Ma08}).
\end{proof}

Let $s$ be a semisimple $\ell'$-element and let $\cC(s)=\prod_{\Gamma} \cC_\Gamma(s)$ as (\ref{def-core}).
We define $i\Bl_\ell(G)$ to be the set of $G^*$-conjugacy classes of pairs $(s,\kappa)$ where $s$ is a semisimple $\ell'$-element of $G^*$ and $\kappa\in\cC(s)$.
Then by \cite[(10B)]{FS89},
there is a bijection $(s,\kappa)\mapsto B(s,\kappa)$ from $i\Bl_\ell(G)$ to $\Bl_\ell(G)$.
Also by \cite[(11A)]{FS89}, $\Irr(B(s,\kappa))\cap\cE(G,\ell')=\{ \chi_{s,\mu}\mid \mu\in\Psi(s,\kappa)\}$, where $\Psi(s,\kappa)$ is as defined in (\ref{def-par-sym-core}).

We define the action of $\sigma$ on $i\Bl_\ell(G)$ similarly as the action on $i\Irr(G)$.
The following result can be deduced directly from Proposition \ref{act-on-irr}.

\begin{prop}\label{act-block}
	$B(s,\kappa)^\sigma=B({\sigma^*}^{-1}(s),{\sigma^*}^{-1}\kappa)$.
\end{prop}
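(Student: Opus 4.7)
The plan is to deduce Proposition \ref{act-block} directly from the character-level statement in Proposition \ref{act-on-irr}, combined with the description of $\Irr(B(s,\kappa))\cap\mathcal E(G,\ell')$ recalled from \cite[(11A)]{FS89}. The whole proof is essentially a compatibility check between the two labelings: the action of $\sigma$ on blocks and the relabeling $(s,\kappa)\mapsto ({\sigma^*}^{-1}(s),{\sigma^*}^{-1}\kappa)$.

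First I would pick any $\mu\in\Psi(s,\kappa)$, which is nonempty since $\kappa\in\cC(s)$ by construction. Then $\chi_{s,\mu}\in\Irr(B(s,\kappa))$, so $\chi_{s,\mu}^\sigma\in\Irr(B(s,\kappa)^\sigma)$. By Proposition \ref{act-on-irr}, $\chi_{s,\mu}^\sigma=\chi_{{\sigma^*}^{-1}(s),\,{\sigma^*}^{-1}\mu}$. Since distinct $\ell$-blocks have disjoint sets of irreducible characters, to finish it suffices to show that $\chi_{{\sigma^*}^{-1}(s),\,{\sigma^*}^{-1}\mu}\in\Irr(B({\sigma^*}^{-1}(s),\,{\sigma^*}^{-1}\kappa))$, i.e. ${\sigma^*}^{-1}\mu\in\Psi({\sigma^*}^{-1}(s),\,{\sigma^*}^{-1}\kappa)$.

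The key combinatorial check is therefore that the formation of $e_\Gamma$-cores is equivariant under the relabeling ${\sigma^*}^{-1}\colon\Gamma\mapsto{\sigma^*}^{-1}\Gamma$. The point is that ${\sigma^*}^{-1}$ (being induced by the field automorphism) preserves the degree $d_\Gamma$, reduced degree $\delta_\Gamma$ and the sign $\varepsilon_\Gamma$, so that $e_\Gamma=e_{{\sigma^*}^{-1}\Gamma}$ and $\beta_\Gamma=\beta_{{\sigma^*}^{-1}\Gamma}$. Moreover, ${\sigma^*}^{-1}(s)$ has ${\sigma^*}^{-1}\Gamma$ as an elementary divisor of the same multiplicity $m_\Gamma(s)=m_{{\sigma^*}^{-1}\Gamma}({\sigma^*}^{-1}(s))$, with the corresponding subspace $V_{{\sigma^*}^{-1}\Gamma}({\sigma^*}^{-1}(s))$ of the same type as $V_\Gamma(s)$. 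Hence the sets $\Psi_{{\sigma^*}^{-1}\Gamma}({\sigma^*}^{-1}(s))$ and $\Psi_\Gamma(s)$ coincide after relabeling, and the same is true for the sets $\cC_{{\sigma^*}^{-1}\Gamma}({\sigma^*}^{-1}(s))$ and $\cC_\Gamma(s)$. By definition $({\sigma^*}^{-1}\mu)_{{\sigma^*}^{-1}\Gamma}=\mu_\Gamma$ and $({\sigma^*}^{-1}\kappa)_{{\sigma^*}^{-1}\Gamma}=\kappa_\Gamma$, so the statement that the $e_{{\sigma^*}^{-1}\Gamma}$-core of $({\sigma^*}^{-1}\mu)_{{\sigma^*}^{-1}\Gamma}$ equals $({\sigma^*}^{-1}\kappa)_{{\sigma^*}^{-1}\Gamma}$ is literally the hypothesis $\mu\in\Psi(s,\kappa)$.

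I do not anticipate a serious obstacle: the proposition is essentially a bookkeeping consequence of Proposition \ref{act-on-irr} and the fact that the block parametrization of \cite[(10B)]{FS89} is defined purely in terms of the combinatorial data $(\Gamma,m_\Gamma(s),\eta_\Gamma(s),\kappa_\Gamma)$ which are preserved under ${\sigma^*}^{-1}$. The only minor care needed is in the degenerate-symbol case for $\Gamma\in\cF_0$ when $V_\Gamma(s)$ is even-dimensional orthogonal of type $+$: here $\kappa_\Gamma$ may be a doubleton $\{\kappa_\Gamma,\kappa_\Gamma'\}$, but the operation $'$ introduced on page~\pageref{def-c-s} is intrinsic to $\Psi_\Gamma(s)$ and is therefore respected by the pure relabeling ${\sigma^*}^{-1}$, so the argument goes through unchanged.
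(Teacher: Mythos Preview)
Your proposal is correct and follows essentially the same approach as the paper, which simply states that the result ``can be deduced directly from Proposition~\ref{act-on-irr}.'' You have spelled out the details of that deduction: pick a character in the block, apply $\sigma$, identify its new label via Proposition~\ref{act-on-irr}, and verify the combinatorial compatibility of $e_\Gamma$-cores under the relabeling ${\sigma^*}^{-1}$.
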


We define $i\IBr_\ell(G):=\{ (s,\mu)\in i\Irr(G)\mid s \ \text{is of} \ \ell'\text{-order}  \}$.
Then for $(t,\mu)\in i\IBr_\ell(G)$,
$\chi_{t,\mu}$ lies in the $\ell$-block $B(s,\kappa)$ if and only if $t$ is $G^*$-conjugate to $s$ and
$\mu\in\Psi(s,\kappa)$.

\begin{prop}\label{action-ibr}
	Assume that $\ell$ is linear.
	There is a bijection $(s,\mu)\mapsto \phi_{s,\mu}$ from $i\IBr_\ell(G)$ to $\IBr_\ell(G)$ such that $\phi_{s,\mu}^\sigma=\phi_{{\sigma^*}^{-1}(s),{\sigma^*}^{-1}\mu}$.
\end{prop}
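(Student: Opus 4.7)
The plan is to combine three ingredients already laid out in the paper: the existence of a nice basic set labeled by the proposed parameter set $i\IBr_\ell(G)$, the unitriangularity of the decomposition matrix (which is where linearity of $\ell$ enters), and the explicit action of $\sigma$ on ordinary characters from Proposition \ref{act-on-irr}. The conclusion will then follow from the Cabanes--Sp\"ath style argument packaged in Lemma \ref{ex-unitriibr}.

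First I would set up the basic set. Since $\bG=\SO_{2n+1}$ has connected center and $\ell$ is odd (hence good for $\bG$), Theorem \ref{basicset} applies, so for each semisimple $\ell'$-element $s\in{\bG^*}^F$ the set $\mathcal E(G,s)$ is a basic set of $\mathcal E_\ell(G,s)$. Taking the union over $s$ (up to $G^*$-conjugacy), $\mathcal E(G,\ell')$ is a basic set of $\IBr_\ell(G)$. By Jordan decomposition the elements of $\mathcal E(G,\ell')$ are precisely the $\chi_{s,\mu}$ with $(s,\mu)\in i\IBr_\ell(G)$, so in particular $|i\IBr_\ell(G)|=|\mathcal E(G,\ell')|=|\IBr_\ell(G)|$, giving a candidate parametrisation.

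Next I would invoke the unitriangularity of the decomposition matrix. Because $\ell$ is linear, the theorem of Gruber--Hiss \cite{GH97} provides an ordering on $\mathcal E(G,\ell')$ for which the decomposition matrix restricted to this basic set is lower unitriangular. Applying the argument of \cite[Lem.~7.5]{CS13} (exactly as in the proof of Lemma \ref{ex-unitriibr}) then produces a canonical bijection
\[
\mathscr D:\mathcal E(G,\ell')\longrightarrow\IBr_\ell(G),\qquad \chi^\circ=\mathscr D(\chi)+\sum_{\phi\neq\mathscr D(\chi)}d_{\chi,\phi}\,\phi,\quad d_{\chi,\phi}\in\mathbb Z_{\ge0}.
\]
I will then set $\phi_{s,\mu}:=\mathscr D(\chi_{s,\mu})$ for every $(s,\mu)\in i\IBr_\ell(G)$.

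It remains to identify the action of $\sigma$. For any $\chi\in\mathcal E(G,\ell')$ the decomposition numbers are Galois-stable, i.e.\ $d_{\chi,\phi}=d_{\chi^\sigma,\phi^\sigma}$, so applying $\sigma$ to the unitriangular equation above gives
\[
(\chi^\sigma)^\circ=\mathscr D(\chi)^\sigma+\sum_{\phi\neq\mathscr D(\chi)}d_{\chi,\phi}\,\phi^\sigma.
\]
Uniqueness of the leading term in the unitriangular decomposition of $(\chi^\sigma)^\circ$ forces $\mathscr D(\chi^\sigma)=\mathscr D(\chi)^\sigma$, i.e.\ $\mathscr D$ is $\sigma$-equivariant. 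Combining this with Proposition \ref{act-on-irr} yields
\[
\phi_{s,\mu}^\sigma=\mathscr D(\chi_{s,\mu})^\sigma=\mathscr D(\chi_{s,\mu}^\sigma)=\mathscr D(\chi_{{\sigma^*}^{-1}(s),\,{\sigma^*}^{-1}\mu})=\phi_{{\sigma^*}^{-1}(s),\,{\sigma^*}^{-1}\mu},
\]
as required. The only place serious work is being hidden is the appeal to Gruber--Hiss, and I do not expect any extra subtlety for type $B$ once $\ell$ is assumed linear; everything else is formal manipulation of basic sets and the Jordan decomposition.
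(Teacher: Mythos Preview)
Your proof is correct and follows essentially the same approach as the paper: both use Theorem \ref{basicset} for the basic set, Gruber--Hiss \cite{GH97} for unitriangularity (this is where linearity enters), and \cite[Lem.~7.5]{CS13} to obtain a $\sigma$-equivariant bijection $\mathcal E(G,\ell')\to\IBr_\ell(G)$, then conclude via Proposition \ref{act-on-irr}. Your write-up is just a slightly unpacked version of the paper's three-line proof.
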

\begin{proof}
	By Theorem \ref{basicset}, $\cE(\bG^F,\ell')$ is a basic set of $\IBr_\ell(G)$.
	By \cite{GH97}, the decomposition matrix with respect to  $\cE(\bG^F,\ell')$ is unitriangular since $\ell$ is linear.
	Then by \cite[Lem.~7.5]{CS13}, there is a $\sigma$-equivariant bijection from  $\cE(\bG^F,\ell')$ to $\IBr_\ell(G)$ which preserves blocks.
	Thus the assertion follows from Proposition \ref{act-on-irr}.
\end{proof}

\begin{rmk}\label{brau-dec}
In fact, by the construction in \cite[Lem.~7.5]{CS13}, the proof of Proposition \ref{action-ibr}
gives a bijection $\mathscr D:\mathcal E(\bG^F,\ell')\to \IBr_\ell(G)$ such that
 $\phi_{s,\mu}=\mathscr D(\chi_{s,\mu})$ for all $(s,\mu)\in i\IBr_\ell(G)$.
 In addition, there is a partial order relation $\le$ on $\IBr_\ell(G)$,
 such that $\chi_{s,\mu}^\circ=\phi_{s,\mu}+\sum\limits_{\varphi\in\IBr_\ell(G),\varphi\lneq \varphi} d_\varphi \varphi$ with $d_\varphi\in\mathbb Z$.
\end{rmk}

Note that $Z(G^*)=\langle z \rangle$,
where $z=-I_{(2n)}$.
For a semisimple element $s\in G^*$, we write $-s:=z s=-I_{(2n)}\cdot s$.
For $\Gamma\in\cF$, let $\xi$ be a root of $\Gamma$.
We define $z.\Gamma$ to be the unique polynomial in $\cF$ such that $-\xi$ is a root of $z.\Gamma$.
For $\mu\in\Psi(s)$, we define $-\mu=(-\mu)_{z.\Gamma}$, with $(-\mu)_{z.\Gamma}=\mu_\Gamma$.
Let $\hat z\in\cE(\bG^F,z)$ be the character corresponding under Jordan decomposition to $1_{G}\in\cE(\bG^F,1)$.
Then $\hat z $ is the (unique) non-trivial linear character of $G$~(the definition of $\hat z$ also follows from \cite[Prop. 13.30]{DM91}).
Then by \cite[Prop.~1.3.1(ii)]{En08}, we have the following result.

\begin{prop}\label{z-times-char}
	$\hat z\chi_{s,\mu}=\chi_{-s,-\mu}$ in the sense that the pair $(s,\mu)$ in the subscript means in fact a $G^*$-conjugacy class.
\end{prop}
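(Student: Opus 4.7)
The plan is to combine the standard behaviour of tensoring with $\hat z$ on Lusztig series with a careful comparison of the combinatorial labels $\Psi(s)$ and $\Psi(-s)$. The input from Deligne--Lusztig theory, recorded in \cite[Prop.~1.3.1(ii)]{En08}, is that for any semisimple $t\in Z({\bG^*}^F)$, multiplication by $\hat t$ sends $\cE(\bG^F, s)$ bijectively onto $\cE(\bG^F, ts)$, and is compatible with Jordan decomposition in the sense that the unipotent character of $C_{\bG^*}(s)^F$ associated to $\chi\in\cE(\bG^F, s)$ corresponds to the unipotent character of $C_{\bG^*}(ts)^F$ associated to $\hat t\chi$, via the canonical identification $C_{\bG^*}(s)=C_{\bG^*}(ts)$ that comes from $t\in Z(\bG^*)$.

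Applying this with $t=z=-I_{(2n)}\in Z({\bG^*}^F)$, so that $ts=-s$, it remains to show that under the identification $C_{\bG^*}(s)^F=C_{\bG^*}(-s)^F$ the induced bijection between the labeling sets $\Psi(s)$ and $\Psi(-s)$ is given by $\mu\mapsto -\mu$. I would do this by comparing primary decompositions directly: if $V^*=\sum_\Gamma V^*_\Gamma(s)$ is the primary decomposition associated to $s$, then the restriction of $-s$ to $V^*_\Gamma(s)$ has minimal polynomial $z.\Gamma$, so $V^*_{z.\Gamma}(-s)=V^*_\Gamma(s)$ with identical symplectic/orthogonal type and dimension. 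Since $d_{z.\Gamma}=d_\Gamma$, $\delta_{z.\Gamma}=\delta_\Gamma$ and $\varepsilon_{z.\Gamma}=\varepsilon_\Gamma$ for every $\Gamma\in\cF$, the factorized centralizers $C_\Gamma(s)$ and $C_{z.\Gamma}(-s)$ coincide as subgroups, and the combinatorial labeling sets $\Psi_\Gamma(s)$ and $\Psi_{z.\Gamma}(-s)$ (by partitions or symbols, according to the case) are identified tautologically. Collecting over all $\Gamma$ yields precisely the map $\mu\mapsto -\mu$.

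Combining the two steps, $\hat z\chi_{s,\mu}$ lies in $\cE(\bG^F,-s)$ and corresponds under Jordan decomposition to the same unipotent character as $\chi_{s,\mu}$, viewed through $C_{\bG^*}(s)^F=C_{\bG^*}(-s)^F$; by the identification of labels this unipotent character is $\psi_{-\mu}$, so $\hat z\chi_{s,\mu}=\chi_{-s,-\mu}$.

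The main subtlety is purely bookkeeping: one must verify that the defect/type conventions defining $\Psi_\Gamma(s)$ in \S\ref{notations-and-conventions} transport correctly under $\Gamma\mapsto z.\Gamma$, in particular the doubling of degenerate symbols and the parity-of-defect conditions attached to orthogonal $V^*_\Gamma(s)$ of each type. Since $z=-I_{(2n)}$ is a scalar preserving the form on $V^*$ and all the types of the subspaces $V^*_\Gamma(s)$, and since the involution $\Gamma\mapsto z.\Gamma$ on $\cF$ swaps $x-1\leftrightarrow x+1$ while preserving $(d_\Gamma,\delta_\Gamma,\varepsilon_\Gamma)$ on $\cF_1\cup\cF_2$, no correction is needed and the identification is automatic.
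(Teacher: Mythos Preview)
Your proposal is correct and follows the same approach as the paper: both rest on \cite[Prop.~1.3.1(ii)]{En08}. The paper's proof is in fact just a direct citation of that result, whereas you additionally spell out the identification of the combinatorial labels $\Psi(s)\to\Psi(-s)$ via the primary decomposition. One minor remark: in this section $\bG^*=\Sp_{2n}$, so $V^*$ is symplectic and all $V^*_\Gamma(s)$ are symplectic; hence the concerns you raise about degenerate symbols and parity-of-defect for orthogonal components do not actually arise here, and your final paragraph can be simplified accordingly.
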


Since $\ell$ is odd and $z$ has order $2$, the character $\hat z$ in Proposition \ref{z-times-char} can be regarded as a linear $\ell$-Brauer character of $G$.

\begin{prop}\label{dec-Brauer-char}
	With the notation of Proposition \ref{action-ibr} and \ref{z-times-char},
	if $\ell$ is linear, then
	$\hat z\phi_{s,\mu}=\phi_{-s,-\mu}$ in the sense that the pair $(s,\mu)$ in the subscript means in fact a $G^*$-conjugacy class.
\end{prop}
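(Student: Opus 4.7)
First I would observe that since $\hat z$ is linear of order $2$ and $\ell$ is odd, $\hat z$ has $\ell'$-order, so $\eta:=\hat z^\circ$ is a linear irreducible $\ell$-Brauer character of $G$; moreover $(\hat z\chi)^\circ=\eta\cdot\chi^\circ$ for every $\chi\in\Irr(G)$. Combined with Proposition~\ref{z-times-char}, this yields $\eta\cdot\chi_{s,\mu}^\circ=\chi_{-s,-\mu}^\circ$ for every $(s,\mu)\in i\IBr_\ell(G)$. The plan is to compare this identity with the unitriangular decomposition of Remark~\ref{brau-dec} applied to both $\chi_{s,\mu}$ and $\chi_{-s,-\mu}$, and thereby to prove $\eta\,\phi_{s,\mu}=\phi_{-s,-\mu}$ by induction on the level $L(s,\mu):=|\{(s',\mu')\mid \phi_{s',\mu'}\lneq \phi_{s,\mu}\}|$ with respect to the partial order $\lneq$ provided by that remark.

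In the base case $L(s,\mu)=0$, Remark~\ref{brau-dec} gives $\chi_{s,\mu}^\circ=\phi_{s,\mu}$, so $\chi_{-s,-\mu}^\circ=\eta\,\phi_{s,\mu}$ is a single irreducible Brauer character. Uniqueness of expansion in the $\mathbb Z$-basis $\IBr_\ell(G)$ then forces $\eta\,\phi_{s,\mu}=\phi_{-s,-\mu}$. For the inductive step, I would write
\[\chi_{s,\mu}^\circ=\phi_{s,\mu}+\sum_{\phi_{s',\mu'}\lneq\phi_{s,\mu}}d_{s',\mu'}\,\phi_{s',\mu'},\]
and note that each $(s',\mu')$ appearing in the sum has $L(s',\mu')<L(s,\mu)$, so by the inductive hypothesis $\eta\,\phi_{s',\mu'}=\phi_{-s',-\mu'}$. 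Multiplying both sides by $\eta$ therefore yields
\[\chi_{-s,-\mu}^\circ=\eta\,\phi_{s,\mu}+\sum_{\phi_{s',\mu'}\lneq\phi_{s,\mu}}d_{s',\mu'}\,\phi_{-s',-\mu'}.\]

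Comparing this with the direct decomposition $\chi_{-s,-\mu}^\circ=\phi_{-s,-\mu}+\sum_{\phi_{t,\nu}\lneq\phi_{-s,-\mu}}d'_{t,\nu}\,\phi_{t,\nu}$ in the basis $\IBr_\ell(G)$ finishes the argument. No term $\phi_{-s',-\mu'}$ in the previous sum can equal $\phi_{-s,-\mu}$, since the bijection $(s',\mu')\mapsto\phi_{s',\mu'}$ together with the involution $(s',\mu')\mapsto(-s',-\mu')$ is injective, and $(s',\mu')\ne(s,\mu)$ because $\phi_{s',\mu'}\lneq\phi_{s,\mu}$. Hence the coefficient $1$ at $\phi_{-s,-\mu}$ on the right-hand side of the direct decomposition can only be produced by the term $\eta\,\phi_{s,\mu}$, forcing $\eta\,\phi_{s,\mu}=\phi_{-s,-\mu}$, which is the claim. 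The argument is essentially a formal consequence of Remark~\ref{brau-dec} and Proposition~\ref{z-times-char}; the linearity of $\hat z$ and the injectivity of $(s,\mu)\mapsto(-s,-\mu)$ absorb any need to analyse the order $\lneq$ beyond its well-foundedness, so the only real work is the bookkeeping in the inductive step.
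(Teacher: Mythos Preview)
Your proof is correct and follows essentially the same approach as the paper: both arguments rest on Proposition~\ref{z-times-char} together with the unitriangular structure recorded in Remark~\ref{brau-dec}. The paper's version is more compressed, observing (via \cite[Lem.~2.4]{Fe18}) that multiplication by $\hat z$ is an automorphism of the group algebra and hence preserves decomposition numbers, so the bijection $\mathscr D$ of Remark~\ref{brau-dec} is automatically equivariant; your induction on the depth $L(s,\mu)$ simply unpacks this compatibility by hand, which is slightly more elementary but amounts to the same thing.
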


\begin{proof}
Here, we use \cite[Lem.~2.4]{Fe18}. By its proof, $\hat z$ induces an automorphism of the associated group algebra.
Then it permutes the irreducible ordinary and $\ell$-Brauer characters in the way indicated.
Thus the assertion follows from  Remark \ref{brau-dec}, Proposition \ref {action-ibr} and \ref{z-times-char}.
\end{proof}

\subsection{Weights of $\SO_{2n+1}(q)$}

Now we let $V$ be an odd-dimensional orthogonal space,  $\tilde G=I(V)$ and $G=I_0(V)$. Then $\tilde G=Z(\tilde G)\times G$.
Define
$$i\cW_\ell(G)=\left\{~(s,\kappa,K)^{ G^*}~\middle|~
\begin{array}{c}
s~\textrm{is a semisimple $\ell'$-element of}~ G^*, \kappa\in \mathcal C(s),\\
K=K_\Gamma,~K_\Gamma:\bigcup_\delta\sC_{\Gamma,\delta}\to\{~\ell\textrm{-cores}~\}~\textrm{s.t.}~\\
\sum_{\delta,i,j}\ell^\delta |K_\Gamma(\psi_{\Gamma,\delta,i,j})|=w_\Gamma,
m_{\Gamma}(s)/\beta_\Gamma=|\kappa_\Gamma|+e_\Gamma w_\Gamma
\end{array}~\right\}.$$
Here, $(s,\kappa,K)^{ G^*}$ means a $G^*$-conjugacy class of $(s,\kappa,K)$.

A bijection between $\cW_\ell(G)$ and $i\cW_\ell(G)$ has been constructed implicitly in the proof of \cite[(4G)]{An94} and can be described as follows.
Let $(R,\varphi)$ be an $\ell$-weight of $G$.
Then $(R,\tilde\varphi)$ is an $\ell$-weight of $\tilde G$, where $\tilde \varphi=1_{Z(\tilde G)}\times \varphi$.
Set $\tilde C=C_{\tilde G}(\tilde R)$ and $\tilde N=N_{\tilde G}(\tilde R)$.
Thus there exists an $\ell$-block $\tilde b$ of $\tilde C\tilde R$ with $\tilde R$ a defect group such that $\tilde \varphi=\Ind_{\tilde N(\theta)}^{\tilde N}\tilde \psi$ where $\tilde \theta$ is the canonical character of $\tilde b$ and $\tilde \psi$ is a character of $\tilde N(\tilde \theta)$ lying over $\tilde \theta$ and of $\ell$-defect zero as a character of $\tilde N(\tilde \theta)/\tilde R$.

Let $V_0=C_V(R)$ and $V_+=[V,R]$.
Then $V=V_0\perp V_+$ and $V_+$ is an even-dimensional orthogonal space.
Suppose that $\mathrm{dim}(V_0)=2n_0+1$.
In addition, let $\tilde G_0=I(V_0)$, $G_0=I_0(V_0)$, $\tilde G_+=I(V_+)$ and $G_+=I_0(V_+)$.
Then $R=R_0\times R_+$, $\tilde b=\tilde b_0\times \tilde b_+$, $\tilde\theta=\tilde\theta_0\times\tilde{\theta}_+$,
where $R_0=\langle 1_{V_0} \rangle\le \tilde G_0$, $R_+\le \tilde G_+$,
$\tilde b_0$, $\tilde b_+$ are $\ell$-blocks of $\tilde G_0$, $C_{\tilde G_+}(R_+)$ respectively, and $\tilde{\theta}_0\in\Irr(\tilde b_0)$,  $\tilde{\theta}_+\in\Irr(\tilde b_+)$.

First, we let $\tilde C_0=\tilde N_0=\tilde G_0$, $C_+=C_{I(V_+)}(R_+)$, and
$\tilde N_+=N_{I(V_+)}(R_+)$.
Then $\tilde \varphi_0=\tilde \psi_0=\tilde \theta_0$ a character of $\tilde G_0$ of $\ell$-defect zero.
Let $\tilde \theta_0=1_{\langle -1_{V_0} \rangle}\times \theta_0$, where $\theta_0$ is a character ($\ell$-defect zero) of $G_0\cong \SO_{2n_0+1}(q)$.
So it is of the form $\chi_{s_0,\kappa}$ where $s_0$ is a  semisimple $\ell'$-element of $G_0^*\cong \Sp_{2n_0}(q)$ and $\kappa\in \Psi(s_0)$ such that $\kappa_\Gamma$ is an $e_\Gamma$-core  which affords the second component of the triple $(s,\kappa,K)$.

Secondly, assume we have the following decomposition $\tilde  \theta_+=\prod_{\Gamma,\delta,i}\tilde \theta_{\Gamma,\delta,i}^{t_{\Gamma,\delta,i}}$, $R_+=\prod_{\Gamma,\delta,i}R_{\Gamma,\delta,i}^{t_{\Gamma,\delta,i}}.$
$\theta_\Gamma$ determines a semisimple $\ell'$-element of $G_\Gamma$ with a unique elementary divisor $\Gamma$ of multiplicity  $\beta_\Gamma e_\Gamma$.
Let $s_+$ be a semisimple element of $G_+$ such that $s_+$ has divisors $\Gamma$ with multiplicity $\prod_{\delta,i} t_{\Gamma,\delta,i}\beta_\Gamma  e_\Gamma\ell^\delta$.
Then $s=s_0\ti s_+$ is the first component of the triple $(s,\kappa,K)$.
We can view the block $\tilde b$ as a block of $C_{\tilde G}(R)$.
Thus $(R,\varphi)$ belongs to an $\ell$-block of $G$ with label $(s,\kappa)$.
In particular, $\kappa\in \mathcal C(s)$.

Finally,
the correspondence $(R,\Ind_{\tilde N(\theta)}^{\tilde N}\tilde \psi))\mapsto (R_+,\Ind_{\tilde N_+(\theta_+)}^{\tilde N_+}\tilde \psi_+))$
is a bijection from
$\{(R,\Ind_{\tilde N(\theta)}^{\tilde N}\tilde \psi))\mid \tilde \psi\in\Irr^0(\tilde N(\theta)\mid\tilde\theta) \}$ to
$\{(R_+,\Ind_{\tilde N_+(\theta_+)}^{\tilde N_+}\tilde \psi_+))\mid \tilde \psi_+\in\Irr^0(\tilde N_+(\theta_+)\mid\tilde\theta_+) \}$.
Then the third component $K=\prod_\Gamma K_\Gamma$ of the triple $(s,\kappa,K)$ is given as in the statement preceding Lemma \ref{action-basic}.

Let $(R,\varphi)$ be the $\ell$-weight of $G$ with label $(s_1,\kappa_1,K)$.
Then by the proof of \cite[(4G)]{An94},
$(R,\varphi)$ is a $B(s_2,\kappa_2)$-weight if and only if $s_1$ and $s_2$ are $G^*$-conjugate and $\kappa_1=\kappa_2$.

\begin{prop}\label{act:weights}
	Let $(R,\varphi)$ be the $\ell$-weight of $G$ with label $(s,\kappa,K)^{G^*}$.
	Then $(R,\varphi)^\sigma$ is the $\ell$-weight of $G$ with label $({\sigma^*}^{-1}(s),{{\sigma^*}^{-1}\kappa},{{\sigma^*}^{-1} K})^{G^*}$.
\end{prop}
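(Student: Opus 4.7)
My plan is to track the action of $\sigma$ through each of the three components $(s,\kappa,K)$ of the label, using the explicit construction of the bijection $\cW_\ell(G)\to i\cW_\ell(G)$ just described, and invoking the equivariance results already established for each piece.

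First, I would reduce to the case of $\tilde G = I(V) = Z(\tilde G)\times G$. The assignment $\varphi\mapsto \tilde\varphi=1_{Z(\tilde G)}\times \varphi$ is trivially $\sigma$-equivariant, so the statement for $\tilde\varphi$ yields the statement for $\varphi$. Next, given the radical subgroup $R$, I consider the decomposition $V=V_0\perp V_+$ with $V_0=C_V(R)$ and $V_+=[V,R]$, and correspondingly $R=R_0\times R_+$, $\tilde b=\tilde b_0\times \tilde b_+$, $\tilde\theta=\tilde\theta_0\times\tilde\theta_+$. By Remark \ref{action-on-rad}, after composing $\sigma$ with a suitable inner automorphism of $\tilde G$, we may assume $\sigma$ stabilizes this orthogonal decomposition, so $\sigma$ acts factorwise on all the associated data.

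For the $V_0$-factor, the weight is just a character $\tilde\theta_0=1_{\langle -1_{V_0}\rangle}\times \theta_0$ of $\ell$-defect zero on $\tilde G_0$, where $\theta_0=\chi_{s_0,\kappa}\in\Irr(G_0)$ for $G_0\cong \SO_{2n_0+1}(q)$. Applying Proposition \ref{act-on-irr} to $\theta_0$ gives $\theta_0^\sigma=\chi_{{\sigma^*}^{-1}(s_0),{\sigma^*}^{-1}\kappa}$, so that the $s_0$-part of $s$ becomes ${\sigma^*}^{-1}(s_0)$ and the second component $\kappa$ becomes ${\sigma^*}^{-1}\kappa$. For the $V_+$-factor, $(R_+,\tilde\varphi_+)$ is an $\ell$-weight of $\tilde G_+=I(V_+)$ in the even-dimensional orthogonal setting treated in Section \ref{wei-spec-case}. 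Lemma \ref{action-basic} (equivalently Corollary \ref{action-basic-par}) tells us that $(R_+,\tilde\varphi_+)^\sigma$ carries the label ${\sigma^*}^{-1}K$. Moreover, since $\sigma^*$ permutes the polynomials $\Gamma\in\cF$ and the multiplicities $t_{\Gamma,\delta,i}$ in the decomposition of $\tilde\theta_+$ transform as $t_{\Gamma,\delta,i}\mapsto t_{{\sigma^*}^{-1}\Gamma,\delta,i}$, the constructed semisimple element $s_+$ becomes ${\sigma^*}^{-1}(s_+)$.

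Combining the two factors, $s=s_0\times s_+$ becomes ${\sigma^*}^{-1}(s_0)\times{\sigma^*}^{-1}(s_+)={\sigma^*}^{-1}(s)$ (as a $G^*$-conjugacy class), $\kappa$ becomes ${\sigma^*}^{-1}\kappa$, and $K$ becomes ${\sigma^*}^{-1}K$, which is exactly the claimed label. The main technical point I expect to need care with is the compatibility of the two independent applications of ${\sigma^*}^{-1}$: namely, that the one coming from Jordan decomposition on $G_0$ (Proposition \ref{act-on-irr}) and the one coming from the action on basic-subgroup labels on $G_+$ (Lemma \ref{action-basic}) refer to the \emph{same} dual automorphism, which is ensured by the definition of $\sigma^*$ as dual to $\sigma$ over $\bG$. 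Once this identification is made, the decomposition $i\cW_\ell(G)\ni(s,\kappa,K)\leftrightarrow((s_0,\kappa),(s_+,K))$ is $\sigma$-equivariant factor by factor, yielding the proposition.
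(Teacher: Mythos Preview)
Your proposal is correct and follows essentially the same approach as the paper's own proof: both decompose along $V=V_0\perp V_+$, invoke Remark \ref{action-on-rad} to assume $\sigma$ respects this decomposition, apply Proposition \ref{act-on-irr} on the $V_0$-factor to handle $(s_0,\kappa)$, and apply Lemma \ref{action-basic} on the $V_+$-factor to handle $K$ (and the $s_+$-part of $s$). Your explicit remark on why the two occurrences of ${\sigma^*}^{-1}$ are compatible is a welcome clarification that the paper leaves implicit.
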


\begin{proof}
We want to find which triple corresponds to $(R,\varphi)^\sigma$.
	Assume it be $(s',\kappa',K')$.
	First, $R^\sigma=R$, $\tilde C_0^\sigma=\tilde N_0^\sigma=\tilde C_0=\tilde N_0$, $\tilde C_+^\sigma=\tilde C_+$ and $\tilde N_+^\sigma=\tilde N_+$ by Remark \ref{action-on-rad}..
	Now, $\tilde \varphi^\sigma=\tilde \varphi_0^\sigma\times\tilde \varphi_+^\sigma$.
	$\varphi_0=\Res^{\tilde C_0}_{C_0}(\tilde\varphi_0)$ is of the form $\chi_{s_0,\kappa}$ by construction.
	By Proposition \ref{act-on-irr}, $\chi_{s_0,\kappa}^\sigma=\chi_{{\sigma^*}^{-1}(s_0),{{\sigma^*}^{-1}\kappa}}$.
	Then we have $\kappa'={{\sigma^*}^{-1}\kappa}$.
	
	Secondly, $\sigma$ stabilizes every $\tilde C_{\Gamma,\delta,i}$.
Now $\tilde\theta_{\Gamma,\delta,i}^\sigma=\tilde\theta_{{}^{{\sigma^*}^{-1}}\Gamma,\delta,i}$ corresponds to a semisimple element with a unique elementary divisor $\Gamma$ of multiplicity 
$\beta_\Gamma e_\Gamma \ell^\delta$.
	Up to conjugacy, we have $s'=\sigma(s)$.
	
	Finally, by the argument above,
	we may denote $R_{\Gamma,\delta,i}^\sigma=R_{{^\sigma\Gamma},\delta,i}$, $\tilde N_{\Gamma,\delta,i}^\sigma=\tilde N_{{^\sigma\Gamma},\delta,i}$ and
	$\tilde C_{\Gamma,\delta,i}^\sigma=\tilde C_{{^\sigma\Gamma},\delta,i}$ although the corresponding terms indexed by $\Gamma$ and $^{{\sigma^*}^{-1}}\Gamma$ are actually the same.
	Then $K'={{\sigma^*}^{-1} K}$ follows by Lemma \ref{action-basic}.
\end{proof}

Let $i\cW'_\ell(G)$ be the set of $G^*$-conjugacy classes  of triples $(s,\kappa,\mu)$
such that $s$ is a semisimple $\ell'$-element of $G^*$, $\kappa\in\cC(s)$, $\mu\in \prod_\Gamma \mathcal P(\beta_\Gamma e_\Gamma,w_\Gamma)$.
Then by \cite[Prop.(3.7)]{Ol93}, $i\cW'_\ell(G)$ is also a labeling set for $\cW_\ell(G)$.
Now by Corollary \ref{action-basic-par} and Proposition \ref{act:weights}, we have

\begin{cor}\label{act-wei-b}
Let $(R,\varphi)$ be an $\ell$-weight of $G$ corresponding to $(s,\kappa,\mu)\in i\cW'_\ell(G)$, then
$(R,\varphi)^\sigma$  corresponds to $({\sigma^*}^{-1}(s),{\sigma^*}^{-1}\kappa,{\sigma^*}^{-1}\mu)$.
\end{cor}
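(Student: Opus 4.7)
The plan is to deduce the corollary by combining Proposition \ref{act:weights} (which already tracks the action of $\sigma$ in the $(s,\kappa,K)$-labeling of $i\cW_\ell(G)$) with Corollary \ref{action-basic-par} (which handles the basic-subgroup case in the $\mathcal P(\beta_\Gamma e_\Gamma,w_\Gamma)$-labeling), using the canonical bijection between the $K$-datum and the $\mu$-datum supplied by \cite[Prop.(3.7)]{Ol93}.

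First, I would recall precisely how a triple $(s,\kappa,K)\in i\cW_\ell(G)$ is re-encoded as a triple $(s,\kappa,\mu)\in i\cW'_\ell(G)$: the first two components are unchanged, while each function $K_\Gamma\colon\bigcup_\delta\sC_{\Gamma,\delta}\to\{\ell\text{-cores}\}$ with $\sum_{\delta,i,j}\ell^\delta|K_\Gamma(\psi_{\Gamma,\delta,i,j})|=w_\Gamma$ is translated via the canonical Olsson-type bijection into a tuple $\mu_\Gamma\in\mathcal P(\beta_\Gamma e_\Gamma,w_\Gamma)$. This bijection is fixed once and for all on the level of combinatorial data, and the tuples $(\mu_\Gamma)_\Gamma$ assemble into $\mu\in\prod_\Gamma\mathcal P(\beta_\Gamma e_\Gamma,w_\Gamma)$.

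Next I would invoke Proposition \ref{act:weights} to conclude that $(R,\varphi)^\sigma$ carries the $(s,\kappa,K)$-label $({\sigma^*}^{-1}(s),{\sigma^*}^{-1}\kappa,{\sigma^*}^{-1}K)$, where the action of $\sigma^*$ on $K$ is the one defined componentwise by $(\sigma^*K)_{{}^{\sigma^*}\Gamma}=K_\Gamma$. The heart of the corollary is then to show that, under the Olsson bijection, this permutation of the $K$-components matches the permutation $\mu\mapsto {\sigma^*}^{-1}\mu$ of the $\mathcal P(\beta_\Gamma e_\Gamma,w_\Gamma)$-components. Since $m_\Gamma=m_{{}^{\sigma^*}\Gamma}$, $\alpha_\Gamma=\alpha_{{}^{\sigma^*}\Gamma}$, $\beta_\Gamma=\beta_{{}^{\sigma^*}\Gamma}$, $e_\Gamma=e_{{}^{\sigma^*}\Gamma}$, and (with our chosen labellings from the previous subsection) $\sC_{\Gamma,\delta}$ and $\sC_{{}^{\sigma^*}\Gamma,\delta}$ are identified via $\tilde\psi_{\Gamma,\delta,i,j}\mapsto\tilde\psi_{{}^{\sigma^*}\Gamma,\delta,i,j}$, the combinatorial bijection of \cite[Prop.(3.7)]{Ol93} commutes with this relabeling. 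Thus the translation is compatible: ${\sigma^*}^{-1}K$ corresponds to ${\sigma^*}^{-1}\mu$ defined by $({\sigma^*}^{-1}\mu)_{{\sigma^*}^{-1}\Gamma}=\mu_\Gamma$.

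The principal obstacle is only bookkeeping: we must check that the \emph{same} choice of Olsson bijection is used on both sides, so that no hidden non-canonical choice is affected by $\sigma^*$. This is guaranteed by the uniformity of our labellings $\sC_{\Gamma,\delta}=\{\tilde\psi_{\Gamma,\delta,i,j}\}$ chosen to satisfy (\ref{weight:convention-act}), together with the canonical nature of the bijection between $\Psi_\Gamma(s,\kappa)$ and $\mathcal P(\beta_\Gamma e_\Gamma,w_\Gamma)$ (or $\mathcal P'(2e,w_\Gamma)$ in the degenerate case) recalled in (\ref{bijection-sym-skew-par}). Once this compatibility is in place, the corollary is immediate from Proposition \ref{act:weights} and Corollary \ref{action-basic-par}: the first component transforms by ${\sigma^*}^{-1}$, the second (lying on the $G_0$-part, handled as in \cite{CS13}) by ${\sigma^*}^{-1}$ via Proposition \ref{act-on-irr}, and the third, by combining Corollary \ref{action-basic-par} componentwise over $\Gamma$, also by ${\sigma^*}^{-1}$.
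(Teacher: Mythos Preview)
Your proposal is correct and follows essentially the same approach as the paper: the paper simply states the corollary as an immediate consequence of Corollary \ref{action-basic-par} and Proposition \ref{act:weights}, and your argument spells out in detail why the combination works, namely by checking that the Olsson bijection between the $K$-datum and the $\mu$-datum is compatible with the ${\sigma^*}^{-1}$-action on the $\Gamma$-indexing.
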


\begin{thm}\label{equiv-bij-so}
	Let $G=\SO_{2n+1}(q)$, where $q=p^f$ is a power of an odd prime $p$,
	$\ell\ne p$ is an odd prime and $n\ge 2$. Assume that $\ell$ is linear.
		Then the blockwise bijection between $\IBr_\ell(G)$ and $\cW_\ell(G)$ given in \cite{An94} is equivariant under  the action of field automorphisms.
\end{thm}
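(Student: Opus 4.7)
The plan is to make An's bijection $\IBr_\ell(G)\to\cW_\ell(G)$ explicit at the level of the labeling sets $i\IBr_\ell(G)$ and $i\cW'_\ell(G)$, and then read off the $\sigma$-equivariance from Proposition~\ref{action-ibr} and Corollary~\ref{act-wei-b}. Concretely, for $(s,\mu)\in i\IBr_\ell(G)$ with $\mu=\prod_\Gamma \mu_\Gamma$, An's construction sends $\phi_{s,\mu}$ to an $\ell$-weight whose first component is $s$, whose second component $\kappa=\prod_\Gamma\kappa_\Gamma$ is obtained by taking the $e_\Gamma$-core of each $\mu_\Gamma$, and whose third component is the partition-tuple image of the $e_\Gamma$-quotient under the bijection (\ref{bijection-sym-skew-par}) established from \cite[Prop.(3.7)]{Ol93} and the proofs of \cite[Prop.~14, 15]{Ol86}. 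This gives a concrete map $\Phi:i\IBr_\ell(G)\to i\cW'_\ell(G)$ matching $\ell$-blocks on both sides, because a $\phi_{s,\mu}$ belongs to $B(s,\kappa)$ precisely when $\kappa_\Gamma$ is the $e_\Gamma$-core of $\mu_\Gamma$ for every $\Gamma$, and a weight with label $(s,\kappa,\mu')$ also belongs to $B(s,\kappa)$.

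Next I would observe that both sides of the bijection carry a $\sigma$-action described by the same permutation action of $\sigma^*$ on the polynomial indexing set $\cF$. By Proposition~\ref{action-ibr},
\[
\phi_{s,\mu}^\sigma=\phi_{{\sigma^*}^{-1}(s),\,{\sigma^*}^{-1}\mu},
\]
where $({\sigma^*}^{-1}\mu)_{{\sigma^*}^{-1}\Gamma}=\mu_\Gamma$, and by Corollary~\ref{act-wei-b} the weight $(R,\varphi)$ labeled by $(s,\kappa,\mu')$ satisfies
\[
(R,\varphi)^\sigma\;\longleftrightarrow\;({\sigma^*}^{-1}(s),\,{\sigma^*}^{-1}\kappa,\,{\sigma^*}^{-1}\mu').
\]
Thus it remains exactly to verify that the combinatorial map $\Phi$ commutes with the permutation action ${\sigma^*}^{-1}$ on $\cF$, i.e.\ $\Phi\circ{\sigma^*}^{-1}={\sigma^*}^{-1}\circ\Phi$.

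This commutativity is local in $\Gamma$: the $e_\Gamma$-core/quotient decomposition of $\mu_\Gamma$ is carried to the $e_{{\sigma^*}^{-1}\Gamma}$-core/quotient decomposition of $({\sigma^*}^{-1}\mu)_{{\sigma^*}^{-1}\Gamma}=\mu_\Gamma$, and $e_\Gamma=e_{{\sigma^*}^{-1}\Gamma}$, $\delta_\Gamma=\delta_{{\sigma^*}^{-1}\Gamma}$, $\varepsilon_\Gamma=\varepsilon_{{\sigma^*}^{-1}\Gamma}$ because $\sigma^*$ permutes $\cF_0$, $\cF_1$, $\cF_2$ within themselves and preserves the multiplicative orders modulo $\ell$ of roots. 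Hence the $e_\Gamma$-core data is moved bodily from the $\Gamma$-slot to the ${\sigma^*}^{-1}\Gamma$-slot, and the same holds for the $e_\Gamma$-quotient image in $\mathcal P(\beta_\Gamma e_\Gamma,w_\Gamma)$. Since the bijections in (\ref{bijection-sym-skew-par}) depend only on the combinatorics of the symbols/partitions and on the numerical data $(\beta_\Gamma,e_\Gamma,w_\Gamma)$, one may (and I would) choose the labelings of the various $\Psi_\Gamma(s,\kappa)$ uniformly across a $\sigma^*$-orbit of $\Gamma$'s, so that $\Phi$ becomes manifestly $\sigma^*$-equivariant.

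The main obstacle is the bookkeeping in the degenerate-symbol case of (\ref{bijection-sym-skew-par}), where $\mathcal P'(2e,w_\Gamma)$ replaces $\mathcal P(\beta_\Gamma e_\Gamma,w_\Gamma)$ and each partition-tuple in $\mathcal P_0(2e,w_\Gamma)$ is counted twice. Here I must ensure that the two copies (which correspond to the two images of a degenerate symbol under the operation $'$) are transported correctly by $\sigma^*$. This is handled by noting that $\Gamma\in\cF_0$ is fixed by any field automorphism, so when the issue arises the action of $\sigma^*$ on the $\Gamma$-slot is trivial and the two copies need only be consistently labeled within that slot; the invariance of the corresponding character under $\sigma$ follows from the same kind of argument as in Remark~\ref{act-field-wei}. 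Once this is settled, combining $\Phi$ with Proposition~\ref{action-ibr} and Corollary~\ref{act-wei-b} immediately yields $\sigma$-equivariance of the bijection between $\IBr_\ell(G)$ and $\cW_\ell(G)$, proving the theorem.
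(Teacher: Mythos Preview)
Your proposal is correct and follows essentially the same route as the paper: realize An's bijection at the level of the labeling sets $i\IBr_\ell(G)$ and $i\cW'_\ell(G)$ via the $e_\Gamma$-core/quotient decomposition, and then invoke Proposition~\ref{action-ibr} and Corollary~\ref{act-wei-b} together with the evident ${\sigma^*}$-equivariance of that combinatorial map. Your careful treatment of the degenerate-symbol case is in fact unnecessary here, since for $G=\SO_{2n+1}(q)$ the dual $G^*=\Sp_{2n}(q)$ has symplectic eigenspaces $V^*_\Gamma(s)$ for $\Gamma\in\cF_0$, so all relevant symbols have odd defect and are never degenerate; but this over-caution does no harm.
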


\begin{proof}
By Theorem \ref{basicset}, the set $\cE(G,\ell')$ is a basic set of $\IBr_\ell(G)$ and
by \cite{GH97}, the corresponding $\ell$-decomposition matrix of $G$ is lower unitriangular  since $\ell$ is linear.	
Then there is a canonical bijection $\Xi$ from $i\IBr_\ell(G)$ to $i\cW'_\ell(G)$.
By the construction of $\Xi$ there,
$\Xi$ is $E$-equivariant.
Thus the assertion follows by Proposition	\ref{action-ibr} and
Corollary \ref{act-wei-b}.
\end{proof}

\subsection{Weights of $\Omega_{2n+1}(q)$}
Recall that $S=\Omega_{2n+1}(q)$ and
$z=-I_{2n}\in Z(\Sp_{2n}(q))$.
We may identify $\hat z$ with $1_{Z(\tilde G)}\times \hat z$ and regard $\hat z$ as a linear character of $\tilde G$.
We may assume $R_{\Gamma,\delta,i}=R_{{z.\Gamma},\delta,i}$, $N_{\Gamma,\delta,i}=N_{{z.\Gamma},\delta,i}$, and
$C_{\Gamma,\delta,i}=C_{{z.\Gamma},\delta,i}$.
We also may regard $\hat{z}$ as a non-trivial linear character of $C_{\Gamma,\delta,i}$ by Lemma \ref{detofC}.
Then by \cite[Prop.~12.6]{DM91},
$\hat{z} \tilde\theta_\Gamma=\pm\hat{z} R^{\tilde C_\Gamma}_{\tilde T_\Gamma}(\widehat {s_\Gamma})=
\pm R^{\tilde C_\Gamma}_{\tilde T_\Gamma}(\widehat {-s_\Gamma})$,
and then $\hat{z} \tilde\theta_\Gamma=\tilde \theta_{z.\Gamma}$.
Thus $\hat z\theta_{\Gamma,\delta,i}=\theta_{z.\Gamma,\delta,i}$.
So we may choose the labeling of $\sC_{\Gamma,\delta}$ and $\sC_{^{{\sigma^*}^{-1}}\Gamma,\delta}$ such that
\begin{equation}\label{weight:convention-eps}
\hat z\psi_{\Gamma,\delta,i,j}=\psi_{z.\Gamma,\delta,i,j}.
\end{equation}

We define $-K=\prod_{\Gamma} (-K)_{z.\Gamma}$ with $ (-K)_{z.\Gamma}=K_\Gamma$.
Since $N_G(R)/N_S(R)\cong G/S$, we may regard $\hat z$ as a linear character of $N_G(R)$ whose  kernel is $N_S(R)$.

\begin{prop}\label{dec-weig}
Let $(R,\varphi)$ be the $\ell$-weight of $G$ with label $(s,\kappa,K)^{G^*}$.
Then $(R,\hat z\varphi)$ is the $\ell$-weight of $G$ with label $(-s,-\kappa,-K)^{G^*}$.
\end{prop}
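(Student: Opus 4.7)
The plan is to apply Corollary \ref{lin-wei} to produce the new weight and then trace the label through the construction preceding the statement, reducing the verification to the already-established $\hat z$-compatibilities for the two blocks $V_0$ and $V_+$ of the decomposition.

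First, since $\hat z$ is a linear $\ell'$-character of $G$, Corollary \ref{lin-wei} immediately gives that $(R,\hat z\varphi)$ is an $\ell$-weight; moreover, if $(R,\varphi)$ lies in the block $B(s,\kappa)$, then $(R,\hat z\varphi)$ lies in $\hat z\otimes B(s,\kappa) = B(-s,-\kappa)$ by Proposition \ref{z-times-char} applied on the level of basic sets (combined with Lemma \ref{ind-block}). This already pins down the first two components of the new label as $-s$ and $-\kappa$, provided we know the triple label is well-defined from the weight. To nail down the third component $K$, we must follow the recipe in the construction of the label for $(R,\hat z\varphi)$.

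I would pass to the $\tilde G$-side: $(R,\hat z\varphi)$ corresponds to $(R,\hat z\tilde\varphi)$ where $\hat z$ is extended to $\tilde G = Z(\tilde G)\times G$ by being trivial on $Z(\tilde G)$. Writing $\tilde\varphi = \Ind_{\tilde N(\tilde\theta)}^{\tilde N}\tilde\psi$ with $\tilde\theta$ the canonical character of the associated root block, one gets $\hat z\tilde\varphi = \Ind_{\tilde N(\tilde\theta)}^{\tilde N}((\Res \hat z)\tilde\psi)$, and the new canonical character is $\hat z\tilde\theta$. Using the decomposition $\tilde C = I(V_0)\times C_+$ arising from $V = V_0\perp V_+$, and the multiplicativity of the spinor norm on this direct sum, the restriction $\hat z|_{\tilde C}$ factors as $\hat z_0\otimes \hat z_+$, where $\hat z_0,\hat z_+$ are the respective nontrivial linear characters obtained from the spinor norms on $I(V_0)$ and on $C_+\le I_0(V_+)$. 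Hence $\hat z\tilde\theta = (\hat z_0\tilde\theta_0)\otimes(\hat z_+\tilde\theta_+)$.

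Now the two factors are handled separately. On the $V_0$-side, $\tilde\theta_0 = 1_{\langle-1_{V_0}\rangle}\times\theta_0$ with $\theta_0 = \chi_{s_0,\kappa}$; Proposition \ref{z-times-char} gives $\hat z_0\theta_0 = \chi_{-s_0,-\kappa}$, so the $V_0$-part of the new label is indeed $(-s_0,-\kappa)$. On the $V_+$-side, the identity $\hat z\tilde\theta_\Gamma = \tilde\theta_{z.\Gamma}$ (derived in the paragraph preceding the proposition from Deligne--Lusztig theory, \cite[Prop.~12.6]{DM91}) promotes under the convention for the inflated pieces to $\hat z\tilde\theta_{\Gamma,\delta,i} = \tilde\theta_{z.\Gamma,\delta,i}$; combined with the labeling convention (\ref{weight:convention-eps}), each factor $\tilde\theta_{\Gamma,\delta,i}^{t_{\Gamma,\delta,i}}$ in the decomposition of $\tilde\theta_+$ becomes $\tilde\theta_{z.\Gamma,\delta,i}^{t_{\Gamma,\delta,i}}$, so the multiplicities $t_{\Gamma,\delta,i}$ migrate from $\Gamma$ to $z.\Gamma$; this produces the semisimple part $-s_+$ and, via the explicit Clifford form (\ref{weights:psi}) with the same $\kappa_{\Gamma,\delta,i,j}$ now attached to the $\psi_{z.\Gamma,\delta,i,j}$, exactly the function $-K$ defined by $(-K)_{z.\Gamma}=K_\Gamma$. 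Reassembling $s = s_0\times s_+$ yields the label $(-s,-\kappa,-K)$, as claimed.

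The main obstacle is the bookkeeping in the middle step: verifying that the restriction $\hat z|_{\tilde C}$ really does split as $\hat z_0\otimes\hat z_+$ under the identification $\tilde C\cap G = G_0\times C_+$, so that multiplying $\tilde\theta_0\otimes\tilde\theta_+$ by $\hat z$ genuinely multiplies each tensor factor independently. This rests on the multiplicativity of the spinor norm on orthogonal direct sums, together with the fact that $Z(\tilde G)\le\ker\hat z$ absorbs the discrepancy coming from elements of $I(V_0)$ of determinant $-1$. Once this is in hand, the rest is a direct substitution into the construction recalled before Proposition \ref{act:weights}, paralleling the proof of that proposition with $\hat z$ replacing $\sigma$.
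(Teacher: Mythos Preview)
Your proposal is correct and follows essentially the same approach as the paper: decompose along $V=V_0\perp V_+$, apply Proposition \ref{z-times-char} on the $V_0$-factor to obtain $(-s_0,-\kappa)$, use the identity $\hat z\tilde\theta_{\Gamma,\delta,i}=\tilde\theta_{z.\Gamma,\delta,i}$ together with (\ref{weight:convention-eps}) on the $V_+$-factor, and push through the explicit Clifford form (\ref{weights:psi}) to see $K'_{z.\Gamma}=K_\Gamma$. The only organisational difference is that you invoke Corollary \ref{lin-wei} and Lemma \ref{ind-block} at the outset to identify the block of $(R,\hat z\varphi)$ as $B(-s,-\kappa)$, whereas the paper recovers this information as a by-product of the same $V_0$/$V_+$ computation; and the ``obstacle'' you flag about the splitting of $\hat z|_{\tilde C}$ is precisely what the paper absorbs into the conventions stated just before the proposition (regarding $\hat z$ as a linear character of $\tilde G$ trivial on $Z(\tilde G)$, and of each $C_{\Gamma,\delta,i}$ via Lemma \ref{detofC}).
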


\begin{proof}
We want to find which triple corresponds to $(R,\hat z\varphi)$.
	Assume it be $(s',\kappa',K')$.	
	Now $\tilde\varphi=1_{Z(\tilde G)}\times\varphi$, so $\hat z\tilde\varphi=1_{Z(\tilde G)}\times\hat z\varphi$.	
	First, $\hat z\tilde \varphi=\hat z\tilde \varphi_0\times\hat z\tilde \varphi_+$.
	$\varphi_0=\Res^{\tilde G_0}_{G_0} \tilde \varphi_0$ is of the form $\chi_{s_0,\kappa}$ by construction.
	By Proposition \ref{z-times-char}, $\hat z\chi_{s_0,\kappa}=\chi_{-s_0,-\kappa}$.
	Then we have $\kappa'={-\kappa}$.
	
	Secondly,
	we have	$\hat z\tilde \theta_{\Gamma,\delta,i}=\tilde \theta_{z.\Gamma,\delta,i}$ as above.
	Note that
	$\hat z\tilde\theta_{\Gamma,\delta,i}$ corresponds to a semisimple element with a unique elementary divisor $\Gamma$ of multiplicity  
	$\beta_\Gamma e_\Gamma\ell^\delta$ and $\tilde\theta_{z.\Gamma,\delta,i}$ corresponds to 
	a semisimple element with a unique elementary divisor $z.\Gamma$ of multiplicity  
	$\beta_\Gamma e_\Gamma\ell^\delta$.
	Up to conjugacy, we have $s'=-s$.
	
	Finally, by the conventions above, we may assume $R_{\Gamma,\delta,i}=R_{{z.\Gamma},\delta,i}$, $\tilde N_{\Gamma,\delta,i}=\tilde N_{{z.\Gamma},\delta,i}$, and
	$\tilde C_{\Gamma,\delta,i}=\tilde C_{{z.\Gamma},\delta,i}$.
	To determine $K'$, we note that $\hat z\tilde \psi_+=\prod_{\Gamma,\delta,i}\hat z \tilde \psi_{\Gamma,\delta,i}$.
	By (\ref{weights:psi}), $\hat z\tilde \psi_{\Gamma,\delta,i}$ is
	\begin{align*}
	&\hat z\Ind
	_{\tilde N_{\Gamma,\delta,i}(\theta_{\Gamma,\delta,i})\wr
		\prod_j\fS(t_{\Gamma,\delta,i,j})}
	^{\tilde N_{\Gamma,\delta,i}(\theta_{\Gamma,\delta,i})\wr
		\fS(t_{\Gamma,\delta,i})}
	\left(\overline{\prod_j\tilde \psi_{\Gamma,\delta,i,j}^{t_{\Gamma,\delta,i,j}}}\right)\cdot
	\prod_j\phi_{\kappa_{\Gamma,\delta,i,j}}\\
	=&\Ind
	_{\tilde N_{z.\Gamma,\delta,i}(\theta_{z.\Gamma,\delta,i})\wr
		\prod_j\fS(t_{\Gamma,\delta,i,j})}
	^{\tilde N_{z.\Gamma,\delta,i}(\theta_{z.\Gamma,\delta,i})\wr
		\fS(t_{\Gamma,\delta,i})}\hat z
	\left(\overline{\prod_j\tilde \psi_{\Gamma,\delta,i,j}^{t_{\Gamma,\delta,i,j}}}\right)\cdot
	\prod_j\phi_{\kappa_{\Gamma,\delta,i,j}}.
	\end{align*}
	Since $\hat z\tilde \theta_{\Gamma,\delta,i}=\tilde \theta_{z.\Gamma,\delta,i}$, we have $\tilde N_{\Gamma,\delta,i}(\tilde \theta_{\Gamma,\delta,i})=
\tilde 	N_{{z.\Gamma},\delta,i}(\tilde \theta_{{z.\Gamma},\delta,i})$.
		We can fix the way to extend $\prod_j\tilde \psi_{\Gamma,\delta,i,j}^{t_{\Gamma,\delta,i,j}}$ as in \cite[Lem. 25.5]{Hu98}, then we have that
	$\hat z\left(\overline{\prod_j\tilde \psi_{\Gamma,\delta,i,j}^{t_{\Gamma,\delta,i,j}}}\right)
	=\overline{\prod_j\left(\hat z\tilde \psi_{\Gamma,\delta,i,j}\right)^{t_{\Gamma,\delta,i,j}}}.$
	Since $\hat z\tilde \psi_{\Gamma,\delta,i,j}=\tilde \psi_{z.\Gamma,\delta,i,j}$ by (\ref{weight:convention-eps}), $\hat z\tilde \psi_{\Gamma,\delta,i}$ would be
	$$\Ind
	_{\tilde N_{{z.\Gamma},\delta,i}(\tilde \theta_{{z.\Gamma},\delta,i})\wr
		\prod_j\fS(t_{\Gamma,\delta,i,j})}
	^{\tilde N_{{z.\Gamma},\tilde \delta,i}(\theta_{{z.\Gamma},\delta,i})\wr
		\fS(t_{\Gamma,\delta,i})}
	\overline{\prod_j\tilde \psi_{{z.\Gamma},\delta,i,j}^{t_{\Gamma,\delta,i,j}}}\cdot
	\prod_j\phi_{\kappa_{\Gamma,\delta,i,j}}.$$
	Then $K'_{z.\Gamma}=K_\Gamma$ which is just $K'={z. K}$.
	Thus we complete the proof.
\end{proof}

\begin{cor}\label{act-wei-b-omega}
	Let $(R,\varphi)$ be an $\ell$-weight of $G$ corresponding to $(s,\kappa,\mu)\in i\cW'_\ell(G)$, then
	$(R,\hat z\varphi)$  corresponds to $(-s,-\kappa,-\mu)$.
\end{cor}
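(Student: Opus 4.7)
The plan is to deduce this directly from Proposition \ref{dec-weig} by passing from the $K$-coordinate (a function to $\ell$-cores) to the partition-tuple coordinate $\mu$ via the canonical bijection recalled immediately before Corollary \ref{act-wei-b}. This is the analogue of how Corollary \ref{act-wei-b} is obtained from Proposition \ref{act:weights} together with Corollary \ref{action-basic-par}.

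First, I would apply Proposition \ref{dec-weig}: if $(R,\varphi)$ has label $(s,\kappa,K)^{G^*}\in i\cW_\ell(G)$, then $(R,\hat z\varphi)$ has label $(-s,-\kappa,-K)^{G^*}$, where $(-K)_{z.\Gamma}=K_\Gamma$. It therefore suffices to check that, under the canonical bijection $K_\Gamma\leftrightarrow \mu_\Gamma$ from $\{K_\Gamma:\bigcup_\delta\sC_{\Gamma,\delta}\to\{\ell\text{-cores}\}\mid \sum_{\delta,i,j}\ell^\delta|K_\Gamma(\psi_{\Gamma,\delta,i,j})|=w_\Gamma\}$ to $\mathcal P(\beta_\Gamma e_\Gamma,w_\Gamma)$ coming from \cite[Prop.~(3.7)]{Ol93} and the proof of \cite[(1A)]{AF90}, the assignment $K\mapsto -K$ corresponds to $\mu\mapsto -\mu$.

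The point is that this canonical bijection depends only on the numerical data $(\beta_\Gamma e_\Gamma,w_\Gamma)$ together with the indexing of $\bigcup_\delta\sC_{\Gamma,\delta}$. Since $z.\Gamma$ and $\Gamma$ have the same reduced degree and lie in the same $\cF_i$, we have $\beta_{z.\Gamma}=\beta_\Gamma$ and $e_{z.\Gamma}=e_\Gamma$; similarly $m_{z.\Gamma}(-s)=m_\Gamma(s)$ and the same for $\kappa$, so $w_{z.\Gamma}=w_\Gamma$. Moreover the convention (\ref{weight:convention-eps}) fixes an identification of the indexed sets $\sC_{\Gamma,\delta}$ and $\sC_{z.\Gamma,\delta}$ via $\psi_{\Gamma,\delta,i,j}\mapsto \psi_{z.\Gamma,\delta,i,j}$. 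Under these identifications the two bijections (for $\Gamma$ and for $z.\Gamma$) are literally the same, so $K_\Gamma\leftrightarrow\mu_\Gamma$ forces $(-K)_{z.\Gamma}=K_\Gamma\leftrightarrow\mu_\Gamma=(-\mu)_{z.\Gamma}$. Taking the product over all $\Gamma$ yields $(-K)\leftrightarrow (-\mu)$, and combined with Proposition \ref{dec-weig} this proves the corollary.

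I do not expect any serious obstacle: the argument is essentially bookkeeping, running parallel to the derivation of Corollary \ref{act-wei-b} from Proposition \ref{act:weights} and Corollary \ref{action-basic-par}. The only point that requires care is verifying that the convention (\ref{weight:convention-eps}) (which was fixed earlier precisely to make this kind of translation transparent) indeed matches the indexings used in the definition of the canonical bijection between $K$-labels and $\mu$-labels, so that no extra permutation on the $\mu$-coordinates is introduced when one passes from $\Gamma$ to $z.\Gamma$.
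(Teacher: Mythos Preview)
Your proposal is correct and matches the paper's approach: the corollary is stated immediately after Proposition~\ref{dec-weig} without proof, being the direct translation of that result from the $K$-label to the $\mu$-label via the canonical bijection of \cite[Prop.~(3.7)]{Ol93}, exactly parallel to how Corollary~\ref{act-wei-b} follows from Proposition~\ref{act:weights}. Your check that the convention~(\ref{weight:convention-eps}) makes the bijections for $\Gamma$ and $z.\Gamma$ agree is the right point, and the paper leaves it implicit.
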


	Let $\mathcal W_\ell(G)$ be a complete set of representatives of all $G$-conjugacy classes of $\ell$-weights of $G$. We may assume that for $(R_1,\varphi_1),(R_2,\varphi_2)\in \mathcal W_\ell(G)$, $R_1$ and $R_2$ are $G$-conjugate if and only if $R_1=R_2$.
	
	Let $S=\Omega_{2n+1}(q)$, then $\Rad_\ell(G)=\Rad_\ell(S)$.
	Now define a equivalence relation on $\mathcal W_\ell(G)$ such that for $(R_1,\varphi_1),(R_2,\varphi_2)\in \mathcal W_\ell(G)$, $(R_1,\varphi_1)\sim(R_2,\varphi_2)$ if and only if $R_1=R_2$ and $\varphi_1=\varphi_2 \eta$ for some $\eta\in\Irr(N_G(R_1)/N_S(R_1))$.
	Then by  \cite[Lem.~2.4]{Fe18} and Corollary \ref{relaofradicalconj},
	the set $\{(R,\psi)\}$, where $(R,\varphi)$ runs through a complete set of representatives of the equivalence classes of $\mathcal W_\ell(G)/\sim$ and $\psi$ runs through $\Irr(N_S(R)\ |\ \varphi)$, is a complete set of representatives of all $S$-conjugacy classes of $\ell$-weights of $S$.

	Let $(R,\varphi)$ be an $\ell$-weight of $G$, $(R,\psi)$ an $\ell$-weight of $S$ such that $\varphi\in\Irr(N_G(R)\ | \ \psi)$.
	Let $b=\bl_\ell(\varphi)$, $b_0=\bl_\ell(\psi)$ and $B=b ^G$ and $B_0=b_0^S$.
	By \cite[Lem. 2.3]{KS15}, if $b$ covers $b_0$, then $B$ covers $B_0$.
	
	Let $B_0$ be an $\ell$-block of $S$.
	Denote by $\mathcal B_0$ the union of the $\ell$-blocks of $S$ which are $G$-conjugate to $B_0$ and ${\mathcal B}$  the union of the $\ell$-blocks of $G$ which cover $B_0$.
	Then
	\begin{itemize}
		\item if $(R,\varphi)$ is an $\ell$-weight of $G$ belonging to ${\mathcal B}$, then for every $\psi\in\Irr(N_S(R)\ | \ \varphi)$, $(R,\psi)$ is an $\ell$-weight of $S$ belonging to $\mathcal B_0$, and
		\item if $(R,\psi)$ is an $\ell$-weight of $S$ belonging to $\mathcal B_0$, then there exists $\varphi\in\Irr(N_G(R)\ | \ \psi)$ such that $(R,\varphi)$ is an $\ell$-weight of $G$ belonging to ${\mathcal B}$.
	\end{itemize}

\begin{prop}\label{alp-Omega}
	Let $q$ be a power of an odd prime and $\ell$ an odd prime. Assume that $\ell$ is linear.
Then the Alperin weight conjecture \ref{weiconj} holds for every $\ell$-block of the
 group $S=\Omega_{2n+1}(q)$.
\end{prop}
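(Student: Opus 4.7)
The plan is to descend the blockwise bijection $\Xi\colon \IBr_\ell(G)\to\cW_\ell(G)$ supplied by Theorem \ref{equiv-bij-so} to a bijection between the Brauer characters and weights of $B_0$ via Clifford theory for the index-two subgroup $S\unlhd G$. Fix an $\ell$-block $B_0$ of $S$ and let $\mathcal B_0$, $\mathcal B$ be as defined in the paragraphs preceding the statement, so that $\mathcal B$ (resp.\ $\mathcal B_0$) consists of one or two blocks of $G$ (resp.\ $S$), and $\mathcal B$ is precisely the union of the blocks of $G$ covering $B_0$.

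The crucial preliminary step is to observe that $\Xi$ is equivariant not only under the field automorphism but also under the action of $\hat z$, that is, multiplication by $\hat z$ on $\IBr_\ell(G)$ and tensoring $(R,\varphi)\mapsto(R,\hat z\varphi)$ on $\cW_\ell(G)$. On the labels, $\hat z$ acts by $(s,\mu)\mapsto(-s,-\mu)$ on $i\IBr_\ell(G)$ by Proposition \ref{dec-Brauer-char} and by $(s,\kappa,\mu)\mapsto(-s,-\kappa,-\mu)$ on $i\cW'_\ell(G)$ by Corollary \ref{act-wei-b-omega}. A short inspection shows that the reduced degree $\delta_\Gamma$ and the sign $\vare_\Gamma$ are preserved under $\Gamma\mapsto z.\Gamma$, so that $e_{z.\Gamma}=e_\Gamma$; consequently the $e_\Gamma$-core/$e_\Gamma$-quotient correspondence underlying the construction of $\Xi$ in Theorem \ref{equiv-bij-so} commutes with the involutions $\mu\mapsto-\mu$ and $(\kappa,\mu)\mapsto(-\kappa,-\mu)$. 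Hence $\Xi$ is $\langle\hat z\rangle$-equivariant, and restricts to a $\langle\hat z\rangle$-equivariant bijection $\IBr_\ell(\mathcal B)\to\cW_\ell(\mathcal B)$.

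It then remains to apply Clifford theory for $S\unlhd G$ with $|G/S|=2$ to count $|\IBr_\ell(\mathcal B_0)|$ and $|\cW_\ell(\mathcal B_0)|$ in terms of $\hat z$-orbits on the $G$-side. On the Brauer side, a $\hat z$-orbit of size $1$ in $\IBr_\ell(\mathcal B)$ is an induced character $\Ind^G_S\psi$ covering a two-element $G$-orbit $\{\psi,\psi^g\}$ in $\IBr_\ell(\mathcal B_0)$, while a $\hat z$-orbit of size $2$ corresponds to a single $G$-invariant Brauer character of $S$ with two extensions to $G$. On the weight side the description $(R,\varphi)\rightsquigarrow\Irr(N_S(R)\mid\varphi)$ recorded just before the statement says that a $\hat z$-orbit of size $1$ contributes two $S$-weights and one of size $2$ contributes a single $S$-weight. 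Writing $a_i$ (resp.\ $b_i$) for the number of $\hat z$-orbits of size $i$ in $\IBr_\ell(\mathcal B)$ (resp.\ $\cW_\ell(\mathcal B)$), one has
\[|\IBr_\ell(\mathcal B_0)|=2a_1+a_2,\qquad |\cW_\ell(\mathcal B_0)|=2b_1+b_2,\]
and the $\langle\hat z\rangle$-equivariance of $\Xi$ forces $a_i=b_i$. Since the (at most two) blocks in $\mathcal B_0$ are $G$-conjugate and thus share the cardinalities of $\IBr$ and $\cW$, this yields $|\IBr_\ell(B_0)|=|\cW_\ell(B_0)|$.

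The main obstacle is the verification in the second paragraph: ensuring that the canonical $\Xi$ coming from the unitriangularity of the decomposition matrix on $\cE(G,\ell')$ is compatible with An's labelling of weights in a way that genuinely intertwines the two $\hat z$-actions, and that the block correspondence $\hat z\otimes B(s,\kappa)=B(-s,-\kappa)$ is transported across. The preparatory results assembled in this section---Propositions \ref{action-ibr}, \ref{z-times-char}, \ref{dec-Brauer-char}, \ref{act:weights} together with Corollaries \ref{act-wei-b} and \ref{act-wei-b-omega}---are designed to supply exactly these compatibilities.
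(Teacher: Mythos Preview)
Your proof is correct and follows essentially the same approach as the paper: establish that the canonical bijection $\Xi$ (An's bijection, as refined here) is $\langle\hat z\rangle$-equivariant via Proposition~\ref{dec-Brauer-char} and Corollary~\ref{act-wei-b-omega}, and then descend to $S$ by the index-two Clifford counting described just before the statement. The paper's own proof is terser---it simply records that $\hat z\phi=\phi$ iff $\hat z\varphi=\varphi$ and defers to ``the argument above'' and the analogy with \cite[Thm.~1.2]{Fe18}---while you spell out the orbit count and the check that $e_{z.\Gamma}=e_\Gamma$; these are exactly the details that underlie the paper's sentence.
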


\begin{proof}
	The proof here is analogous to \cite[Thm.~1.2]{Fe18}.
Let $\Theta$ be the canonical blockwise bijection between $\IBr_\ell(G)$ and $\cW_\ell(G)$.
For $\phi\in \IBr_\ell(G)$, let $(R,\varphi)=\Theta(\phi)$.
By Proposition \ref{dec-Brauer-char} and Corollary
\ref{act-wei-b-omega},
$\hat z \phi=\phi$ if and only if $\hat z \varphi=\varphi$.
Thus the assertion follows by the argument above.
\end{proof}

By \cite[\S2.5]{GLS98}, $\Aut(S)\cong G\rtimes E$, where $E=\langle F_p\rangle$.

\begin{thm}\label{bi-simple-b}
	Let $S=\Omega_{2n+1}(q)$, where $q=p^f$ is a power of an odd prime $p$,
$\ell\ne p$ is an odd prime and $n\ge 2$.
Assume further that $f$ is odd and $\ell$ is linear.
	Then there exists a blockwise $\Aut(S)$-equivariant bijection between $\IBr_\ell(S)$ and $\cW_\ell(S)$.
\end{thm}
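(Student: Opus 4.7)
The plan is to descend the $E$-equivariant blockwise bijection $\Theta\colon \IBr_\ell(G)\to \cW_\ell(G)$ from Theorem \ref{equiv-bij-so} to an $\Aut(S)$-equivariant blockwise bijection for $S$, using Clifford theory along the index-$2$ inclusion $S\trianglelefteq G$. The essential input is that $\Theta$ is already compatible with the action of $\hat z$, the unique nontrivial linear character of $G$ (which has kernel $S$): Proposition \ref{dec-Brauer-char} describes the $\hat z$-action on $\IBr_\ell(G)$ by $\phi_{s,\mu}\mapsto \phi_{-s,-\mu}$, while Corollary \ref{act-wei-b-omega} describes the $\hat z$-action on $\cW_\ell(G)$ (via the inflation to $N_G(R)$) by $(s,\kappa,\mu)\mapsto(-s,-\kappa,-\mu)$. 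Since these two combinatorial actions coincide under the labelings used in the construction of $\Theta$, we have $\Theta(\hat z\phi)=(R,\hat z\varphi)$ whenever $\Theta(\phi)=(R,\varphi)$.

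Having this, the descent goes as follows. For each $\phi\in\IBr_\ell(G)$ we are in one of two cases: either $\hat z\phi=\phi$, in which case $\Res^G_S\phi$ splits into two $G$-conjugate irreducible Brauer characters of $S$, or $\hat z\phi\ne\phi$, in which case $\Res^G_S\phi\in\IBr_\ell(S)$ and $\{\phi,\hat z\phi\}$ both restrict to it. An entirely parallel dichotomy holds for $\ell$-weights of $G$ versus $S$, as recorded in the paragraphs preceding Proposition \ref{alp-Omega} (using Corollary \ref{relaofradicalconj} to identify $\Rad_\ell(G)=\Rad_\ell(S)$). The $\hat z$-compatibility of $\Theta$ forces the two dichotomies to correspond under $\Theta$, so one obtains a well-defined blockwise bijection $\widetilde\Theta\colon \IBr_\ell(S)\to \cW_\ell(S)$ (as in the proof of Proposition \ref{alp-Omega}).

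Finally, we must check $\Aut(S)$-equivariance. Since $\Aut(S)\cong G\rtimes E$ with $E=\langle F_p\rangle$, the inner part (coming from $G/S$) is already built into the definition of $\widetilde\Theta$ via Clifford theory, and the $E$-part is handled by applying Theorem \ref{equiv-bij-so}. The subtle point is the case where $\phi$ (or the corresponding weight) is $\hat z$-fixed and therefore splits into two pieces in $S$: here one must verify that $E$ permutes the two $S$-pieces on the character side in the same manner as on the weight side. By the $\hat z$-compatibility of $\Theta$ and Proposition \ref{act-on-irr}/Corollary \ref{act-wei-b}, the action of $E$ on the labels $(s,\kappa,K)$ is given in both cases by $(\sigma^*)^{-1}$ acting on the components, so the matching of $S$-pieces is automatic once we fix the labelings coherently, much as was arranged in (\ref{weight:convention-act}) and (\ref{weight:convention-eps}).

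The main obstacle will be the last step: ensuring that in the $\hat z$-fixed case, the splitting of a Brauer character (resp.\ weight) of $G$ into two $S$-constituents can be labeled consistently so that the $E$-action on the two sides is matched. The hypothesis that $f$ is odd enters precisely here, because it guarantees that $|E|$ is odd (in particular coprime to $|G:S|=2$), so $E$ cannot swap the two constituents on one side while fixing them on the other; consequently one can choose the labelings so that $\widetilde\Theta$ is simultaneously equivariant for $G/S$ and for $E$, hence for the whole of $\Aut(S)$. The remaining verifications (compatibility of blocks, behaviour of induction in Clifford theory, and that the conventions fixed in \S\ref{act-wei-IV} transport correctly to $S$) should be routine once these two points are secured.
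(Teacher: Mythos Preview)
Your proposal is correct and follows essentially the same route as the paper's proof. Both arguments descend the $E$-equivariant bijection $\Theta$ for $G$ (Theorem \ref{equiv-bij-so}) to $S$ via the $\hat z$-compatibility established in Proposition \ref{dec-Brauer-char} and Corollary \ref{act-wei-b-omega}, and then resolve the only ambiguity (the matching of the two $S$-constituents in the $\hat z$-fixed case) by observing that the relevant $E$-orbit length divides both $2$ and $f$, hence equals $1$ when $f$ is odd; the paper phrases this as ``$m_1,m_2\le 2$, $m_1\mid f$, $m_2\mid f$, $f$ odd $\Rightarrow m_1=m_2=1$'', which is exactly your coprimality observation.
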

\begin{proof}
It is analogous to the proof of \cite[Prop.~5.19]{Fe18}.
By Theorem \ref{equiv-bij-so} and the proof of Propsition \ref{alp-Omega}, it suffice to show that for any $\phi\in\IBr_\ell(G)$ and any $(R,\varphi)\in\mathcal W_\ell(G)$, 	
$E$ acts trivially on $\IBr_\ell(S\mid \phi)$ and $N_{G\rtimes E}(R)$ acts trivially on $\Irr(N_S(R)\mid \varphi)$.
Let $m_1$ be the length of an orbit of $E$ on $\IBr_\ell(S\mid \phi)$
and $m_2$ the length of an orbit of $N_{G\rtimes E}(R)$ on $\Irr(N_S(R)\mid \varphi)$.
Then $m_1,m_2\le 2$, $m_1\mid f$ and $m_2\mid f$.
Now $f$ is odd, so $m_1=m_2=1$.
This completes the proof.	
\end{proof}

Note that $X=\mathrm{Spin}_{2n+1}(q)$ is the universal $\ell'$-covering group of the simple group $S=\Omega_{2n+1}(q)$ (with $n\ge 3$ and $q$  odd) unless when $n=q=3$ by \cite[\S~6.1]{GLS98}.

\begin{proof}[Proof of Theorem \ref{ibawc-dominate}]
By assumption, $\Aut(S)/S$ is cyclic for $S=X/Z(X)= \Omega_{2n+1}(q)$.	
Thus by Corollary \ref{cyclic-outer-ind}, it suffices to show that there exists a blockwise $\Aut(X)_B$-equivariant bijection between $\IBr_\ell(B)$ and $\cW_\ell(B)$.

By Lemma \ref{ind-quot}, it suffices to show that there exists a blockwise $\Aut(S)$-equivariant bijection between $\IBr_\ell(S)$ and $\cW_\ell(S)$,
which follows by Theorem \ref{bi-simple-b}.	
\end{proof}

\subsection{The unipotent blocks}

\begin{prop}\label{unip-blo-B}
Let $G=\SO_{2n+1}(q)$ and $S=\Omega_{2n+1}(q)$ with $n\ge 2$ and $q$ odd, and $\ell\nmid q$ an odd prime.
Suppose $B$ is a unipotent $\ell$-block of $G$.
Then 
\begin{itemize}
	\item $B$ covers a unique $\ell$-block $b$ of $S$,
	\item $\Res_S^G: \IBr_\ell(B)\to \IBr_\ell(b)$ is bijective, and
	\item $\mathcal W_\ell( B)\to\mathcal W_\ell(  b) $, $( R, \varphi)\mapsto (   R,\Res^{N_G(  R)}_{N_S(  R)}  \varphi)$ is a bijection.
\end{itemize}
\end{prop}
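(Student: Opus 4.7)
The crux of the argument will be that tensoring by the unique nontrivial linear character $\hat{z}\in\Irr(G)$ (whose kernel is $S$) sends the unipotent block $B$ to a \emph{distinct} block $\hat{z}\otimes B$. Indeed, by the Fong--Srinivasan labeling, $B=B(1,\kappa)$ for some $\kappa\in\cC(1)$; applying Proposition \ref{z-times-char} character-by-character to $\Irr(B)\cap\mathcal{E}(G,\ell')$, together with the compatibility of the $e_\Gamma$-core with the negation $\mu\mapsto -\mu$, identifies $\hat{z}\otimes B=B(-1,-\kappa)$. Since $1$ and $z=-I_{(2n)}$ are distinct central elements of $G^*$, they are not $G^*$-conjugate, so the two labels differ and $\hat{z}\otimes B\neq B$.

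With this in hand, the first two bullets reduce to Clifford theory for the index-$2$ normal inclusion $S\unlhd G$. The linear characters of $G/S$ act on the blocks of $G$ over a fixed block of $S$ transitively, so an orbit has size $1$ or $2$; our orbit $\{B,\hat{z}\otimes B\}$ has size $2$, forcing $B$ to cover a unique block $b$ of $S$, with $\{B,\hat{z}\otimes B\}$ being exactly the two blocks of $G$ over $b$. For the Brauer-character statement I would apply Lemma \ref{ext-Br}(i): the quotient $G/SZ(G)$ has order dividing $2$, hence is $\ell'$, and the number of blocks of $G$ covering $b$ is $m=2=|G/S|_{\ell'}$, giving the bijection $\Res_S^G\colon\IBr_\ell(B)\to\IBr_\ell(b)$.

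For the weight bijection I would verify well-definedness, injectivity, and surjectivity in turn. By Lemmas \ref{detofC} and \ref{gexingz}, $[N_G(R):N_S(R)]=2$ for every $R\in\Rad_\ell(G)$. For a $B$-weight $(R,\varphi)$, Clifford's theorem gives a dichotomy: either $\Res^{N_G(R)}_{N_S(R)}\varphi$ is irreducible, or $\hat{z}\varphi=\varphi$; the latter would force $\hat{z}\otimes B=B$ via Lemma \ref{ind-block}, contradicting the first paragraph. Hence the restriction is irreducible; $\ell$-defect zero transfers to $N_S(R)/R$ since $[N_G(R):N_S(R)]=2$ is $\ell'$; and the induced block of $S$ is covered by $B$, hence equals $b$. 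Injectivity follows because two $B$-weights with the same restriction $\psi$ have extensions differing by $\hat{z}$, and $(R,\hat{z}\varphi)$ belongs to $\hat{z}\otimes B\ne B$ by Corollary \ref{lin-wei}. For surjectivity, the discussion preceding Proposition \ref{alp-Omega} supplies $\varphi_1\in\Irr(N_G(R)\mid\psi)$ with $(R,\varphi_1)$ a weight in $\{B,\hat{z}\otimes B\}$; after possibly replacing $\varphi_1$ by $\hat{z}\varphi_1$ (which preserves the restriction, since $N_S(R)\le\ker\hat{z}$, and swaps the two blocks), we obtain the required $B$-weight.

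The principal delicate point will be the opening identification $\hat{z}\otimes B=B(-1,-\kappa)$ at block level, which requires transporting Proposition \ref{z-times-char} through the Fong--Srinivasan parametrization and checking the symbol-theoretic claim that the $e_\Gamma$-core of $-\mu_\Gamma$ is $(-\kappa)_{z.\Gamma}$. Once this is secured, the three assertions flow uniformly from the fact that $\{B,\hat{z}\otimes B\}$ are precisely the two distinct blocks of $G$ over $b$, using only the index-$2$ Clifford machinery already assembled in Sections \ref{sec:Preliminaries} and \ref{wei-spec-case}.
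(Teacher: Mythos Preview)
Your proposal is correct and follows essentially the same strategy as the paper: establish $\hat z\otimes B\neq B$, then invoke Lemma~\ref{ext-Br} for the Brauer characters and the index-$2$ Clifford machinery for the weights. Two small points of comparison are worth noting. First, for $\hat z\otimes B\neq B$ the paper uses the one-line observation $\hat z\otimes B\subseteq\mathcal E_\ell(G,z)$ (disjoint from $\mathcal E_\ell(G,1)\supseteq B$), which avoids the symbol-theoretic verification you flag as the ``delicate point''; your route through the Fong--Srinivasan label $B(-1,-\kappa)$ is correct but heavier than necessary. Second, for the weight bijection the paper simply cites Proposition~\ref{dec-weig}, which already shows that $(R,\hat z\varphi)$ is a $B(-s,-\kappa)$-weight whenever $(R,\varphi)$ is a $B(s,\kappa)$-weight; this immediately gives $\hat z\varphi\neq\varphi$ for every $B(1,\kappa)$-weight, from which well-definedness, injectivity, and surjectivity of the restriction map follow exactly as you spell them out.
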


\begin{proof}
For $\ell$-weights, this follows by Proposition  \ref{dec-weig}.
Now we prove that $\Res_S^G: \IBr_\ell(B)\to \IBr_\ell(b)$ is bijective.
First, $\hat z\otimes B$ is also an $\ell$-block of $G$ covering $b$~(for the definition of $\hat z\otimes B$, see Page \pageref{def-otimes-block}).
Also, $\hat z\otimes B\ne B$ since $\hat z\otimes B\subseteq \mathcal E_\ell(G,z)$.
Thus there are two $\ell$-blocks covering $b$.
So by Lemma \ref{ext-Br},
$\Res_S^G: \IBr_\ell(B)\to \IBr_\ell(b)$ is bijective.
\end{proof}

\begin{lem}\label{extension-B}
Let $G=\SO_{2n+1}(q)$ with $n\ge 2$ and $q$ odd, and $\ell\nmid q$ an odd prime.
Suppose $B$ is a unipotent $\ell$-block of $G$.
Let $\phi\in\IBr_\ell(B)$ and $(R,\varphi)\in\mathcal W_\ell(B)$.
Then 
\begin{itemize}
	\item  $\phi$ extends to $G\rtimes E$, and
	\item  $\varphi$ extend to  $(G\rtimes E)_{R}$.
\end{itemize}
\end{lem}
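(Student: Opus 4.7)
The plan is to handle the two assertions separately, each by reducing to $E$-invariance and the cyclicity of the field-automorphism group $E$.

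For the extension of $\phi$, I apply Lemma \ref{ex-unitriibr} with $A = G \rtimes E$ and basic set $X := \Irr(B) \cap \mathcal E(G,1)$. By Theorem \ref{basicset} (with $s=1$) this is a basic set for the union of unipotent $\ell$-blocks, hence for $B$. The associated decomposition matrix is unitriangular by the known unitriangularity of decomposition matrices in unipotent blocks in good characteristic (which holds for $\ell\ne 2$ in type $B$, without any linearity assumption on $\ell$). Each unipotent character of $\SO_{2n+1}(q)$ is $E$-invariant by Malle's theorem (cf.\ \cite[Thm.~2.5]{Ma08}), and since $E$ is cyclic, standard Clifford theory gives that each such character extends to $G \rtimes E$. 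Lemma \ref{ex-unitriibr} then delivers the extendibility of every $\phi\in\IBr_\ell(B)$ to $(G\rtimes E)_\phi=G\rtimes E$.

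For the extension of $\varphi$, I use the explicit weight parameterization of Section \ref{wei-spec-case}. Since $B$ is unipotent, its label is $(1,\kappa)$, and the constraints $m_\Gamma(s)/\beta_\Gamma = |\kappa_\Gamma| + e_\Gamma w_\Gamma$ force $\kappa_\Gamma = \emptyset$ and $w_\Gamma = 0$ for all $\Gamma \ne x-1$. Consequently only basic subgroups indexed by $\Gamma = x-1$ appear non-trivially in $R$, and one has $\tilde\varphi = \Ind_{\tilde N(\tilde\theta)}^{\tilde N}\tilde\psi$ with $\tilde\theta = \tilde\theta_0 \times \prod_{\delta,i}\tilde\theta_{x-1,\delta,i}^{t_{x-1,\delta,i}}$ and $\tilde\psi$ a corresponding product involving wreath-product characters built on $\tilde N_{x-1,\delta,i}(\tilde\theta_{x-1,\delta,i})\wr\fS(t_{x-1,\delta,i})$. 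The factor $\tilde\theta_0 = 1_{\langle -1_{V_0}\rangle} \times \theta_0$ arises from a unipotent $\ell$-defect-zero character $\theta_0$ of $\SO_{2n_0+1}(q)$, which is $E$-invariant by Malle's theorem. Each basic character $\tilde\psi_{x-1,\delta,i,j}$ is $E$-invariant by Remark \ref{act-field-wei} (as $\sigma^*$ fixes $x-1$), and $\sigma$ acts trivially on the symmetric-group factors by Remark \ref{action-on-rad}. Choosing the extensions of the products $\prod_j\tilde\psi_{x-1,\delta,i,j}^{t_{x-1,\delta,i,j}}$ via \cite[Lem.~25.5]{Hu98} is compatible with the $\sigma$-action, so $\tilde\psi$, $\tilde\varphi$, and hence $\varphi$ are all $E$-invariant.

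Finally, $(G\rtimes E)_R$ contains $N_G(R)$ as a normal subgroup with cyclic quotient $E_R \le E$, so the $E_R$-invariant character $\varphi$ extends to $(G\rtimes E)_R$ by standard Clifford theory. The main technical hurdle will be the unitriangularity of the unipotent decomposition matrix without the linearity assumption on $\ell$: this must be invoked via the general result on unipotent blocks in good characteristic rather than via \cite{GH97}, since Theorem \ref{ibawc-uni} for type $B$ does not restrict to linear primes. A secondary point is verifying that the canonical choice of extension in \cite[Lem.~25.5]{Hu98} is preserved by $\sigma$, which follows from the trivial $\sigma$-action on the symmetric groups.
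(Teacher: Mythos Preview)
Your argument for $\varphi$ is essentially the paper's, just unpacked: the paper cites Corollary \ref{act-wei-b} directly, observing that for a unipotent block only $\Gamma=x-1$ contributes and this is $\sigma^*$-fixed, so the label and hence the $G$-conjugacy class of $(R,\varphi)$ is $E$-invariant; combined with $(G\rtimes E)_R/N_G(R)\hookrightarrow E$ cyclic this gives the extension. Your explicit verification via Remark \ref{act-field-wei} and the wreath-product structure is the same computation one step lower.

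For $\phi$, however, you are making your life harder than necessary and introducing exactly the gap you flag. You invoke Lemma \ref{ex-unitriibr}, which requires unitriangularity, and then worry that \cite{GH97} only gives this for linear $\ell$. The paper avoids this entirely: since the unipotent characters of $B$ form a basic set (Theorem \ref{basicset}) and each is $E$-invariant by \cite[Thm.~2.5]{Ma08}, the whole $\mathbb Z$-module $\mathbb Z\IBr_\ell(B)$ is pointwise $E$-fixed, and therefore so is each $\phi\in\IBr_\ell(B)$ (as $\IBr_\ell(B)$ is another $\mathbb Z$-basis of this module). No unitriangularity, no linearity hypothesis. Then cyclicity of $E$ gives the extension. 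Your route via Lemma \ref{ex-unitriibr} would work too if you could justify unitriangularity for arbitrary odd $\ell$, but that is not available from the references in the paper, so as written your argument for $\phi$ has a genuine gap that the paper's direct invariance argument sidesteps.
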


\begin{proof}
By Theorem \ref{basicset}, the set of unipotent characters of $B$ form a basic set of $B$.
By \cite[Thm.~2.5]{Ma08}, every unipotent character of $G$ is $E$-invariant.
So every $\ell$-Brauer character of $B$ is $E$-invariant.
On the other hand we have $\varphi$ is  $(G\rtimes E)_{R}$-invariant
by Corollary \ref{act-wei-b}.
Thus this assertion holds since $E$ is cyclic.
\end{proof}

\begin{proof}[Proof of Theorem \ref{ibawc-uni} for type $B_n$]
Let $b$ be a unipotent $\ell$-block of $X$.
Then $b$ dominates an $\ell$-block $\bar b$ of $S=\Omega_{2n+1}(q)$.
Thus there exists a unique unipotent $\ell$-block $\bar B$ of $G=\SO_{2n+1}(q)$ which covers $\bar b$.
By Proposition \ref{unip-blo-B},
$\Res_S^G: \IBr_\ell(\bar B)\to \IBr_\ell(\bar b)$ is bijective and
$\mathcal W_\ell(\bar B)\to\mathcal W_\ell(\bar b) $, $(\bar R,\bar \varphi)\mapsto (\bar  R,\Res^{N_G(\bar R)}_{N_S(\bar R)}\bar \varphi)$ is a bijection.
Thus by Theorem \ref{equiv-bij-so},
there exists a blockwise $\Aut(S)_{\bar b}$-equivariant bijection between $\IBr_\ell(\bar b)$ and $\cW_\ell(\bar b)$.
This gives a blockwise $\Aut(X)_b$-equivariant bijection between $\IBr_\ell(b)$ and $\cW_\ell(b)$ by Lemma \ref{ind-quot}.
Then by Lemma \ref{first-2-con}, it suffice to show the condition (iii) in Definition \ref{induc}.

For every $Q\in\Rad_\ell(X)$ and every $\phi\in\IBr_\ell(b\mid Q)$, we let $A:=A(\phi, Q)=G\rtimes E$.
Note that all irreducible character of $b$ have $Z(X)$ in their kernel.
By Theorem \ref{basicset}, $\Irr(b)\cap \mathcal E(X,\ell')$ is a basic set of $b$, so all irreducible $\ell$-Brauer characters of $b$ have $Z(X)$ in their kernel.
By Lemma \ref{wei-quotient}~(iii), all weight characters of $b$ also have $Z(X)$ in their kernel.
Thus conditions (1)-(3) of Definition \ref{induc} (iii) hold by Proposition \ref{unip-blo-B} and Lemma \ref{extension-B}.
For Definition \ref{induc}~(iii)(4), the proof of \cite[Lem.~7.2]{Fe18} applies here.
Then the (iBAW) condition holds for $b$, which completes the proof.
\end{proof}

\section{Type $C$}\label{typeC}

In this section, we denote by $\bG=\Sp_{2n}(\overline\F_q)$,
$\tilde\bG=\CSp_{2n}(\overline\F_q)$ with $q$ odd and $n\ge 2$.
Let $F_p : \tilde\bG \to \tilde\bG$
be the raising of matrix entries to the $p$-th power,
$F=F_p^f$, for  some $f$ such that $q=p^f$.
Let $G=\bG^F=\Sp_{2n}(q)$, $\tilde G=\tilde \bG^F=\CSp_{2n}(q)$.
Then $|\tG/GZ(\tG)|=2$.
We denote by $V$ the underlying space of $\tilde G$ and $G$.

\subsection{The characters of symplectic groups}

First note that ${\tilde\bG}^*$ is the corresponding special Clifford group~(then ${\tilde G}^*=({\tilde\bG}^*)^F$ is a special Clifford group over $\F_q$) and $\bG^*=\SO_{2n+1}(\overline\F_q)$.
For the definition of the Clifford groups of orthogonal spaces, see \cite[\S9]{Gr02} or \cite[\S~5.4]{KL90}.
Note that the special Clifford groups are called even Clifford groups in \cite{Gr02}.
Thus there is a natural epimorphism $\pi:{\tilde\bG^*}\to \bG^*$~(see also \cite[(2.3)]{FS89}).
Clearly, $\pi({\tilde G}^*)={G^*}={\bG^*}^F=\SO_{2n+1}(q)$.
For a semisimple element $s$ of $\tG^*$,
we write $\bar s=\pi(s)$.
Note that $m_{x-1}(\bar s)$ is odd and $m_{x-1}(\bar s)$ is even.
In particular, $m_{x-1}(\bar s)\ne 0$.
Let $i\Irr(\tilde G)$  be the set of $\tG^*$-conjugacy classes of pairs $(s,\mu)$, where $s$ is a semisimple $\ell'$-element of $G^*$ and $\mu\in\Psi(\bar s)$~(where $\Psi(\bar s)$ is defined as (\ref{def-par-sym})).
Here, $(s_1,\mu_1)$ and $(s_2,\mu_2)$ are conjugate if and only if $s_1$ and $s_2$ are $\tG^*$-conjugate and $\mu_1=\mu_2$.
With the parametrization of pairs involving semisimple elements and unipotent characters, the irreducible characters of $\tG$ were constructed by Lusztig \cite{Lu77}; by Jordan decomposition of characters, there is a bijection from $i\Irr(\tilde G)$ to $\Irr(\tilde G)$~(see also \cite[(4.5)]{FS89}).
We write $\tilde\chi_{s,\mu}$ for the character of $\tG$ corresponding to $(s,\mu)$.

In this section, we always assume that $\sigma=F_p$ is the field automorphism and $E=\langle F_p \rangle$ as above.
Then $\sigma^*$ is also a field automorphism.
Note that $\sigma^*$ commutes with $\pi$.

\begin{lem}\label{act-irr-scp}
	Let $(s,\mu)\in i\Irr(\tG)$. Then $\tilde\chi_{s,\mu}^{\sigma}=\tilde\chi_{{\sigma^*}^{-1}(s),{\sigma^*}^{-1}\mu}$.
\end{lem}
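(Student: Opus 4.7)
The proof will follow exactly the pattern of Proposition \ref{act-on-irr} (which handled the corresponding statement for $G = \SO_{2n+1}(q)$), with the added input of the natural epimorphism $\pi: \tilde\bG^* \to \bG^*$ so that centralizers and their unipotent characters can be indexed by $\Psi(\bar s)$ as in the setup above the lemma.

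The plan is to invoke the equivariance of Jordan decomposition under field automorphisms. Concretely, I would cite \cite[Thm.~3.1]{CS13} (or equivalently \cite[Prop.~1.3.1(iv)]{En08}) applied to $\tilde\bG$: for any semisimple element $s \in {\tilde\bG^*}^F$ and any unipotent character $\psi \in \mathcal E(C_{\tilde\bG^*}(s)^F, 1)$, the Jordan correspondent $\chi_{s,\psi} \in \mathcal E(\tilde\bG^F, s)$ satisfies $\chi_{s,\psi}^{\sigma} = \chi_{{\sigma^*}^{-1}(s), \psi^{{\sigma^*}^{-1}}}$, where $\sigma^*$ is dual to $\sigma$ and acts on the unipotent characters of $C_{\tilde\bG^*}(s)^F$ in the evident way after transporting it to $C_{\tilde\bG^*}({\sigma^*}^{-1}(s))^F$. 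Since $Z(\tilde\bG)$ is connected, Jordan decomposition is canonical here, so this equivariance is clean.

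Next I would reduce the action on unipotent labels to the primary decomposition. Under $\pi$, the centralizer $C_{\tilde\bG^*}(s)^F$ fibers over $C_{\bG^*}(\bar s)^F$, and on unipotent characters these two groups have the same set, labeled by $\Psi(\bar s) = \prod_\Gamma \Psi_\Gamma(\bar s)$, because $\pi$ induces an isomorphism on the derived groups of the relevant Levi/reductive components. In the primary decomposition, each factor $C_\Gamma(\bar s)$ is either a (small) orthogonal or symplectic group (for $\Gamma \in \cF_0$) or $\GL_{m_\Gamma(\bar s)}(\varepsilon_\Gamma q^{\delta_\Gamma})$ (for $\Gamma \in \cF_1 \cup \cF_2$). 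On each such factor $\sigma^*$ acts as a field automorphism, and unipotent characters of symplectic groups in odd characteristic, odd-dimensional orthogonal groups, and general linear/unitary groups are all invariant under all field automorphisms — this is exactly the fact cited from \cite[Thm.~2.5]{Ma08} in the proof of Proposition \ref{act-on-irr}. Consequently, the action of $\sigma^*$ on $\Psi(\bar s)$ matches the previously defined action, namely $(\sigma^* \mu)_{\sigma^* \Gamma} = \mu_\Gamma$, where $\sigma^* \Gamma$ is read off from $\sigma^*$ applied to the roots of $\Gamma$.

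Combining these two inputs yields $\tilde\chi_{s,\mu}^\sigma = \tilde\chi_{{\sigma^*}^{-1}(s),{\sigma^*}^{-1}\mu}$ directly. The only mild subtlety (and the step that should be written out carefully rather than just asserted) is the compatibility of $\pi$ with $\sigma^*$ when identifying the unipotent characters of $C_{\tilde\bG^*}(s)^F$ with those of $C_{\bG^*}(\bar s)^F$; but since $\pi$ is defined on $\tilde\bG^*$ and commutes with $\sigma^*$, and since the kernel of $\pi$ is a central torus on which unipotent characters are trivial, this matching is canonical. Thus the main obstacle — if one exists — is bookkeeping of the $\tilde G^*$-conjugacy class rather than the $G^*$-conjugacy class in the subscript, but Jordan decomposition is done for $\tilde\bG$ directly so this causes no trouble.
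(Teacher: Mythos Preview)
Your approach is exactly the paper's: invoke \cite[Thm.~3.1]{CS13} for equivariance of Jordan decomposition, then use \cite[Thm.~2.5]{Ma08} for invariance of the unipotent labels on each primary factor. One small correction: in this section $\bG^*=\SO_{2n+1}$, so the $\cF_0$-factors of $C^\circ_{\bG^*}(\bar s)^F$ are orthogonal, not symplectic; in particular the $\Gamma=x+1$ factor is an \emph{even}-dimensional special orthogonal group (type $D$ or ${}^2D$), which your list of ``symplectic, odd-dimensional orthogonal, GL/GU'' omits. The paper's proof accordingly cites invariance for types $A$, ${}^2A$, $B$, $D$, ${}^2D$ rather than recycling the list from Proposition~\ref{act-on-irr} verbatim --- an easy fix, but necessary for the argument to go through here.
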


\begin{proof}
	Similar as in Proposition \ref{act-on-irr},
	this follows from \cite[Thm.~3.1]{CS13}  and the fact that every unipotent character of groups of type $A$, $^2A$, $B$, $D$ and $^2D$ is invariant under field automorphisms~(see \cite[Thm.~2.5]{Ma08}).
\end{proof}

We will make use of the following result.
\begin{thm}
	\label{action-irr-const}
	Let $\tilde\chi\in\Irr(\tG)$ and $\Delta=\Irr(G\mid \tilde\chi)$.
	Then $E_\Delta$ acts trivially on $\Delta$.
\end{thm}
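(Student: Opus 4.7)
The plan is to first reduce to the essential case $|\Delta|=2$. Since $\tG$ acts on $\Irr(G)$ by conjugation with kernel $Z(\tG)$, and since $|\tG/GZ(\tG)|=2$, the orbit $\Delta$ has at most two elements. If $|\Delta|=1$ the conclusion is automatic, so I assume $\Delta=\{\chi_1,\chi_2\}$ with $\chi_2=\chi_1^\tau$ for a fixed $\tau\in\tG\setminus GZ(\tG)$; the task is then to rule out that any $\sigma\in E_\Delta$ swaps $\chi_1$ and $\chi_2$.

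Fix $\sigma\in E_\Delta$. Since $\sigma$ permutes $\Delta$ and extends to an automorphism of $\tG$ stabilising $G$, one has $\Irr(G\mid\tilde\chi^\sigma)=\Delta=\Irr(G\mid\tilde\chi)$, so by Clifford theory $\tilde\chi^\sigma=\lambda\tilde\chi$ for some $\lambda\in\Irr(\tG/G)$. Writing $\tilde\chi=\tilde\chi_{s,\mu}$ and applying Lemma \ref{act-irr-scp}, the pair $({\sigma^*}^{-1}(s),{\sigma^*}^{-1}\mu)$ is $\tG^*$-conjugate to a pair of the form $(s',\mu)$ with $\pi(s')=\pi(s)=\bar s$. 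Projecting through $\pi$ one concludes that ${\sigma^*}^{-1}(\bar s)$ is $G^*$-conjugate to $\bar s$ and that the induced action on $\Psi(\bar s)$ sends ${\sigma^*}^{-1}\mu$ to $\mu$.

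The next step is to identify $\chi_1,\chi_2$ under the Lusztig parametrisation of $\Irr(G)$: since $|\Delta|=2$, the label $\mu$ must contain a degenerate symbol component $\mu_\Gamma$ (with $\Gamma\in\cF_0$ and $V_\Gamma(\bar s)$ orthogonal of even dimension and type $+$, as in the convention preceding (\ref{def-par-sym})), and $\Delta$ corresponds to the two copies $\mu,\mu'$ of $\mu_\Gamma$ in $\Psi_\Gamma(\bar s)$. The two copies arise as two distinct unipotent characters of an orthogonal factor of $C_{\bG^*}^\circ(\bar s)^F$ sharing the same symbol; by \cite[Thm.~2.5]{Ma08} each of these two unipotent characters is individually $E$-invariant, so with a consistent labelling of the copies compatible with the $G^*$-action on $\bar s$ the map ${\sigma^*}^{-1}$ fixes $\mu$ and $\mu'$ separately. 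Combined with the $G^*$-conjugacy of $\bar s$ and ${\sigma^*}^{-1}(\bar s)$, this yields $\chi_i^\sigma=\chi_i$ for $i=1,2$.

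The main obstacle will be the equivariant matching of the two copies of a degenerate symbol, that is, ensuring that the split $\Res^\tG_G\tilde\chi_{s,\mu}=\chi_1+\chi_2$ can be made $E$-equivariant constituent by constituent for every relevant pair $(s,\mu)$. I plan to handle this via a Bonnaf\'e-type $E$-equivariant extension argument, in the spirit of \cite[Prop.~4.17]{Fe18}, applied to the Jordan decomposition through the disconnected centraliser $C_{\bG^*}(\bar s)^F$: the two constituents $\chi_1,\chi_2$ will arise as the two extensions of a single unipotent character of $C_{\bG^*}^\circ(\bar s)^F$ to $C_{\bG^*}(\bar s)^F$, and the $E$-invariance of both the unipotent character and the extension procedure should give the required compatibility.
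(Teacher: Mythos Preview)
Your approach contains a genuine error in the identification of when $|\Delta|=2$. By Lemma~\ref{res-sp}, the restriction $\Res^{\tG}_G\tilde\chi_{s,\mu}$ splits into two constituents precisely when $-1$ is an eigenvalue of $\bar s$ and $\mu_{x+1}$ is \emph{non}-degenerate; when $\mu_{x+1}$ is degenerate the restriction is irreducible, and the two copies $\mu,\mu'$ of the degenerate symbol give two \emph{different} characters of $\tG$ restricting to the \emph{same} character of $G$ (see Remark~\ref{res-to-so-deg}). So your claim that ``the label $\mu$ must contain a degenerate symbol component'' and that ``$\Delta$ corresponds to the two copies $\mu,\mu'$'' is the opposite of what happens. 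Consequently the appeal to \cite[Thm.~2.5]{Ma08} for the individual invariance of the two copies does not address the actual splitting.

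The correct picture is the one you only allude to at the end: when $\mu_{x+1}$ is non-degenerate, the unipotent character $\psi_\mu$ of $C^\circ_{\bG^*}(\bar s)^F$ is invariant under the component group $C_{\bG^*}(\bar s)^F/C^\circ_{\bG^*}(\bar s)^F$ of order~$2$, and its two extensions to $C_{\bG^*}(\bar s)^F$ correspond under Jordan decomposition to $\chi_1,\chi_2$. Showing that $E_\Delta$ fixes each extension separately is exactly the substantive content of the theorems the paper cites, namely \cite[Thm.~3.1]{CS17} and \cite[Thm.~16.2]{Ta18}; it is not a formal consequence of the invariance of unipotent characters alone, and the Bonnaf\'e-type extension argument you sketch would need considerably more work to pin down which of the two extensions $\sigma$ sends to which. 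The paper does not reprove this but simply invokes those references.
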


\begin{proof}
	This is \cite[Thm.~3.1]{CS17}~(or \cite[Thm.~16.2]{Ta18}).
\end{proof}

If $\chi\in\Irr(G\mid \tilde\chi_{s,\mu})$, then we say $\chi$ corresponds to the pair $(\bar s,\mu)$.

\begin{lem}\label{res-sp}
	Let $(s,\mu)\in i\Irr(\tG)$, $\bar s=\pi(s)$
	and $\tilde\chi=\tilde\chi_{s,\mu}$.
	\begin{enumerate}[(i)]
		\item  If $-1$ is not an eigenvalue of $\bar s$,
		then $\Res^{\tG}_G\tilde\chi$ is irreducible.
		\item If  $-1$ is an eigenvalue of $\bar s$, then
		\begin{itemize}
			\item if $\mu_{x+1}$ is degenerate, then $\Res^{\tG}_G\tilde\chi$ is irreducible, and
			\item if $\mu_{x+1}$ is non-degenerate, then $\Res^{\tG}_G\tilde\chi$ is a sum of two irreducible constituents.
		\end{itemize}
	\end{enumerate}
\end{lem}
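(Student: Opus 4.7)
The plan is to reduce the count of irreducible constituents of $\Res^{\tG}_G \tilde\chi_{s,\mu}$ to the order of the stabilizer of $\psi_\mu$ under the action of the component group $A_{G^*}(\bar s) := C_{G^*}(\bar s)/C^\circ_{G^*}(\bar s)$, and then to compute this stabilizer in each case.

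First I would observe that, since $|\tG/GZ(\tG)|=2$ and $Z(\tG)$ centralizes $G$ (hence acts trivially by conjugation on $\Irr(G)$), every constituent $\chi$ of $\Res^{\tG}_G \tilde\chi$ satisfies $\tG_\chi \supseteq GZ(\tG)$. Clifford theory then forces $\Res^{\tG}_G \tilde\chi$ to have either one or two irreducible constituents, with two occurring precisely when a representative $\tau \in \tG \setminus GZ(\tG)$ fails to fix $\chi$. The dichotomy we want will therefore follow once we know when such a $\tau$ stabilizes $\chi$.

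Next I would invoke the Jordan decomposition of characters for the regular embedding $G \hookrightarrow \tG$ (Lusztig, Digne--Michel, Bonnaf\'e): since $Z(\tG)$ is connected, the number of irreducible constituents of $\Res^{\tG}_G \tilde\chi_{s,\mu}$ equals $|A_{G^*}(\bar s)_{\psi_\mu}|$, the order of the stabilizer of $\psi_\mu$ under the natural action of $A_{G^*}(\bar s)$ on the unipotent characters of $C^\circ_{G^*}(\bar s)^F$. Then I would compute $A_{G^*}(\bar s)$. Using the primary decomposition $V^* = \perp_\Gamma V^*_\Gamma(\bar s)$, the contributions to $A_{G^*}(\bar s)$ come only from the $\Gamma\in \cF_0 = \{x-1,x+1\}$ factors (since the $\GL$ and $\GU$ factors coming from $\cF_1\cup \cF_2$ are connected), arising from sign changes in $\GO(V^*_\Gamma) \setminus \SO(V^*_\Gamma)$, subject to the total-determinant-one constraint. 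Because $V^*$ is odd-dimensional, $m_{x-1}(\bar s)$ is odd and $V^*_{x-1} \neq 0$; hence $A_{G^*}(\bar s)$ is trivial precisely when $V^*_{x+1}=0$, i.e.\ when $-1$ is not an eigenvalue of $\bar s$. This gives case~(i).

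For case~(ii), when $A_{G^*}(\bar s) \cong \ZZ/2$, its nontrivial element may (up to inner conjugation by $C^\circ$) be represented by an element acting trivially on all factors of $C^\circ_{G^*}(\bar s)$ except the $x+1$-factor $\SO^\pm(V^*_{x+1})^F$, on which it acts by the non-trivial outer diagonal automorphism induced from $\GO \setminus \SO$. Its action on the unipotent characters of this even-dimensional orthogonal group is well-known: characters labeled by non-degenerate symbols are fixed, while the two characters corresponding to each degenerate symbol (which is exactly why degenerate symbols are counted twice in $\Psi_{x+1}(\bar s)$ in the Fong--Srinivasan convention) are swapped. Hence $|A_{G^*}(\bar s)_{\psi_\mu}|=2$ when $\mu_{x+1}$ is non-degenerate, and $=1$ when $\mu_{x+1}$ is degenerate, giving the claimed statement. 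The main obstacle will be verifying precisely that the formula ``number of constituents $=|A_{G^*}(\bar s)_{\psi_\mu}|$'' applies in our exact setup, and matching the Lusztig-theoretic outer action on unipotent characters of the $D$-type centralizer factor with the symbol parameterization from \cite{FS89}; both ingredients are essentially in the literature, but coordinating the conventions requires care.
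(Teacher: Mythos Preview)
Your proposal is correct and follows essentially the same route as the paper: the paper also uses the regular embedding $\bG\hookrightarrow\tilde\bG$ and Jordan decomposition (specifically \cite[Prop.~5.1]{Lu88}) to obtain $|\Irr(G\mid\tilde\chi)|=|\Irr(C_{\bG}(\bar s)^F\mid\psi_\mu)|$, then computes the component group from the primary decomposition and determines the action on the $\SO(V^*_{x+1})$-factor via \cite[Thm.~2.5]{Ma08}. Your concern about matching conventions is exactly handled by these two references; the only cosmetic difference is that the paper represents the nontrivial component-group element as $g_1\times g_{-1}$ with both factors of determinant $-1$ (rather than a single factor), but since $\GO_{2m+1}=\SO_{2m+1}\times\{\pm I\}$ this acts trivially on the $x-1$ factor anyway.
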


\begin{proof}
	First note that $\bG\hookrightarrow\tilde \bG$ is a  regular embedding.
	Let $\psi_\mu$ be the unipotent character of $C_{\tilde \bG}(s)^F$ corresponding to $\mu$.
	Then we may regard $\psi_\mu$ as a unipotent character of $C^\circ_{\bG}(\bar s)^F$ since $\cE(C_{\tilde \bG}(s)^F,1)=\cE(C^\circ_{\bG}(\bar s)^F,1)$.
	By Jordan decomposition (cf. \cite[Prop. 5.1]{Lu88}),
	$|\Irr(G\mid\tilde\chi)|=|\Irr(C_{\tilde\bG}(s)^F\mid \psi_\mu)|$.

	Let $V^*$ be the underlying space of $G^*$ and let $\bar s=\prod_\Gamma \bar s(\Gamma)$ and $V^*=\sum_\Gamma V^*_\Gamma(\bar s)$ be the primary decomposition.
	In particular, we abbreviate $V^*_1:=V^*_{x-1}(\bar s)$, $V^*_{-1}:=V^*_{x+1}(\bar s)$ and $V^*_\Gamma:=V^*_\Gamma(\bar s)$.
	Here, $V^*_1$ is of odd-dimension and $V^*_{-1}$ is of even-dimension.
Then $C_{\bG}(\bar s)^F=(\SO(V^*_1\perp V^*_{-1})\cap(\GO(V^*_1)\times \GO(V^*_{-1})))\times \prod_\Gamma C_{\GO(V^*_\Gamma)}(\bar s(\Gamma))$,
$C^\circ_{\bG}(\bar s)^F=\SO(V^*_1)\times \SO(V^*_{-1})\times \prod_\Gamma C_{\GO(V^*_\Gamma)}(\bar s(\Gamma))$
and
$C_{\GO(V^*_\Gamma)}(\bar s(\Gamma))\le \SO(V^*_\Gamma)$.
We also write $\psi_\mu=\prod_\Gamma\psi_{\mu_\Gamma}$ where
$\psi_{x-1}:=\psi_1$ and $\psi_{x+1}:=\psi_{-1}$ are the unipotent characters of $\SO(V^*_1)$ and $\SO(V^*_{-1})$
corresponding to $\mu_{x-1}$ and $\mu_{x+1}$
respectively and
 $\psi_{\mu_\Gamma}$ is the unipotent character of $C_{\SO(V^*_\Gamma)}(\bar s_\Gamma)$ corresponding to $\mu_\Gamma$ for $\Gamma\in\cF_1\cup\cF_2$.
If $-1$ is not an eigenvalue of $\bar s$, then $C_{\bG}(\bar s)$ is connected and then $\Res^{\tG}_G\tilde\chi$ is irreducible.

Now let $-1$ be an eigenvalue of $\bar s$.
Then $|C_{\bG}(\bar s)^F/C_{\bG}^\circ(\bar s)^F|=2$ and
$\Res^{\tG}_G\tilde\chi$ is irreducible if and only if $\psi_\mu$ is not $C_{\bG}(\bar s)^F/C_{\bG}^\circ(\bar s)^F$-invariant.
Let $g=g_{1}\times g_{-1}\times\prod_\Gamma g_\Gamma$ with $g_1\in\GO(V^*_1)$, $g_{-1}\in \GO(V^*_{-1})$ and $g_\Gamma\in C_{\GO(V^*_\Gamma)}(\bar s(\Gamma))$ such that $g_{1}$ and $g_{-1}$ are of determinant $-1$.
Then $\psi_\mu=\psi_1^{g_1}\times \psi_{-1}^{g_{-1}} \times\prod_\Gamma \psi_{\mu_\Gamma}^{g_\Gamma}=\psi_1\times \psi_{-1}^{g_{-1}} \times\prod_\Gamma \psi_{\mu_\Gamma}$ by \cite[Thm.~2.5]{Ma08} and then $\psi_\mu^g=\psi_\mu$ if and only if $\kappa_{x+1}$ is non-degenerate by \cite[Thm.~2.5]{Ma08} again.
Thus the assertion holds.	
\end{proof}

\begin{rmk}
\label{res-to-so-deg}
	In Lemma \ref{res-sp} (ii), if $\mu_{x+1}$ is degenerate,
	then by Jordan decomposition,
	$\Res^\tG_G\tilde \chi_{s,\mu}=\Res^\tG_G\tilde \chi_{s,\mu'}$, where $\mu'$ is defined as in \S\ref{notations-and-conventions}.
	Thus  $\Irr(\tG\mid \Res^\tG_G\tilde \chi_{s,\mu})\cap \cE(\tG,s)=\{\tilde \chi_{s,\mu},\tilde \chi_{s,\mu'}\}$.
\end{rmk}

\subsection{The blocks of symplectic groups}
\label{The-blocks-of-special-symplectic-groups}

Recall that we let $\ell$ be an odd prime with $\ell\nmid q$ and $e_0$ the multiplicative order of $q$ modulo $\ell$.
The labeling of $\ell$-blocks of $\tilde\bG^F$ and $\mathbf G^F$~(using $e_0$-Jordan-cuspidal pairs)~ described in  \cite{CE99} and \cite{KM15} can be stated as following.

\begin{thm}\label{ecuspidalofblock}
	Let $\mathbf H\in\{\tilde\bG, \bG\}$ and $e_0=e_0(q,\ell)$ as defined in Equation (\ref{definitionofe0}).
	\begin{enumerate}
		\item[(i)] For any $e_0$-Jordan-cuspidal pair $(\mathbf L,\zeta)$ of $\mathbf H$ such that
		$\zeta\in\mathcal E(\mathbf L^F,\ell')$, there exists a unique $\ell$-block $b_{\mathbf H^F}(\mathbf L,\zeta)$ of $\mathbf H^F$ such that all irreducible constituents of $R_{\mathbf L}^{\mathbf H}(\zeta)$ lie in $b_{\mathbf H^F}(\mathbf L,\zeta)$.
		\item[(ii)] Moreover, the map $\Xi: (\mathbf L,\zeta)\mapsto b_{\mathbf H^F}(\mathbf L,\zeta)$ is a bijection from the set of $\mathbf H^F$-conjugacy classes of $e_0$-Jordan-cuspidal pairs $(\mathbf L,\zeta)$ of $\mathbf H$ such that
		$\zeta\in\mathcal E(\mathbf L^F,\ell')$ to the $\ell$-blocks of $\mathbf H^F$.
	\end{enumerate}
\end{thm}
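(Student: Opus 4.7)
The plan is to deduce the theorem as a direct application of the general block parametrization of Kessar--Malle \cite[Thm.~A]{KM15}, after checking that the hypotheses of that theorem are satisfied for both $\mathbf H = \tilde\bG = \CSp_{2n}(\overline{\F_q})$ and $\mathbf H = \bG = \Sp_{2n}(\overline{\F_q})$. The theorem of \cite{KM15} asserts precisely the two statements (i) and (ii), so once the hypotheses are in place no further combinatorics are needed.

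The hypotheses to verify are that $\ell$ is good for $\mathbf H$ and that $\ell$ does not divide the order of the relevant quotient $(Z(\mathbf H)/Z^\circ(\mathbf H))_F$. For both groups we are working in type $C_n$, whose only bad prime is $2$; since we are assuming $\ell$ is odd and $\ell \ne p$, goodness is automatic. For $\mathbf H = \bG$ the center is $Z(\bG) = \{\pm I\}$ of order $2$, while $Z^\circ(\bG)$ is trivial, so $(Z(\bG)/Z^\circ(\bG))_F$ has order dividing $2$; as $\ell$ is odd, the required divisibility condition is satisfied. For $\mathbf H = \tilde\bG$ the center is the $1$-dimensional torus of scalar matrices, hence connected, so $Z(\tilde\bG)/Z^\circ(\tilde\bG)$ is trivial and the condition is automatic.

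Granting these verifications, \cite[Thm.~A]{KM15} produces, for every $e_0$-Jordan-cuspidal pair $(\mathbf L,\zeta)$ of $\mathbf H$ with $\zeta\in\mathcal E(\mathbf L^F,\ell')$, a unique $\ell$-block $b_{\mathbf H^F}(\mathbf L,\zeta)$ of $\mathbf H^F$ containing all irreducible constituents of $R_{\mathbf L}^{\mathbf H}(\zeta)$, and asserts that $\Xi$ is a bijection onto $\Bl_\ell(\mathbf H^F)$. This is exactly the content of (i) and (ii). Since the Mackey formula holds for classical groups by \cite{BM11}, the Lusztig induction $R_{\mathbf L}^{\mathbf H}$ is well defined independently of the choice of parabolic subgroup, so no ambiguity arises in the statement.

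The main (and only) potential obstacle is matching the precise statement of the Kessar--Malle parametrization to the version quoted here: one needs to confirm that their conditions (a)--(e) of \cite[Thm.~A]{KM15} all hold in our setting, and in particular that $\ell$ is not one of the small exceptional primes excluded in the general statement. For type $C_n$ with $\ell$ odd, however, all of these conditions reduce to the two checks above, and so the application is straightforward.
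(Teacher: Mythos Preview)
Your proposal is correct and matches the paper's approach: the paper does not prove this theorem but simply states it as a direct quotation of the block parametrization from \cite{CE99} and \cite[Thm.~A]{KM15}, noting beforehand (in the discussion following Theorem~\ref{basicset}) that the relevant hypotheses on $\ell$ and $Z(\bG)/Z^\circ(\bG)$ are always satisfied for odd $\ell$ in classical type. Your explicit verification of these hypotheses for $\bG=\Sp_{2n}$ and $\tilde\bG=\CSp_{2n}$ is slightly more detailed than the paper's treatment but is exactly in the same spirit.
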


Now we give the relationship between the  $e_0$-cuspidal pairs of $\tilde\bG$  and the  $e_0$-cuspidal pairs of $\bG$.
\begin{prop}\label{restrofesplit}
	\begin{enumerate}
		\item[(i)] Let $(\tilde\bL,\tilde\zeta)$ be an $e_0$-cuspidal pair of $\tilde\bG$ and $B$ an $\ell$-block of $\bG^F$ covered by $\tilde B=b_{\tilde\bG^F}(\tilde\bL,\tilde\zeta)$, then $B=b_{\bG^F}(\bL,\zeta)$, where $\mathbf L=\tilde\bL\cap \bG$ and $\zeta$ is an irreducible constituent of $\Res^{\tilde\bL^F}_{\mathbf L^F}\tilde\zeta$.
		\item[(ii)] Let $(\bL,\zeta)$ be an $e_0$-cuspidal pair of $\bG$ and $\tilde B$ an $\ell$-block of $\tilde \bG^F$ which covers $B=b_{\bG^F}(\bL,\zeta)$, then $\tilde B=b_{\tilde\bG^F}(\tilde\bL,\tilde\zeta)$ for some $e_0$-cuspidal pair $(\tilde\bL,\tilde\zeta)$ satisfying that $\mathbf L=\tilde\bL\cap \bG$ and $\zeta$ is an irreducible constituent of $\Res^{\tilde\bL^F}_{\mathbf L^F}\tilde\zeta$.
	\end{enumerate}
\end{prop}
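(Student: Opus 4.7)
The plan is to exploit the regular embedding $\bG\hookrightarrow\tilde\bG$ via Clifford theory, combined with Theorem \ref{ecuspidalofblock} applied on both sides.

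First I would establish a correspondence between $e_0$-split Levi subgroups: for every $e_0$-split Levi subgroup $\tilde\bL$ of $\tilde\bG$ the intersection $\bL:=\tilde\bL\cap\bG$ is $e_0$-split in $\bG$, and conversely every $e_0$-split Levi of $\bG$ equals $\tilde\bL\cap\bG$ for $\tilde\bL:=\bL\cdot Z(\tilde\bG)$. This is standard for regular embeddings since $Z^\circ(\tilde\bL)_{e_0}=(Z^\circ(\bL)\cdot Z(\tilde\bG))_{e_0}$ has the same underlying torus in $\bG$ as $Z^\circ(\bL)_{e_0}$.

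Next I would transport $e_0$-Jordan-cuspidality along restriction. Fix a semisimple $\ell'$-element $\tilde s\in(\tilde\bL^*)^F$ with image $\bar s\in(\bL^*)^F$ under the dual surjection $\tilde\bL^*\twoheadrightarrow\bL^*$. Because $\bG\hookrightarrow\tilde\bG$ is a regular embedding, $C_{\tilde\bL^*}(\tilde s)$ is connected and $C^\circ_{\tilde\bL^*}(\tilde s)$, $C^\circ_{\bL^*}(\bar s)$ share the same derived subgroup, so Jordan decomposition identifies their unipotent characters. Hence if $\tilde\zeta\in\mathcal E(\tilde\bL^F,\tilde s)$ is $e_0$-Jordan-cuspidal, every irreducible constituent of $\mathrm{Res}^{\tilde\bL^F}_{\bL^F}\tilde\zeta$ lies in $\mathcal E(\bL^F,\bar s)$ and, under Jordan decomposition, corresponds to a $C_{\bL^*}(\bar s)^F$-orbit on the same $e_0$-cuspidal unipotent character of the common derived subgroup; the condition $Z^\circ(C^\circ_{\bL^*}(\bar s))_{e_0}=Z^\circ(\bL^*)_{e_0}$ is inherited from its $\tilde\bL^*$-analogue by pushforward along $\tilde\bL^*\twoheadrightarrow\bL^*$, whose kernel is a central torus.

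For the block statement I would invoke the commutation of Lusztig induction with the regular embedding, which gives $\mathrm{Res}^{\tilde\bG^F}_{\bG^F}\circ R^{\tilde\bG}_{\tilde\bL}=R^{\bG}_{\bL}\circ\mathrm{Res}^{\tilde\bL^F}_{\bL^F}$ on virtual characters. Thus if $\zeta$ is a constituent of $\mathrm{Res}^{\tilde\bL^F}_{\bL^F}\tilde\zeta$, the character $R^{\bG}_{\bL}(\zeta)$ is a summand of $\mathrm{Res}^{\tilde\bG^F}_{\bG^F} R^{\tilde\bG}_{\tilde\bL}(\tilde\zeta)$, whose irreducible constituents all lie in $\tilde B$ by Theorem \ref{ecuspidalofblock}. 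Therefore the constituents of $R^{\bG}_{\bL}(\zeta)$ lie in $\ell$-blocks of $\bG^F$ covered by $\tilde B$, and by the uniqueness part of Theorem \ref{ecuspidalofblock} this block must be $b_{\bG^F}(\bL,\zeta)=B$, proving (i). For (ii), given $B=b_{\bG^F}(\bL,\zeta)$ and $\tilde B$ covering it, Theorem \ref{ecuspidalofblock} yields some pair $(\tilde\bL_0,\tilde\zeta_0)$ with $\tilde B=b_{\tilde\bG^F}(\tilde\bL_0,\tilde\zeta_0)$; applying (i) shows $\tilde B$ covers $b_{\bG^F}(\tilde\bL_0\cap\bG,\zeta_0)$ for a suitable constituent $\zeta_0$ of $\mathrm{Res}\,\tilde\zeta_0$, and since all blocks covered by $\tilde B$ form a single $\tilde\bG^F$-orbit, an appropriate $\tilde\bG^F$-conjugate of $(\tilde\bL_0,\tilde\zeta_0)$ gives the desired pair with $\tilde\bL\cap\bG=\bL$ and $\zeta$ a constituent of $\mathrm{Res}\,\tilde\zeta$.

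The main obstacle is the descent of $e_0$-Jordan-cuspidality in the second step, in particular verifying the centralizer-center condition component-by-component when $C_{\bL^*}(\bar s)$ is disconnected (which, by Lemma \ref{res-sp}, occurs precisely when $-1$ is an eigenvalue of $\bar s$). In this case $\tilde\zeta$ can restrict to a sum of two non-$\bL^F$-conjugate irreducibles whose Jordan correspondents differ by the action of $C_{\bL^*}(\bar s)^F/C^\circ_{\bL^*}(\bar s)^F$ on the unipotent label; one must check that each of the resulting pairs is independently $e_0$-Jordan-cuspidal, which reduces via Jordan decomposition to the observation that this component group acts trivially on the $e_0$-cuspidal unipotent characters of the common derived group.
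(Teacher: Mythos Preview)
Your overall strategy is exactly that of \cite[Lem.~3.7 and 3.8]{KM15}, which is all the paper invokes (with details deferred to \cite[Prop.~4.5]{Fe18}); the ingredients you list---compatibility of $e_0$-split Levis under the regular embedding, transport of $e_0$-Jordan-cuspidality, and the commutation $\Res^{\tilde\bG^F}_{\bG^F}\circ R^{\tilde\bG}_{\tilde\bL}=R^{\bG}_{\bL}\circ\Res^{\tilde\bL^F}_{\bL^F}$---are precisely the ones used there.

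There is, however, a genuine gap in your argument for (i). You correctly show that for any constituent $\zeta$ of $\Res^{\tilde\bL^F}_{\bL^F}\tilde\zeta$ the block $b_{\bG^F}(\bL,\zeta)$ is covered by $\tilde B$. But the claim is that \emph{every} block $B$ covered by $\tilde B$ arises this way, and your appeal to ``the uniqueness part of Theorem~\ref{ecuspidalofblock}'' does not give this: Theorem~\ref{ecuspidalofblock}(ii) only says the map $(\bL,\zeta)\mapsto b_{\bG^F}(\bL,\zeta)$ is injective, not that its image contains the given $B$. What is missing is the orbit argument you deploy (correctly) in (ii): the blocks of $\bG^F$ covered by $\tilde B$ form a single $\tilde\bG^F$-orbit, the constituents $\zeta$ form a single $\tilde\bL^F$-orbit, and since $\tilde\bG^F=\bG^F\tilde\bL^F$ (this is Remark~\ref{esplitofslsu}, using $\tilde\bL^F/\bL^F\cong\tilde\bG^F/\bG^F$), the $\tilde\bL^F$-orbit $\{b_{\bG^F}(\bL,\zeta)\}$ already exhausts the full $\tilde\bG^F$-orbit of blocks covered by $\tilde B$.

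A smaller point: your final sentence, that ``the component group acts trivially on the $e_0$-cuspidal unipotent characters,'' contradicts what you wrote two lines earlier (the two constituents differ exactly by that action). What you need is only that $e_0$-cuspidality is \emph{preserved} by the component group action, which is immediate since $e_0$-cuspidality is a property detected on the common connected centralizer $C^\circ_{\bL^*}(\bar s)$.
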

\begin{proof}
This  follows by \cite[Lem.~3.7 and 3.8]{KM15}~(see \cite[Prop.~4.5]{Fe18} for details).
\end{proof}

Note that we have $\tilde\bL=Z(\tilde \bG)\bL$ in Proposition \ref{restrofesplit}.
In fact, the $F$-stable Levi subgroups of $\tilde\bG$  and $\bG$ have been classified in \cite[(3A) and (3B)]{FS89}.

\begin{lem}\label{conofcharoflevi}
	Let $\tilde \bL$ be an $F$-stable Levi subgroup of $\tilde\bG$, $\tilde\zeta\in\Irr(\tilde\bL^F)$ and $\mathbf L=\tilde \bL\cap \mathbf G$.
	Let $\Delta:=\Irr(\mathbf L^F\mid \tilde\zeta)$, then  $N_{\mathbf G^F}(\mathbf L)_\Delta$ acts trivially on $\Delta$.
\end{lem}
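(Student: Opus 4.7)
My plan is to exploit the explicit structure of Levi subgroups in type $C$ together with Clifford theory. By \cite[(3A), (3B)]{FS89}, the $F$-stable Levi $\bL\subseteq \bG=\Sp(V)$ arises from an orthogonal decomposition
\[
V = \bigoplus_{i} (W_i \oplus W_i^{*}) \perp V_0,
\]
with $W_i, W_i^{*}$ paired totally isotropic and $V_0$ non-degenerate, and under this description one has $\bL\cong \prod_{i}\GL(W_i)\times \Sp(V_0)$ and $\tilde\bL\cong \prod_{i}\GL(W_i)\times \CSp(V_0)$ as algebraic groups. The quotient $\tilde\bL/\bL$ identifies with the conformal factor group $\CSp(V_0)/\Sp(V_0)\cong \mathbb{G}_m$, and at the level of $F$-fixed points the conjugation action of $\tilde\bL^F$ on $\Irr(\bL^F)$ factors through $\CSp(V_0)^F / \Sp(V_0)^F Z(\CSp(V_0))^F$, a group of order at most $2$ since the central $Z(\CSp(V_0))^F\cong \F_q^\times$ surjects onto a subgroup of index $2$ via the squaring map. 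Hence Clifford theory yields $|\Delta|\in\{1,2\}$, and I may assume $|\Delta|=2$ since the lemma is vacuous otherwise; this forces $V_0\neq 0$ (when $V_0=0$ the group $\tilde\bL^F$ is a genuine direct product $\F_q^{\times}\times \bL^F$ and the restriction is irreducible). Writing $\tilde\zeta=\tilde\chi_L \otimes \tilde\eta$ accordingly, the two elements of $\Delta$ have the form $\tilde\chi_L\otimes \eta_j$ ($j=1,2$) for the distinct constituents $\eta_1,\eta_2$ of $\Res^{\CSp(V_0)^F}_{\Sp(V_0)^F}\tilde\eta$.

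The heart of the proof, and its main subtlety, will be to show that conjugation by any $n\in N_{\bG^F}(\bL)$ induces the identity on $\Irr(\Sp(V_0)^F)$. Since $\Sp(V_0)$ is the unique symplectic factor of the derived subgroup $[\bL,\bL]$, it is characteristic in $\bL$, so $n$ normalises $\Sp(V_0)$ and stabilises $V_0$ setwise. Since $n\in \Sp(V)^F$, the restriction $n|_{V_0}$ belongs to $\Sp(V_0)^F$, and conjugation by $n$ on $\Sp(V_0)^F\subseteq \bL^F$ coincides with conjugation by $n|_{V_0}$, which is inner and therefore trivial on $\Irr(\Sp(V_0)^F)$. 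Consequently the permutation $\sigma_n$ of $\Irr(\bL^F)$ induced by $n$ has the form $\chi\otimes \eta\mapsto \sigma_n'(\chi)\otimes \eta$ for some permutation $\sigma_n'$ of $\Irr\bigl((\prod_i\GL(W_i))^F\bigr)$.

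To conclude, if $n\in N_{\bG^F}(\bL)_\Delta$, then $\sigma_n(\tilde\chi_L\otimes \eta_1)=\sigma_n'(\tilde\chi_L)\otimes \eta_1$ must lie in $\Delta=\{\tilde\chi_L\otimes \eta_1,\tilde\chi_L\otimes \eta_2\}$; since $\eta_1\neq \eta_2$, this forces $\sigma_n'(\tilde\chi_L)=\tilde\chi_L$, and so $n$ fixes both elements of $\Delta$ individually. The one nontrivial verification is the inner-ness claim above: any outer twist on the symplectic piece would have to come from an element of $\CSp(V_0)^F\setminus \Sp(V_0)^FZ(\CSp(V_0))^F$, which is unavailable inside $\bG^F=\Sp(V)^F$ since the diagonal/conformal twist is an artefact of the regular embedding $\bG\hookrightarrow \tilde\bG$.
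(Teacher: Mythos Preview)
Your proof is correct and follows essentially the same line as the paper's. Both arguments invoke the Levi structure from \cite[(3A),(3B)]{FS89} to see that the two constituents in $\Delta$ (when $|\Delta|=2$) differ only on the symplectic factor $\Sp(V_0)^F$, and then observe that $N_{\bG^F}(\bL)$ acts on this factor by inner automorphisms. The paper cites the explicit decomposition $N_{\bG^F}(\bL)=N_0\times N_+$ with $N_0=L_0=\Sp(V_0)^F$, while you obtain the same conclusion directly by noting that $n\in N_{\bG^F}(\bL)$ stabilises $V_0$ and restricts to an element of $\Sp(V_0)^F$; these are two phrasings of the same fact.
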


\begin{proof}
	Let $L=\bL^F$ and $\tilde L=\tilde\bL^F$.
Follow \cite[(3A) and (3B)]{FS89},
 we may assume that there is an orthogonal decomposition
$V=V_0\perp V_+$ of $V$, where $V_+=\sum\limits_{i=1}^{t} V_i$ such that
\begin{itemize}
	\item  $L=L_0\times L_+$, where $L_0=\Sp(V_0)$, $L_+=\prod\limits_{i=1}^{t} L_i$ such that $L_i\le \Sp(V_i)$ isomorphic to some general linear or unitary group for $1\le i\le t$.
	\item $\tilde L=\langle \tau, L \rangle$, where $\tau$ satisfies $\tG=\langle G,\tau\rangle$ and $[\tau, L_+]=1$.
	Moreover, $\tau=\tau_0\times\tau_+$ such that $\tau_0\in\CSp(V_0)$ and $\tau_+\in\CSp(V_+)$.
\end{itemize}
Thus, $|\tilde L/LZ(\tilde L)|=2$ and
$N_G(\bL)=N_0\times N_+$, with $N_0=L_0$ and $N_+\le \Sp(V_+)$.
So $|\Delta|\le 2$.
If $|\Delta|=1$, then the assertion is obvious.
Now we  may assume that  $|\Delta|=2$.

Let $\Delta=\{\zeta, \zeta'\}$, then $\zeta$ and $\zeta'$ are $\tilde L$-conjugate.
We write $\zeta=\zeta_0\times \zeta_+$ and $\zeta'=\zeta'_0\times \zeta'_+$ with $\zeta_0,\zeta'_0\in\Irr(L_0)$ and $\zeta_+,\zeta'_+\in\Irr(L_+)$.
Since $[\tau, L_+]=1$, we know $\zeta_+=\zeta'_+$.
Hence $\zeta_0\ne \zeta'_0$.
For $n\in N_G(\bL)_\Delta$,
we let $n=n_1\times n_+$ where $n_0\in N_0$ and $n_+\in N_+$.
If $\zeta^n=\zeta'$, then $\zeta_0^{n_0}=\zeta'_0$ and this is impossible since $n_0\in L_0$.
So $\zeta^n\ne \zeta'$, which implies that
$N_G(\bL)_\Delta$ acts trivially on $\Delta$.
\end{proof}

\begin{rmk}\label{esplitofslsu}
	Let $\tilde\bL$ an $F$-stable Levi subgroup of $\tilde\bG$, and $\mathbf L=\tilde\bL\cap \mathbf G$.
    Then  by (4) of \cite[(3B)]{FS89} $\tilde\bL^F/\mathbf L^F\cong \tilde\bG^F/\mathbf G^F$ and then $\tilde \bG^F=\mathbf G^FN_{\tilde \bG^F}(\mathbf L)$.
	So the $\tilde\bG^F$-conjugacy classes of $e_0$-split Levi subgroups of $\bG$ are just the $\bG^F$-conjugacy classes of $e_0$-split Levi subgroups of $\bG$.
	
	We denote by $\widetilde{\mathcal L}$ a complete set of representatives of the $\tilde\bG^F$-conjugacy classes of $e_0$-Jordan-cuspidal pairs of $\tilde\bG$ such that
	$\tilde \zeta\in\mathcal E(\tilde\bL^F,\ell')$.
	We may assume that for $(\tilde\bL,\tilde \zeta)$, $(\tilde\bL',\tilde \zeta')\in \widetilde{\mathcal L}$, if  $\tilde\bL$ and $\tilde\bL'$ are $\tilde\bG^F$-conjugacy, then $\tilde\bL=\tilde\bL'$.	
	Now we define an equivalence relation on $\widetilde{\mathcal L}$ :  $(\tilde\bL,\tilde \zeta)\sim (\tilde\bL',\tilde \zeta')$ if and only if $\tilde\bL=\tilde\bL'$ and $\Res^{\tilde\bL^F}_{\bL^F}\tilde \zeta=\Res^{\tilde\bL^F}_{\bL^F}\tilde \zeta'$ where $\mathbf L=\tilde\bL\cap \mathbf G$.
	Then by Proposition \ref{restrofesplit}, Lemma \ref{conofcharoflevi}  and Clifford theory, $\{(\tilde\bL\cap \mathbf G,\zeta)\}$  is a complete set of representatives of $\mathbf G^F$-conjugacy classes of $e_0$-Jordan-cuspidal pairs of $\mathbf G$ such that
	$\zeta\in\mathcal E((\tilde\bL\cap\mathbf G )^F,\ell')$, where $(\tilde\bL,\tilde\zeta)$ runs through a complete set of representatives of the equivalence classes of $\widetilde{\mathcal L}/\sim$ and $\zeta$ runs through $\Irr( (\tilde\bL\cap \mathbf G)^F \ |\ \tilde\zeta)$.
\end{rmk}

Now we recall the classification of $\ell$-blocks of $\tilde\bG^F$ in \cite[\S11]{FS89}.
Let $i\Bl_\ell(\tG)$ be the set of $\tG^*$-conjugacy classes of pairs $(s,\kappa)$ where $s$ is a semisimple $\ell'$-element of $\tG^*$ and $\kappa\in\cC(\bar s)$~(where $\cC(\bar s)$ is defined as in (\ref{def-core})).
Here, $(s_1,\kappa_1)$ and $(s_2,\kappa_2)$ are $\tG^*$-conjugate if and only if $s_1$ and $s_2$ are $\tG^*$-conjugate and $\kappa_1=\kappa_2$.
Then  \cite[(11E)]{FS89} gives a bijection $(s,\kappa)\mapsto \tilde B(s,\kappa)$ from $i\Bl_\ell(\tG)$ to $\Bl_\ell(\tG)$.

For $(s,\kappa)\in i\Bl_\ell(\tG)$,
\cite[(13B)]{FS89} also gave a criterion for when an irreducible character of $\tG$ lies in the $\ell$-block $\tilde B=\tilde B(s,\kappa)$.
In particular, the irreducible characters of $\Irr(\tilde B)\cap\cE(\tG,\ell')$ are of the form $\tilde\chi_{s,\mu}$ with $\mu\in\Psi(\bar s,\kappa)$~(where $\Psi(\bar s,\kappa)$ is defined as in (\ref{def-par-sym-core})).
In addition, by (\ref{bijection-sym-skew-par}), $\Psi_\Gamma(\bar s,\kappa)$ is in bijection with
$\mathcal P(\beta_\Gamma e_\Gamma, w_\Gamma)$
if $\Gamma\ne x+1$ or $\Gamma= x+1$ and $\kappa_{x+1}$ is non-degenerate and in bijection with
$\mathcal P'(2 e, w_{x+1})$ if $\Gamma= x+1$ and $\kappa_{x+1}$ is degenerate.
Here, the sets $\mathcal P(\beta_\Gamma e_\Gamma, w_\Gamma)$ and $\mathcal P'(2 e, w_{x+1})$ are defined as in (\ref{def-set-lab}) and (\ref{def-p'}) respectively.

Fix $(s,\kappa)\in i\Bl_\ell(\tG)$.
Now we give an $e_0$-Jordan-cuspidal pair of $\tilde\bG^F$ corresponding to $\tilde B(s,\kappa)$.
First, we define an $e_0$-split Levi subgroup $\bL$ of $\bG$.
Let $(s,\mu)\in i\Irr(\tG)$ such that $\chi_{s,\mu}\in \tilde B(s,\kappa)$.
Recall that we have integers $w_\Gamma=e_\Gamma^{-1}(m_\Gamma(s)-|\kappa_\Gamma|)$ if $\Gamma\in\cF_1\cup\cF_2$ and
$w_\Gamma$ is the number of $e_\Gamma$-hooks (or $e_\Gamma$-cohooks) removed from $\mu_\Gamma$ to get $\kappa_\Gamma$ if $\Gamma\in\cF_0$~(see \S\ref{notations-and-conventions}).
Note that the $w_\Gamma$ do not depend on the choice of $\mu$ and are determined by $(s,\kappa)$.
Let $\tD$ be a defect group of the $\ell$-block $\tilde B(s,\kappa)$ and $D=\tD\cap G$.
Then by \cite[\S11]{FS89},
there exist corresponding orthogonal decompositions
$V=V_0\perp V_1\perp\cdots\perp V_t$ and $D=D_0\times D_1\times\cdots\times D_t$ such that $D_0=\langle 1_{V_0}\rangle$ and for $i>0$, $D_i=R_{m_i,\alpha_i,0,\beta_i}$ for integers $m_i$, $\alpha_i$, $\beta_i$.
Here, $R_{m_i,\alpha_i,0,\beta_i}=R_{m_i,\alpha_i,\gamma,\bc}$ with $\gamma=0$, $\bc=(\beta_i)$ defined as in \S\ref{section:radical}.
In addition,
we may write $D=D_0\prod_\Gamma (R_{m_\Gamma,\alpha_\Gamma,0,\beta})^{t_{\Gamma,\beta}}$, where $t_{\Gamma,\beta}$ are the coefficients occuring in the $\ell$-adic expansion $\sum_\beta t_{\Gamma,\beta}\ell^\beta$ of $w_\Gamma$ for $\Gamma\in\cF$.
Thus $V_0=C_V(D)$ and $V_+=[D,V]$, where $V_+=V_1\perp\cdots\perp V_t$.

Let $\bL$ be an $F$-stable Levi subgroup of $\bG$ (described as in \cite[(3A)]{FS89}) such that $L=\bL^F=L_0\times  L_+$  with
$L_0=\Sp(V_0)$, $L_+=\prod\limits_\Gamma\prod\limits_{i=1}^{w_\Gamma} L_{\Gamma,i}\le\Sp(V_+)$
and
$ L_{\Gamma,i}\cong \GL_{\frac{e_\Gamma \delta_\Gamma}{e}}(\eps q^e)$ for $1\le i\le w_\Gamma$,
where $\eps=1$ if $\ell$ is linear and $\eps=-1$ if $\ell$ is unitary.
Let $V=V_0+\sum\limits_\Gamma \sum\limits_{i=1}^{w_\Gamma} V_{\Gamma,i}$ be the corresponding orthogonal decomposition of $V$. Obviously, $V_+=\sum\limits_\Gamma \sum\limits_{i=1}^{w_\Gamma} V_{\Gamma,i}$.
From this we obtain an $e_0$-split Levi subgroup $\bL$ of $\bG$.
Then $\tilde \bL=\bL Z(\tilde\bG)$ is an $e_0$-split Levi subgroup of $\tilde\bG$ and the structure of $\bL^*$ and $\tilde\bL^*$ are described in \cite[(3A) and (3B)]{FS89}.
In fact, 
$\bL\hookrightarrow \tilde\bL$ is a regular embedding.
Clearly,  $\bar s\in L^*:={\bL^*}^F$ and $s\in \tilde L^*:=(\tilde\bL^*)^F$ up to conjugacy.
Also, $\tilde L=\langle L_0,\tau\rangle \cdot L_+$ is the central product of $\langle L_0,\tau\rangle$ and $L_+$, where $\tau$ is as in the proof of Lemma \ref{conofcharoflevi}.
Write $\bar s=\bar s_0\times \bar s_+$, with $\bar s_0\in L_0^*$ and $\bar s_+\in L_+^*$.

Let $V^*$ be the underlying space of  $G^*$ and
 $V^*=V^*_0+\sum\limits_\Gamma \sum\limits_{i=1}^{w_\Gamma} V^*_{\Gamma,i}$, $L^*=L^*_0\times L^*_+$ with $L^*_+=\prod\limits_\Gamma\prod\limits_{i=1}^{w_\Gamma} L^*_{\Gamma,i}$
 be the corresponding  orthogonal decompositions of $V^*$ and $L^*$ as in \cite[(3A)]{FS89}.
Denote $V^*_+=\sum\limits_\Gamma \sum\limits_{i=1}^{w_\Gamma} V^*_{\Gamma,i}$.
In addition, we have the primary decompositions
$V_0^*=\sum\limits_\Gamma V^*_\Gamma(\bar s_0)$ and $\bar s_0=\prod\limits_\Gamma \bar s_0(\Gamma)$
of $V_0^*$ and $\bar s_0$.
Thus $C^\circ_{\bL^*}(\bar s)^F=\SO(V^*_{x-1}(\bar s_0))\times \SO(V^*_{x+1}(\bar s_0))\times \prod\limits_{\Gamma\in\cF_1\cup\cF_2}\GL_{m_\Gamma(\bar s_0)}(\vare_\Gamma q^{\delta_\Gamma})\times C_{L^*_+}(s_+)$, where
$C_{L^*_+}(s_+)=\prod\limits_{\Gamma\in\cF}\prod\limits_{i=1}^{w_\Gamma} \GL_1((\eps q^e)^{e_\Gamma\delta_\Gamma/e})$.
Let $\phi_{\kappa}=\prod\limits_\Gamma \phi_\Gamma \times 1_{C_{L^*_+}(s_+)} $ be the unipotent character of $C^\circ_{\bL^*}(\bar s)^F$, where
$\phi_{\Gamma}$ is the unipotent character of $\SO(V^*_{\Gamma}(\bar s_0))$ corresponding to $\kappa_\Gamma$ if $\Gamma\in \cF_0$
and the unipotent character of $\GL_{m_\Gamma(\bar s_0)}(\vare_\Gamma q^{\delta_\Gamma})$ corresponding to $\kappa_\Gamma$ if $\Gamma\in \cF_1\cup\cF_2$.
Then $\phi_\kappa$ is an $e_0$-cuspidal unipotent character of $C^\circ_{\bL^*}(\bar s)^F$.
Now note that
$\mathcal E(C_{\tilde\bL^*}(s)^F,1)= \mathcal E(C^\circ_{\bL^*}(\bar s)^F,1)$ and then we may regard $\phi_\kappa$ as an $e_0$-cuspidal unipotent character of $C_{\tilde\bL^*}(s)^F$.
Let $\tilde\zeta$ be the character of $\tilde \bL^F$ corresponding under the Jordan decomposition to $\phi_\kappa\in\mathcal E(C_{\tilde\bL^*}(s)^F,1)$.
Then $(\tilde \bL,\tilde\zeta)$ is an $e_0$-Jordan-cuspidal pair of $\tilde\bG$.

\begin{lem}
With the notation above,  the $\ell$-block $b_{\tilde\bG^F}(\tilde \bL,\tilde\zeta)$ of $\tilde\bG^F$ corresponding to the $e_0$-Jordan-cuspidal pair  $(\tilde \bL,\tilde\zeta)$ is $\tilde B(s,\kappa)$.
\end{lem}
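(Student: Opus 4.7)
The plan is to identify the two blocks by showing that they share an irreducible character, and then invoke Theorem~\ref{ecuspidalofblock}(ii) together with \cite[(11E)]{FS89} to conclude they are equal. More precisely, I will exhibit an irreducible constituent of $R^{\tilde\bG}_{\tilde\bL}(\tilde\zeta)$ which by \cite[(13B)]{FS89} lies in the Fong--Srinivasan block $\tilde B(s,\kappa)$.

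First, I would analyze the constituents of $R^{\tilde\bG}_{\tilde\bL}(\tilde\zeta)$ via Jordan decomposition. Since $\bL\hookrightarrow \tilde\bL$ is a regular embedding and $\tilde\zeta\in \mathcal E(\tilde\bL^F, s)$ corresponds to the unipotent character $\phi_\kappa$ of $C_{\tilde\bL^*}(s)^F = C^\circ_{\bL^*}(\bar s)^F$, the irreducible constituents of $R^{\tilde\bG}_{\tilde\bL}(\tilde\zeta)$ all lie in $\mathcal E(\tilde\bG^F, s)$ and correspond under Jordan decomposition to the constituents of $R^{C_{\tilde\bG^*}(s)}_{C_{\tilde\bL^*}(s)}(\phi_\kappa)$ (using that Lusztig induction commutes with Jordan decomposition in this setting, cf.\ \cite[Prop.~5.1]{Lu88} and the discussion in \cite{BM89}).

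Second, I would decompose $\phi_\kappa$ and use the $e_0$-Harish-Chandra theory factor by factor. The centralizer $C^\circ_{\bG^*}(\bar s)^F$ splits according to the primary decomposition into orthogonal/symplectic factors indexed by $\Gamma\in\cF_0$ and general linear/unitary factors indexed by $\Gamma\in\cF_1\cup\cF_2$. The Levi $C_{\tilde\bL^*}(s)^F$ inside it matches this decomposition, and $\phi_\kappa$ is built from the $e_0$-cuspidal unipotent characters $\phi_\Gamma$ attached to the $e_\Gamma$-cores $\kappa_\Gamma$. By the explicit $e$-Harish-Chandra description of unipotent characters of classical groups~(cf.\ the bijection (\ref{bijection-sym-skew-par}) and \cite[Prop.~3.7]{Ol93}, also the proofs of \cite[Prop.~14,15]{Ol86}), the irreducible constituents of $R^{C_{\tilde\bG^*}(s)}_{C_{\tilde\bL^*}(s)}(\phi_\kappa)$ are precisely the unipotent characters $\psi_\mu$ of $C_{\tilde\bG^*}(s)^F$ with $\mu\in\Psi(\bar s,\kappa)$, i.e.\ those whose $\Gamma$-component has $\kappa_\Gamma$ as its $e_\Gamma$-core. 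Translating back through Jordan decomposition yields that the constituents of $R^{\tilde\bG}_{\tilde\bL}(\tilde\zeta)$ are exactly the $\tilde\chi_{s,\mu}$ with $\mu\in\Psi(\bar s,\kappa)$.

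Third, I would invoke \cite[(13B)]{FS89}, which characterizes $\Irr(\tilde B(s,\kappa))\cap\mathcal E(\tilde G,\ell')$ as $\{\tilde\chi_{s,\mu}\mid \mu\in\Psi(\bar s,\kappa)\}$. Since any such $\tilde\chi_{s,\mu}$ lies simultaneously in $b_{\tilde\bG^F}(\tilde\bL,\tilde\zeta)$ (by Theorem~\ref{ecuspidalofblock}(i)) and in $\tilde B(s,\kappa)$, the two $\ell$-blocks must coincide. The main technical obstacle will be the second step: rigorously matching the $e_0$-Harish-Chandra description of the constituents of $R^{C_{\tilde\bG^*}(s)}_{C_{\tilde\bL^*}(s)}(\phi_\kappa)$ with the $\Psi(\bar s,\kappa)$-parametrization of Fong--Srinivasan, especially keeping track of the combinatorics of $e_\Gamma$-cores versus $e_\Gamma$-cohooks according to whether $\ell$ is linear or unitary, and handling the degenerate symbol case for $\Gamma=x+1$ carefully so that no additional orbit ambiguity is introduced.
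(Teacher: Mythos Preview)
Your approach is sound in outline but differs from the paper's. The paper does not attempt to identify all constituents of $R^{\tilde\bG}_{\tilde\bL}(\tilde\zeta)$; instead it factors the induction through an intermediate $F$-stable Levi subgroup $\tilde{\mathbf Q}$ (the centralizer in $\tilde\bG$ of a certain $\ell$-element in $Z(D)$, as in \cite[p.~178]{FS89}), writes $R^{\tilde\bG}_{\tilde\bL}(\tilde\zeta)=R^{\tilde\bG}_{\tilde{\mathbf Q}}\bigl(R^{\tilde{\mathbf Q}}_{\tilde\bL}(\tilde\zeta)\bigr)$, observes that $C_{\tilde Q^*}(s)\le \tilde L^*$ so that $R^{\tilde{\mathbf Q}}_{\tilde\bL}(\tilde\zeta)$ lies entirely in the block $\tilde{\mathbf b}$ of $\tilde Q$ by \cite[(13A)]{FS89}, and then invokes the proof of \cite[(13B)]{FS89} to conclude that some constituent of the outer induction lies in $\tilde B(s,\kappa)$. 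This is more economical given that the Fong--Srinivasan machinery is already in play, and it only establishes what is needed (a single shared constituent).

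Your route is conceptually more transparent but leans on the commutation of Jordan decomposition with Lusztig induction, which is not quite what \cite[Prop.~5.1]{Lu88} or \cite{BM89} provide: the former is the Jordan decomposition itself, the latter concerns blocks. For connected-center groups such as $\tilde\bG$ the commutation statement you need is available (see e.g.\ work of Enguehard or Cabanes--Enguehard on the compatibility of Lusztig series with Deligne--Lusztig induction), but you should supply a precise reference if you pursue this line. Once that is in hand, your argument indeed gives more than required, namely the full list of constituents, and the combinatorial matching via \cite{Ol86,Ol93} is the right tool for the $\Gamma$-by-$\Gamma$ analysis.
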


\begin{proof}
We prove that there is one irreducible constituent of $R_{\tilde\bL}^{\tilde\bG}(\tilde\zeta)$ lying in $\tilde B(s,\kappa)$.
In fact, this is essentially contained in \cite[\S 13]{FS89}.
Let $Q$ and  $\tilde Q$ be the $F$-fixed point of some $F$-stable Levi subgroups (say, $\mathbf Q$ and $\tilde{\mathbf Q}$) defined in \cite[p.~178]{FS89}, centralizer of a certain $\ell$-element in $Z(D)$ in $G$ and $\tilde G$ respectively.
Then $Q\cong Q_0\times Q_+$  and $\tilde Q=\langle Q_0,\tau\rangle Q_+$
with $Q_0=L_0$ and $L_+\le Q_+$.
Also, we let $\tilde{\mathbf b}$ be the $\ell$-block of $\tilde Q$ defined in \cite[p.~179]{FS89}.
Now $\tilde\bL\le\tilde {\mathbf Q}$, so
$R_{\tilde\bL}^{\tilde\bG}(\tilde\zeta)=R_{\tilde {\mathbf Q}}^{\tilde\bG}(R_{\tilde\bL}^{\tilde {\mathbf Q}}(\tilde\zeta))$.
In addition $C_{\tilde Q^*}(s)\le \tilde L^*$,
then
$R_{\tilde\bL}^{\tilde {\mathbf Q}}(\tilde\zeta)$ lies in $\cE(\tilde Q,s)$ and then by \cite[(13A)]{FS89}, lies in $\tilde{\mathbf b}$.
Thus we conclude from the proof of \cite[(13B)]{FS89} that there exists one irreducible constituent of $R_{\tilde\bL}^{\tilde\bG}(\tilde\zeta)$ lying in $\tilde B(s,\kappa)$.
\end{proof}

Now keep the hypotheses and setup above and we wish to investigate how many $\ell$-blocks of $G$ are covered by $\tilde B(s,\kappa)$.
This number is  equal to the cardinality of the set $\Irr(L\mid \tilde\zeta)$ by Remark \ref{esplitofslsu}.

Let ${\mathfrak e}$ be the identity element of the clifford algebra over $V^*$.
Then  $Z((\tilde\bG^*)^F)=\{ k{\mathfrak e}\mid k\in\F_q^\times \}$.
For $z\in Z((\tilde\bG^*)^F)$,
we denote by $\hat z$ the corresponding linear character (by \cite[Prop.~13.30]{DM91}) of ${\tG}$ as before.
Moreover, we may regard $\hat z$ as a linear character of $\tilde L/L$ since $\tilde L/L\cong \tG/G$.
From this, $(\tilde \bL,\hat z\tilde\zeta)$ is also an $e_0$-Jordan-cuspidal pair of $\tilde\bG$.
Also, $\hat z\tilde\zeta\in \mathcal E(\tilde \bL^F,\ell')$ if and only if $z$ is
of $\ell'$-order.
Conversely, if $\tilde B'$ is an $\ell$-block of $\tG$ such that $\tilde B'$ and $b_{\tilde\bG^F}(\tilde\bL,\tilde\zeta)$ cover the same $\ell$-blocks of $G$, then $\tilde B'=b_{\tilde\bG^F}(\tilde\bL,\hat z\tilde\zeta)$ for some $z\in\mathcal O_{\ell'}(Z(\tG^*))$.

The relations between conjugacy classes of $\tG^*$ and $G^*$ are given in \cite[\S6.4]{Lu77} (or \cite[(2D)]{FS89}).
Let $\bar C$ be the conjugacy class of $G^*$ containing $\bar s$ and $C=\pi^{-1}(\bar C)$.
If $-1$ is not an eigenvalue of $\bar s$,
then $C$ is the union of $|Z(\tG^*)|$ conjugacy classes of $\tG^*$ and each class contains a unique element of $\{zs\mid z\in Z(\tG^*) \}$.
If $-1$ is an eigenvalue of $\bar s$,
then $C$ is the union of $\frac{1}{2}|Z(\tG^*)|$ conjugacy classes of $\tG^*$ and each class contains exactly two elements $z_1$ and $z_2$ of
 $\{zs\mid z\in Z(\tG^*) \}$ such that $z_1=- z_2$.

If  $-1$ is not an eigenvalue of $\bar s$, then  $\tilde B(zs,\kappa)$ are distinct $\ell$-blocks of $\tG$ for $z\in\mathcal O_{\ell'}(Z(\tilde G^*))$ and they cover the same $\ell$-block of $G$.
In addition, $b_{\tilde\bG^F}(\tilde\bL,\hat z\tilde\zeta)=\tilde B(zs,\kappa)=\hat z \otimes \tilde B(s,\kappa)$.
Now suppose $-1$ is an eigenvalue of $\bar s$.
Then $\tilde B(zs,\kappa)$ and $\tilde B(zs,\kappa')$~(where $\kappa'$ is defined as in page \pageref{def-c-s}) are $\ell$-blocks of $\tG$,
where $z$ runs through a complete set of representatives of $\langle -{\mathfrak e} \rangle$-cosets in $\mathcal O_{\ell'}(Z(\tilde G^*))$, and they cover the same $\ell$-blocks of $G$.
If $w_{x+1}\ne 0$ or $\kappa_{x+1}$ is non-degenerate, then $\tilde B(zs,\kappa)=\tilde B(zs,\kappa')$ by
Remark \ref{res-to-so-deg}
and then
$b_{\tilde\bG^F}(\tilde\bL,\hat z\tilde\zeta)=\tilde B(zs,\kappa)=\hat z \otimes \tilde B(s,\kappa)$ for $z\in \mathcal O_{\ell'}(Z(\tilde G^*))$.
Let $w_{x+1}=0$ and $\kappa_{x+1}$ be degenerate.
Fix $z\in \mathcal O_{\ell'}(Z(\tilde G^*))$, then $\tilde B(zs,\kappa)$ and $\tilde B(zs,\kappa')$
are distinct $\ell$-blocks of $\tG$.
In addition, if $b_{\tilde\bG^F}(\tilde\bL,\hat z\tilde\zeta)=\tilde B(zs,\kappa)=\hat z \otimes \tilde B(s,\kappa)$ for $z\in \mathcal O_{\ell'}(Z(\tilde G^*))$, then
$b_{\tilde\bG^F}(\tilde\bL,\widehat{-{\mathfrak e} s}\tilde\zeta)=\tilde B(-{\mathfrak e} zs,\kappa)=\widehat{-{\mathfrak e} s} \otimes \tilde B(s,\kappa)$.

Now $\bL\hookrightarrow\tilde\bL$ is a regular embedding, so by Jordan decomposition, we have
$|\Irr(\bL^F\mid \tilde\zeta)|=|\Irr(C_{{\bL^*}}(\bar s)^F\mid \phi_\kappa)|$.
Thus if $-1$ is not an eigenvalue of $\bar s$,
then $|\Irr(\bL^F\mid \tilde\zeta)|=1$,
Now suppose that $-1$ is an eigenvalue of $\bar s$.
Then by Clifford theory,
$\Res^{\tilde{\bL}^F}_{\bL^F}\tilde\zeta$ is not irreducible if and only if $\phi_\kappa$ is $C_{{\bL^*}}(\bar s)^F$-invariant.
Note that
$C_{\bL^*}(\bar s)^F=\SO(V^*_{x-1}(\bar s_0)\perp V^*_{x+1}(\bar s_0))\cap (\GO(V^*_{x-1}(\bar s_0))\times \GO(V^*_{x+1}(\bar s_0)))
\times\prod\limits_{\Gamma\in\cF_1\cup\cF_2}\GL_{m_\Gamma(\bar s_0)}(\vare q^{\delta_\Gamma})\times C_{L^*_+}(s_+)$.
Here, $V^*_{x-1}$ has odd-dimension and $V^*_{x+1}$ has even-dimension.
In this way, by a similar proof as in Lemma \ref{res-sp}, $\Res^{\tilde{\bL}^F}_{\bL^F}\tilde\zeta$ is irreducible if and only if $\kappa_{x+1}$ is degenerate.

By the argument above, we have the following result.

\begin{thm}\label{blocks-sp}
Let $(s,\kappa)\in i\Bl_\ell(\tilde G)$, $\bar s=\pi(s)$,
$\tilde B=\tilde B(s,\kappa)$ and $B$ an $\ell$-block of $G$ covered by $\tilde B$.
\begin{enumerate}[(i)]
\item If $-1$ is not an eigenvalue of $\bar s$,
then  $B$ is the unique $\ell$-block of $G$ covered by $\tilde B$ and there are $|\mathcal O_{\ell'}(\F_q^\times)|$ $\ell$-blocks of $\tG$ covering $B$.
In addition, the $\ell$-blocks covering $B$ of $\tilde G$ are $\tilde B(zs,\kappa)$,
where $z$ runs through  $\mathcal O_{\ell'}(Z(\tilde G^*))$.
\item  If $-1$ is  an eigenvalue of $\bar s$, $w_{x+1}= 0$ and $\kappa_{x+1}$ is degenerate,
then  $B$ is the unique $\ell$-block of $G$ covered by $\tilde B$ and there are $|\mathcal O_{\ell'}(\F_q^\times)|$ $\ell$-blocks of $\tG$ covering $B$.
In addition, the $\ell$-blocks covering $B$ of $\tilde G$ are $\tilde B(zs,\kappa)$ and $\tilde B(zs,\kappa')$, where $z$ runs through a complete set of representatives of $\langle -{\mathfrak e} \rangle$-cosets in $\mathcal O_{\ell'}(Z(\tilde G^*))$.
\item If $w_{x+1}\ne 0$ or $\kappa_{x+1}$ is non-degenerate,
then there are $\frac{1}{2}|\mathcal O_{\ell'}(\F_q^\times)|$ $\ell$-blocks of $\tG$ covering $B$ and they are $\tilde B(zs,\kappa)$, where $z$ runs through a complete set of representatives of $\langle -{\mathfrak e} \rangle$-cosets in $\mathcal O_{\ell'}(Z(\tilde G^*))$.	
	Moreover,
\begin{itemize}
	\item if $w_{x+1}\ne 0$ and $\kappa_{x+1}$ is degenerate, then $B$ is the unique $\ell$-block of $G$ covered by  $\tilde B$ and
	\item if $\kappa_{x+1}$ is non-degenerate, then there are two $\ell$-blocks of $G$ covered by  $\tilde B$.
\end{itemize}
\end{enumerate}
\end{thm}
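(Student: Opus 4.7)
My plan is to consolidate the case analysis developed in the paragraphs immediately preceding the theorem: for each case I must exhibit both the number $n_G(\tilde B)$ of $\ell$-blocks of $G$ covered by $\tilde B = \tilde B(s,\kappa)$ and the collection of $\ell$-blocks $\tilde B'$ of $\tilde G$ that cover the same block of $G$ as $\tilde B$.

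For the first count I would invoke Remark \ref{esplitofslsu} together with Clifford theory to reduce to $n_G(\tilde B) = |\Irr(\bL^F\mid \tilde\zeta)|$, where $(\tilde\bL,\tilde\zeta)$ is the $e_0$-Jordan-cuspidal pair associated to $\tilde B$ in the construction given just before the theorem. Because $\bL\hookrightarrow\tilde\bL$ is a regular embedding, Jordan decomposition identifies this with $|\Irr(C_{\bL^*}(\bar s)^F\mid \phi_\kappa)|$. When $-1$ is not an eigenvalue of $\bar s$, $C_{\bL^*}(\bar s)$ is connected and the count is $1$. Otherwise $|C_{\bL^*}(\bar s)^F:C^\circ_{\bL^*}(\bar s)^F|=2$, and by the same argument as in the proof of Lemma \ref{res-sp} (applying \cite[Thm.~2.5]{Ma08} componentwise to the factors of the centralizer) the nontrivial coset acts on $\phi_\kappa$ by exchanging $\kappa_{x+1}$ with $\kappa'_{x+1}$, so the count is $2$ when $\kappa_{x+1}$ is non-degenerate and $1$ otherwise.

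For the second count I would use that every $\ell$-block of $\tilde G$ covering the same block of $G$ as $\tilde B$ has the form $\hat z\otimes \tilde B = \tilde B(zs,\kappa)$ for some $z\in\mathcal O_{\ell'}(Z(\tilde G^*))$. The problem thus reduces to counting distinct blocks among $\{\tilde B(zs,\kappa)\}_z$. Using the description of $\tilde G^*$-conjugacy classes above a given $G^*$-class recalled from \cite[\S6.4]{Lu77} (equivalently \cite[(2D)]{FS89}), this yields $|\mathcal O_{\ell'}(\F_q^\times)|$ distinct blocks if $-1$ is not an eigenvalue of $\bar s$ (case (i)), and otherwise $\tfrac12|\mathcal O_{\ell'}(\F_q^\times)|$ distinct blocks of the form $\tilde B(zs,\kappa)$ as $z$ ranges over $\langle -\mathfrak{e}\rangle$-coset representatives. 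In the latter situation Remark \ref{res-to-so-deg} gives $\tilde B(zs,\kappa)=\tilde B(zs,\kappa')$, so no new blocks appear, except precisely when $w_{x+1}=0$ and $\kappa_{x+1}$ is degenerate: in that exceptional case the blocks $\tilde B(zs,\kappa)$ and $\tilde B(zs,\kappa')$ are distinct and together contribute an additional $\tfrac12|\mathcal O_{\ell'}(\F_q^\times)|$ blocks covering $B$ (case (ii)).

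Assembling these observations gives $(n_G(\tilde B), n_{\tilde G}(B)) = (1,|\mathcal O_{\ell'}(\F_q^\times)|)$ in cases (i) and (ii), with the explicit list of covering blocks as stated; and in case (iii) gives $n_{\tilde G}(B)=\tfrac12|\mathcal O_{\ell'}(\F_q^\times)|$, with $n_G(\tilde B)=2$ when $\kappa_{x+1}$ is non-degenerate and $n_G(\tilde B)=1$ when $w_{x+1}\ne 0$ and $\kappa_{x+1}$ is degenerate. The main obstacle I expect is the bookkeeping around the degenerate/non-degenerate distinction: one has to verify that the $C_{\bL^*}(\bar s)^F$-action on the cuspidal unipotent character $\phi_\kappa$ matches exactly with the action of $-\mathfrak{e}\in Z(\tilde G^*)$ on pairs $(s,\kappa)$ modulo the $\kappa\leftrightarrow\kappa'$ identification, so that the two counts remain mutually consistent with Clifford theory across all three cases.
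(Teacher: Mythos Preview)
Your proposal is correct and follows essentially the same route as the paper: the theorem is stated in the paper as a summary of the preceding discussion, which computes $n_G(\tilde B)=|\Irr(\bL^F\mid\tilde\zeta)|$ via Remark \ref{esplitofslsu} and Jordan decomposition (reducing to the $C_{\bL^*}(\bar s)^F$-action on $\phi_\kappa$ as in Lemma \ref{res-sp}), and determines the blocks $\tilde B(zs,\kappa)$ covering $B$ via the Lusztig/Fong--Srinivasan description of $\tilde G^*$-classes over a given $G^*$-class together with Remark \ref{res-to-so-deg}. Your ordering of the two counts is reversed relative to the paper's exposition, but the ingredients and their use are identical.
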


Now let $i\Bl^{(1)}_\ell(G)$ be the set of $G^*$-conjugacy classes of pairs $(s,\kappa)$, where $s\in G^*$ is a semisimple $\ell'$-element and $\kappa\in\cC(s)$ such that $-1$ is not an eigenvalue of $s$ or
 $\kappa_{x+1}$ is degenerate.
Here, we identify $(s,\kappa)$ with $(s,\kappa')$.
Let $i\Bl^{(2)}_\ell(G)$ be the set of $G^*$-conjugacy classes of pairs $(s,\kappa)$,
where $s\in G^*$ is a semisimple $\ell'$-element  and $\kappa\in\cC(s)$ is such that $w_{x+1}(s)\ne 0$ and $\kappa_{x+1}$ is non-degenerate.
Also $(s_1,\kappa_2)$ and $(s,\kappa_2)$ are $G^*$-conjugate means $s_1$ and $s_2$ are $G^*$-conjugate and $\kappa_1=\kappa_2$.
Then $i\Bl_\ell(G):=i\Bl^{(1)}_\ell(G)\cup i\Bl^{(2)}_\ell(G)$, where the elements of $i\Bl^{(2)}_\ell(G)$ are counted twice, is a labeling set for $\Bl_\ell(G)$ by Theorem \ref{blocks-sp}.
If $(s,\kappa)\in i\Bl^{(1)}_\ell(G)$, we denote by $B(s,\kappa)$ the $\ell$-blocks of $G$ corresponding to $(s,\kappa)$.
If $(s,\kappa)\in i\Bl^{(2)}_\ell(G)$,
then $B^{(1)}(s,\kappa)$ and $B^{(-1)}(s,\kappa)$ denote the two $\ell$-blocks of $G$ corresponding to $(s,\kappa)$.

\subsection{The action of $\Aut(G)$ on the Brauer characters and weights}

Now let $i\IBr^{(1)}_\ell(G)$ be the set of $G^*$-conjugacy classes of pairs $(s,\mu)$, where $s\in G^*$ is a semisimple $\ell'$-element and $\mu\in\Psi(s)$ such that either $-1$ is not an eigenvalue of $s$ or  $-1$ is an eigenvalue of $s$ and $\mu_{x+1}$ is degenerate.
Here, we identify $(s,\mu)$ with $(s,\mu)$ in $i\IBr^{(1)}_\ell(G)$, which means degenerate symbols are not counted twice in any case.
Let $i\IBr^{(2)}_\ell(G)$ be the set of $G^*$-conjugacy classes of pairs $(s,\mu)$,
where $s\in G^*$ is a semisimple $\ell'$-element  and $\mu\in\Psi(s)$ such that $-1$ is an eigenvalue of $s$ and $\mu_{x+1}$ is non-degenerate.
Then $i\IBr_\ell(G):=i\IBr^{(1)}_\ell(G)\cup i\IBr^{(2)}_\ell(G)$, where the elements of $i\IBr^{(2)}_\ell(G)$ are counted twice, is a labeling set for $\Irr(G)\cap \cE(G,\ell')$ by Lemma \ref{res-sp}.
If $(s,\mu)\in i\IBr^{(1)}_\ell(G)$, we denote by $\chi_{s,\mu}$ the character of $G$ corresponding to $(s,\mu)$.
If $(s,\mu)\in i\IBr^{(2)}_\ell(G)$,
then $\chi^{(1)}_{s,\mu}$ and $\chi^{(-1)}_{s,\mu}$ denote the two characters of $G$ corresponding to $(s,\mu)$.

Furthermore,
if $(s,\kappa)\in i\Bl^{(1)}_\ell(G)$ and $B=B(s,\kappa)$, then $\Irr(B)\cap\cE(G,s)=
\{\chi_{s,\mu}\mid \mu\in\Psi(s,\kappa),(s,\mu)\in i\IBr^{(1)}_\ell(G)   \} \cup \{\chi^{(\pm 1)}_{s,\mu}\mid \mu\in\Psi(s,\kappa),(s,\mu)\in i\IBr^{(2)}_\ell(G)  \}.$
%Here, the second set is non-empty if and only if $-1$ is an eigenvalue of $s$ and $\kappa_{x+1}$ is non-degenerate.
If $(s,\kappa)\in i\Bl^{(2)}_\ell(G)$, $B^{(1)}=B^{(1)}(s,\kappa)$ and $B^{(-1)}=B^{(-1)}(s,\kappa)$, then
$\Irr(B^{(1)}\cup B^{(-1)})\cap\cE(G,s)=\{ \chi^{(\pm 1)}_{s,\mu}\mid \mu\in\Psi(s,\kappa) \}$.
We may assume that
$\Irr(B^{(i)})\cap\cE(G,s)=\{\chi^{(i)}_{s,\mu}\mid \mu\in\Psi(s,\kappa) \}$
for $i=\pm1$.
Note that, if $\kappa_{x+1}$ is non-degenerate, then $\mu_{x+1}$ is also non-degenerate,
and then we always have $(s,\mu)\in i\IBr^{(2)}_\ell(G)$.

\begin{rmk}\label{act-irr-sp}
Let $g\in \tilde G\setminus GZ(\tG)$.
Then $g$ induces the non-trivial diagonal automorphism on $G$.
By Lemma \ref{act-irr-scp} and \ref{res-sp}, we have
$\chi_{s,\mu}^\sigma=\chi_{{\sigma^*}^{-1}(s),{\sigma^*}^{-1}\mu}$ and
$\chi_{s,\mu}^g=\chi_{s,\mu}$
if $(s,\mu)\in i\IBr^{(1)}_\ell(G)$.
By Theorem \ref{action-irr-const}, we may assume that $(\chi^{(i)}_{s,\mu})^\sigma=\chi^{(i)}_{{\sigma^*}^{-1}(s),{\sigma^*}^{-1}\mu}$
and $(\chi^{(i)}_{s,\mu})^g=\chi^{(-i)}_{s,\mu}$
for $(s,\mu)\in i\IBr^{(2)}_\ell(G)$ and $i=\pm1$.	
\end{rmk}

For $(s,\kappa)\in i\Bl_\ell(G)$ and  an $\ell$-block $B$ of $G$ corresponding to $(s,\kappa)$,
we define $\mathcal P(B):=\prod\limits_\Gamma\mathcal P(\beta_\Gamma e_\Gamma,w_\Gamma)$, where the sets $\mathcal P(\beta_\Gamma e_\Gamma,w_\Gamma)$ are defined as in (\ref{def-set-lab}).

\begin{prop}\label{label-ibr-b}
With the preceding notation,
$\mathcal P(B)$ is a labeling set for $\Irr(B)\cap \cE(G,\ell')$.
\end{prop}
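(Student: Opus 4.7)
The plan is to construct the required bijection by combining the description of $\Irr(B)\cap\cE(G,s)$ recorded above the proposition with the combinatorial bijections recalled in \S\ref{notations-and-conventions}. The starting point is the identity $\Irr(B)\cap\cE(G,\ell')=\Irr(B)\cap\cE(G,s)$ for any $B$ corresponding to a pair $(s,\kappa)$, since $B$ is contained in the union of Lusztig series $\mathcal{E}_\ell(G,s)$ and the $\ell'$-elements of this union form exactly $\cE(G,s)$. After this reduction it will suffice to analyse the three cases from Theorem~\ref{blocks-sp}.

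First I would treat $(s,\kappa)\in i\Bl^{(1)}_\ell(G)$ with $-1$ not an eigenvalue of $s$. Then $m_{x+1}(s)=0$ forces $w_{x+1}=0$ and $\Psi_{x+1}(s,\kappa)$ is a singleton, while for each other $\Gamma$ we have $\Psi_\Gamma(s,\kappa)\longleftrightarrow \mathcal P(\beta_\Gamma e_\Gamma,w_\Gamma)$ from (\ref{bijection-sym-skew-par}). Moreover every pair $(s,\mu)$ lies in $i\IBr^{(1)}_\ell(G)$, so $\chi_{s,\mu}\mapsto \mu$ directly yields a bijection $\Irr(B)\cap\cE(G,s)\longleftrightarrow \mathcal P(B)$.

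Next I would handle $(s,\kappa)\in i\Bl^{(1)}_\ell(G)$ with $-1$ an eigenvalue of $s$ and $\kappa_{x+1}$ degenerate. Here the bijection $\Psi_{x+1}(s,\kappa)\longleftrightarrow \mathcal P'(2e,w_{x+1})$ is used, together with the observation that a $\mu_{x+1}\in \Psi_{x+1}(s,\kappa)$ is degenerate (hence counted twice in $\mathcal P'$) precisely when it corresponds to an element of $\mathcal P_0(2e,w_{x+1})$, and is non-degenerate precisely when it corresponds to an equivalence class in $\mathcal P'_1(2e,w_{x+1})$. In the first subcase $(s,\mu)\in i\IBr^{(1)}_\ell(G)$ and $\chi_{s,\mu}$ contributes one character to $\Irr(B)\cap\cE(G,s)$; in the second $(s,\mu)\in i\IBr^{(2)}_\ell(G)$ and both $\chi^{(1)}_{s,\mu}$, $\chi^{(-1)}_{s,\mu}$ contribute. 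The matching total is
\[
|\mathcal P_0(2e,w_{x+1})|+2|\mathcal P'_1(2e,w_{x+1})|=|\mathcal P_0(2e,w_{x+1})|+|\mathcal P_1(2e,w_{x+1})|=|\mathcal P(2e,w_{x+1})|,
\]
which matches $|\mathcal P(\beta_{x+1}e_{x+1},w_{x+1})|$ as needed, and one obtains an explicit bijection by sending the two copies of $\mu_{x+1}\in\mathcal P_0$ (equivalently, the pair $(\mu_{x+1},\mu_{x+1}')$) to the two elements of $\mathcal P_0$, and sending the pair $\chi^{(\pm 1)}_{s,\mu}$ to the two representatives of the class in $\mathcal P_1$.

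Finally, for $(s,\kappa)\in i\Bl^{(2)}_\ell(G)$ the symbol $\kappa_{x+1}$ is non-degenerate with $w_{x+1}\neq 0$, so $\Psi_{x+1}(s,\kappa)\longleftrightarrow \mathcal P(2e,w_{x+1})$ by (\ref{bijection-sym-skew-par}); since $-1$ is an eigenvalue of $s$ and $\mu_{x+1}$ is necessarily non-degenerate (adding $e$-hooks/cohooks to a non-degenerate symbol preserves non-degeneracy), every $\mu\in\Psi(s,\kappa)$ lies in $i\IBr^{(2)}_\ell(G)$, giving $\chi^{(i)}_{s,\mu}\in\Irr(B^{(i)})$ for $i=\pm 1$. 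The assignment $\chi^{(i)}_{s,\mu}\mapsto\mu$ then identifies $\Irr(B^{(i)})\cap\cE(G,s)$ with $\Psi(s,\kappa)\longleftrightarrow\mathcal P(B)$. The main obstacle to this plan is the bookkeeping in Case 2: one must carefully match the double-counting of degenerate symbols in $\mathcal P'(2e,w_{x+1})$ with the double-count of non-degenerate pairs via $\chi^{(\pm 1)}$ to land on the correct single copy of $\mathcal P(2e,w_{x+1})$; the remaining cases reduce immediately to the bijections already established in~\S\ref{notations-and-conventions}.
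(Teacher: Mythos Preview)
Your proposal is correct and follows essentially the same route as the paper's proof: both do a case split over the three possibilities for $(s,\kappa)$ from Theorem~\ref{blocks-sp} and then invoke the combinatorial bijections of~(\ref{bijection-sym-skew-par}), together with the dichotomy that a symbol $\mu_{x+1}$ with degenerate $e$-core is degenerate iff it lands in $\mathcal P_0(2e,w_{x+1})$ and non-degenerate iff it lands in a size-two $\sim$-class in $\mathcal P_1(2e,w_{x+1})$ (the paper attributes this to \cite[Prop.~15(2)]{Ol86}). The only cosmetic difference is that the paper phrases Cases~1 and~3 via the bijectivity of $\Res^{\tG}_G$ on $\Irr(\tilde B)\cap\cE(\tG,\ell')$, whereas you work directly from the description of $\Irr(B)\cap\cE(G,s)$ recorded just before the proposition; these are equivalent since that description was itself obtained from $\tG$.

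One small wording issue in your Case~2: for degenerate $\mu_{x+1}$ there is a \emph{single} character $\chi_{s,\mu}$ (since $(s,\mu)$ and $(s,\mu')$ are identified in $i\IBr^{(1)}_\ell(G)$) and a \emph{single} corresponding element of $\mathcal P_0(2e,w_{x+1})\subset\mathcal P(2e,w_{x+1})$, so the phrase ``sending the two copies of $\mu_{x+1}\in\mathcal P_0$ \ldots\ to the two elements of $\mathcal P_0$'' should simply read ``sending $\chi_{s,\mu}$ to the unique element of $\mathcal P_0$ determined by $\mu_{x+1}$''. Your count and your treatment of the non-degenerate pairs $\chi^{(\pm1)}_{s,\mu}\leftrightarrow$ (two representatives of a $\sim$-class) are both fine.
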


\begin{proof}	Let $\tilde B$ be an $\ell$-block of $\tG$ covering $B$.
If $(s,\kappa)\in i\Bl^{(2)}_\ell(G)$,
then every character of $\Irr(\tilde B)\cap \cE(\tG,\ell')$ is parametrized by an element of $i\IBr^{(2)}_\ell(G)$.
Thus the map $\Res^\tG_G:\Irr(\tilde B)\cap \cE(\tG,\ell')\to \Irr(B)\cap \cE(G,\ell')$ 	is bijective.
So $\Psi(s,\kappa)$ is a labeling set for $\Irr(B)\cap \cE(G,\ell')$.
From this
there is a canonical bijection between $\Psi(s,\kappa)$ and $\mathcal P(B)$ by  (\ref{bijection-sym-skew-par}).
So $\mathcal P(B)$ is a labeling set for $\Irr(B)\cap \cE(G,\ell')$.
	
Now we assume that $(s,\kappa)\in i\Bl^{(1)}_\ell(G)$.
If $-1$ is not an eigenvalue of $s$,
then every character of $\Irr(\tilde B)\cap \cE(\tG,\ell')$ is parametrized by an element of $i\IBr^{(1)}_\ell(G)$.
From this we obtain the map
$\Irr(\tilde B)\cap \cE(\tG,\ell')\to \Irr(B)\cap \cE(G,\ell')$,
which sends $\tilde\chi$ to the unique element of $\Irr(B)\cap \Irr(G\mid\tilde\chi)$.
So $\Psi(s,\kappa)$ is a labeling set for $\Irr(B)\cap \cE(G,\ell')$.
Thus $\mathcal P(B)$ is a labeling set for $\Irr(B)\cap \cE(G,\ell')$ as in the last paragraph.
	
Now we assume that $-1$ is an eigenvalue of $s$.
Let $\mu\in \Psi(s,\kappa)$.
If $(s,\mu)\in i\IBr_\ell^{(1)}(G)$,
then
$\mu_{x+1}$ is degenerate
 and if $(s,\mu)\in i\IBr_\ell^{(2)}(G)$,
 then
 $\mu_{x+1}$ is non-degenerate.
 By the proof of \cite[Prop.~15~(2)]{Ol86},
 if $\mu_{x+1}$ is degenerate, then
 it corresponds to an element of $\mathcal P_0(2e,w_{x+1})$~(defined as in
 (\ref{par-P0})) and if $\mu_{x+1}$ is non-degenerate, then $\mu_{x+1}$ and its copy
correspond to the two element of $\mathcal P(2e,w_{x+1})\setminus \mathcal P_0(2e,w_{x+1})$ which are equivalent in the sense of (\ref{equ-par}).
Thus we have a natural bijection between $\mathcal P(B)$ and $\Irr(B)\cap \cE(G,\ell')$. 	
\end{proof}

By the proof of Proposition \ref{label-ibr-b} and Remark \ref{act-irr-sp}, we have the following result immediately.

\begin{prop} \label{act-irr-ell'}
Let  $B$ be an $\ell$-block of $G$, $\mu\in \mathcal P(B)$ and $\chi$ be the irreducible character in $\Irr(B)\cap \cE(G,\ell')$ corresponding to $\mu$.
Let $g$ be an element of $\tG$ which induces a non-trivial diagonal automorphism and $\sigma$ be a field automorphism. Then
\begin{enumerate}[(i)]
	\item $\chi^\sigma$ is a character of $B^\sigma$ corresponding to ${\sigma^*}^{-1}\mu$,
	\item $\chi^g$ is a character of $B^g$ corresponding to $\mu^\dag$, where $\mu^\dag$ is defined as in Proposition \ref{action-par-diag-wei}.
\end{enumerate}
\end{prop}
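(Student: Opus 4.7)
The plan is to deduce both parts from Remark \ref{act-irr-sp} via the combinatorial bijection established in the proof of Proposition \ref{label-ibr-b}. Recall that this bijection $\Psi(s,\kappa)\to \mathcal P(B)$ is built componentwise from (\ref{bijection-sym-skew-par}), so it behaves well under relabeling the index set $\cF$.

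For part (i), I would note that the componentwise bijections $\Psi_\Gamma(s,\kappa)\to \mathcal P(\beta_\Gamma e_\Gamma, w_\Gamma)$ (or $\mathcal P'(2e,w_{x+1})$ if $\Gamma=x+1$ and $\kappa_{x+1}$ is degenerate) are purely combinatorial and intrinsic to each $\Gamma$, so they are compatible with the reindexing $\Gamma\mapsto{\sigma^*}^{-1}\Gamma$ which defines the action of $\sigma^*$ on $\Psi(s,\kappa)$ and on $\mathcal P(B)$. Hence, by the character-level equivariance of Remark \ref{act-irr-sp}, namely $\chi_{s,\mu}^\sigma=\chi_{{\sigma^*}^{-1}(s),{\sigma^*}^{-1}\mu}$ when $(s,\mu)\in i\IBr^{(1)}_\ell(G)$ and $(\chi^{(i)}_{s,\mu})^\sigma=\chi^{(i)}_{{\sigma^*}^{-1}(s),{\sigma^*}^{-1}\mu}$ when $(s,\mu)\in i\IBr^{(2)}_\ell(G)$, the image of $\mu$ under the bijection is transported to that of ${\sigma^*}^{-1}\mu$. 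This handles both cases uniformly.

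For part (ii), I would split according to the $x+1$-component of $\mu$. If $-1$ is not an eigenvalue of $s$, then $w_{x+1}=0$ and $\mathcal P(2e,0)$ is trivial, so $\mu^\dag=\mu$; meanwhile Remark \ref{act-irr-sp} gives $\chi^g=\chi$. If $\mu_{x+1}$ is degenerate (so $(s,\mu)\in i\IBr^{(1)}_\ell(G)$), then by the bijection (\ref{bijection-sym-skew-par}) and the reasoning in the proof of Proposition \ref{label-ibr-b} (following \cite[Prop.~15(2)]{Ol86}), the labeling element of the $x+1$-component lies in $\mathcal P_0(2e,w_{x+1})$; by definition, its entries satisfy $\mu_i=\mu_{e+i}$ for $1\le i\le e$, so $\mu^\dag_{x+1}=\mu_{x+1}$ and thus $\mu^\dag=\mu$, matching $\chi^g=\chi$. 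Finally, if $\mu_{x+1}$ is non-degenerate and $-1$ is an eigenvalue of $s$, so $(s,\mu)\in i\IBr^{(2)}_\ell(G)$, then Proposition \ref{label-ibr-b} assigns to the pair $\chi^{(\pm 1)}_{s,\mu}$ the two $\sim$-equivalent elements of $\mathcal P_1(2e,w_{x+1})$ obtained from the bijection between $\Psi_{x+1}(s,\kappa)$ and $\mathcal P'(2e,w_{x+1})$. The dagger operation $\mu\mapsto\mu^\dag$ is precisely the $\sim$-swap on $\mathcal P_1(2e,w_{x+1})$, and by Remark \ref{act-irr-sp} we have $(\chi^{(1)}_{s,\mu})^g=\chi^{(-1)}_{s,\mu}$; combining these yields the claim.

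The main obstacle is verifying that the labeling convention of Proposition \ref{label-ibr-b} really assigns $\chi^{(1)}_{s,\mu}$ and $\chi^{(-1)}_{s,\mu}$ to the two distinct $\sim$-equivalent representatives (and not to the same class counted twice), and that the interchange induced by a diagonal automorphism on the superscripts matches the dagger swap on the partition tuples. This requires carefully tracking the choices of extensions made implicitly in the proof of Proposition \ref{label-ibr-b} and is the point where one must be attentive to sign conventions; once fixed, the identification is dictated by the explicit description of $\mathcal P'(2e,w_{x+1})$ in (\ref{def-p'}) and (\ref{equ-par}).
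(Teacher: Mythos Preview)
Your approach matches the paper's: both derive the result directly from Remark \ref{act-irr-sp} together with the explicit bijection constructed in the proof of Proposition \ref{label-ibr-b}. The paper's proof is a single sentence invoking exactly these two inputs, so your expansion into cases is more detailed than what the paper actually writes.

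One point deserves attention in your case analysis for (ii). Your final sub-case (``$\mu_{x+1}$ non-degenerate and $-1$ an eigenvalue of $s$'') tacitly assumes $(s,\kappa)\in i\Bl^{(1)}_\ell(G)$, i.e., that $\kappa_{x+1}$ is degenerate, since you invoke the bijection $\Psi_{x+1}(s,\kappa)\to\mathcal P'(2e,w_{x+1})$ and speak of $\chi^{(\pm1)}_{s,\mu}$ as a pair in the \emph{same} block mapped to two $\sim$-equivalent tuples. When $(s,\kappa)\in i\Bl^{(2)}_\ell(G)$, however, $\kappa_{x+1}$ is non-degenerate, the relevant bijection is $\Psi_{x+1}(s,\kappa)\to\mathcal P(2e,w_{x+1})$ (not $\mathcal P'$), and $\chi^{(1)}_{s,\mu}$, $\chi^{(-1)}_{s,\mu}$ lie in the \emph{distinct} blocks $B^{(1)}$, $B^{(-1)}=B^g$. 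In that situation the assertion that $\chi^g$ has label $\mu^\dag$ in $B^g$ is precisely a statement about how the labeling of $B^{(-1)}$ is chosen relative to that of $B^{(1)}$; this is the convention issue you flag in your last paragraph, but it belongs to a different sub-case than the one you describe. The fix is simply to separate these two situations explicitly.
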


\vspace{2ex}

Recall that $V$ is a $2n$-dimensional symplectic space over $\F_q$ with $n\ge 2$ and that $G=I(V)=I_0(V)=\Sp(V)$ throughout this section.
Let $B$ be   an $\ell$-block of $G$ covered by $\tilde B=\tilde B(s,\kappa)$
and define $i\cW_\ell(B)$ to be the set of
$K=\prod_\Gamma K_\Gamma$ where $K_\Gamma:\bigcup_\delta\sC_{\Gamma,\delta}\to \{\ell\textrm{-cores} \}$ such that
$\sum_{\delta,i,j}\ell^\delta |K_\Gamma(\psi_{\Gamma,\delta,i,j})|=w_\Gamma$.

A bijection between $\cW_\ell(B)$ and $i\cW_\ell(B)$ has been constructed implicitly in the proof of \cite[(4F)]{An94} and can be described as follows.
Let $D$ be a defect group of $B$,
$V_0=C_V(D)$ and $V_+=[V,D]$ so that $V=V_0\perp V_+$ as above.
Let $(D,\mathbf b)$ be a maximal Brauer pair of $G$ containing $(1,B)$, and $\vartheta$ be the canonical character of $\mathbf b$.
Then $D=D_0\times D_+$, $\mathbf b=\mathbf b_0\times \mathbf b_+$ and $\vartheta=\vartheta_0\times\vartheta_+$, where $D_0=\langle 1_{V_0} \rangle\le \Sp(V_0)$, $D_+\le\Sp(V_+)$, $\mathbf b_0$, $\mathbf b_+$ are $\ell$-blocks of $\Sp(V_0)$ and $C_{\Sp(V_+)}(D_+)$ respectively, and $\vartheta_0\in\Irr(\mathbf b_0)$, $\vartheta_+\in\Irr(\mathbf b_+)$.

Let $(R,\varphi)$ be a $B$-weight, $C=C_G(R)$ and $N=N_G(R)$.
Then there is an $\ell$-block $b$ of $CR$ with defect group $R$ and canonical character $\theta$ such that $b^G = B$ and $\varphi = \Ind^{N}_{N_{\theta}}\psi$ for some $\psi\in\Irr^0(N_{\theta}\mid \theta)$.
We may suppose $Z(D)\le Z(R)\le R\le D$.
Thus $V_0=C_V(R)$ and $V_+=[V,R]$,
so that $R=R_0\times R_+$, $C=C_0\times C_+$, $N=N_0\times N_+$, where $R_0=D_0$,
$R_+\le\Sp(V_+)$, $C_0=N_0=\Sp(V_0)$, $C_+=C_{\Sp(V_+)}(R_+)$ and $N_+=N_{\Sp(V_+)}(R_+)$.
Let $b=b_0\times b_+$ and $\theta=\theta_0\times\theta_+$ be the corresponding decompositions.
Then $\theta_0=\vartheta_0$ and $b_+^{\Sp(V_+)}=\mathbf b_+^{\Sp(V_+)}$.
Note that $N_\theta=N_0\times N_{\theta_+}$.
If $\psi\in\Irr^0(N_{\theta}\mid \theta)$, then $\psi=\vartheta_0\times\psi_+$, where
$\psi_+\in\Irr^0(N_{\theta_+}\mid \theta_+)$.
The map $(R,\Ind^{N}_{N_{\theta}}\psi)\mapsto(R_+,\Ind^{N_+}_{(N_+)_{\theta_+}}\psi_+)$
is a bijection from
$\{(R,\Ind^{N}_{N_{\theta}}\psi)\mid \psi\in   \Irr^0(N_{\theta}\mid \theta) \}$
to $\{ (R_+,\Ind^{N_+}_{(N_+)_{\theta_+}}\psi_+)\mid  \psi_+\in\Irr^0(N_{\theta_+}\mid \theta_+) \}$.
Then the bijection between $\cW_\ell(B)$ and $i\cW_\ell(B)$ can  be given as in \S \ref{act-wei-IV}.

Then there is a canonical bijection between
$i\cW_\ell(B)$ and
$\mathcal P(B):=\prod\limits_\Gamma\mathcal P(\beta_\Gamma e_\Gamma,w_\Gamma)$.
So $\mathcal P(B)$
is also a labeling set for $\cW_\ell(B)$ by \cite[(4F)]{An94}.

\begin{proof}[Proof of Theorem \ref{baw-sp}]
Let $B$ be an $\ell$-block of $G=\Sp_{2n}(q)$.
Then $\mathcal P(B)$ is a labeling set for both $\Irr(B)\cap\cE(G,\ell')$ and  $\cW_\ell(B)$ by the above argument.
So $|\Irr(B)\cap\cE(G,\ell')|=|\cW_\ell(B)|$.
Thus the assertion follows by Theorem \ref{basicset}.
\end{proof}

By  Corollary \ref{action-basic-par} and Proposition \ref{action-par-diag-wei}, we have the following result.

\begin{prop} \label{act-wei-c}
	Let $B$ be an $\ell$-block of $G$,
	$(R,\varphi)$ be a $B$-weight corresponding to $\mu\in \mathcal P(B)$
	Let $g$ be an element of $\tG$ which induces a non-trivial diagonal automorphism and $\sigma$ be a field automorphism of $G$.
	 Then
	\begin{enumerate}[(i)]
		\item $(R,\varphi)^\sigma$ is a $B^\sigma$-weight corresponding to ${\sigma^*}^{-1}\mu$,
		\item $(R,\varphi)^g$ is a  $B^g$-weight corresponding to $\mu^\dag$, which is defined as in Proposition \ref{action-par-diag-wei}.
	\end{enumerate}
\end{prop}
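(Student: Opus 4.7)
The proof reduces Proposition \ref{act-wei-c} to the already-established actions on weights of $I(V_+)$ by means of the standard decomposition recorded immediately before the statement. Concretely, for a $B$-weight $(R,\varphi)$ write $V = V_0 \perp V_+$ with $V_0 = C_V(R)$, $V_+ = [V,R]$, so that $R = R_0 \times R_+$ (with $R_0$ trivial on $V_0$), $C_G(R) = C_0 \times C_+$, $N_G(R) = N_0 \times N_+$, and $\varphi = \Ind^{N_G(R)}_{N_G(R)_\theta}\psi$ with $\theta = \vartheta_0 \times \theta_+$ and $\psi = \vartheta_0 \times \psi_+$. The map $(R,\varphi) \mapsto (R_+, \Ind^{N_+}_{(N_+)_{\theta_+}}\psi_+)$ identifies the $B$-weight with a weight of an $\ell$-block $B_+$ of $\Sp(V_+)$ satisfying $V_+ = [V_+, D_+]$ for a defect group $D_+$. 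By the construction of the labelling recalled at the end of \S\ref{typeC}, the label of $(R,\varphi)$ in $\mathcal P(B)$ coincides with that of the $V_+$-weight in $\mathcal P(B_+)$, since the $V_0$-part $\vartheta_0$ contributes only to the block parameter.

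For part (i), Remark \ref{action-on-rad} lets us replace $\sigma$ by its composition with a suitable inner automorphism of $G$ so that $\sigma$ stabilises the orthogonal decomposition $V = V_0 \perp V_+$ and each of $R_0$, $R_+$; then $\sigma$ acts factor by factor. The image $\vartheta_0^\sigma$ alters only the block label (compatibly with $B^\sigma$), not the $\mu$-component, while Corollary \ref{action-basic-par} applied to the $V_+$-component shows that $(R_+, \Ind^{N_+}_{(N_+)_{\theta_+}}\psi_+)^\sigma$ carries the label ${\sigma^*}^{-1}\mu$. Transporting through the identification above gives (i).

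For part (ii), by adjusting $g$ modulo $Z(\tilde G)$ and conjugation by elements of $G$ (using the structure of $\tilde G$ together with $|\tilde G / G Z(\tilde G)| = 2$), we may assume $g = g_0 \times g_+$ with $g_0 \in \CSp(V_0)$ and $g_+ \in \CSp(V_+)$ representing the non-trivial coset of $Z(J(V_+)) I(V_+)$ in $J(V_+)$. Conjugation by $g_0$ only permutes characters of $\Sp(V_0)$ and so leaves the $\mu$-label intact, whereas Proposition \ref{action-par-diag-wei} applied to the $V_+$-weight produces the label $\mu^\dag$ of Proposition \ref{action-par-diag-wei}. Combining the two factors yields (ii). The only delicate point is ensuring that $g$ actually admits such a block-diagonal representative, but this is immediate from the compatibility of $\CSp(V)$ with the orthogonal decomposition $V = V_0 \perp V_+$; no further technical obstruction arises.
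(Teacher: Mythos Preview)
Your argument is correct and follows essentially the same approach as the paper: the paper's own proof is a one-line citation of Corollary \ref{action-basic-par} and Proposition \ref{action-par-diag-wei}, and what you have written is precisely an unpacking of that citation via the decomposition $V=V_0\perp V_+$ recorded just before the statement. Your extra care in arranging $\sigma$ and $g$ to respect the decomposition (using Remark \ref{action-on-rad} and the splitting $\tau=\tau_0\times\tau_+$ available from \cite[(5A)]{FS89}, cf.\ the proof of Lemma \ref{conofcharoflevi}) is exactly the hidden content of that one-line proof.
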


\vspace{2ex}

Note that $X=G=\mathrm{Sp}_{2n}(q)$ is the universal $\ell'$-covering group of the simple group $S=\mathrm{PSp}_{2n}(q)$ by \cite[\S~6.1]{GLS98}.
By \cite[\S~2.5]{GLS98}, the automorphisms induced by $\tilde G\rtimes E$ equal $\Aut(G)$.
Recall that $E=\langle F_p \rangle$.

By Proposition \ref{act-irr-ell'} and \ref{act-wei-c}, we have

\begin{thm}\label{bij-char-sp}
	Let $G=\Sp_{2n}(q)$,  $B$ an $\ell$-block of $G$
	where
	$q=p^f$ is a power of an odd prime $p$,
$n\ge 2$ and
$\ell\nmid q$ is an odd prime.
Then there is an $\Aut(G)_B$-equivariant bijection  between $\Irr(B)\cap \mathcal E(G,\ell')$ and $\cW_\ell(B)$.
\end{thm}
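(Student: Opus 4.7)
The plan is to exhibit the bijection directly through the common parametrizing set $\mathcal P(B)$ and then verify equivariance by comparing the action formulas on the two sides. By Proposition \ref{label-ibr-b}, $\mathcal P(B)$ labels $\Irr(B)\cap\mathcal E(G,\ell')$, and by the discussion between Proposition \ref{label-ibr-b} and the theorem (building on \cite[(4F)]{An94} and the bijection $i\cW_\ell(B)\leftrightarrow\mathcal P(B)$ from (\ref{bijection-sym-skew-par})), the same set $\mathcal P(B)$ labels $\cW_\ell(B)$. So the canonical map
\[
\Omega_B:\Irr(B)\cap\mathcal E(G,\ell')\longrightarrow\cW_\ell(B)
\]
sending the character labeled by $\mu\in\mathcal P(B)$ to the $B$-weight labeled by $\mu$ is already a bijection; I only need to check it is $\Aut(G)_B$-equivariant.

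Since $\Aut(G)$ is generated by inner, diagonal, and field automorphisms, and the first act trivially on $G$-classes of both sides, it suffices to treat a field automorphism $\sigma=F_p$ and an element $g\in\tG\smallsetminus GZ(\tG)$ inducing the non-trivial diagonal automorphism. For any such $\sigma$ (resp.\ $g$) in $\Aut(G)_B$, Proposition \ref{act-irr-ell'} tells us that if $\chi\in\Irr(B)\cap\mathcal E(G,\ell')$ has label $\mu$, then $\chi^\sigma$ (resp.\ $\chi^g$) is a character in $B^\sigma=B$ (resp.\ $B^g=B$) with label ${\sigma^*}^{-1}\mu$ (resp.\ $\mu^\dag$). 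Proposition \ref{act-wei-c} gives the identical statement on the weights side. Consequently
\[
\Omega_B(\chi^\sigma)=\Omega_B(\chi)^\sigma,\qquad \Omega_B(\chi^g)=\Omega_B(\chi)^g
\]
whenever the relevant automorphism stabilizes $B$, so $\Omega_B$ is $\Aut(G)_B$-equivariant.

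The one delicate point, which I expect to be the main thing to check carefully, is the case where $B$ is one of the two blocks arising from a pair $(s,\kappa)\in i\Bl_\ell^{(2)}(G)$, i.e.\ when $\kappa_{x+1}$ is non-degenerate and $w_{x+1}\ne0$; there $B^{(1)}$ and $B^{(-1)}$ are swapped by $g$ and each $\mu\in\mathcal P(B)$ with $\mu_{x+1}\ne\mu_{x+1}^\dag$ also has its label modified. Here one must ensure that the conventions labeling $\chi^{(\pm1)}_{s,\mu}$ in Remark \ref{act-irr-sp} and the corresponding conventions labeling the two weights in $B^{(\pm1)}$ are matched in the \emph{same} way, so that the $\dag$-operation and the block-swap agree on both sides. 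Once this bookkeeping is fixed consistently — which is possible because both Proposition \ref{act-irr-ell'}(ii) and Proposition \ref{act-wei-c}(ii) were derived from the same combinatorial prescription in Proposition \ref{action-par-diag-wei} — the equivariance follows uniformly in all cases, completing the proof.
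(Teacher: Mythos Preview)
Your proposal is correct and follows essentially the same approach as the paper: the paper's proof is literally the one-line ``By Proposition \ref{act-irr-ell'} and \ref{act-wei-c}, we have'', and you have simply unpacked this into the construction of $\Omega_B$ via the common labeling set $\mathcal P(B)$ together with the verification that field and diagonal automorphisms act by the same combinatorial rule on both sides.

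One minor remark: your ``delicate point'' about the case $(s,\kappa)\in i\Bl_\ell^{(2)}(G)$ is in fact a non-issue for the statement as written. In that case $g$ genuinely swaps $B^{(1)}$ and $B^{(-1)}$ (by Remark \ref{act-irr-sp}), so $g\notin\Aut(G)_B$ and there is nothing to check for $\Aut(G)_B$-equivariance. The cross-block compatibility you worry about would only matter for a stronger global equivariance statement (a single $\Aut(G)$-equivariant bijection $\Irr(G)\cap\mathcal E(G,\ell')\to\cW_\ell(G)$), which is not what the theorem asserts.
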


\begin{thm}\label{bij-sp}
Keep the hypothesis and setup of Theorem \ref{bij-char-sp}.
Assume further that $\ell$ is linear. Then there exists
an $\Aut(G)_B$-equivariant bijection  between $\IBr_\ell(B)$ and $\cW_\ell(B)$.
\end{thm}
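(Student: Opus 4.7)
The plan is to deduce Theorem \ref{bij-sp} from Theorem \ref{bij-char-sp} by passing from the ordinary characters in $\mathcal E(G,\ell')$ to $\ell$-Brauer characters, using the unitriangular shape of the $\ell$-decomposition matrix that is available in the linear case. This is essentially a standard transport of structure, and no substantially new idea is required beyond Theorem \ref{bij-char-sp}.

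First I would observe that, for each $\ell$-block $B$ of $G = \Sp_{2n}(q)$, the set $X_B := \Irr(B) \cap \mathcal E(G,\ell')$ is a basic set of $B$: this follows from Theorem \ref{basicset}, since the hypotheses (namely $\ell$ good for $\bG$, $\ell\neq p$, and $\ell\nmid|(Z(\bG)/Z^\circ(\bG))_F|$) are automatic for $\bG=\Sp_{2n}$ with $\ell$ odd. Moreover $X_B$ is $\Aut(G)_B$-stable, because every element of $\mathcal E(G,\ell')$ is characterized by lying in a Lusztig series $\mathcal E(G,s)$ for $s$ semisimple of $\ell'$-order, and the $\Aut(G)$-action permutes such series among themselves (by Proposition \ref{act-irr-ell'} and Remark \ref{act-irr-sp}), while preserving the block decomposition.

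Next, since $\ell$ is linear, the decomposition matrix of $G$ with respect to the basic set $\mathcal E(G,\ell')$ is unitriangular with respect to a suitable order, by the theorem of Gruber--Hiss \cite{GH97}. Restricting to the $\ell$-block $B$, the corresponding square decomposition submatrix indexed by $X_B$ and $\IBr_\ell(B)$ is still unitriangular. Then by \cite[Lem.~7.5]{CS13} (the same result invoked in Lemma \ref{ex-unitriibr}), there exists a block-preserving, $\Aut(G)_B$-equivariant bijection
\[
\mathscr D_B : X_B \longrightarrow \IBr_\ell(B),
\]
characterized by the property that $\chi^\circ = \mathscr D_B(\chi) + \sum_{\varphi\in\IBr_\ell(B)\setminus\{\mathscr D_B(\chi)\}} d_\varphi \varphi$ with nonnegative integer coefficients $d_\varphi$, for each $\chi\in X_B$.

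Finally, let $\Omega_B : X_B \to \cW_\ell(B)$ denote the $\Aut(G)_B$-equivariant bijection provided by Theorem \ref{bij-char-sp} (its construction uses the labels $\mu\in\mathcal P(B)$ via Propositions \ref{act-irr-ell'} and \ref{act-wei-c}). Then the composition $\Omega_B \circ \mathscr D_B^{-1} : \IBr_\ell(B) \to \cW_\ell(B)$ is the desired $\Aut(G)_B$-equivariant bijection. There is no serious obstacle here; the genuine content of the theorem lies in Theorem \ref{bij-char-sp}, together with the unitriangularity input of Gruber--Hiss which forces the linearity hypothesis on $\ell$.
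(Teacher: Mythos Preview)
Your proposal is correct and follows essentially the same approach as the paper: invoke Theorem \ref{basicset} to get that $\Irr(B)\cap\mathcal E(G,\ell')$ is a basic set, apply Gruber--Hiss \cite{GH97} for unitriangularity when $\ell$ is linear, use \cite[Lem.~7.5]{CS13} to obtain an equivariant bijection to $\IBr_\ell(B)$, and compose with the bijection of Theorem \ref{bij-char-sp}. The only cosmetic difference is that the paper phrases equivariance in terms of $(\tilde G\rtimes E)_B$ rather than $\Aut(G)_B$, which amounts to the same thing.
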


\begin{proof}
	By Theorem \ref{basicset}, $\Irr(B)\cap\cE(G,\ell')$ is a basic set for $B$.
	Since $\ell$ is linear, the decomposition matrix is unitriangular by \cite{GH97}.
	Hence there is an $(\tG\rtimes E)_B$-equivariant bijection between $\Irr(B)\cap\cE(G,\ell')$ and $\IBr_\ell(B)$ by \cite[Lem.~7.5]{CS13}.
	Thus the assertion follows from Theorem  \ref{bij-char-sp}.
\end{proof}

Now we prove the main result of this paper for simple groups of type $C_n$.

\begin{proof}[Proof of Theorem \ref{equiv-bij-c}]
With the assumption that $f$ is odd,
we know $\Aut(S)/S$ is cyclic.
Then by Corollary \ref{cyclic-outer-ind}, it suffices to show that there exists an $\Aut(G)_B$-equivariant bijection between $\IBr_\ell(B)$ and $\cW_\ell(B)$ for every $\ell$-block $B$ of $G$.
Then the assertion follows by Theorem \ref{bij-sp}.	
\end{proof}

\subsection{The unipotent blocks}\label{uni-bloc-c}

We first summarize the description for  the unipotent $\ell$-blocks of symplectic groups above.
Let $G=\Sp_{2n}(q)$, with $n\ge 2$ and $q$ odd.
Then the unipotent $\ell$-blocks of $G$ are parametrised by $\cC(1)$.
Let $w=w_{x-1}$.
We also write $B(\kappa,w)$ for $B(1,\kappa)$ to emphasize $w$.
Then $\mathcal P(2e,w)$ is a labeling set for the unipotent characters of $B(\kappa,w)$.

Let $B=B(\kappa,w)$ be   a unipotent $\ell$-block of $G$.
Then by the above argument, there is a bijection between $\cW_\ell(B)$ and $\mathcal P(B)=\mathcal P(2e,w)$.
By Proposition \ref{act-wei-c}, we have

\begin{lem}\label{act-wei-typC}
Let $B$ be a unipotent $\ell$-block of $G=\Sp_{2n}(q)$ with $q$, $\ell$ as above.
Then every $B$-weight (in the sense of $G$-conjugacy class) is invariant under the action of $\tilde G\rtimes E$.
\end{lem}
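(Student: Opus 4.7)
The plan is to pass to the parametrisation $\mathcal P(B) \leftrightarrow \cW_\ell(B)$ and apply Proposition \ref{act-wei-c} to show that the label of every $B$-weight is fixed by both $E$ and $\tilde G$, and that the block $B$ itself is preserved. Since $B = B(1,\kappa)$ is unipotent, the semisimple parameter $s = 1$ acts on $V^*$ with only eigenvalue $1$, so $m_\Gamma(\bar s) = 0$ and therefore $w_\Gamma = 0$ for every $\Gamma \neq x-1$. Consequently, any label $\mu = \prod_\Gamma \mu_\Gamma \in \mathcal P(B) = \prod_\Gamma \mathcal P(\beta_\Gamma e_\Gamma, w_\Gamma)$ has $\mu_\Gamma$ equal to the tuple of empty partitions for every $\Gamma \neq x-1$; the same observation applies to the block datum $\kappa = \prod_\Gamma \kappa_\Gamma$, so that $\kappa_\Gamma$ is trivial for $\Gamma \neq x-1$ as well.

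Let $(R,\varphi)$ be a $B$-weight with label $\mu$, and let $\sigma \in E$ be a field automorphism. By Proposition \ref{act-wei-c}(i), $(R,\varphi)^\sigma$ is a $B^\sigma$-weight with label ${\sigma^*}^{-1}\mu$. The permutation ${\sigma^*}^{-1}$ acts on labels via $({\sigma^*}^{-1}\mu)_{{\sigma^*}^{-1}\Gamma} = \mu_\Gamma$; since $x-1$ is fixed by $\sigma^*$ (its roots lie in the prime field) and all non-$(x-1)$ components of $\mu$ are empty, we have ${\sigma^*}^{-1}\mu = \mu$. The same reasoning applied to $\kappa$ gives ${\sigma^*}^{-1}\kappa = \kappa$, whence $B^\sigma = B(1,\kappa) = B$. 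For a diagonal automorphism induced by $g \in \tilde G$, Theorem \ref{blocks-sp}(i) (applied with $s = 1$, so that $-1$ is not an eigenvalue of $\bar s$) yields $B^g = B$, and Proposition \ref{act-wei-c}(ii) sends the label $\mu$ to $\mu^\dag$, which differs from $\mu$ only in its $(x+1)$-component via the swap described in Proposition \ref{action-par-diag-wei}; since $\mu_{x+1}$ is the empty tuple, $\mu^\dag = \mu$.

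Because the label uniquely determines the $G$-conjugacy class of a $B$-weight, the two observations together show that every $B$-weight is fixed under $\tilde G$ and under $E$, hence under $\tilde G \rtimes E$. There is no genuine obstacle to overcome here: the unipotent hypothesis forces all potentially non-trivial label components — in particular the component indexed by $\Gamma = x+1$, which is the only one perturbed by diagonal automorphisms — to be empty, so no actual permutation of labels can occur.
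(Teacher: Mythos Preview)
Your proof is correct and follows exactly the approach the paper intends: the paper's proof consists solely of the phrase ``By Proposition \ref{act-wei-c}, we have'', and you have spelled out precisely why that proposition yields the result in the unipotent case, namely that the only nonempty component of any label $\mu\in\mathcal P(B)$ is the $(x-1)$-component, which is fixed by $\sigma^*$ and untouched by the $\dag$-operation.
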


\begin{rmk}\label{rad-bet-b}
	Now we give the relationship between $\ell$-weights of $\tilde G$ and $\ell$-weights of $G$ in the proof above.
	Since $|\tilde G/GZ(\tilde G)|=2$, by the same argument as in \cite[\S5.2 and \S5.3]{Fe18}, we have the following statements.
	\begin{itemize}
		\item $\tilde R\mapsto \tilde R\cap G$ gives a bijection from  $\Rad_\ell(\tilde G)$ to $\Rad_\ell(G)$ with inverse given by $R\mapsto R\times \mathcal O_\ell(Z(\tilde G))$.
		\item Let  $\tilde R=R \mathcal O_\ell(Z(\tilde G))$ for $R\in \Rad_\ell(G)$.
		If $(\tilde R,\tilde\varphi)$ is an $\ell$-weight of $\tilde G$, then $(R,\varphi)$ is an $\ell$-weight of $G$ for every $\varphi\in\Irr(N_G(R)\mid \tilde\varphi)$.
		Conversely, if $(R,\varphi)$ is an $\ell$-weight of $G$, then there exists $\tilde\varphi\in\Irr(N_{\tilde G}(\tilde R)\mid \varphi)$ such that
		$(\tilde R,\tilde\varphi)$ is an $\ell$-weight of $\tilde G$.
		\item Let $R\in \Rad_\ell(G)$, $\tilde R=R \mathcal O_\ell(Z(\tilde G))$, $(\tilde R,\tilde\varphi)$ an $\ell$-weight of $\tilde G$ and $\varphi\in\Irr(N_G(R)\mid \tilde\varphi)$.
		Suppose that $\tilde B$ is an $\ell$-block of $\tilde G$ and $B$ is an $\ell$-block of $G$.
		If $(\tilde R,\tilde\varphi)$ is an $\tilde B$-weight and $(R,\varphi)$ is an $B$-weight,
		then $\tilde B$ covers $B$.
	\end{itemize}
\end{rmk}

\begin{lem}\label{num-wei-csp}
Let $R\in \Rad_\ell(G)$, $\tilde R=R \mathcal O_\ell(Z(\tilde G))$ and $(R,\varphi)$ an $\ell$-weight of $G$.
Then  $|\{\tilde\varphi\in\Irr(N_{\tilde G}(\tilde R)\mid \varphi) \mid (\tilde R,\tilde \varphi)\ \text{is an}\ \ell  \text{-weight of}\ \tilde G \}|\le |\mathcal O_{\ell'}(\tilde G/G)|$.
\end{lem}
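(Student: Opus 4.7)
The plan is to show that the hypothesis that $(\tilde R,\tilde\varphi)$ is an $\ell$-weight forces $\tilde\varphi$ to factor through $N_{\tilde G}(\tilde R)/Z$, where $Z:=\mathcal O_\ell(Z(\tilde G))$, and then to bound the number of such $\tilde\varphi$ lying over $\varphi$ by a direct index computation, exploiting that $|\tilde G:GZ(\tilde G)|=2$.

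First I would set $H:=N_{\tilde G}(\tilde R)$ and $K:=N_G(R)$. Since $G$ centralizes $Z(\tilde G)$, one has $N_G(\tilde R)=N_G(R)$, so $K=H\cap G\trianglelefteq H$. Because $(\tilde R,\tilde\varphi)$ is an $\ell$-weight, $\tilde R\le\ker\tilde\varphi$, and in particular $Z\le\ker\tilde\varphi$. As $\ell$ is odd and $Z(G)=\{\pm1\}$, we have $K\cap Z\le G\cap Z(\tilde G)\cap Z=\{\pm1\}\cap Z=\{1\}$, so $KZ/Z\cong K$. Thus the characters we are counting form a subset of $\Irr(H/Z\mid\varphi)$, where $\varphi$ is viewed as an irreducible character of $KZ/Z$.

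By Clifford theory, induction from $(H/Z)_\varphi$ gives a bijection
$\Irr((H/Z)_\varphi\mid\varphi)\to\Irr(H/Z\mid\varphi)$,
and $|\Irr((H/Z)_\varphi\mid\varphi)|\le [(H/Z)_\varphi:KZ/Z]$, so the number in question is at most $[H:KZ]$. It therefore suffices to show $[H:KZ]\le|\mathcal O_{\ell'}(\tilde G/G)|=(q-1)_{\ell'}$.

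For this, factor $[H:KZ]=[H:KZ(\tilde G)]\cdot[KZ(\tilde G):KZ]$. One checks that $H\cap GZ(\tilde G)=KZ(\tilde G)$ (if $gz\in H$ with $g\in G$, $z\in Z(\tilde G)\le H$, then $g\in H\cap G=K$), so $H/KZ(\tilde G)\hookrightarrow\tilde G/GZ(\tilde G)$, which has order $2$ because $|\tilde G/G|=q-1$ and $|GZ(\tilde G)/G|=|Z(\tilde G)/\{\pm1\}|=(q-1)/2$. Hence the first factor is at most $2$. For the second, $KZ(\tilde G)/KZ\cong Z(\tilde G)/(Z(\tilde G)\cap KZ)$; and $Z(\tilde G)\cap KZ=\{\pm1\}Z$ (if $kz'\in Z(\tilde G)$ with $k\in K$, $z'\in Z$, then $k\in K\cap Z(\tilde G)=\{\pm1\}$), giving $[KZ(\tilde G):KZ]=(q-1)/(2(q-1)_\ell)=(q-1)_{\ell'}/2$. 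Multiplying yields $[H:KZ]\le(q-1)_{\ell'}$, as required. There is no serious obstacle here; the only delicate point is keeping track of which part of $Z(\tilde G)$ is absorbed into $K$ and which by $Z$, using that $\ell$ is odd and $Z(G)=\{\pm1\}$.
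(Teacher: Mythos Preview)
Your proof is correct and follows essentially the same approach as the paper: both observe that any weight $\tilde\varphi$ lying over $\varphi$ must contain $Z=\mathcal O_\ell(Z(\tilde G))$ in its kernel, and then bound the count by the index $[N_{\tilde G}(\tilde R):N_G(R)Z]\le|\mathcal O_{\ell'}(\tilde G/G)|$. The paper's execution is marginally quicker---it uses that $N_{\tilde G}(\tilde R)/N_G(R)$ embeds in the cyclic group $\tilde G/G$ to parametrize $\Irr(N_{\tilde G}(\tilde R)\mid\varphi)$ by Gallagher twists $\tilde\varphi_0\tau$ and then observes in one step that $N_{\tilde G}(\tilde R)/N_G(R)Z$ is an $\ell'$-subgroup of $\tilde G/G$---whereas your two-step factorization of $[H:KZ]$ through $KZ(\tilde G)$ reaches the same bound by a slightly longer route.
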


\begin{proof}
Let $\tilde\varphi_0\in\Irr(N_{\tilde G}(\tilde R)\mid \varphi)$ such that $(\tilde R,\tilde\varphi)$ is an $\ell$-weight of $\tilde G$.
Since $N_{\tilde G}(\tilde R)/N_{G}(R)\le \tilde G/G$ is cyclic, by Clifford theory we have that
$\Irr(N_{\tilde G}(\tilde R)\mid \varphi)=\{\tilde\varphi_0\tau\mid\tau\in\Irr(N_{\tilde G}(\tilde R)/N_{G}(R))\}$.
If $(\tilde R,\tilde\varphi_0\tau)$ is an $\ell$-weight of $\tilde G$, then $\tilde R$ is contained in the kernel of $\varphi_0\tau$, and then $\mathcal O_\ell(Z(\tilde G))$ is contained in the kernel of $\tau$.
Thus $\tau\in\Irr(N_{\tilde G}(\tilde R)/\mathcal O_\ell(Z(\tilde G))N_{G}(R))$.
Now $N_{\tilde G}(\tilde R)/\mathcal O_\ell(Z(\tilde G))N_{G}(R)$ is an $\ell'$-subgroup of $\tilde G/G$, thus the assertion holds.
\end{proof}

\begin{proof}[Proof of Theorem \ref{ibawc-uni} for type $C_n$]
Let $B=B(\kappa,w)$ be a unipotent $\ell$-block of $X=G=\Sp_{2n}(q)$.
Then by the argument above, $\mathcal P(2e,w)$ is a labeling set of $\cW_\ell(B)$.
By Lemma \ref{act-wei-typC}, every element of $\cW_\ell(B)$ is $\tilde G\rtimes E$-invariant.
On the other hand, $\Irr(B)\cap \mathcal E(G,1)$ is a basic set of $B$ by Theorem \ref{basicset}.
By \cite[Thm.~2.5]{Ma08}, every unipotent character of $G$ is $\tilde G\rtimes E$-invariant.
Thus $\mathcal P(2e,w)$ is a labeling set of $\IBr_\ell(B)$
and every element of $\IBr_\ell(B)$ is $\tilde G\rtimes E$-invariant.
Hence there exists an $\Aut(G)$-equivariant bijection between $\IBr_\ell(B)$ and $\cW_\ell(B)$.
Then it suffices to show condition (iii) of Definition \ref{induc}.
Note that the number of $\ell$-blocks of $\tilde G$ covering $B$ is $|\mathcal O_{\ell'}(\F_{q}^\times)|$ and there is a unique unipotent $\ell$-block $\tilde B$ of $\tG$ covering $B$ by Theorem \ref{blocks-sp}.
Then $\tilde B_z:=\hat z\otimes \tilde B$ for $z\in\mathcal O_{\ell'}(Z(\tG^*))$ are (all) the  $\ell$-blocks of $\tG$ covering $B$.

\emph{Claim \bf 1}:
Every $\phi\in \IBr_\ell(B)$ extends to $\tilde G\rtimes E$.

Let $\tilde \phi\in \IBr_\ell(\tilde B)$ be an  extension of $\phi$.
Note that every element of $\IBr_\ell(\tilde B)$ is $\tilde G\rtimes E$-invariant
since
$\Irr(\tilde B)\cap \mathcal E(\tilde G,1)$ is a basic set of $\tilde B$ by Theorem \ref{basicset} and every unipotent character of $\tilde G$ is $E$-invariant.
Thus $\tilde\phi$ extends to $\tilde G\rtimes E$ and so does $\phi$.

\emph{Claim \bf 2}: If $(R,\varphi)$ is a $B$-weight, then $\varphi$ extends to $(\tilde G\rtimes E)_{R}$.

By Lemma \ref{act-wei-typC}, $\varphi$ is $(\tilde G\rtimes E)_{R}$-invariant.
For $z\in \mathcal O_{\ell'}(Z(\tilde G^*))$, there exists $\tilde\varphi_z\in\Irr(N_{\tilde G}(\tilde R)\mid \varphi)$ such that $(\tilde R,\tilde\varphi_z)$ is a $\tilde B_z$-weight by Corollary \ref{lin-wei}.
Now the number of $\ell$-blocks of $\tilde G$ which cover $B$ is $|\mathcal O_{\ell'}(\F_{q}^\times)|$, by Lemma \ref{num-wei-csp}, $\tilde\varphi_z$ is unique.
Let $\tilde\varphi=\tilde\varphi_1$, that is, $\tilde\varphi\in\Irr(N_{\tilde G}(\tilde R)\mid \varphi)$ and $(\tilde R,\tilde\varphi)$ is a $\tilde B$-weight.
Now $\tilde B$ is $D$-invariant, so we have
$\tilde\varphi^x\in\Irr(N_{\tilde G}(\tilde R)\mid \varphi)$ and $(\tilde R,\tilde\varphi^x)$ is a $\tilde B$-weight for all $x\in (\tilde G\rtimes E)_{R}$.
Thus $\tilde\varphi^x=\tilde \varphi$ and then $\tilde\varphi$ is $(\tilde G\rtimes E)_{R}$-invariant.
Note that $E$ acts trivially on  $\Rad_\ell(\tilde G)/\sim_G$.
Thus there exists $g\in G$ such that $g^{-1}\sigma$ stabilizes $R$, and then $(\tilde G\rtimes E)_{R}=N_{\tilde G}(R)\langle g^{-1}\sigma \rangle$, which implies $(\tilde G\rtimes E)_{R}/N_{\tilde G}(\tilde R)$ is cyclic.
Hence $\tilde\varphi$ extends to $(\tilde G\rtimes E)_{R}$ and so does $\varphi$.

The remaining process is similar to the case of type $B$.
For a set $\IBr_\ell(B\mid Q)$ as in Lemma \ref{first-2-con}~(for definition, see the proof of \cite[Lemma 2.10]{Sc16}) and $\phi\in \IBr_\ell(B\mid Q)$, we let $A:=A(\phi,Q)=(\tilde G\rtimes E)/Z(\tG)$.
By a similar argument as in the proof of \cite[Prop.~7.1]{Fe18}, 
conditions (1)-(3) of Definition \ref{induc}~(iii) hold.
For Definition \ref{induc}~(iii)(4), the proof of \cite[Lem.~7.2]{Fe18} applies here.
\end{proof}

\section{Type $D$}\label{typeD}

Let $V$ be a $2n$-dimensional orthogonal space over $\F_q$ with $n\ge 4$ and let $\tilde G=I(V)=\GO(V)=\GO_{2n}^\eps(q)$ and $G=I_0(V)=\SO(V)=\SO_{2n}^\eps(q)$ with $\eps\in\{\pm\}$.
As before, $F_p$ denotes the field automorphism which sends $(a_{ij})$ to $(a_{ij}^p)$ and we write $E=\langle F_p\rangle$.

In this section, we write $J=\CSO_{2n}^\eps(q)$ for the special conformal orthogonal groups and $\tG=\GO_{2n}^\eps(q)$ for the general orthogonal groups for convention (of description of $\ell$-weight of special orthogonal groups in \cite{An94}), which is not the same as the notation in Appendix \ref{Appendix-blocks-so}.

The blocks of $\SO_{2n}^\eps(q)$ can be obtained from $\CSO_{2n}^\eps(q)$ as  we did for $\Sp_{2n}(q)$ in \S\ref{The-blocks-of-special-symplectic-groups};
see Appendix \ref{Appendix-blocks-so}.
But in this section, we only consider unipotent $\ell$-blocks, which are classified by Cabanes and Enguehard \cite{CE94}, which is easier to describe.
So we do not use the results of Appendix \ref{Appendix-blocks-so} in this section.
Let $\ell$ be an odd prime with $\ell\nmid q$ and $e_0$ and $e$ defined as before.
As in \S\ref{uni-bloc-c}, by \cite[Thm.]{CE94}, the unipotent $\ell$-blocks of $G=\SO^\eps_{2n}(q)$ are parametrised by the $G$-conjugacy classes of $e_0$-cuspidal pairs $(\mathbf L,\lambda)$.
Here $\mathbf L$ satisfies that $L=\mathbf L^F=\SO_{2(n-we)}^\delta(q)\times T_{e_0}^w$, with either $T_{e_0}=\GL_1(q^e)$ if $e_0$ is odd, or
$T_{e_0}=\GU_1(q^e)$ if $e_0$ is even,
and $\delta=\eps$ if $e_0$ is odd or $w$ is even, and $\delta=-\eps$ else,
and $\lambda$ is an $e_0$-cuspidal unipotent character of $L$~(for the structure of $e_0$-cuspidal pairs, see \cite{BMM93}).
Following the notation of \cite[\S5.3]{Ma17}, we write $B=B(L,\lambda)$ for the corresponding~(unipotent)~$\ell$-block.

Write $\lambda=\lambda_0\times 1_{T_{e_0}^w}$, where $\lambda_0$ is an $e_0$-cuspidal unipotent character of $\SO_{2(n-we)}^\delta(q)$.
Let $\kappa$ be the symbol corresponding to $\lambda_0$.
Then by \cite[\S3]{BMM93}, $\kappa$ is an $e$-core.
Moreover, the unipotent characters in the block $B(L,\lambda)$ are then the members of the $e_0$-Harish-Chandra series above $(L,\lambda)$, and then the ones parametrised by the symbols of rank $n$ and having $e$-core $\kappa$.
Thus we also write $B(\kappa,w)$ for $B(L,\lambda)$.

If $B=B(\kappa,w)$, then we let $\mathcal P(B):=\mathcal P(2e,w)$ if $\kappa$ is non-degenerate, and
 $\mathcal P(B):=\mathcal P'(2e,w)$ if $\kappa$ is degenerate.
Thus by (\ref{bijection-sym-skew-par}),
$\mathcal P(B)$ is a labeling set for $\Irr(B)\cap \cE(G,1)$ .
Also, $B$ is a defect zero $\ell$-block if and only if $w=0$.
So we always assume that $w>0$ from now on.

A defect group of $B(\kappa,w)$ is then obtained as a Sylow $\ell$-subgroup of $C_G([\mathbf L,\mathbf L])$, which is isomorphic to
a Sylow $\ell$-subgroup of $\GL_{ew}(q)$~(if $e_0$ is odd) or $\GU_{ew}(q)$~(if $e_0$ is even) by \cite[\S5.6]{Ma18}.

Denote $\tilde J:=\CO_{2n}^\eps(q)$ and $S=\mathrm{P\Omega}_{2n}^\eps(q)$.
Then $S$ is simple.
By \cite[\S2.5]{GLS98}, the automorphisms induced by $\tilde A:=\tilde J\rtimes E$ on $S$ equal  $\Aut(S)$ except when $n=4$ and $\eps=+$.
Recall that $E=\langle F_p \rangle$ is the group generated by the field automorphism $F_p$ which sends $(a_{ij})$ to $(a^p_{ij})$.
Here we start with a lemma following from \cite[Thm.~2.5]{Ma08}  immediately.

\begin{lem}\label{act-uni-D}
\begin{enumerate}[(i)]
	\item Every unipotent character of $\tilde G$ is $\tilde A$-invariant.
	\item Any element $g\in \tilde A$ fixes every unipotent character of $G$ except when $\eps=+$,
	the action of $g$ on $G$ can be induced by some element of $\tilde G\setminus G$ and the unipotent character is labelled by a degenerate symbol~(or an element of $\mathcal P_0(2e,w)$). Furthermore, such $g$ interchanges the two unipotent characters in all pairs labeled by the same degenerate symbol.
\end{enumerate}
\end{lem}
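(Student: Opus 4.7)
The plan is to deduce both parts of the lemma directly from Malle's classification \cite[Thm.~2.5]{Ma08} of the action of the full automorphism group on unipotent characters of finite reductive groups, combined with the well-known relation between unipotent characters of $G=\SO^\eps_{2n}(q)$ and those of $\tilde G=\GO^\eps_{2n}(q)$ via restriction.

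First I would recall the relevant parametrization: unipotent characters of $G$ are in bijection with symbols of rank $n$ of defect divisible by $4$ (if $\eps=+$) or congruent to $2\pmod 4$ (if $\eps=-$), where degenerate symbols are counted twice; and unipotent characters of $\tilde G$ are in bijection with the same symbols but with each degenerate symbol counted only once, since the two unipotent constituents of $G$ corresponding to a degenerate symbol fuse into a single $\tilde G$-character (they are interchanged by any element of $\tilde G\setminus G$). Note that degenerate symbols only occur in type $D$, i.e. $\eps=+$.

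For part (i), I would write every element of $\tilde A = \tilde J \rtimes E$ as a product of a field automorphism in $E$ and a conjugation automorphism coming from $\tilde J$. Malle's theorem gives that field automorphisms fix every unipotent character of $\tilde G$. For the conjugation part, one notes that $Z(\tilde J)$ acts trivially by inner automorphisms on $\tilde G$, so the action of $\tilde J$ on the unipotent characters of $\tilde G$ factors through $\tilde J/Z(\tilde J)\tilde G$; Malle's theorem again gives that such conformal diagonal automorphisms fix all unipotent characters, since the only nontrivial action (the interchange of the two members of a degenerate pair) happens at the level of $G$ and disappears after passing to $\tilde G$.

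For part (ii), I would similarly split $g\in \tilde A$ into its field part (always fixing unipotent characters by \cite[Thm.~2.5]{Ma08}) and its diagonal part. By Malle's theorem, the only unipotent characters of $G$ on which a diagonal automorphism acts nontrivially are the pairs labeled by degenerate symbols in type $D$, and such a pair is permuted exactly by those diagonal automorphisms whose action on $G$ is realized by an element of $\tilde G\setminus G$ (i.e.\ by determinant-reversing isometries). For $\eps=-$ no symbol is degenerate, so all unipotent characters of $G$ are fixed. This matches exactly the exception formulated in the statement, and the ``interchanges'' clause is the explicit description of the $\tilde G/G$-action in Malle's theorem.

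The main obstacle is bookkeeping rather than substance: one has to be careful about which outer automorphisms of $G$ are induced by $\tilde G\setminus G$ versus by $\tilde J\setminus \tilde G Z(\tilde J)$, especially when $\eps=+$ and $n$ is even, where the diagonal outer automorphism group has order $4$. The claim ``the action of $g$ on $G$ can be induced by some element of $\tilde G\setminus G$'' is precisely the condition that $g$ lies outside $\tilde G Z(\tilde J)\rtimes E$ modulo the subgroup fixing each individual symbol, so one must verify compatibility with Malle's tabulated action on degenerate-symbol pairs; this is routine once the parametrization in the first paragraph is set up correctly.
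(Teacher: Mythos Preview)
Your proposal is correct and takes essentially the same approach as the paper: the paper simply states that the lemma follows immediately from \cite[Thm.~2.5]{Ma08}, and your argument is an expanded version of exactly this citation, spelling out how Malle's classification of the automorphism action on unipotent characters yields both parts. The only minor point not fully addressed (and equally glossed over in the paper) is why, for a non-degenerate symbol, an automorphism in $\tilde A$ fixes each of the two extensions of the corresponding $G$-character to $\tilde G$ rather than swapping them; this comes down to the sign character of $\tilde G/G$ being $\tilde A$-stable, which is immediate.
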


\begin{lem}\label{ext-ibr-d}
	Let $\phi\in\IBr_\ell(B)$ where $B$ is a unipotent $\ell$-block of $G$.
\begin{enumerate}[(i)]
	\item If $\eps=-$, then  $\phi$  extends to $\tilde A$.
	\item If $\eps=+$ and $\ell$ is linear, then $\phi$ extends to $\tilde A_\phi$.	
\end{enumerate}
\end{lem}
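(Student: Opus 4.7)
Plan: For both parts the strategy is to apply Lemma \ref{ex-unitriibr} with $A = \tilde A$ and basic set $X := \Irr(B)\cap\mathcal E(G,1)$, which is a basic set of the unipotent block $B$ by Theorem \ref{basicset}. The two verifications required are (a) unitriangularity of the decomposition matrix of $B$ with respect to (a suitable order on) $X$, and (b) that each $\chi\in X$ extends to its stabilizer $\tilde A_\chi$.

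For part (i), assume $\eps = -$. The exceptional case in Lemma \ref{act-uni-D}(ii) requires $\eps = +$, so every $\chi\in X$ is $\tilde A$-invariant, i.e.\ $\tilde A_\chi = \tilde A$. For (b) I will use the known fact that every unipotent character of a classical group extends to its full stabilizer in the automorphism group: the field automorphisms act trivially on unipotent characters~(Lemma \ref{act-uni-D}(i) applied to $\tilde G$), and extensions through the graph/diagonal part coming from $\tilde J/G$ for type~$^2D_n$ follow from the analysis of unipotent characters in \cite{Ma08} together with the extendibility results of \cite{CS17}. For (a), the restriction of the decomposition matrix of a unipotent block of a classical group to the subset indexed by unipotent characters is (lower) unitriangular for a suitable order, by Geck's work on regular basic sets (together with \cite{GH97} when $\ell$ is linear). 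Lemma \ref{ex-unitriibr} then produces extensions of every $\phi\in\IBr_\ell(B)$ to $\tilde A$, as claimed.

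For part (ii), assume $\eps = +$ and $\ell$ is linear. Unitriangularity of the full decomposition matrix of $B$ with respect to $X$ is now immediate from \cite{GH97}, so (a) holds. By Lemma \ref{act-uni-D}(ii) the set $X$ splits into two kinds: characters labelled by non-degenerate symbols (equivalently by elements of $\mathcal P'_1(2e,w)$), which are $\tilde A$-invariant and extend to $\tilde A = \tilde A_\chi$ exactly as in part (i); and pairs of characters labelled by a degenerate symbol (equivalently by elements of $\mathcal P_0(2e,w)$), which are permuted transitively by the diagonal automorphism coming from any $g\in \tilde G\setminus G$. In the latter case $\tilde A_\chi$ has index $2$ in $\tilde A$ and contains $G\rtimes E$, and extension of $\chi$ to $\tilde A_\chi$ reduces, via cyclicity of $\tilde A_\chi/G$, to the extendibility of unipotent characters of $\SO_{2n}^+(q)$ to their stabilizers in $\Aut(G)$ established in \cite{CS17}. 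Applying Lemma \ref{ex-unitriibr} again gives the claim.

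The main obstacle is (b) for the degenerate pairs in part (ii): one must exhibit a simultaneous extension across the cyclic quotient $\tilde A_\chi/G$, controlling both the field automorphism action and the action of the non-trivial coset of $\tilde A_\chi$ in $\tilde J$ that fixes the degenerate pair setwise. This is the step where the detailed analysis of the $\tilde A$-action on unipotent characters of $\SO_{2n}^+(q)$ (Theorem~3.1 of \cite{CS17} and its method) is essential; the rest of the argument is a direct bookkeeping application of Lemma \ref{ex-unitriibr}.
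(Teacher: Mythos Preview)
Your approach to part~(ii) is correct and matches the paper: both use the unitriangularity from \cite{GH97} (valid since $\ell$ is linear), the extendibility of unipotent characters to their stabilizers from \cite[Thm.~2.4]{Ma08}, and then Lemma~\ref{ex-unitriibr}.

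Part~(i), however, has a genuine gap. You need unitriangularity of the decomposition matrix of $B$ with respect to the unipotent basic set, but for $\eps=-$ there is \emph{no} assumption that $\ell$ is linear, and the paper explicitly notes (see the discussion after Theorem~\ref{ibawc-dominate}) that unitriangularity for classical groups in non-defining characteristic is an open problem outside the linear case. Your appeal to ``Geck's work on regular basic sets'' does not supply this: \cite{Ge93} gives basic sets, not unitriangularity, and \cite{GH97} only covers linear primes. Without unitriangularity, Lemma~\ref{ex-unitriibr} cannot be invoked.

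The paper circumvents this by an entirely different route that avoids unitriangularity. Since every $\chi\in\Irr(B)\cap\mathcal E(G,1)$ is $\tilde A$-invariant (Lemma~\ref{act-uni-D}, as $\eps=-$ rules out the degenerate case), and this set is a basic set, every $\phi\in\IBr_\ell(B)$ is automatically $\tilde A$-invariant; this step needs only that Brauer characters are $\mathbb Z$-combinations of $\chi^\circ$'s, not any triangularity. The paper then \emph{constructs} the extension in stages: first to $\tilde G=\GO_{2n}^-(q)$ using that (for $\kappa$ non-degenerate, which is automatic when $\eps=-$) there are exactly two unipotent blocks of $\tilde G$ covering $B$, so Lemma~\ref{ext-Br} applies and gives an $\tilde A$-invariant extension $\tilde\phi\in\IBr_\ell(\tilde B^{(1)})$; then to $\tilde J=\CO_{2n}^-(q)$ by counting the $|\mathcal O_{\ell'}(\F_q^\times)|$ blocks of $\tilde J$ covering $\tilde B^{(1)}$ and applying Lemma~\ref{ext-Br} again; finally to $\tilde A$ since $\tilde A/\tilde J\cong E$ is cyclic. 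The block-counting argument via Lemma~\ref{ext-Br} is the key idea you are missing.
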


\begin{proof}
Let $B=B(\kappa,w)$.
If $w=0$, then $B$ is a defect zero $\ell$-block, and then the assertion follows from \cite[Thm.~2.4]{Ma08}.
Now we assume that $w>0$.
For every $z\in \mathcal O_{\ell'}(Z(J^*))$~(note that $J^*$ is the special Clifford group),
we let $\hat z$ be the corresponding linear character of $J$~(cf. \cite[Prop.~13.30]{DM91}) for $z\in \mathcal O_{\ell'}(Z(J^*))$.
Then by \cite[Prop.~13.30]{DM91}, $\hat z\otimes \cE(J,1)=\cE(J,z)$.
Thus there exists an $\ell$-block $\mathbf B_z$ of $J$ covering $B$ such that $\Irr(\mathbf B_z) \subseteq \mathcal E_\ell(J,z)$~(see also Theorem \ref{blocks-so}).
Moreover, $\mathbf B_z$ is unique.
In this way, there are $|\mathcal O_{\ell'}(\F_{q}^\times)|$ $\ell$-blocks of $J$ covering $B$.
We denote $\mathbf B=\mathbf B_1$.
Then $\mathbf B_z=\hat z \otimes \mathbf B$.
Since $z$  is of $\ell'$-order, we may regard $\hat z$ as a linear $\ell$-Brauer character of $J$.
Thus $\IBr_\ell(\mathbf B_z)=\{\hat z\phi\mid \phi\in\IBr_\ell(\mathbf B)\}$ by \cite[Lem.~2.4]{Fe18}.

If $\kappa$ is non-degenerate, then there are two unipotent $\ell$-blocks $\tilde B^{(1)}$, $\tilde B^{(2)}$ of $\tilde G$ covering $B$.
Let $\mathbf {\tilde B}^{(i)}$ be the unique  unipotent $\ell$-block of $\tilde J$ covering $\tilde B^{(i)}$ for $i=1,2$.
Since $J/G\cong \tilde  J/\tilde G$,
we may regard $\hat z$ as a linear character~(or linear $\ell$-Brauer character) of $\tilde J$  for $z\in \mathcal O_{\ell'}(Z(J^*))$.
Note that $\tilde G$ acts trivially on $J/G$, and then $\hat z$ is $\tilde G$-invariant.
Thus $\mathbf B_z$ is $\tilde J$-invariant for every $z\in \mathcal O_{\ell'}(Z(J^*))$.
Let $\mathbf {\tilde B}^{(i)}_z=\hat z \otimes\mathbf {\tilde B}^{(i)}$ for $z\in \mathcal O_{\ell'}(Z(J^*))$.
Then 	$\mathbf {\tilde B}^{(i)}_z$ covers $\mathbf B_z$.
From this, $\mathbf {\tilde B}^{(i)}_z$ for $i=1,2$ and $z\in \mathcal O_{\ell'}(Z(J^*))$ are distinct $\ell$-blocks of $\tilde J$.
In particular, there are $|\mathcal O_{\ell'}(\F_{q}^\times)|$ $\ell$-blocks of $\tilde J$ covering $B^{(i)}$ for $i=1,2$. 	
Now every character of $\Irr(B)\cap \mathcal E(G,1)$ is $\tilde A$-invariant by Lemma \ref{act-uni-D}.
By Theorem \ref{basicset}, $\Irr(B)\cap \mathcal E(G,1)$ is a basic set of $B$.
So every irreducible $\ell$-Brauer character of $B$ is $\tilde A$-invariant.
Then there is an extension $\tilde\phi\in\IBr_\ell(\tilde B^{(1)})$ of $\phi$ to $\tilde G$.
By Lemma \ref{act-uni-D} again, $\tilde B^{(1)}$ is $\tilde A$-invariant.
So $\tilde\phi$ is $\tilde A$-invariant by Lemma \ref{ext-Br}.
Then there exists an extension $\tilde \phi'\in\IBr_\ell(\mathbf {\tilde B}^{(1)})$ of $\tilde \phi$.
Note that the number of extensions of $\tilde\phi$ to $\tilde J$ is at most $|\mathcal O_{\ell'}(\F_{q}^\times)|$.
By Lemma \ref{ext-Br} again,  $\tilde \phi'$ is $\tilde A$-invariant and then extends to $\tilde A$ since $\tilde A/\tilde J$ is cyclic.

If $\eps=-$, then every $\kappa$ is non-degenerate, and then this assertion holds.
	
Now we let $\eps=+$. Then $\ell$ is linear.
By \cite{GH97}, with a suitable order, the decomposition matrix of $B$ with respect to the basic set $\Irr(B)\cap \mathcal E(G,1)$ is unitriangular.
By \cite[Thm.~2.4]{Ma08}, every $\chi\in \Irr(B)\cap \mathcal E(G,1)$ extends to $\tilde A_\chi$.
Then by Lemma \ref{ex-unitriibr}, every $\phi\in\IBr_\ell(B)$ extends to $\tilde A_\phi$.	
This completes the proof.
\end{proof}

Now let $B=B(\kappa,w)$ be a unipotent $\ell$-block of $G$ and $D$ a defect group of $B$. We state the results for $B$-weights which follows from the proof of \cite[(4H)]{An94}.
Let $V_0=C_V(D)$ and $V_+=[V,D]$ so that $V=V_0\perp V_+$.
Then $\mathrm{dim}(V_+)=2ew$.
Let $(D,\mathbf b)$ be a maximal Brauer pair of $G$ containing $(1,B)$, and $\vartheta$ be the canonical character of $\mathbf b$.
Let $\tilde G_0=\GO(V_0)$, $G_0=\SO(V_0)$, $\tilde G_+=\GO(V_+)$ and $G_+=\SO(V_+)$.
Then $D=D_0\times D_+$, $\mathbf b=\mathbf b_0\times \mathbf b_+$ and $\vartheta=\vartheta_0\times\vartheta_+$, where $D_0=\langle 1_{V_0} \rangle\le G_0$, $D_+\le G_+$, $\mathbf b_0$, $\mathbf b_+$ are $\ell$-blocks of $G_0$ and $C_{G_+}(D_+)$ respectively, and $\vartheta_0\in\Irr(\mathbf b_0)$, $\vartheta_+\in\Irr(\mathbf b_+)$.

Let $(R,\varphi)$ be a $B$-weight, $\tilde C=C_{\tilde G}(R)$, $C=C_G(R)$, $\tilde N=N_{\tilde G}(R)$ and $N=N_G(R)$.
Then there is an $\ell$-block $b$ of $CR$ with defect group $R$ and canonical character $\theta$ such that $b^G = B$ and $\varphi = \Ind^{N}_{N_{\theta}}\psi$ for some $\psi\in\Irr^0(N_{\theta}\mid \theta)$.
We may suppose $Z(D)\le Z(R)\le R\le D$.
Thus $V_0=C_V(R)$ and $V_+=[V,R]$,
so that $R=R_0\times R_+$, $C=G_0\times C_+$, $\tilde C=\tilde G_0\times C_+$, $N=\langle\tau, G_0\times N_+\rangle$ and  $\tilde N=\tilde G_0\times \tilde N_+$, where $R_0=D_0$,
$R_+\le G_+$, $C_+=C_{G_+}(R_+)$, $N_+=N_{G_+}(R_+)$,
$\tilde C_+=C_{\tilde G_+}(R_+)$, $\tilde N_+=N_{\tilde G_+}(R_+)$ and $\tau=\tau_0\times\tau_+$ with $\tau_0\in \tilde G_0$, $\tau_+\in\tilde G_+$ of determinant $-1$.
Then $\tilde N=\langle \tau_0,N \rangle$.
Let $b=b_0\times b_+$ and $\theta=\theta_0\times\theta_+$ be the corresponding decompositions.
Then $\theta_0=\vartheta_0$ and $b_+^{\tilde G_+}=\mathbf b_+^{\tilde G_+}$ and we suppose that $(R,b)\le(D,\mathbf b)$.

\emph{Case \bf1}.
$\vartheta_0^{\sigma_0}=\vartheta_0$ for some $\sigma_0\in\tilde G_0$ of determinant $-1$, \emph{i.e.}, $\kappa$ is non-degenerate.
There are two irreducible characters $\vartheta'_0$ and $\vartheta''_0$ of $\tilde G_0$ covering $\vartheta_0$.
Let $\vartheta'=\vartheta'_0\times \vartheta_+$,
$\vartheta''=\vartheta''_0\times \vartheta_+$,
and $\mathbf b'$, $\mathbf b''$ be the $\ell$-blocks of $C_{\tilde G}(D)$ containing $\vartheta'$,
$\vartheta''$ respectively.
Then $\mathbf b'^{\tilde G}$ and $\mathbf b''^{\tilde G}$
are two $\ell$-blocks of $\tilde G$.
Let $\tilde B=\mathbf b'^{\tilde G}$.

\emph{Case \bf2}.
$V_0=0$ or $\vartheta_0^{\sigma_0}\ne\vartheta_0$ for any $\sigma_0\in\tilde G_0$ of determinant $-1$, \emph{i.e.}, $\kappa$ is degenerate.
If $V_0=0$, then it is the case in Lemma \ref{(4C)}.
Now assume that $V_0\ne 0$ and $\vartheta_0^{\sigma_0}\ne\vartheta_0$ for any $\sigma_0\in\tilde G_0$ of determinant $-1$.
Also, we may assume that $\theta=\vartheta_0\times \theta_+$ for some character $\theta_+$ of $C_+$.
Then $\tilde N_{\theta}=G_0\times \tilde N_{\theta_+}$ and
$ N_{\theta}=G_0\times N_{\theta_+}$.
Thus each character $\tilde\psi\in\Irr(\tilde N_\theta\mid\theta)$ and each $\psi\in\Irr(N_\theta\mid\theta)$
decompose as $\tilde\psi=\vartheta_0\times \tilde\psi_+$ and
$\psi=\vartheta_0\times\psi_+$ for some
$\tilde\psi\in\Irr(\tilde N_{\theta_+}\mid\theta_+)$
and $\psi\in\Irr(N_{\theta_+}\mid\theta_+)$.
Then $\Res^{\tilde N_\theta}_{N_\theta} \tilde\psi$ is irreducible if and only if $\Res^{\tilde N_{\theta_+}}_{N_{\theta_+}} \tilde\psi_+$ is irreducible~(which is the case in Lemma \ref{(4C)}).
Let 
$\vartheta_0'=\vartheta_0+\vartheta_0^{\tau_0}$ and
${\mathbf b}'$ be the $\ell$-block of $C_{\tilde G}(D)D$ containing $\vartheta'=\vartheta_0'\times \vartheta_+$.
Then $\tilde B={\mathbf b'}^{\tilde G}$ is the unique $\ell$-block of $\tilde G$ covering $B$.

For both cases, we define $i\cW_\ell(\tilde B)$
to be the set of
$K:\bigcup_\delta\sC_{x-1,\delta}\to\{\ell\textrm{-cores}\}$ such that
$\sum_{\delta,i,j}\ell^\delta |K(\psi_{x-1,\delta,i,j})|=w.$

Note that $N_{\vartheta'}=\tilde G_0\times N_{\vartheta_+}$.
If $\psi\in\Irr^0(N_{{\vartheta'}}\mid {\vartheta'})$, then $\psi=\vartheta'_0\times\psi_+$, where
$\psi_+\in\Irr^0(N_{\vartheta_+}\mid \vartheta_+)$.
The map $(R,\Ind^{N}_{N_{\vartheta'}}\psi)\mapsto(R_+,\Ind^{N_+}_{(N_+)_{\vartheta_+}}\psi_+)$
is a bijection from
$\{(R,\Ind^{N}_{N_{\vartheta'}}\psi)\mid \psi\in   \Irr^0(N_{\vartheta'}\mid{\vartheta'}) \}$
to $\{ (R_+,\Ind^{N_+}_{(N_+)_{\vartheta_+}}\psi_+)\mid  \psi_+\in\Irr^0(N_{\vartheta_+}\mid \vartheta_+) \}$.
Then the bijection between $\cW_\ell(\tilde B)$ and $i\cW_\ell(\tilde B)$ can  be given as in \S \ref{act-wei-IV}.
As in the proof of \cite[(4E)]{An94}, there is a canonical bijection between $i\cW_\ell(\tilde B)$ and $\mathcal P(2e,w)$, so $\mathcal P(2e,w)$ is also a labeling set of $\cW_\ell(\tilde B)$~(see also Appendix \ref{(4E)}).

Similar with Lemma \ref{act-wei-typC}~(using Corollary \ref{action-basic-par} and Proposition \ref{action-par-diag-wei}), we have

\begin{lem}\label{act-wei-typD}
Every $\tilde B$-weight (in the sense of $\tilde G$-conjugacy class) is invariant under the action of $\tilde A=\CO_{2n}^\eps(q)\rtimes E$.
\end{lem}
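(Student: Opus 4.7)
The plan is to mirror the argument of Lemma~\ref{act-wei-typC}, transferring the $\tilde A$-action through the parametrization $\cW_\ell(\tilde B)\leftrightarrow \mathcal P(2e,w)$ constructed in the paragraphs preceding the statement. Since $\tilde B$ is the unipotent block attached to $(\kappa,w)$, its associated semisimple element is $s=1$, so the only elementary divisor contributing a non-trivial component is $\Gamma=x-1$: for every other $\Gamma\in\cF'$ we have $w_\Gamma=0$, and in particular $\mu_\Gamma$ is empty when $\mu\in\mathcal P(\tilde B)$. This is the key reduction that makes both the field and the conformal actions visibly trivial on labels.

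Let $(R,\tilde\varphi)$ be a $\tilde B$-weight with label $\mu\in\mathcal P(2e,w)$. For the field action of $E=\langle F_p\rangle$, I apply Corollary~\ref{action-basic-par} to $\sigma=F_p$: the weight $(R,\tilde\varphi)^\sigma$ is a $\tilde B^\sigma$-weight with label ${\sigma^*}^{-1}\mu$, defined by $({\sigma^*}^{-1}\mu)_{{\sigma^*}^{-1}\Gamma}=\mu_\Gamma$. Since $\sigma^*$ fixes the polynomial $x-1$ and the only non-empty component of $\mu$ is $\mu_{x-1}$, we get ${\sigma^*}^{-1}\mu=\mu$. In particular $\tilde B^\sigma=\tilde B$ and $(R,\tilde\varphi)^\sigma$ lies in the same $\tilde G$-conjugacy class as $(R,\tilde\varphi)$.

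For the conformal action, note that $\tilde J/\tilde G=\CO^\eps_{2n}(q)/\GO^\eps_{2n}(q)$ is cyclic of order $\leq 2$, and the non-trivial class is precisely the one described by $J(V)/Z(J(V))I(V)$ in Proposition~\ref{action-par-diag-wei}, since in our orthogonal setting $J(V)=\CO(V)$ and $I(V)=\GO(V)$. That proposition says the corresponding action sends $\mu$ to $\mu^\dag$, where $\mu^\dag_\Gamma=\mu_\Gamma$ for every $\Gamma\ne x+1$; only the $(x+1)$-block of the label is permuted. But $\mu_{x+1}$ is empty for a unipotent block, so $\mu^\dag=\mu$.

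Combining the two steps, every element of $\tilde A=\tilde J\rtimes E$ acts trivially on the label set $\mathcal P(2e,w)$, and so on $\tilde G$-conjugacy classes of $\tilde B$-weights through the bijection above. The main (and only) point requiring care is the identification of the non-trivial conformal class in $\tilde J/\tilde G$ with the one treated by Proposition~\ref{action-par-diag-wei}; this is routine once one recalls that $Z(\tilde J)=Z(\CO^\eps_{2n}(q))$ acts trivially by conjugation, so the induced outer action is precisely that of $\CO/Z(\CO)\GO$.
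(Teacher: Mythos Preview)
Your argument is correct and follows essentially the same route as the paper's one-line proof (``Similar with Lemma~\ref{act-wei-typC}, using Corollary~\ref{action-basic-par} and Proposition~\ref{action-par-diag-wei}''): field automorphisms fix the polynomial $x-1$ and hence the label, while the conformal action only permutes the $(x+1)$-component, which is empty for a unipotent block.

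One small slip: the quotient $\tilde J/\tilde G=\CO^\eps_{2n}(q)/\GO^\eps_{2n}(q)$ is cyclic of order $q-1$, not of order $\leq 2$. What has order $2$ is the quotient $\CO/(Z(\CO)\GO)$, i.e.\ $J(V)/Z(J(V))I(V)$, and you do arrive at this correctly in your final paragraph when you note that $Z(\tilde J)$ acts trivially by conjugation. So the argument stands; just correct that sentence.
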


Now we give $\ell$-weights of $G$ by the argument above and Lemma \ref{(4C)}.
If $\kappa$ is non-degenerate, then
$(R,\tilde \varphi)\mapsto (R,\Res^{N_{\tilde G}(R)}_{N_G(R)}\tilde \varphi)$
is a bijection from $\mathcal W_\ell(\tilde B)$ to $\mathcal W_\ell(B)$.
If $\kappa$ is degenerate and $w$ is odd,
then  $(R,\tilde \varphi)\mapsto (R,\Res^{N_{\tilde G}(R)}_{N_G(R)}\tilde \varphi)$
is also a bijection from $\mathcal W_\ell(\tilde B)$ to $\mathcal W_\ell(B)$.
Now let $\kappa$ be degenerate and let $w$ be even.
If $(R,\varphi)\in\mathcal W(\tilde B)$ corresponds to some element in $\mathcal P(2e,w)\setminus\mathcal P_0(2e,w)$, then $\Res^{N_{\tilde G}(R)}_{N_G(R)}\tilde\varphi$ is irreducible.
If $(R,\varphi)\in\mathcal W(\tilde B)$ corresponds to some element in $\mathcal P_0(2e,w)$, then $\Res^{N_{\tilde G}(R)}_{N_G(R)}\tilde\varphi$ is a sum of two irreducible constituents~(for the construction of these two irreducible constituents, see the proof of Lemma \ref{(4C)}).
In addition, by the argument above,
$\mathcal P(B)$ is a labeling set for $\cW_\ell(B)$ .

\begin{cor}\label{act-wei-SO}
Suppose that $g\in \tilde A$ and let $(R,\varphi)$ be a $B$-weight.
Let $\tilde\varphi\in\Irr(N_{\tilde G}(R)\mid\varphi)$ such that
$(R,\tilde\varphi)$ is a $\tilde B$-weight.
Then the $G$-conjugacy class of $(R,\varphi) $ is invariant under the action of $g$
except when $\eps=+$, the action of $g$ on $G$ can be induced by some element of $\tilde G\setminus G$ and $(R,\tilde\varphi)$ corresponds to an element of $\mathcal P_0(2e,w)$.
Furthermore, when $(R,\tilde\varphi)$ corresponds to some element of $\mathcal P_0(2e,w)$,
$g$ interchanges the two $G$-conjugacy classes of $(R,\varphi_1)$ and  $(R,\varphi_2)$,
where $\varphi_1$ and $\varphi_2$ are the irreducible constituents of $\Res^{N_{\tilde G}(R)}_{N_G(R)}\tilde\varphi$.
\end{cor}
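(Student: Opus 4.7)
The plan is to reduce the $G$-level statement to the already-established $\tilde G$-level invariance (Lemma~\ref{act-wei-typD}) and then analyze the Clifford-theoretic behavior of the restriction $\Res^{N_{\tilde G}(R)}_{N_G(R)}\tilde\varphi$, in analogy with how Lemma~\ref{res-sp} and Remark~\ref{act-irr-sp} were combined for the parallel character-side problem in type $C$. First, by Lemma~\ref{act-wei-typD} the $\tilde G$-conjugacy class of $(R,\tilde\varphi)$ is $\tilde A$-invariant, so $(R,\varphi)^g$ lies in the $\tilde G$-orbit of $(R,\varphi)$; hence its $G$-conjugacy class is either $[(R,\varphi)]_G$ itself or (when the $\tilde G$-orbit splits into two $G$-orbits) the other one. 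It therefore suffices to determine when these two $G$-orbits coincide and, when they do not, when $g$ swaps them.

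Second, from the analysis preceding the statement (in Cases~\textbf{1} and~\textbf{2} of the construction of $\tilde B$-weights), $\Res^{N_{\tilde G}(R)}_{N_G(R)}\tilde\varphi$ is irreducible in every situation except when $\eps=+$, $\kappa$ is degenerate, and $(R,\tilde\varphi)$ corresponds to an element of $\mathcal P_0(2e,w)$. In the irreducible case there is a unique $B$-weight $(R,\varphi)$ lying below $(R,\tilde\varphi)$, and the first paragraph already forces $(R,\varphi)^g$ to be $G$-conjugate to $(R,\varphi)$. This disposes of all non-exceptional situations.

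Third, in the exceptional case we write $\Res^{N_{\tilde G}(R)}_{N_G(R)}\tilde\varphi=\varphi_1+\varphi_2$. By Clifford theory applied to the index-two inclusion $N_G(R)\trianglelefteq N_{\tilde G}(R)$, the two $G$-conjugacy classes $[(R,\varphi_1)]_G$ and $[(R,\varphi_2)]_G$ are exchanged by any representative of $N_{\tilde G}(R)\setminus N_G(R)$, and each $\varphi_i$ has stabilizer exactly $N_G(R)$ in $N_{\tilde G}(R)$. Since $g$ preserves $\{[(R,\varphi_1)]_G,[(R,\varphi_2)]_G\}$, after composing with a suitable element of $\tilde G$ (which, by the same Clifford argument, can be absorbed into a possible swap) we may assume $g$ itself normalizes $R$ and fixes $\tilde\varphi$. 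The assertion then reduces to the dichotomy: $g$ swaps $\varphi_1$ and $\varphi_2$ precisely when its action on $G$ lies in the diagonal coset, i.e., when it agrees with conjugation by some element of $\tilde G\setminus G$ modulo $\Inn(G)$. In the diagonal case we may further adjust $g$ to lie in $N_{\tilde G}(R)\setminus N_G(R)$, and it swaps the $\varphi_i$ by Clifford.

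The main obstacle is the remaining direction, namely showing that if $g\in\tilde A$ fixes $(R,\tilde\varphi)$ but its outer class on $G$ is not diagonal (i.e., $g$ involves a nontrivial field component), then $g$ fixes each $\varphi_i$. The plan is to invoke the parametrization of $\cW_\ell(\tilde B)$ by $\mathcal P(2e,w)$: by Corollary~\ref{action-basic-par} the field automorphism acts on $\mathcal P(\tilde B)$ through ${\sigma^*}^{-1}$ and, since $s=1$ and ${\sigma^*}^{-1}$ fixes $\Gamma=x-1$, it acts trivially on the full label. Combining this with the explicit construction of the two $G$-constituents out of the $\mathcal P_0(2e,w)$-label in~(\ref{par-P0})–(\ref{def-p'}), together with an analogue of Theorem~\ref{action-irr-const} transferred to the normalizer $N_{\tilde G}(R)$ via the basic-subgroup structure of \S\ref{act-wei-IV}, one sees that the Clifford sign distinguishing $\varphi_1$ from $\varphi_2$ is invariant under $E$ while flipped by $N_{\tilde G}(R)\setminus N_G(R)$, yielding the required statement.
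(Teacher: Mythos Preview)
Your reduction in the first three paragraphs is sound and matches the paper's logic: Lemma~\ref{act-wei-typD} pins down the $\tilde G$-class, and the description preceding the Corollary tells you exactly when $\Res^{N_{\tilde G}(R)}_{N_G(R)}\tilde\varphi$ splits, so the only issue is which elements of $\tilde A$ swap $\varphi_1$ and $\varphi_2$ in the exceptional case. The Clifford argument that any $g$ acting via $\tilde G\setminus G$ (i.e.\ via the graph automorphism) must swap them is correct.

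The gap is in your fourth paragraph. You need to show that $\CSO^\eps_{2n}(q)\rtimes E$ fixes each $\varphi_i$ individually, and for this you invoke ``an analogue of Theorem~\ref{action-irr-const} transferred to the normalizer $N_{\tilde G}(R)$''. No such analogue is available: Theorem~\ref{action-irr-const} is a deep statement about Lusztig's parametrization of $\Irr(\Sp_{2n}(q))$, and there is no mechanism in the paper (or in the cited literature) for transporting it to weight characters of normalizers of radical subgroups. The phrase ``the Clifford sign distinguishing $\varphi_1$ from $\varphi_2$ is invariant under $E$'' is exactly what must be proved, and your proposal does not prove it.

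The paper closes this gap differently and more concretely. It goes down to the basic building blocks: by the remark after \cite[(4A)]{An94}, each $\tilde\psi_{x-1,\delta,i,j}\in\sC_{x-1,\delta}$ restricts irreducibly to the determinant-$1$ subgroup of $\tilde N_{x-1,\delta,i}$. These restrictions are then shown to be $\CSO^\eps_{2n}(q)\rtimes E$-invariant directly, using Lemma~\ref{act-diag-wei}(i) for the conformal part (since $\Gamma=x-1\ne x+1$) and Remark~\ref{act-field-wei} for the field part (since ${}^{\sigma^*}(x-1)=x-1$). Feeding this into the wreath-product description of Lemma~\ref{(4C)} yields $G$-level analogues of Proposition~\ref{action-basic} and Proposition~\ref{action-par-diag-wei}(i), from which the invariance of each $\varphi_i$ under $\CSO^\eps_{2n}(q)\rtimes E$ follows as in the proof of Lemma~\ref{act-wei-typC}. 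This is the missing ingredient you should supply in place of the appeal to an unproved analogue of Theorem~\ref{action-irr-const}.
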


\begin{proof}
By Lemma \ref{act-wei-typD}, it suffice to show that
the $G$-conjugacy class of $(R,\varphi) $ is invariant under the action of $\CSO^\eps_{2n}(q)\rtimes E$
if $\kappa$ is degenerate, $w$ is even and $(R,\tilde\varphi)$ corresponds to some element of $\mathcal P_0(2e,w)$.
It is similar to the proof of Lemma \ref{act-wei-typD}.
In fact, by the remark after \cite[(4A)]{An94}, for every $\delta$, the restriction of every weight character in $\mathscr C_{x-1,\delta}$ to the subgroup of $N_{x-1,\delta,i}$ with determinant $1$ is irreducible and then is invariant under the action of $\CSO^\eps_{2n}(q)\rtimes E$ by Lemma \ref{act-diag-wei}~(i).
So we conclude from Lemma \ref{(4C)} a similar result to Proposition \ref{action-basic} and \ref{action-par-diag-wei}~(i), hence the proof of Lemma \ref{act-wei-typC} also applies here, and finally that this assertion holds.
\end{proof}

\begin{lem}\label{ext-wei-d}
	Let $(R,\varphi)$ be a $B$-weight, where $B$ is a unipotent $\ell$-block of $G$.
Then $\varphi$ extends to $\tilde A_{R,\varphi}$.
\end{lem}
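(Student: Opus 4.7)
The plan is to adapt Claim 2 from the proof of Theorem \ref{ibawc-uni} for type $C_n$, inserting an intermediate step to pass from $G = \SO_{2n}^{\eps}(q)$ first to $\tilde G = \GO_{2n}^{\eps}(q)$ and then to $\tilde J = \CO_{2n}^{\eps}(q)$, and treating the exceptional case of Corollary \ref{act-wei-SO} separately.

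First I would pick $\tilde\varphi \in \Irr(N_{\tilde G}(R) \mid \varphi)$ so that $(R, \tilde\varphi)$ is a weight of the unipotent $\ell$-block $\tilde B$ of $\tilde G$ covering $B$ described in the discussion preceding the lemma (one of $\tilde B^{(1)}, \tilde B^{(2)}$ if $\kappa$ is non-degenerate, and the unique such $\tilde B = {\mathbf b'}^{\tilde G}$ if $\kappa$ is degenerate). Let $\check{\tilde B}$ denote the unique unipotent $\ell$-block of $\tilde J$ covering $\tilde B$, which plays the role of $\mathbf{\tilde B}^{(i)}$ from the proof of Lemma \ref{ext-ibr-d}. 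For each $z \in \mathcal O_{\ell'}(Z(J^*))$ the block $\hat z \otimes \check{\tilde B}$ again covers $\tilde B$, and these exhaust all $\ell$-blocks of $\tilde J$ covering $\tilde B$ by the count already used in Lemma \ref{ext-ibr-d}. Using Corollary \ref{lin-wei} and a straightforward generalisation of Lemma \ref{num-wei-csp} applied to the inclusion $N_{\tilde G}(R) \subseteq N_{\tilde J}(R)$, I would conclude that there is a unique $\check\varphi \in \Irr(N_{\tilde J}(R) \mid \tilde\varphi)$ for which $(R, \check\varphi)$ is a $\check{\tilde B}$-weight.

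Combining the uniqueness of $\check\varphi$ with the $\tilde A$-invariance of the $\tilde G$-conjugacy class of $(R, \tilde\varphi)$ supplied by Lemma \ref{act-wei-typD} and with the $\tilde A$-invariance of $\check{\tilde B}$, I obtain that $\check\varphi$ is $\tilde A_{R, \tilde\varphi}$-invariant. Since $E$ acts trivially on $\Rad_\ell(\tilde G)/\!\sim_G$ by Remark \ref{action-on-rad}, there exists $g \in G$ such that $g^{-1} F_p$ normalises $R$, so the quotient $\tilde A_{R, \tilde\varphi}/N_{\tilde J}(R)$ is a subquotient of the cyclic group $E$; hence $\check\varphi$ extends to $\tilde A_{R, \tilde\varphi}$. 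In the generic case, where $\Res^{N_{\tilde G}(R)}_{N_G(R)} \tilde\varphi = \varphi$ is irreducible, we have $\tilde A_{R, \varphi} = \tilde A_{R, \tilde\varphi}$, and restricting this extension along the inclusion $N_G(R) \hookrightarrow N_{\tilde J}(R)$ gives the required extension of $\varphi$.

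The main obstacle I anticipate is the exceptional case of Corollary \ref{act-wei-SO}: $\eps = +$, $\kappa$ degenerate, and $(R, \tilde\varphi)$ corresponding to an element of $\mathcal P_0(2e, w)$. There $\Res^{N_{\tilde G}(R)}_{N_G(R)} \tilde\varphi = \varphi_1 + \varphi_2$ is reducible with $\varphi \in \{\varphi_1, \varphi_2\}$, some element of $\tilde G \setminus G$ interchanges $\varphi_1$ and $\varphi_2$, and consequently $\tilde A_{R, \varphi}$ sits with index $2$ inside $\tilde A_{R, \tilde\varphi}$. I would handle this by applying Lemma \ref{res-char} to the chain $N_G(R) \lhd N_{\tilde G}(R) \cap \tilde A_{R, \varphi} \lhd \tilde A_{R, \varphi}$: case (ii) of that lemma, combined with a canonical extension of each $\varphi_i$ manufactured via Lemma \ref{extension-central} from the central-product structure relating $N_{\tilde J}(R)$ to $Z(\tilde J)$, should produce an extension of $\varphi$ to $\tilde A_{R, \varphi}$, provided the quotient $\tilde A_{R, \varphi}/(N_G(R) \cdot (\tilde A_{R, \varphi} \cap N_{\tilde G}(R)))$ is verified to be cyclic via the same field-automorphism normalisation used in the generic case together with the structure of $\tilde A/\tilde G$.
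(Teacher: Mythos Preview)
Your treatment of the generic case is correct and is exactly what the paper intends: lift $\varphi$ to a $\tilde B$-weight $\tilde\varphi$ of $\tilde G=\GO_{2n}^\eps(q)$, then to a unique $\check{\tilde B}$-weight $\check\varphi$ of $\tilde J=\CO_{2n}^\eps(q)$ via the block count from Lemma~\ref{ext-ibr-d} together with the analogue of Lemma~\ref{num-wei-csp}, deduce $\tilde A_R$-invariance of $\check\varphi$ from Lemma~\ref{act-wei-typD}, and extend over the cyclic quotient $\tilde A_R/N_{\tilde J}(R)$. This is precisely ``Claim~2 from type~$C$ combined with the argument of Lemma~\ref{ext-ibr-d}''.

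Your handling of the exceptional case, however, has a genuine gap. The chain you propose for Lemma~\ref{res-char} collapses: since the non-trivial coset of $N_{\tilde G}(R)/N_G(R)$ interchanges $\varphi_1$ and $\varphi_2$, one has $N_{\tilde G}(R)\cap\tilde A_{R,\varphi}=N_G(R)$, so there is no intermediate group to work with. The appeal to Lemma~\ref{extension-central} is also misplaced, because $N_{\tilde J}(R)$ is \emph{not} generated by $N_G(R)$ and a central subgroup (indeed $Z(\tilde J)\tilde G$ has index~$2$ in $\tilde J$), so no central-product description of the required shape is available. Finally, checking cyclicity of $\tilde A_{R,\varphi}/N_G(R)$ is neither straightforward nor needed.

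The fix is short and uses only what you have already built. Let $\hat\varphi\in\Irr(\tilde A_R)$ be your extension of $\check\varphi$. Since $[\tilde A_R:\tilde A_{R,\varphi}]=2$, the restriction $\Res^{\tilde A_R}_{\tilde A_{R,\varphi}}\hat\varphi$ is either irreducible or a sum of two constituents. It cannot be irreducible: its further restriction to $N_G(R)$ equals $\varphi_1+\varphi_2$, yet both $\varphi_1$ and $\varphi_2$ are $\tilde A_{R,\varphi}$-invariant, so by Clifford theory an irreducible character of $\tilde A_{R,\varphi}$ lies over only one $N_G(R)$-orbit. Hence $\Res^{\tilde A_R}_{\tilde A_{R,\varphi}}\hat\varphi=\psi_1+\psi_2$ with $\psi_i|_{N_G(R)}=\varphi_{j(i)}$, and one of the $\psi_i$ is the desired extension of~$\varphi$. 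Alternatively, observe that $\tilde A_{R,\varphi}=(J\rtimes E)_R$ with $J=\CSO_{2n}^\eps(q)$ (this follows from Corollary~\ref{act-wei-SO} together with $[\tilde A_R:\tilde A_{R,\varphi}]=2$), and run the Claim~2 argument once with $J$ in place of $\tilde J$: there are $|\mathcal O_{\ell'}(\F_q^\times)|$ blocks of $J$ covering $B$, so the $\mathbf B$-weight over $\varphi$ is unique, hence $(J\rtimes E)_R$-invariant, and extends since $(J\rtimes E)_R/N_J(R)$ is cyclic.
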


\begin{proof}
The proof here is similar to the proof of \emph{Claim \bf 2} in the proof of Theorem \ref{ibawc-uni} for type $C$, using the argument in the proof of Lemma \ref{ext-ibr-d}.
\end{proof}

\begin{lem}\label{wei-ibr-ome2n}
Let $T=[G,G]=\Omega^\eps_{2n}(q)$, $B$ a unipotent $\ell$-block of $G$ and $b$ an $\ell$-block of $T$ covered by $B$.
Then
\begin{enumerate}[(i)]
	\item $\Res^G_T:\IBr_\ell(B)\to \IBr_\ell(b)$ is a bijection, and
	\item $(R,\varphi)\mapsto (R,\Res^{N_G(R)}_{N_T(R)}\varphi)$ is a bijection from $\cW_\ell(B)$ to $\cW_\ell(b)$.
\end{enumerate}
\end{lem}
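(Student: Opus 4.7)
The strategy closely parallels the proof of Proposition~\ref{unip-blo-B}. Let $\hat z \in \Irr(G)$ be the unique nontrivial linear character of $G$ with $T \subseteq \ker\hat z$; under the correspondence between linear characters and central elements of the dual, $\hat z$ is associated to a nontrivial $z \in Z(G^*)^F$, so $\hat z\otimes B \subseteq \cE_\ell(G,z)$ is disjoint from $\cE_\ell(G,1) \supseteq B$, and in particular $\hat z \otimes B \neq B$. Since $\hat z|_T = 1$, both $B$ and $\hat z \otimes B$ cover $b$, giving at least two blocks of $G$ over $b$; as $|G/T|_{\ell'}=2$, standard block theory then forces exactly two blocks of $G$ to cover $b$, and $b$ to be $G$-invariant with $B$ covering $b$ alone. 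Part~(i) is then immediate from Lemma~\ref{ext-Br}(i), whose hypotheses are verified since $G/T$ is cyclic of order $2$ and $G/TZ(G)$ is an $\ell'$-group.

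For part~(ii), Corollary~\ref{relaofradicalconj} gives $\Rad_\ell(G) = \Rad_\ell(T)$, and by Lemma~\ref{detofC} every $R \in \Rad_\ell(G)$ satisfies $N_G(R)\not\subseteq T$, so $\hat z$ restricts to the nontrivial character of $N_G(R)/N_T(R)$. For $(R,\varphi) \in \cW_\ell(B)$, Corollary~\ref{lin-wei} shows that $(R,\hat z\varphi)$ is a $(\hat z\otimes B)$-weight; since $\hat z\otimes B \neq B$ we conclude $\hat z\varphi \neq \varphi$, whence $\psi := \Res^{N_G(R)}_{N_T(R)}\varphi$ is irreducible by Clifford's theorem. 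Preservation of defect zero follows because $[N_G(R):N_T(R)] = 2$ is coprime to $\ell$, and the induced block of $T$ is $b$ by (i) together with transitivity of the covering relation between $\bl(\varphi)$ and $\bl(\psi)$.

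The bijection $\cW_\ell(B) \to \cW_\ell(b)$ is then established as in the framework preceding Proposition~\ref{alp-Omega}: the two elements of $\Irr(N_G(R)\mid\psi)$ differ by multiplication by $\hat z$ and lie respectively in $B$ and in $\hat z\otimes B$, yielding both injectivity (the two preimages are distinguished by their blocks) and surjectivity (one of the two lifts is always in $B$). The main obstacle is verifying $\hat z \otimes B \neq B$, which rests on identifying $\hat z$ with a Lusztig twist by a nontrivial element of $Z(G^*)^F$; for type~$D$ with non-connected center this requires some care, but once established the remainder of the proof is a direct imitation of the type~$B$ case.
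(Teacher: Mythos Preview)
Your proof is correct and follows essentially the same approach as the paper: for (i) both you and the paper exhibit a second block $\hat z\otimes B\subseteq\cE_\ell(G,z)$ covering $b$ and invoke Lemma~\ref{ext-Br}, and for (ii) both deduce the bijection from the fact that $\hat z\varphi\neq\varphi$ for $B$-weights. Your ``main obstacle'' is not actually an obstacle: the paper simply notes $G^*=G$ here (special orthogonal groups in even dimension are self-dual), so $Z(G^*)^F=Z(G)=\{\pm I_{2n}\}$ and the unique nontrivial linear character of $G/T$ is $\hat z$ for $z=-I_{2n}$, with $\hat z\otimes B\subseteq\cE_\ell(G,z)$ immediate from \cite[Prop.~13.30]{DM91}. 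One minor difference: for (ii) the paper appeals to the label computation analogous to Proposition~\ref{dec-weig}, whereas you obtain $\hat z\varphi\neq\varphi$ more directly via Corollary~\ref{lin-wei}; your route is slightly cleaner since it avoids redoing the label analysis in type~$D$.
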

\begin{proof}
(ii) is similar with Proposition \ref{dec-weig}.
For (i), note that there exists another $\ell$-block of $G$ covering $b$.
Indeed, let $1\ne z\in Z(G^*)=Z(G)$ (we have $G^*=G$ here) and
$\hat z$ be the corresponding linear character (cf. \cite[Prop.~13.30]{DM91}),
then $\hat z \otimes B$ is an $\ell$-block of $G$ covering $b$ and is contained in $\mathcal E_\ell(G,z)$. So $\hat z\otimes B\ne B$. Thus (i) holds by Lemma \ref{ext-Br}.
\end{proof}

Note that $X=\mathrm{Spin}^\eps_{2n}(q)$ is the universal $\ell'$-covering group of the simple group $S=\mathrm{P\Omega}^\eps_{2n}(q)$ by \cite[\S~6.1]{GLS98}.

\begin{proof}[Proof of Theorem \ref{ibawc-uni}~(ii) and (iii)]
Let $b$ be a unipotent $\ell$-block of $X=\mathrm{Spin}^\eps_{2n}(q)$.
Then $b$ dominates an $\ell$-block $\mathfrak b$ of $T=\Omega^\eps_{2n}(q)$.
Thus there exists a unique unipotent $\ell$-block $B$ of $G=\SO^\eps_{2n}(q)$ covering $\mathfrak b$.
By Lemma \ref{act-uni-D} and Proposition \ref{act-wei-SO},
there is an $\tilde A$-equivariant bijection between $\mathcal W_\ell(B)$ and $\Irr(B)\cap\mathcal E_\ell(G,1)$.
If $\eps=-$, then every character of $\Irr(B)\cap\mathcal E_\ell(G,1)$ is $\tilde A$-invariant.
If $\eps=+$, then by assumption, $\ell$ is linear and
 thus by \cite[Lem~7.5]{CS13}, there is an $\tilde A$-equivariant bijection between $\Irr(B)\cap\mathcal E_\ell(G,1)$ and $\IBr_\ell(B)$~(since the corresponding decomposition matrix is unitriangular by \cite{GH97}).
In both cases, there exist $\tilde A$-equivariant bijections between $\mathcal W_\ell(B)$ and $\IBr_\ell(B)$.
From this, by Lemma \ref{wei-ibr-ome2n}, we get an $\tilde A$-equivariant bijection between $\mathcal W_\ell(\mathfrak b)$ and $\IBr_\ell(\mathfrak b)$.
Then by Lemma \ref{ind-quot}, there exists an
$\Aut(X)$-equivariant bijection between $\mathcal W_\ell(b)$ and $\IBr_\ell(b)$.
By Lemma \ref{first-2-con}, we only need to verify Definition \ref{induc}~(iii) now.

The remaining process is similar with the case of type $B$ since all irreducible character of $b$ have $Z(X)$ in their kernel.
Let $\bar b$ be the $\ell$-block of $S=\mathrm P \Omega_{2n}^{\eps}(q)$ dominated by $b$.
For a set $\IBr_\ell(b\mid Q)$ as in Lemma \ref{first-2-con}~(for definition, see the proof of \cite[Lemma 2.10]{Sc16}) and $\phi\in \IBr_\ell(b\mid Q)$, 
we denote by $\phi'\in\IBr_\ell(\mathfrak b)$ and $\bar\phi\in\IBr_\ell(\bar b)$ the $\ell$-Brauer characters associated with $\phi$.
Let $A:=A(\phi,Q)=\tilde A_{\phi'}/Z(G)$, where $\tilde A=\tilde J\rtimes E$ as above. 
Then by Lemma \ref{ext-ibr-d} and \ref{ext-wei-d}, conditions (1)-(3) of Definition \ref{induc}~(iii) hold.
For Definition \ref{induc}~(iii)(4), the proof of \cite[Lem.~7.2]{Fe18} applies here.
\end{proof}

\begin{appendices}

\section{Appendix A: Remarks on \cite{An94}}
\label{com-for-an94}

In line 35 of \cite[p.~33]{An94}, after ``Then $w\in X_0$ and so $\xi(w)=\xi^x(w)$", we can
only get $\xi_i(h)\xi_j(h)=\xi_i'(h)\xi_j'(h)$
but can not conclude $\xi_i(h)=\xi_i'(h)$,
nor does the claim $X_0\mathbf S_{\mathbf m}=K$ in line 29 of \cite[p.~33]{An94} follow.
For this reason, we give a new proof for \cite[(4C)]{An94}.

We will completely follow the notation in \cite{An94} and all references in this proof are to this paper  throughout Appendix \ref{com-for-an94}.

\subsection{For (4C)}
\subsubsection{}
Now we give some conventions for orthogonal cases first.
Let $\Gamma\in\cF_0$ and $$\mathscr C_{\Gamma,d}=\{\varphi_{\Gamma,d,i,j}\mid 1\le i\le 2e, 1\le j\le r^d\}$$ be the set as page 32.

Let $(R,\theta)$ be a pair of type $\Gamma$ and $R=R_{1,0,\gamma,\mathbf c}$~($\mathbf c=(c_1,\ldots,c_l)$) a basic subgroup such that $\gamma+c_1+\cdots+c_l=d$ .
Let $V$ be the underlying (orthogonal) space of $R$.
Then $|\Irr^0(N(\theta)\mid\theta)|=2e(r-1)^l$.
Let $N_0(\theta)=N(\theta)\cap \SO(V)$.
Then $|N(\theta):N_0(\theta)|=2$ and the restriction of each character of $\Irr^0(N(\theta)\mid\theta)$ to $N_0(\theta)$ is irreducible by the remark of (4A).
For $\varphi_1,\varphi_2\in N(\theta)$, we write $\varphi_1\sim\varphi_2$ if $\varphi_1|_{N_0(\theta)}=\varphi_2|_{N_0(\theta)}$.
Then if $\varphi_1\sim\varphi_2$ and $\varphi_1\ne\varphi_2$, then
\begin{equation}\label{tow-exts}
\varphi_1(n)=-\varphi_2(n) \ \text{for any}\ n\in N(\theta)\ \text{with determinant}\ -1.
\end{equation}

We may assume that
\begin{equation}
\varphi_{\Gamma,d,i,j}\sim \varphi_{\Gamma,d,e+i,j} \ \text{for all}\ 1\le i\le e, 1\le j\le r^d.
\end{equation}

Now we keep the notation and assumption preceding (4B).
Let $B$ be the block in (4B).
Then the proof of (4B) gives a bijection between $B$-weights and the assignments
\begin{equation}\label{bijection}
\coprod\limits_{d\ge0}\mathscr C_{\Gamma,d}\to \{r\text{-cores} \},\ \ \varphi_{\Gamma,d,i,j}\mapsto\kappa_{\Gamma,d,i,j}
\end{equation}
such that $\sum\limits_{d\ge0}r^d\sum\limits^{\beta_\Gamma e_\Gamma}_{i=1}\sum\limits^{r^d}_{j=1}|\kappa_{\Gamma,d,i,j}|=w_\Gamma$.

(4C) should be as follows.
\begin{lem}\label{(4C)}
	With the notation and hypothesis preceding (4C), let $G=O(V)$ be an orthogonal group, $G_0=\SO(V)$,
	and $R$ a radical subgroup of $G$ such that $[V, R] = V$.
	Let $(R, b)$ a Brauer pair of $G_0$ labeled by $(R, s, -)$ and $\theta$ the canonical character of $b$.
	Then $|N(\theta):N_0(\theta)|=\beta_\Gamma$.
	Moreover, if we write $N_0(\theta)=N(\theta)\cap G_0$, then the restriction $\psi|_{N_0(\theta)}$ of each $\psi\in\Irr^0(N(\theta),\theta)$ to $N_0(\theta)$ is irreducible unless when $\Gamma\in \cF_0$,
	$w_\Gamma$ is even
	and the associated assignment (\ref{bijection}) of $\psi$ satisfies that $\kappa_{\Gamma,d,i,j}=\kappa_{\Gamma,d,e+i,j}$ for all
	$1\le i\le e, 1\le j\le r^d$.
\end{lem}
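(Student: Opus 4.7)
My plan is to first pin down $|N(\theta):N_0(\theta)|$ and then apply index-two Clifford theory together with the explicit formula (\ref{weights:psi}) for weight characters to decide when the restriction $\psi|_{N_0(\theta)}$ remains irreducible.

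First I would establish $|N(\theta):N_0(\theta)|=\beta_\Gamma$. By Lemma \ref{gexingz}, $|N(R):N_0(R)|=2$, so this reduces to deciding whether an element $\sigma\in N(R)$ of determinant $-1$ stabilizes $\theta$. For $\Gamma\in\cF_1\cup\cF_2$ the centralizer $\tilde C_\Gamma\cong\GL_{m_\Gamma}(\vare q^{e\ell^{\alpha_\Gamma}})$ and $\sigma$ induces on it a graph/Galois involution sending the semisimple parameter $s_\Gamma$ to a non-conjugate element; hence $\tilde\theta_\Gamma=\pm R_{\tilde T_\Gamma}^{\tilde C_\Gamma}(\widehat{s_\Gamma})$ is moved and $\beta_\Gamma=1$, making the irreducibility claim vacuous. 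For $\Gamma\in\cF_0$ the element $\widehat{s_\Gamma}$ is central (in fact $s_\Gamma\in\{\pm 1\}$), so $\theta$ is $\sigma$-invariant and $\beta_\Gamma=2$.

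Now suppose $\Gamma\in\cF_0$. Since $|N(\theta):N_0(\theta)|=2$, standard Clifford theory says $\psi|_{N_0(\theta)}$ is reducible if and only if $\psi\cdot\mathrm{sgn}=\psi$, where $\mathrm{sgn}$ is the unique nontrivial linear character of $N(\theta)/N_0(\theta)$. Writing $\psi$ using (\ref{weights:psi}) and recalling from (\ref{tow-exts}) that the partners $\varphi_{\Gamma,d,i,j}$ and $\varphi_{\Gamma,d,e+i,j}$ agree on $N_0(\theta)$ and differ by the sign $-1$ on the coset $N(\theta)\setminus N_0(\theta)$, one checks that tensoring each factor with $\mathrm{sgn}$ implements on the assignment side the involution $\pi$ swapping $\kappa_{\Gamma,d,i,j}\leftrightarrow\kappa_{\Gamma,d,e+i,j}$. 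Consequently $\psi\cdot\mathrm{sgn}=\epsilon_\kappa\cdot\psi_{\pi(\kappa)}$ for an explicit overall sign $\epsilon_\kappa$ obtained by collecting the contributions $(-1)^{t_{\Gamma,d,i,j}}$ coming from the wreath product extensions.

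Thus $\psi\cdot\mathrm{sgn}=\psi$ amounts to (i) $\pi(\kappa)=\kappa$, i.e.\ $\kappa_{\Gamma,d,i,j}=\kappa_{\Gamma,d,e+i,j}$ for all $d$ and all $1\le i\le e$, $1\le j\le r^d$, together with (ii) $\epsilon_\kappa=1$. Since the symmetry (i) forces $w_\Gamma=2\sum_{d,\,i\le e,\,j}r^d|\kappa_{\Gamma,d,i,j}|$ to be even, it matches the parity condition on $w_\Gamma$ listed in the statement, and (ii) becomes a meaningful extra sign constraint only if one is careful about how the total count $\sum t_{\Gamma,d,i,j}$ interacts with $w_\Gamma/2$. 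The main obstacle will be the sign bookkeeping in $\epsilon_\kappa$: one must extend $\prod_j\tilde\varphi_{\Gamma,d,i,j}^{t_{\Gamma,d,i,j}}$ to the wreath product $\tilde N_{\Gamma,d,i}(\tilde\theta_{\Gamma,d,i})\wr\prod_j\fS(t_{\Gamma,d,i,j})$ canonically (e.g.\ via the construction of \cite[Lem.~25.5]{Hu98}) and then track exactly how $\mathrm{sgn}$ interacts with this extension and the subsequent induction to $N(\theta)$. This is precisely where the argument in \cite{An94} breaks down, and carrying out the bookkeeping correctly is the essential new content.
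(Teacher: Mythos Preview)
Your route via the index-two criterion $\psi\cdot\mathrm{sgn}=\psi$ is genuinely different from the paper's, which never tensors with $\mathrm{sgn}$ at all. Instead the paper proves a small Mackey--Clifford lemma (Lemma~\ref{num-in-res}) and reduces the question to a pure stabilizer computation: writing $\psi=\Ind^H_{X\mathbf S_{\mathbf m}}(\tilde\xi\chi)$ with $H=N(\theta)$, it determines the stabilizer $K$ of $\xi_0:=\xi|_{X_0}$ inside $H_0$, shows $|K:X_0\mathbf S_{\mathbf m}|\le 2$ with equality exactly when the multiplicities satisfy $t_{\Gamma,d,i,j}=t_{\Gamma,d,e+i,j}$, and then (when equality holds) checks that the stabilizer of $\zeta_0:=\tilde\xi_0\chi$ in $H_0$ equals $K$ precisely when $\chi^x=\chi$, i.e.\ when $\kappa_{\Gamma,d,i,j}=\kappa_{\Gamma,d,e+i,j}$. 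No sign tracking enters.

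Your proposal has a real gap: you flag the sign $\epsilon_\kappa$ as ``the essential new content'' but do not compute it, and your expectation that it collects factors $(-1)^{t_{\Gamma,d,i,j}}$ is mistaken. In fact $\epsilon_\kappa\equiv 1$, so there is no extra constraint beyond the symmetry. The reason is twofold: first, each basic block has even dimension, so the permutations in $\mathbf S_{\mathbf m}$ act with determinant~$1$ and $\mathrm{sgn}$ restricts trivially to $\mathbf S_{\mathbf m}$; second, the Huppert extension is compatible with one-dimensional twists, namely $\widetilde{(\det_T\cdot\rho)^{\,t}}=(\det_T^{\boxtimes t}\otimes 1_{\fS(t)})\cdot\widetilde{\rho^{\,t}}$ on $T\wr\fS(t)$. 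Together these give $\mathrm{sgn}|_{X\mathbf S_{\mathbf m}}\cdot\tilde\xi=\widetilde{\mathrm{sgn}\cdot\xi}$ with no residual twist on the symmetric-group factor, hence $\mathrm{sgn}\cdot\psi=\psi_{\pi(\kappa)}$ on the nose and your argument closes. Note too that if $\epsilon_\kappa$ could genuinely equal $-1$ you would be contradicting the lemma itself, which asserts that the symmetry condition alone characterizes reducibility; the paper's stabilizer argument therefore also confirms, a posteriori, that your feared sign obstruction does not arise.
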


\subsubsection{Proof}

\begin{lem}\label{num-in-res}
	Let $M$ be arbitrary finite group, $M_1, M_2\unlhd M$ with $M_1M_2=M$ and $L=M_1\cap M_2$.
	Let $\xi\in\Irr(M_1)$ such that $\varphi=\xi_{L}$ is irreducible.
	Let $\psi\in\Irr(M(\xi)\mid\xi)$ and $\eta=\psi|_{M(\xi)\cap M_2}$.
	Then $\eta$ is irreducible and the following statements hold.
	\begin{enumerate}[(i)]
		\item If $M_2(\eta)=M(\xi)\cap M_2$, then the restriction of $\Ind^M_{M(\xi)}(\psi)$ to $M_2$ is irreducible.
		\item If $|M_2(\eta):M(\xi)\cap M_2|=2$, then the restriction of $\Ind^M_{M(\xi)}(\psi)$ to $M_2$ is a sum of two irreducible characters.
		\item If $M_2(\varphi)=M(\xi)\cap M_2$, then the restriction defines a bijection from $\Irr(M\mid\xi)$ onto $\Irr(M_2\mid\varphi)$.
	\end{enumerate}
\end{lem}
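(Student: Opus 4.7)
The plan is to first reduce $\Res^M_{M_2}\Ind^M_{M(\xi)}(\psi)$ to an induced character of $M_2$ via Mackey's formula. Observe that $M_1 \le H := M(\xi)$, since $\xi$ is invariant under conjugation by $M_1$; combined with $M_1 M_2 = M$, this forces $HM_2 = M$, so the Mackey double-coset sum collapses to a single term and yields
$$\Res^M_{M_2}\Ind^M_H(\psi) = \Ind^{M_2}_K(\eta), \qquad K := H \cap M_2.$$
From $M_1 \cap K = L$ and $|HM_2| = |M|$, one computes $|H| = |M_1|\,|K|/|L|$, so $H = M_1 K$ and the inclusion $K \hookrightarrow H$ induces an isomorphism $K/L \cong H/M_1$.

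The key step is to show that $\eta$ is irreducible and that $\psi \mapsto \eta = \psi|_K$ defines a bijection $\Irr(H \mid \xi) \to \Irr(K \mid \varphi)$. Since $M_1 \unlhd H$ and $\xi$ is $H$-invariant, Clifford's theorem furnishes a $2$-cocycle $\alpha \in Z^2(H/M_1, \mathbb{C}^\times)$ and a projective representation $\tilde\xi$ of $H$ with cocycle $\alpha$ extending $\xi$, so that $\pi \mapsto \tilde\xi \otimes \pi$ is a bijection from irreducible $\alpha^{-1}$-projective representations of $H/M_1$ onto $\Irr(H \mid \xi)$. Restricting $\tilde\xi$ to $K$ yields a projective extension of $\varphi$ to $K$ whose associated cocycle on $K/L$ is the pullback of $\alpha$ under $K/L \cong H/M_1$; writing $\psi = \tilde\xi \otimes \pi$ then gives $\eta = (\tilde\xi|_K) \otimes (\pi|_{K/L})$, which is irreducible by the corresponding correspondence applied to $(K, L, \varphi)$, and this correspondence simultaneously yields the bijection.

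Finally I would apply Clifford theory inside $M_2$ to deduce all three conclusions, using that $K \subseteq M_2(\varphi)$ (since any $g \in K$ fixes $\xi$, hence fixes $\varphi$). For (iii), the hypothesis $M_2(\varphi) = K$ makes $\Ind^{M_2}_K \colon \Irr(K \mid \varphi) \to \Irr(M_2 \mid \varphi)$ the Clifford correspondence; composed with the bijection of the previous paragraph and with $\Ind^M_H \colon \Irr(H \mid \xi) \to \Irr(M \mid \xi)$, this shows that $\chi \mapsto \chi|_{M_2}$ is a bijection $\Irr(M \mid \xi) \to \Irr(M_2 \mid \varphi)$. For (i), $\eta$ is irreducible with $M_2$-stabilizer $K$, so $\Ind^{M_2}_K(\eta)$ is irreducible by Clifford. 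For (ii), $M_2(\eta)/K$ being cyclic of order $2$ implies that $\eta$ admits exactly two extensions $\tilde\eta_1, \tilde\eta_2 \in \Irr(M_2(\eta))$, and
$$\Ind^{M_2}_K(\eta) = \Ind^{M_2}_{M_2(\eta)}(\tilde\eta_1) + \Ind^{M_2}_{M_2(\eta)}(\tilde\eta_2),$$
each summand being irreducible by Clifford.

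The main obstacle is the projective-Clifford step: one must verify that the $2$-cocycles arising from extending $\xi$ from $M_1$ to $H$ and from extending $\varphi$ from $L$ to $K$ agree cohomologically under the isomorphism $H/M_1 \cong K/L$, which is arranged by choosing the projective extension $\tilde\xi$ consistently so that $\tilde\xi|_K$ serves as the projective extension of $\varphi$ to $K$ on both sides.
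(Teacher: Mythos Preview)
Your proposal is correct and follows the same overall architecture as the paper's proof: collapse the Mackey formula to a single term, establish that $\eta$ is irreducible together with the bijection $\Irr(H\mid\xi)\to\Irr(K\mid\varphi)$, and then read off (i)--(iii) from Clifford theory inside $M_2$. The one substantive difference lies in the ``key step''. The paper does not unwind the projective Clifford machinery at all; it simply invokes \cite[Cor.~(4.2)]{Is84}, which directly gives that restriction is a bijection $\Irr(M(\xi)\mid\xi)\to\Irr(M(\xi)\cap M_2\mid\varphi)$ (in particular $\eta$ is irreducible), and then derives (iii) in one line from (i). Your approach reproves this Isaacs result by hand via compatible projective extensions and matching $2$-cocycles under $K/L\cong H/M_1$. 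This is more self-contained and makes the mechanism transparent, at the cost of the cocycle-compatibility verification you flag as the ``main obstacle''; the paper's citation sidesteps that entirely. Either route is fine, and the remaining deductions of (i), (ii), (iii) are identical in both.
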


\begin{proof}
	By \cite[Cor. (4.2)]{Is84}, the restriction defines a bijection from $\Irr(M(\xi)\mid\xi)$ onto $\Irr(M(\xi)\cap M_2\mid\varphi)$.
	So $\eta$ is irreducible.
	
	(i)
	By Mackey formula, $(\Ind_{M(\xi)}^{M}(\psi))|_{M_2}=\Ind_{M(\xi)\cap M_2}^{M_2}(\psi|_{M(\xi)\cap M_2})=\Ind_{M_2(\eta)}^{M_2}(\eta)$.
	Then (i) follows by Clifford theory.
	
	(ii) By Gallagher's theorem, $\Ind^{M_2(\eta)}_{M(\xi)\cap M_2}(\eta)=\tilde \eta+\tilde \eta'$, where $\tilde \eta$ and $\tilde \eta'$ are two extensions of $\eta$ to $M_2(\eta)$~(and $\tilde \eta\ne\tilde \eta'$).
	Thus $\Ind^{M_2}_{M(\xi)\cap M_2}(\eta)=\Ind^{M_2}_{M_2(\eta)}(\tilde \eta)+\Ind^{M_2}_{M_2(\eta)}(\tilde \eta')$ and both $\Ind^{M_2}_{M_2(\eta)}(\tilde \eta)$ and $\Ind^{M_2}_{M_2(\eta)}(\tilde \eta')$ are irreducible ~(and $\Ind^{M_2}_{M_2(\eta)}(\tilde \eta)\ne\Ind^{M_2}_{M_2(\eta)}(\tilde \eta')$) by Clifford theory.
	Then (ii) follows by Mackey formula.
	
	(iii) follows from (i) immediately.
\end{proof}

\begin{proof}[Proof of Lemma \ref{(4C)}]
	Note that the first and second paragraph of (4C) also apply here.
	Keep the notation in (4C) (and in the first and second paragraph of its proof).
	First, we have $|N(\theta):N_0(\theta)|=\beta_\Gamma$ and then we may assume that $\Gamma\in \cF_0$.
	Also, we suppose that $u=1$ and $d=d_1$.
	
	For $\psi\in\Irr^0(N(\theta),\theta)$, we write $\psi=\Ind^{H}_{X\mathbf S_{\mathbf m}}(\tilde\xi\chi)$ (as in the second paragraph of the proof of (4C)), where $\chi$ is some character of $X\mathbf S_{\mathbf m}$ trivial on $X$.
	Let $\xi_0=\xi|_{X_0}$ and let $\xi=\boxtimes_{k=1}^{d}\xi_k$, where $\xi_k\in\Irr(T)$.
	By the remark of (4A), $\xi_k|_{T_0}$ is irreducible for every $1\le k \le d$ and hence $\xi|_{T_0^d}$ is irreducible. So $\xi_0$ is irreducible.
	Let $K$ be the stabilizer of $\xi_0$ in $H_0$ and let $\tilde\xi_0=\tilde\xi|_{X_0\mathbf S_{\mathbf m}}$.
	Then $\tilde\xi_0$ is an extension of $\xi_0$ to $X_0\mathbf S_{\mathbf m}$.
	By Clifford theory, each irreducible character of $X_0\mathbf S_{\mathbf m}$ covering $\xi_0$ has the form $\tilde\xi_0\chi$, where $\chi$ is an irreducible character of $X_0\mathbf S_{\mathbf m}$ trivial on $X_0$.
	Then by Lemma \ref{num-in-res} (iii), $\psi|_{N_0(\theta)}$ is irreducible
	if $K= X_0\mathbf S_{\mathbf m}$.
	
	Now we assume that $K\ne X_0\mathbf S_{\mathbf m}$.
	Let $x\in K$ such that $x\notin X\mathbf S_{\mathbf m}=H(\xi)$ first.
	Then $d>1$. Also, we may assume that $x\in \mathbf S(d)$.
	Write $\xi':=\xi^x=\boxtimes_{k=1}^{d}{\xi'}_k$, where $\xi'_k\in\Irr(T)$.
	Then $\xi'\ne \xi$ and $\xi'|_{X_0}= \xi|_{X_0}=\xi_0$.
	Thus both $\xi'_k$ and $\xi_k$ are extensions of the irreducible character $\xi_k|_{T_0}$ to $T$ for every $1\le k \le d$~(\emph{i.e.}, $\xi'_k\sim\xi_k$).
	Let $1\le i,j\le n$ with $i\ne j$ and $h\in T$ with determinant $-1$ and $w=\mathrm{diag}\{w_1,\ldots,w_d\}$ such that $w_i=h=w_j$ and $w_k=1$ if $k\ne i,j$.
	Then $w\in X_0$ and so $\xi(w)=\xi'(w)$ and then $\xi_i(h)\xi_j(h)=\xi'_i(h)\xi'_j(h)$.
	Thus if $\xi'_i\ne \xi_i$, then $\xi'_j\ne \xi_j$ by (\ref{tow-exts}).
	Hence either $\xi'_k= \xi_k$ for all $1\le k \le d$
	or $\xi'_k\sim \xi_k$ and $\xi'_k\ne \xi_k$ for all $1\le k \le d$ holds.
	On the other hand, if this holds, it is easy to check that $K\ne X_0\mathbf S_{\mathbf m}$.
	Also, by the argument above, $|K:X_0\mathbf S_{\mathbf m}|\le 2$.
	
	Now we write $\xi=\boxtimes_{d,i,j}\varphi_{\Gamma,d,i,j}^{t_{\Gamma,d,i,j}}$.
	By the argument above, $K\ne X_0\mathbf S_{\mathbf m}$, if and only if
	\begin{equation}\label{con-sta}
	t_{\Gamma,d,i,j}=t_{\Gamma,d,e+i,j}\  \text{for all}\ 1\le i\le e, 1\le j\le r^d.
	\end{equation}
	This occurs only when $w_\Gamma$ is even.
	
	Now we assume that $K\ne X_0\mathbf S_{\mathbf m}$ and thus $|K:X_0\mathbf S_{\mathbf m}|=2$ and (\ref{con-sta}) holds.
	Let $\zeta=\tilde\xi\chi$ and $\zeta_0=\tilde\xi_0\chi$, then $\zeta_0=\zeta|_{X_0\mathbf S_{\mathbf m}}$.
	Then $X_0\mathbf S_{\mathbf m}\le H_0(\zeta_0)\le K$.
	Then by (i) and (ii) of Lemma \ref{num-in-res}, $\psi|_{N_0(\theta)}$ is irreducible if $H_0(\zeta_0)=X_0\mathbf S_{\mathbf m}$,  and $\psi|_{N_0(\theta)}$ is a sum of two irreducible characters if $H_0(\zeta_0)=K$.
	
	Now let $x\in K\setminus X_0\mathbf S_{\mathbf m}$ and without loss of generality we assume that $x\in \mathbf S(d)$.
	Now we may write $\xi=\prod_{i=1}^{s}(\xi_i^{t_i}\times {\xi'_i}^{t_i})$ with $\xi_i\sim\xi'_i$ and $\xi_i\ne\xi'_i$ for all $1\le i\le s$.
	Then $\xi^x=\prod_{i=1}^{s}({\xi'_i}^{t_i}\times {\xi_i}^{t_i})$.
	Note that the values of the extension of $\xi_i^{t_i}$ to $T\wr \mathbf S(t_i)$ on $\mathbf S(t_i)$ only depend on $\xi_i(1)$~(see, for example, \cite[Lem.~25.5]{Hu98}), \emph{i.e.}, there exists extension $\eta_i$ (resp. $\eta'_i$) of ${\xi}_i^{t_i}$ (resp. ${\xi'}_i^{t_i}$) to $T^{t_i}\rtimes \mathbf S(t_i)$ such that $\eta_i|_{\mathbf S(t_i)}=\eta'_i|_{\mathbf S(t_i)}$.
	So we may assume that $\tilde\xi^x|_{\mathbf S_{\mathbf m}}=\tilde\xi|_{\mathbf S_{\mathbf m}}$.
	On the other hand, $\tilde\xi|_{X_0}=\xi_0$,
	then
	$\tilde\xi^x|_{X_0}=\tilde\xi|_{X_0}$, and
	then $\tilde\xi^x|_{X_0\mathbf S_{\mathbf m}}=\tilde\xi|_{X_0\mathbf S_{\mathbf m}}$.
	Thus $\tilde\xi_0^x=\tilde\xi_0$.
	Hence
	$$H_0(\zeta_0)=K  \Leftrightarrow \tilde\xi_0\chi=(\tilde\xi_0\chi)^x=\tilde\xi_0\chi^x
	\Leftrightarrow \chi=\chi^x.$$
	So $H_0(\zeta_0)=K$ if and only if
	$\kappa_{\Gamma,d,i,j}=\kappa_{\Gamma,d,e+i,j}$ for all
	$1\le i\le e, 1\le j\le r^d$.
	This completes the proof.		
\end{proof}

\subsection{(4E) and (4H)}\label{app.4E4H}

The remark of (4E) and (4H) used (4C). Thus we give some remarks here.

\subsubsection{The remark of (4E)}\label{(4E)}
By Lemma \ref{(4C)}, the remark of (4E) should be stated as follows.

With the assumption of (4E), we have a bijection between $B$-weights and $\prod_\Gamma \mathscr T_\Gamma$, where $\mathscr T_\Gamma$ is the set of $\beta_\Gamma e_\Gamma$-tuples $(\kappa_1,\kappa_2,\ldots,\kappa_{\beta_\Gamma e_\Gamma})$ of partitions $k_i$ such that $\sum\limits_{i=1}^{\beta_\Gamma e_\Gamma}|\kappa_i|=w_\Gamma$ (by the proof of (4E).

Let $G = O(V)$, $G_0 = \SO(V)$, $(R,\varphi)$ a $B$-weight of $G$, and $\theta$ an irreducible character of $C = C_G(R)$ covered by $\varphi$.
Then $|N(\theta):N_0(\theta)|=1$ or $2$ according as $m_{X\pm 1}=0$ or $m_{X\pm 1}\ne0$.
Moreover, for each $\psi\in\Irr^0(N(\theta)\mid \theta)$, the restriction $\psi|_{N(\theta)\cap G_0}$ is irreducible unless when
$m_{X\pm 1}\ne0$, $w_{X\pm 1}$'s are even and the element $\kappa=\prod_\Gamma \kappa_\Gamma$~(with $\kappa_\Gamma=(\kappa^\Gamma_1,\kappa^\Gamma_2,\ldots,\kappa^\Gamma_{\beta_\Gamma e_\Gamma})$)~ in $\prod_\Gamma \mathscr T_\Gamma$  corresponding to $\psi$ satisfies that
$\kappa^\Gamma_i=\kappa^\Gamma_{e+i}$ for every $\Gamma\in\cF_0$ and every $1\le i\le e$.

\subsubsection{(4H)}\label{(4H)}

For $\Gamma\in\cF$, we recall that the integer $f_\Gamma$ is defined to be the number of $\beta_\Gamma e_\Gamma$-tuples $(\kappa_1,\ldots,\kappa_{\beta_\Gamma e_\Gamma})$ of partitions such that $\sum\limits_{i=1}^{\beta_\Gamma e_\Gamma}|\kappa_i|=w_\Gamma$.

If $\Gamma\in \cF_0$ and $w_\Gamma$ is even, we define $f_\Gamma'$ to be the number of $e$-tuples $(\kappa_1,\ldots,\kappa_{e})$ of partitions such that $\sum\limits_{i=1}^{e}|\kappa_i|=\frac{1}{2}w_\Gamma$.

The conclusion of (4E) should be as follows.
\begin{enumerate}[(1)]
	\item The number of $B$-weights is $\prod\limits_\Gamma f_\Gamma$ if one of the following statements holds:
	\begin{enumerate}[(a)]
		\item $m_{X\pm 1}(s_+)=0$,
		\item $\vartheta_0^{\sigma_0}=\vartheta_0$ for some $\sigma_0\in O(V_0)$ of determinant $-1$,
	\end{enumerate}
	\item Suppose that $m_{X\pm 1}(s_+)\ne0$ and either $V_0=0$ or $\vartheta_0^{\sigma_0}\ne\vartheta_0$ for any $\sigma_0\in O(V_0)$  of determinant $-1$.
	\begin{enumerate}[(a)]
		\item If either $w_{X-1}$ or $w_{X+1}$ is odd,
		then the number of $B$-weights is $\frac{1}{2} \prod_\Gamma f_\Gamma$.
		\item If both $w_{X-1}$ and $w_{X+1}$ are even, then the number of $B$-weights is $\frac{f_{X-1}f_{X+1}+3f'_{X-1}f'_{X+1}}{2} \prod\limits_{\Gamma\notin \cF_0} f_\Gamma.$
	\end{enumerate}	
\end{enumerate}

\section{Appendix B: The blocks of special orthogonal groups in even-dimension}\label{Appendix-blocks-so}

Let $G=\SO_{2n}^\eps (q)$ with $\eps\in\{\pm\}$, $q$ odd and $n\ge 4$.
Now we give a classification for $\ell$-blocks of $G$ for an odd prime $\ell\nmid q$, which is completely analogous with the case of $\Sp_{2n}(q)$ in \S\ref{The-blocks-of-special-symplectic-groups}.
Let $V$ be the underlying space of $G$ and $\tG=\CSO(V)$.
Then $\tG^*$ is the special Clifford group over $V^*$ and $G^*=G$.
Let $\pi:\tG^*\to G^*$ be the natural epimorphism.
As usual, we let $\bG=\SO_{2n}^\eps (\overline\F_{q})$ and $\tilde\bG=\CSO_{2n}^\eps (\overline\F_{q})$ for the corresponding algebraic groups and $F$ the Frobenius endomorphism.

Note that the notation of special conformal orthogonal groups and general orthogonal groups are not the same with those in \S\ref{typeD}, since the relations of $\ell$-blocks of $\SO_{2n}^\eps(q)$ and $\CSO_{2n}^\eps(q)$ is similar with the relations of $\ell$-blocks of $\Sp_{2n}(q)$ and $\CSp_{2n}(q)$.
So we use the notation which is analogous to the one in \S\ref{The-blocks-of-special-symplectic-groups}.

The irreducible characters of $\tG$ have been classified by Lusztig \cite{Lu77}.
For a semisimple element $s$ of $\tG^*$,
we write $\bar s=\pi(s)$.
Note that both $m_{x-1}(\bar s)$ and $m_{x+1}(\bar s)$ are even.
Let $i\Irr(\tilde G)$  be the set of $\tG^*$-conjugacy classes of pairs $(s,\mu)$, where $s$ is a semisimple $\ell'$-element of $\tG^*$ and $\mu\in\Psi(\bar s)$~(where $\Psi(\bar s)$ is defined as (\ref{def-par-sym})).
Here, $(s,\mu)$ and $(s',\mu')$ are conjugate if and only if $s$ and $s'$ are $\tG^*$-conjugate and $\mu=\mu'$.
By Jordan decomposition of characters, there is a bijection from $i\Irr(\tilde G)$ onto $\Irr(\tilde G)$~(see also \cite[(4.5)]{FS89}).
We write $\tilde\chi_{s,\mu}$ for the character of $\tG$ corresponding to $(s,\mu)$.

If $\chi\in\Irr(G\mid \tilde\chi_{s,\mu})$, then we say $\chi$ corresponds to the pair $(\bar s,\mu)$.
Furthermore, we have the following result about the characters of $G$, which is similar to Lemma \ref{res-sp}.

\begin{lem}\label{res-so}
	Let $(s,\mu)\in i\Irr(\tG)$, $\bar s=\pi(s)$
	and $\tilde\chi=\tilde\chi_{s,\mu}$.
	\begin{enumerate}[(i)]
		\item  If $1$ or $-1$ is not an eigenvalue of $\bar s$,
		then $\Res^{\tG}_G\tilde\chi$ is irreducible.
		\item If both $1$ and $-1$ are eigenvalues of $\bar s$, then
		\begin{itemize}
			\item if $\mu_{x-1}$ or $\mu_{x+1}$ is degenerate, then $\Res^{\tG}_G\tilde\chi$ is irreducible, and
			\item if both $\mu_{x-1}$ and $\mu_{x+1}$ are non-degenerate, then $\Res^{\tG}_G\tilde\chi$ is a sum of two irreducible constituents.
		\end{itemize}
	\end{enumerate}
\end{lem}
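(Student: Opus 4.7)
The plan is to follow the proof of Lemma \ref{res-sp} almost verbatim, replacing the symplectic component-group analysis by its orthogonal counterpart. First I would observe that $\bG \hookrightarrow \tilde\bG$ is a regular embedding and apply Jordan decomposition (\cite[Prop.~5.1]{Lu88}) to obtain
\[
|\Irr(G \mid \tilde\chi_{s,\mu})| = |\Irr(C_{\tilde\bG^*}(s)^F \mid \psi_\mu)|,
\]
where $\psi_\mu$ is identified with the corresponding unipotent character of $C^\circ_\bG(\bar s)^F$ via $\cE(C_{\tilde\bG^*}(s)^F,1) = \cE(C^\circ_\bG(\bar s)^F,1)$. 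Since $|\tilde G : G\,Z(\tilde G)| = 2$, the restriction $\Res^{\tilde G}_G \tilde\chi$ has at most two irreducible constituents, so by Clifford theory the question reduces to deciding when $\psi_\mu$ is invariant under the component group $A := C_\bG(\bar s)^F / C^\circ_\bG(\bar s)^F$.

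Next I would compute $A$ from the primary decomposition $V^* = \bigoplus_\Gamma V^*_\Gamma(\bar s)$. Writing
\[
C^\circ_\bG(\bar s)^F = \SO(V^*_{x-1}) \times \SO(V^*_{x+1}) \times \prod_{\Gamma \in \cF_1 \cup \cF_2} C_{\GO(V^*_\Gamma)}(\bar s(\Gamma)),
\]
and recalling from \S\ref{notations-and-conventions} that the last factors lie inside $\SO(V^*_\Gamma)$, one sees that $A$ is trivial unless both $V^*_{x-1}$ and $V^*_{x+1}$ are non-zero, i.e.\ unless both $1$ and $-1$ are eigenvalues of $\bar s$; in the remaining case $|A|=2$, a generator being realized inside $\SO(V^*)$ by any element acting with determinant $-1$ on each of $V^*_{x\pm 1}$ and lying in $C^\circ$ on the remaining summands. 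Part~(i) then follows at once.

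For part~(ii), writing $\psi_\mu = \psi_{\mu_{x-1}} \times \psi_{\mu_{x+1}} \times \prod_{\Gamma \in \cF_1 \cup \cF_2} \psi_{\mu_\Gamma}$, I would invoke \cite[Thm.~2.5]{Ma08}: the $\GO$-action on unipotent characters of an even-dimensional $\SO$-group fixes every character attached to a non-degenerate symbol and swaps, in pairs, the two unipotent characters attached to each degenerate symbol. Since the generator of $A$ acts on the $V^*_{x-1}$- and $V^*_{x+1}$-factors independently as an element of determinant $-1$ (and trivially on the other factors), $\psi_\mu$ is $A$-fixed precisely when both $\psi_{\mu_{x-1}}$ and $\psi_{\mu_{x+1}}$ are individually fixed, i.e.\ precisely when both $\mu_{x-1}$ and $\mu_{x+1}$ are non-degenerate. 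If at least one of them is degenerate, the corresponding factor is sent to its distinct copy while the other factor contribution cannot compensate, so $\psi_\mu$ is not $A$-fixed and $\Res^{\tilde G}_G \tilde\chi$ remains irreducible.

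The main subtlety, compared to the symplectic case treated in Lemma \ref{res-sp}, is the simultaneous presence of two even-dimensional orthogonal summands $V^*_{x\pm 1}$ and the need to verify that the non-trivial element of $A$ can be chosen to act as an \emph{arbitrary} determinant-$(-1)$ isometry on each of them separately. Once this decoupling of the actions on $\mu_{x-1}$ and $\mu_{x+1}$ is in place, the case analysis is a direct orthogonal analogue of the symplectic argument.
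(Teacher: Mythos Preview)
Your proposal is correct and follows essentially the same approach as the paper, which simply indicates that the proof is ``similar to Lemma~\ref{res-sp}.'' The only modification from the symplectic case is exactly the one you identify: here both $V^*_{x-1}$ and $V^*_{x+1}$ are even-dimensional orthogonal spaces, so the component-group generator acts nontrivially on both factors via \cite[Thm.~2.5]{Ma08}, and $\psi_\mu$ is $A$-fixed precisely when both symbols are non-degenerate.
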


Let $I=\GO_{2n}^\eps(q)$.
We recall the action of $I$ on $\Irr(G)$ which was given in \cite[(4D)]{FS89}.
Let $(s,\mu)\in i\Irr(\tG)$, $\tilde\chi=\tilde\chi_{s,\mu}$ and $g\in I$ of determinant $-1$.
Then $\tilde\chi^g$ corresponds to the pair $(g^* s{g^*}^{-1},\mu^*)$~(the operator $^*$ is defined as in \cite[\S2,~p.~132]{FS89}). More precisely, we have \begin{enumerate}[(a)]
	\item $\tilde\chi^g=\tilde\chi$ if
	\begin{itemize}
		\item $1$ is an eigenvalue of $\bar s$, $-1$ is not an eigenvalue of $\bar s$, and $\mu_{x-1}$ is non-degenerate, or
		\item both $1$ and $-1$ are eigenvalues of $\bar s$ and $\mu_{x-1}$ is non-degenerate, and
	\end{itemize}
	\item $\tilde\chi^g\ne\tilde\chi$ and $\Res^\tG_G(\tilde\chi^g)=\Res^\tG_G(\tilde\chi)$ if
	\begin{itemize}
		\item $1$ is not an eigenvalue of $\bar s$, $-1$ is an eigenvalue of $\bar s$, and $\mu_{x+1}$ is non-degenerate, or
		\item both $1$ and $-1$ are eigenvalues of $\bar s$ and $\mu_{x-1}$ is degenerate and $\mu_{x+1}$ is non-degenerate, and
	\end{itemize}
	\item $\tilde\chi^g\ne\tilde\chi$ and $\Res^\tG_G(\tilde\chi^g)\ne\Res^\tG_G(\tilde\chi)$ if
	\begin{itemize}
		\item both $1$ and $-1$ are not eigenvalues of $\bar s$, or
        \item $1$ is an eigenvalue of $\bar s$, $-1$ is not an eigenvalue of $\bar s$, and $\mu_{x-1}$ is degenerate, or
        \item $1$ is not an eigenvalue of $\bar s$, $-1$ is an eigenvalue of $\bar s$, and $\mu_{x+1}$ is degenerate, or
		\item both $1$ and $-1$ are eigenvalues of $\bar s$ and both $\mu_{x-1}$ and $\mu_{x+1}$ are degenerate.
	\end{itemize}
\end{enumerate}
Now let $\chi\in\Irr(G\mid\tilde\chi)$.
By Lemma \ref{res-so}, if we are in case (b) or (c),
then $\chi=\Res^\tG_G\tilde\chi$.
Moreover, $\chi$ is $I$-invariant in case (b) and $\chi$ is not $I$-invariant in case (c).
If $1$ is an eigenvalue of $\bar s$ and $-1$ is not an eigenvalue of $\bar s$ or both $1$ and $-1$ are eigenvalues of $\bar s$ and $\mu_{x-1}$ is non-degenerate and $\mu_{x+1}$ is degenerate, then we also have  $\chi=\Res^\tG_G\tilde\chi$ and thus $\chi$ is $I$-invariant.
If both $1$ and $-1$ are eigenvalues of $\bar s$ and both $\mu_{x-1}$ and $\mu_{x+1}$ are non-degenerate,
then  $\Res^\tG_G\tilde\chi$ is a sum of two irreducible constituents, and then
$\tilde\chi$ is $I$-invariant but we do not know whether $\chi$ is $I$-invariant or not in this case now.

\vspace{2ex}

Now we recall the classification of $\ell$-blocks of $\tilde\bG^F$ given in \cite[\S11]{FS89}.
Let $i\Bl_\ell(\tG)$ be the set of $\tG^*$-conjugacy classes of pairs $(s,\kappa)$ where $s$ is a semisimple $\ell'$-element of $\tG^*$ and $\kappa\in\cC(\bar s)$, where $\bar s=\pi(s)$ and $\cC(\bar s)$ is defined as (\ref{def-core}).
Here, $(s,\kappa)$ and $(s',\kappa')$ are $\tG^*$-conjugate if and only if $s$ and $s'$ are $\tG^*$-conjugate and $\kappa=\kappa'$.
Also note that both $m_{x-1}(\bar s)$ and $m_{x+1}(\bar s)$ are even and $|\kappa|=1,2$, or $4$.
By \cite[(11E)]{FS89},
there is a bijection $(s,\kappa)\mapsto \tilde B(s,\kappa)$ from $i\Bl_\ell(\tG)$ to $\Bl_\ell(\tG)$.

For $(s,\kappa)\in i\Bl_\ell(\tG)$,
\cite[(13B)]{FS89} also gave a criterion that when an irreducible character of $\tG$ lies in the $\ell$-block $\tilde B=\tilde B(\bar s,\kappa)$.
In particular, the irreducible characters of $\Irr(\tilde B)\cap\cE(\tG,\ell')$ are of form $\tilde\chi_{s,\mu}$ with $\mu\in\Psi(\bar s,\kappa)$~(where $\Psi(\bar s,\kappa)$ is defined as in (\ref{def-par-sym-core})).
In addition, by  (\ref{bijection-sym-skew-par}), $\Psi_\Gamma(\bar s,\kappa)$ is in bijection with $\mathcal P(\beta_\Gamma e_\Gamma, w_\Gamma)$
if $\Gamma\in\cF_1\cup\cF_2$ or $\Gamma\in \cF_0$ and $\kappa_\Gamma$ is non-degenerate and in bijection with
$\mathcal P'(2 e, w_\Gamma)$ if $\Gamma\in\cF_0$ and $\kappa_\Gamma$ is degenerate.
Here, the sets $\mathcal P(\beta_\Gamma e_\Gamma, w_\Gamma)$ and $\mathcal P'(2 e, w_\Gamma)$ are defined as in (\ref{def-set-lab}) and (\ref{def-p'}) respectively.

Let ${\mathfrak e}$ be the identity element of the Clifford algebra over $V^*$.
With the similar argument with \S\ref{The-blocks-of-special-symplectic-groups},
we may construct the $e_0$-Jordan-cuspidal pair for an $\ell$-block $\tilde B(s,\kappa)$ of $\tG=\CSO_{2n}^\eps (q)$,
which is completely analogous with the case of $\tilde G=\CSp_{2n}(q)$ and then
we have the following result which is completely analogous with Theorem \ref{blocks-sp}.

\begin{thm}\label{blocks-so}
	Let $(s,\kappa)\in i\Bl_\ell(\tilde G)$, $\bar s=\pi(s)$,
	$\tilde B=\tilde B(s,\kappa)$ and $B$ an $\ell$-block of $G$ covered by $\tilde B$.
	\begin{enumerate}[(i)]
		\item  If $1$ or $-1$ is not eigenvalue of $\bar s$,
		then  $B$ is the unique $\ell$-block of $G$ covered by $\tilde B$ and there are $|\mathcal O_{\ell'}(\F_q^\times)|$ $\ell$-blocks of $\tG$ covering $B$.
		In addition, the $\ell$-blocks covering $B$ of $\tilde G$ are $\tilde B(zs,\kappa)$, where $z$ runs through $\mathcal O_{\ell'}(Z(\tilde G^*))$.
		\item  If both $1$ and $-1$ are eigenvalues of $\bar s$,  and there exists $\Gamma\in\cF_0$ such that $w_\Gamma=0$ and $\kappa_\Gamma$ is degenerate,
		then  $B$ is the unique $\ell$-block of $G$ covered by $\tilde B$ and there are $|\mathcal O_{\ell'}(\F_q^\times)|$ $\ell$-blocks of $\tG$ covering $B$.
		In addition, the $\ell$-blocks covering $B$ of $\tilde G$ are $\tilde B(zs,\kappa)$ and $\tilde B(zs,\kappa')$, where $z$ runs through a complete set of representatives of $\langle -{\mathfrak e} \rangle$-cosets in $\mathcal O_{\ell'}(Z(\tilde G^*))$.	
		\item Suppose that both $1$ and $-1$ are eigenvalues of $\bar s$ and $w_\Gamma\ne 0$ if $\Gamma\in \cF_0$ and $\kappa_\Gamma$ is degenerate.		
		Then there are $\frac{1}{2}|\mathcal O_{\ell'}(\F_q^\times)|$ $\ell$-blocks of $\tG$ covering $B$ and they are $\tilde B(zs,\kappa)$, where $z$ runs through a complete set of representatives of $\langle -{\mathfrak e} \rangle$-cosets in $\mathcal O_{\ell'}(Z(\tilde G^*))$.	
		Moreover,
		\begin{itemize}
			\item if $\kappa_{x-1}$ or $\kappa_{x+1}$ is degenerate, then $B$ is the unique $\ell$-block of $G$ covered by  $\tilde B$, and
			\item if  both $\kappa_{x-1}$ and $\kappa_{x+1}$ are non-degenerate, then there are two $\ell$-blocks of $G$ covered by  $\tilde B$.
			
		\end{itemize}
	\end{enumerate}
\end{thm}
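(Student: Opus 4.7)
The plan is to mirror, step by step, the argument given for Theorem \ref{blocks-sp}, with adjustments to handle the fact that for $\bG^* = \SO_{2n}(\overline{\F}_q)$ both $V^*_1 := V^*_{x-1}(\bar s)$ and $V^*_{-1} := V^*_{x+1}(\bar s)$ are even-dimensional, so each factor may independently contribute a degeneracy in the unipotent symbol $\phi_\kappa$. First I would attach to $(s,\kappa)$ an $e_0$-Jordan-cuspidal pair $(\tilde\bL,\tilde\zeta)$ of $\tilde\bG$ whose corresponding $\ell$-block is $\tilde B(s,\kappa)$. Following \cite[\S11]{FS89}, from the defect group $D$ of $\tilde B$ I obtain the orthogonal decomposition $V=V_0\perp V_+$, construct an $F$-stable Levi subgroup $\bL$ of $\bG$ with $L=L_0\times L_+$ where $L_0=\SO(V_0)$ and $L_+$ is a product of general linear or unitary factors dictated by the $w_\Gamma$'s, set $\tilde\bL=\bL Z(\tilde\bG)$, and let $\tilde\zeta$ be the image under Jordan decomposition of the $e_0$-cuspidal unipotent character $\phi_\kappa$ of $C_{\tilde\bL^*}(s)^F$. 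Verifying $b_{\tilde\bG^F}(\tilde\bL,\tilde\zeta)=\tilde B(s,\kappa)$ then proceeds exactly as in the symplectic case, using \cite[(13A),(13B)]{FS89} applied to the intermediate subgroup $\tilde{\mathbf Q}$ containing $\tilde\bL$.

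Second, I would count the $\ell$-blocks of $G$ covered by $\tilde B$. By Proposition \ref{restrofesplit} together with Remark \ref{esplitofslsu}, this count equals $|\Irr(L\mid\tilde\zeta)|$, and by Jordan decomposition for the regular embedding $\bL\hookrightarrow\tilde\bL$ it further equals $|\Irr(C_{\bL^*}(\bar s)^F\mid\phi_\kappa)|$. The key input is the structure
\[
C^\circ_{\bL^*}(\bar s)^F=\SO(V^*_1)\times\SO(V^*_{-1})\times\prod_{\Gamma\in\cF_1\cup\cF_2}\GL_{m_\Gamma(\bar s_0)}(\vare_\Gamma q^{\delta_\Gamma})\times C_{L^*_+}(s_+),
\]
with $C_{\bL^*}(\bar s)^F/C^\circ_{\bL^*}(\bar s)^F$ of order $1$, $2$ or $4$ according as neither, exactly one, or both of $1,-1$ are eigenvalues of $\bar s$. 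Using \cite[Thm.~2.5]{Ma08}, the outer action on a unipotent character of $\SO(V^*_{\pm 1})$ fixes it precisely when the associated symbol is non-degenerate; this yields the case split in the statement, in particular singling out conditions under which $\tilde B$ covers one or two $\ell$-blocks of $G$.

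Third, I would count the $\ell$-blocks of $\tG$ lying over a given block $B$. Using the linear characters $\hat z$ of $\tG$ for $z\in Z(\tG^*)$ from \cite[Prop.~13.30]{DM91}, together with the identity $b_{\tilde\bG^F}(\tilde\bL,\hat z\tilde\zeta)=\hat z\otimes\tilde B(s,\kappa)$ (established via Lemma \ref{ind-block}-type arguments), the tensor-product action of $\mathcal O_{\ell'}(Z(\tG^*))$ produces precisely the $\tG$-blocks over $B$. To determine when $\tilde B(zs,\kappa)=\tilde B(s,\kappa)$, I would invoke the analog of \cite[\S6.4]{Lu77} (or \cite[(2D)]{FS89}) for the orthogonal case: the $\tG^*$-class of $s$ is determined by the $G^*$-class of $\bar s$ up to the involution $s\mapsto -\mathfrak e\cdot s$, and this involution fixes the $G^*$-class of $\bar s$ precisely when both $1$ and $-1$ are eigenvalues. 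Combined with the identification $\tilde B(s,\kappa)=\tilde B(s,\kappa')$ whenever the parameter set $\Psi(\bar s,\kappa)$ is not enlarged by the $\kappa\leftrightarrow\kappa'$ ambiguity (i.e.\ unless some $\Gamma\in\cF_0$ has $w_\Gamma=0$ and $\kappa_\Gamma$ degenerate), this gives the three cases as stated.

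The main obstacle will be the three-way bookkeeping in case (iii): keeping track simultaneously of which of $\kappa_{x-1},\kappa_{x+1}$ are degenerate, whether the corresponding $w_\Gamma$ vanish, and the $\langle-\mathfrak e\rangle$-coset structure in $\mathcal O_{\ell'}(Z(\tG^*))$. The careful management of the parameter identification $(s,\kappa)\sim(s,\kappa')$ arising from the non-connectedness of $C_{\bG^*}(\bar s)$, together with the induced parametrizations via \eqref{bijection-sym-skew-par}, will be the delicate part, and it has to be checked that the resulting block counts are consistent both from the $\tG$-side (via $e_0$-Jordan-cuspidal pairs) and from the $G$-side (via Clifford theory applied to $\Res^{\tG}_G$).
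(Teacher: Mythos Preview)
Your proposal is correct and follows precisely the route the paper takes: the paper's own proof of this theorem consists of the single remark that the argument of \S\ref{The-blocks-of-special-symplectic-groups} (for $\CSp_{2n}(q)$ and $\Sp_{2n}(q)$) carries over verbatim, with the only change being that now both $V^*_{x-1}(\bar s_0)$ and $V^*_{x+1}(\bar s_0)$ are even-dimensional, so each may contribute a degenerate symbol. Your three steps---constructing the $e_0$-Jordan-cuspidal pair via \cite[\S11,\S13]{FS89}, counting blocks of $G$ under $\tilde B$ via $|\Irr(L\mid\tilde\zeta)|$ and Jordan decomposition, and counting blocks of $\tG$ over $B$ via the $\mathcal O_{\ell'}(Z(\tG^*))$-action and \cite[(2D)]{FS89}---are exactly what the paper does in the symplectic case.

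One small correction: the component group $C_{\bL^*}(\bar s)^F/C^\circ_{\bL^*}(\bar s)^F$ has order at most $2$, not $4$; when both $1$ and $-1$ are eigenvalues of $\bar s$, the nontrivial element is the diagonal pair $(g_1,g_{-1})$ with $\det g_1=\det g_{-1}=-1$, acting simultaneously as the graph involution on each factor $\SO(V^*_{\pm 1})$. Hence $\phi_\kappa$ is invariant under this element precisely when \emph{both} $\kappa_{x-1}$ and $\kappa_{x+1}$ are non-degenerate, which is why $\Res^{\tilde\bL^F}_{\bL^F}\tilde\zeta$ splits into two constituents exactly in that case. This does not affect your conclusions, only the stated size of the group.
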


Now let $i\Bl^{(1)}_\ell(G)$ be the set of $G$-conjugacy classes of pairs $(s,\kappa)$, where $s\in G^*$ is a semisimple $\ell'$-element and $\kappa\in\cC(s)$ such that  either
(1) $1$ or $-1$ is not eigenvalue of $s$ or
(2) $\kappa_{x-1}$ or $\kappa_{x+1}$ is degenerate.
Here, we identify $(s,\kappa)$ with $(s,\kappa')$.
Let $i\Bl^{(2)}_\ell(G)$ be the set of $G^*$-conjugacy classes of pairs $(s,\kappa)$,
where $s\in G^*$ is a semisimple $\ell'$-element  and $\kappa\in\cC(s)$ such that both $1$ and $-1$ are eigenvalues of $s$ and both $\kappa_{x-1}$ and $\kappa_{x+1}$ are non-degenerate.
Then $i\Bl_\ell(G):=i\Bl^{(1)}_\ell(G)\cup i\Bl^{(2)}_\ell(G)$, where the elements of $i\Bl^{(2)}_\ell(G)$ counting twice, is a labeling set for $\Bl_\ell(G)$ by Theorem \ref{blocks-so}.

Let $i\IBr^{(1)}_\ell(G)$ be the set of $G^*$-conjugacy classes of pairs $(s,\mu)$, where $s\in G^*$ is a semisimple $\ell'$-element and $\mu\in\Psi(s)$ such that either $1$ or $-1$ is not an eigenvalue of $s$ or  both $1$ and $-1$ are eigenvalues of $s$ and $\mu_{x-1}$ or $\mu_{x+1}$ is degenerate.
Let $i\IBr^{(2)}_\ell(G)$ be the set of $G^*$-conjugacy classes of pairs $(s,\mu)$,
where $s\in G^*$ is a semisimple $\ell'$-element  and $\mu\in\Psi(s)$ such that both $1$ and $-1$ are eigenvalues of $s$ and both $\mu_{x-1}$ and $\mu_{x+1}$ are non-degenerate.
Then $i\IBr_\ell(G):=i\IBr^{(1)}_\ell(G)\cup i\IBr^{(2)}_\ell(G)$, where the elements of $i\IBr^{(2)}_\ell(G)$ counting twice, is a labeling set for $\Irr(G)\cap \cE(G,\ell')$ by Lemma \ref{res-so}.
If $(s,\mu)\in i\IBr^{(1)}_\ell(G)$, we denote by $\chi_{s,\mu}$ the character of $G$ corresponding to $(s,\mu)$.
If $(s,\mu)\in i\IBr^{(2)}_\ell(G)$,
then $\chi^{(1)}_{s,\mu}$ and $\chi^{(-1)}_{s,\mu}$ denote the two characters of $G$ corresponding to $(s,\mu)$.

Furthermore,
if $(s,\kappa)\in i\Bl^{(1)}_\ell(G)$ and $B=B(s,\kappa)$, then $\Irr(B)\cap\cE(G,s)=
\{\chi_{s,\mu}\mid \mu\in\Psi(s,\kappa),(s,\mu)\in i\IBr^{(1)}_\ell(G)   \} \cup \{\chi^{(\pm 1)}_{s,\mu}\mid \mu\in\Psi(s,\kappa),(s,\mu)\in i\IBr^{(2)}_\ell(G)  \}.$
%Here, the second set is non-empty if and only if both $1$ and $-1$ are eigenvalue of $s$ and $\kappa_{x-1}$ or $\kappa_{x+1}$ is non-degenerate.
If $(s,\kappa)\in i\Bl^{(2)}_\ell(G)$, $B^{(1)}=B^{(1)}(s,\kappa)$ and $B^{(-1)}=B^{(-1)}(s,\kappa)$, then
$\Irr(B^{(1)}\cup B^{(-1)})\cap\cE(G,s)=\{ \chi^{(\pm 1)}_{s,\mu}\mid \mu\in\Psi(s,\kappa) \}$.
We may assume that
$\Irr(B^{(i)})\cap\cE(G,s)=\{\chi^{(i)}_{s,\mu}\mid \mu\in\Psi(s,\kappa) \}$
for $i=\pm1$.
Note that, if both $1$ and $-1$ are eigenvalue of $s$, and
$\kappa_{x-1}$ and $\kappa_{x+1}$ are non-degenerate, then both $\mu_{x-1}$ and $\mu_{x+1}$ are also non-degenerate,
and then we always have $(s,\mu)\in i\IBr^{(2)}_\ell(G)$.

Now we give a labeling set for $\Irr(B)\cap \cE(G,\ell')$.
First, we define a set $\mathcal P(2e,w_1,w_2):=\mathcal P(2e,w_1)\times \mathcal P(2e,w_2)$ for integers $e\ge 1$, $w_1,w_2\ge 0$,
where $\mathcal P(2e,w_1)$ and $\mathcal P(2e,w_2)$ are defined as in (\ref{def-set-lab}).
Now we define
$\mathcal P_0(2e,w_{1},w_{2}):=\mathcal P_0(2e,w_{1})\times \mathcal P_0(2e,w_{2})$,
where $\mathcal P_0(2e,w_{1})$ and $\mathcal P_0(2e,w_{2})$ are defined as in
(\ref{par-P0}).
First we define an equivalent relation on the set $\mathcal P(2e,w_{1},w_{2})$.
For $\mu^{(k)}=\mu^{(k,1)}\times \mu^{(k,2)}$, where
$\mu^{(k,1)}=(\mu^{(k,1)}_1,\ldots,\mu^{(k,1)}_{2e})\in \mathcal P(2e,w_{1})$,
$\mu^{(k,2)}=(\mu^{(k,2)}_1,\ldots,\mu^{(k,2)}_{2e})\in \mathcal P(2e,w_{2})$ and $k=1,2$,
we let $\mu^{(1)}\sim \mu^{(2)}$ if $\mu^{(1,1)}\sim \mu^{(2,1)}$ and $\mu^{(1,2)}\sim \mu^{(2,2)}$ in the sense of (\ref{equ-par}).
Then we define $\mathcal P_1'(2e,w_{1},w_{2})=(\mathcal P(2e,w_{1},w_{2})\setminus \mathcal P_0(2e,w_{1},w_{2})))/\sim$.
Let $\mathcal P'(2e,w_{1},w_{2}):=\mathcal P_1'(2e,w_{1},w_{2})\cup \mathcal P_0(2e,w_{1},w_{2})$, where the elements of $\mathcal P_0(2e,w_{1},w_{2})$ are counted twice.

Define $\mathcal P(B):=$
\begin{enumerate}[(a)]
\item $\prod\limits_\Gamma \mathcal P(\beta_\Gamma e_\Gamma,w_\Gamma)$ if one of the following holds,
\begin{itemize}
	\item $w_{x-1}=w_{x+1}=0$, or
	\item  $\kappa_{x-1}$ or $\kappa_{x+1}$ is non-degenerate,
\end{itemize}
\item $\mathcal P'(2e,w_{\Gamma_0}) \times\prod\limits_{\Gamma\ne\Gamma_0} \mathcal P(\beta_\Gamma e_\Gamma,w_\Gamma)$
if both $\kappa_{x-1}$ and $\kappa_{x+1}$ are degenerate and there exists a unique $\Gamma_0\in\cF_0$ such that $w_{\Gamma_0}$ is odd, where $\mathcal P'(2e,w_{\Gamma_0})$ is defined as in (\ref{def-p'}),
\item $\mathcal P'(2e,w_{x-1},w_{x+1})\times \prod\limits_{\Gamma\notin\cF_0}\mathcal P(e_\Gamma,w_\Gamma)$, if one of the following holds,
\begin{itemize}
	\item both $\kappa_{x-1}$ and $\kappa_{x+1}$ are degenerate and both $w_{x-1}$ and $w_{x+1}$ are  odd, or
	\item  $w_{x-1}$ or $w_{x+1}$ is non-zero, both $\kappa_{x-1}$ and $\kappa_{x+1}$ are degenerate and both $w_{x-1}$ and $w_{x+1}$ are even.
\end{itemize}
\end{enumerate}

Similar with Proposition \ref{label-ibr-b}, we have

\begin{prop}\label{label-ibr-d}
	With the preceding notation,
	$\mathcal P(B)$ is a labeling set for $\Irr(B)\cap \cE(G,\ell')$.
\end{prop}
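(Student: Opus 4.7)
The proof will follow the pattern of Proposition \ref{label-ibr-b}, reducing via $\tG = \CSO_{2n}^\eps(q)$ and descending to $G$ using Lemma \ref{res-so} and Theorem \ref{blocks-so}. First I would fix an $\ell$-block $\tilde B$ of $\tG$ covering $B$, so that by Jordan decomposition $\Irr(\tilde B) \cap \cE(\tG,\ell')$ is parametrized by $\Psi(s,\kappa)$, and by (\ref{bijection-sym-skew-par}) this set is in natural bijection with a product whose factor at each $\Gamma \in \cF$ is $\mathcal P(\beta_\Gamma e_\Gamma, w_\Gamma)$ if $\Gamma \in \cF_1 \cup \cF_2$ or if $\kappa_\Gamma$ is non-degenerate, and $\mathcal P'(2e, w_\Gamma)$ if $\Gamma \in \cF_0$ with $\kappa_\Gamma$ degenerate. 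The task is then to match this with $\Irr(B) \cap \cE(G,\ell')$ using the splitting behaviour of $\Res^\tG_G$.

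For $(s,\kappa) \in i\Bl^{(2)}_\ell(G)$, both $\kappa_{x\pm 1}$ are non-degenerate, hence so are all $\mu_{x\pm 1}$, and Lemma \ref{res-so}(ii) shows each $\tilde\chi_{s,\mu}$ restricts to a sum of two irreducibles. By Theorem \ref{blocks-so}(iii), $\tilde B$ covers exactly the two blocks $B^{(\pm 1)}$ of $G$, and the two restriction constituents split between them, so $\Psi(s,\kappa) \to \Irr(B) \cap \cE(G,\ell')$ is a bijection via $\tilde\chi_{s,\mu} \mapsto \chi^{(i)}_{s,\mu}$. This matches case (a) of $\mathcal P(B)$. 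For $(s,\kappa) \in i\Bl^{(1)}_\ell(G)$ with $1$ or $-1$ not an eigenvalue of $s$, Lemma \ref{res-so}(i) gives irreducible restriction, and since by Theorem \ref{blocks-so}(i) the block $\tilde B$ is the unique block of $\tG$ with its central character covering $B$, restriction induces a bijection $\Irr(\tilde B) \cap \cE(\tG,\ell') \to \Irr(B) \cap \cE(G,\ell')$, again matching case (a). The same reasoning, together with Theorem \ref{blocks-so}(iii), handles the subcase where both eigenvalues are present but exactly one of $\kappa_{x\pm 1}$ is non-degenerate: the non-degeneracy propagates to the corresponding $\mu_\Gamma$, so Lemma \ref{res-so}(ii) gives irreducible restriction and $\mathcal P(B)$ remains case (a).

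The main case is $(s,\kappa) \in i\Bl^{(1)}_\ell(G)$ with both $\kappa_{x-1}$ and $\kappa_{x+1}$ degenerate. Here Theorem \ref{blocks-so}(ii)--(iii) still ensures $\tilde B$ is the unique block of $\tG$ with its central character covering $B$, but components $\mu_{x\pm 1}$ can now be either degenerate or non-degenerate, and Lemma \ref{res-so}(ii) gives opposite splitting behaviours in the two situations: $\Res^\tG_G \tilde\chi_{s,\mu}$ is irreducible exactly when at least one of $\mu_{x-1}, \mu_{x+1}$ is degenerate. Following the proof of \cite[Prop.~15~(2)]{Ol86} applied separately to $x-1$ and $x+1$, a component $\mu_\Gamma$ is degenerate iff its image in $\mathcal P(2e, w_\Gamma)$ lies in $\mathcal P_0(2e, w_\Gamma)$, while a non-degenerate $\mu_\Gamma$ together with its copy corresponds to a pair of $\sim$-equivalent elements of $\mathcal P(2e, w_\Gamma) \setminus \mathcal P_0(2e, w_\Gamma)$. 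Combined with the fact that $\mathcal P_0(2e, w)$ is non-empty only when $w$ is even, this reduces the statement to careful bookkeeping: if there is a unique $\Gamma_0 \in \{x-1, x+1\}$ with odd $w_{\Gamma_0}$, then $\mu_{\Gamma_0}$ is forced to be non-degenerate while the other component contributes a degenerate $\mu$, yielding case (b); otherwise both components must be analysed jointly, producing the combined set $\mathcal P'(2e, w_{x-1}, w_{x+1})$ of case (c).

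The chief obstacle is precisely this simultaneous handling of both degenerate components $\kappa_{x-1}$ and $\kappa_{x+1}$. Unlike the symplectic setting of Proposition \ref{label-ibr-b}, where degeneracy arises only for $x+1$, in type $D$ the joint degeneracy must be tracked across $x-1$ and $x+1$ at once, with Lemma \ref{res-so}(ii) controlled by the disjunction that \emph{either} $\mu_{x-1}$ \emph{or} $\mu_{x+1}$ be degenerate. Verifying that the doubleton-counting convention on $\mathcal P_0(2e, w_{x-1}, w_{x+1})$ matches exactly the splitting pattern of $\Res^\tG_G$ on the corresponding $\tilde\chi_{s,\mu}$, including the two boundary subcases of case (c) (both $w_{x\pm 1}$ odd versus both even with at least one positive), will require a direct case analysis; but this matching is the only non-formal step in the argument, and is modelled on the analogous computation for $x+1$ in Proposition \ref{label-ibr-b}.
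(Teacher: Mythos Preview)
Your overall strategy is exactly the paper's: the proof there is a single sentence referring back to Proposition~\ref{label-ibr-b}, and your case analysis via Lemma~\ref{res-so}, Theorem~\ref{blocks-so}, and the combinatorics of (\ref{bijection-sym-skew-par}) is precisely that analogy made explicit.

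There is, however, a real slip in your handling of the subcase where both $\pm 1$ are eigenvalues but exactly one of $\kappa_{x-1},\kappa_{x+1}$ is non-degenerate. You write that ``Lemma~\ref{res-so}(ii) gives irreducible restriction'', but this is false: if, say, $\kappa_{x-1}$ is non-degenerate and $\kappa_{x+1}$ is degenerate, then $\mu_{x-1}$ is forced to be non-degenerate while $\mu_{x+1}$ may be either, and when $\mu_{x+1}$ is non-degenerate the restriction $\Res^{\tG}_G\tilde\chi_{s,\mu}$ splits into two constituents. The same oversimplification recurs in your case~(b) sketch, where you assert that ``the other component contributes a degenerate $\mu$''; in fact the other component (with even $w$) can again be of either type. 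The bijection with $\mathcal P(B)$ in these cases does not come from uniform irreducibility of restriction but from a balancing: on the side where $\mu_{\Gamma_1}$ is degenerate one has a $2$-to-$1$ collapse (since $\tilde\chi_{s,\mu}$ and $\tilde\chi_{s,\mu'}$ have the same restriction, as in Remark~\ref{res-to-so-deg}), while on the side where $\mu_{\Gamma_1}$ is non-degenerate one has a $1$-to-$2$ splitting; under the bijection of \cite[Prop.~15]{Ol86} this exchange converts $\mathcal P'(2e,w_{\Gamma_1})$ into $\mathcal P(2e,w_{\Gamma_1})$. This is the same mechanism you correctly invoke for case~(c), and it is already present in the final paragraph of the proof of Proposition~\ref{label-ibr-b}; you just need to apply it in these earlier subcases as well.
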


Let $f_\Gamma$ and $f'_\Gamma$ be defined as in Appendix \ref{(4H)}.
Then $f_\Gamma=|\mathcal P(\beta_\Gamma e_\Gamma,w_\Gamma)|$ and
$f'_\Gamma=|\mathcal P_0(\beta_\Gamma e_\Gamma,w_\Gamma)|$.
We end the appendix by giving the number of irreducible $\ell$-Brauer characters in an $\ell$-block of $G=\SO_{2n}^\pm(q)$, which follows
by Proposition \ref{label-ibr-d} immediately.

\begin{thm}\label{num-ibr}
Let $B$ be an $\ell$-block corresponding to $(s,\kappa)\in i\Bl_\ell(G)$ and $l(B)=|\IBr_\ell(B)|$.
\begin{enumerate}[(i)]
\item $l(B)=\prod_\Gamma f_\Gamma$ if one of the following statements holds.
\begin{itemize}
	\item $w_{x-1}=w_{x+1}=0$.
	\item  $\kappa_{x-1}$ or $\kappa_{x+1}$ is non-degenerate.
\end{itemize}
\item Suppose that $w_{x-1}$ or $w_{x+1}$ is non-zero and
both $\kappa_{x-1}$ and $\kappa_{x+1}$ are degenerate.
\begin{itemize}
\item  If either $w_{x-1}$ or $w_{x+1}$ is odd, then $l(B)=\frac{1}{2}\prod_\Gamma f_\Gamma$.
\item If both $w_{x-1}$ and $w_{x+1}$ are even, then $l(B)=\frac{f_{x-1}f_{x+1}+3f'_{x-1}f'_{x+1}}{2} \prod\limits_{\Gamma\notin \cF_0} f_\Gamma$.
\end{itemize}
\end{enumerate}
\end{thm}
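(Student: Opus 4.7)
The plan is to deduce the theorem as an immediate combinatorial consequence of Proposition~\ref{label-ibr-d} together with the basic set theorem. First, by Theorem~\ref{basicset}, the set $\Irr(B)\cap\cE(G,\ell')$ is a basic set of $B$, so $l(B)=|\Irr(B)\cap\cE(G,\ell')|$. Then by Proposition~\ref{label-ibr-d}, $\mathcal P(B)$ is a labeling set for $\Irr(B)\cap\cE(G,\ell')$, hence $l(B)=|\mathcal P(B)|$. All that remains is to evaluate $|\mathcal P(B)|$ in each of the cases (a), (b), (c) of its definition.

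For case (a), which corresponds to part (i) of the theorem, $\mathcal P(B)=\prod_\Gamma\mathcal P(\beta_\Gamma e_\Gamma,w_\Gamma)$ by definition, so $|\mathcal P(B)|=\prod_\Gamma f_\Gamma$ by the definition of $f_\Gamma$. For the remaining cases the point is to compute $|\mathcal P'(2e,w)|$ and $|\mathcal P'(2e,w_1,w_2)|$ from their constructions in \S\ref{notations-and-conventions} and in the appendix. Recall that $\mathcal P'(2e,w)=\mathcal P_0(2e,w)\cup\mathcal P_1'(2e,w)$ with the $\mathcal P_0$-part counted twice, and that $\sim$ fixes every element of $\mathcal P_0$ while pairing up the elements outside $\mathcal P_0$. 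Consequently, whenever $w$ is odd we have $\mathcal P_0(2e,w)=\emptyset$ and $|\mathcal P'(2e,w)|=\tfrac12|\mathcal P(2e,w)|=\tfrac12 f_\Gamma$. Analogously $\mathcal P_0(2e,w_1,w_2)=\mathcal P_0(2e,w_1)\times\mathcal P_0(2e,w_2)$, so this set is empty whenever one of $w_1,w_2$ is odd and has cardinality $f'_{x-1}f'_{x+1}$ when both are even; combined with the pairing this gives $|\mathcal P'(2e,w_{x-1},w_{x+1})|=\tfrac12 f_{x-1}f_{x+1}$ in the odd situation and $|\mathcal P'(2e,w_{x-1},w_{x+1})|=\tfrac12(f_{x-1}f_{x+1}-f'_{x-1}f'_{x+1})+2f'_{x-1}f'_{x+1}=\tfrac12(f_{x-1}f_{x+1}+3f'_{x-1}f'_{x+1})$ when both $w_{x\pm 1}$ are even.

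Plugging these counts into the definition of $\mathcal P(B)$ completes part (ii). Indeed, if exactly one of $w_{x-1},w_{x+1}$ is odd we are in case (b) with unique odd $\Gamma_0$, giving $|\mathcal P(B)|=\tfrac12 f_{\Gamma_0}\prod_{\Gamma\ne\Gamma_0}f_\Gamma=\tfrac12\prod_\Gamma f_\Gamma$; if both are odd we are in the first sub-case of (c), giving $|\mathcal P(B)|=\tfrac12 f_{x-1}f_{x+1}\prod_{\Gamma\notin\cF_0}f_\Gamma=\tfrac12\prod_\Gamma f_\Gamma$; and if both $w_{x-1},w_{x+1}$ are even (with at least one non-zero) we are in the second sub-case of (c), yielding the stated $\tfrac{f_{x-1}f_{x+1}+3f'_{x-1}f'_{x+1}}{2}\prod_{\Gamma\notin\cF_0}f_\Gamma$.

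There is no real obstacle here; the only thing to be careful about is the bookkeeping: making sure the three branches of Proposition~\ref{label-ibr-d} are matched against the correct hypotheses of Theorem~\ref{num-ibr} (in particular that the ``exactly one odd'' and ``both odd'' sub-situations of (ii) produce the same value $\tfrac12\prod_\Gamma f_\Gamma$ even though they come from different clauses of the definition of $\mathcal P(B)$), and that $\mathcal P_0$-type elements are counted with the correct multiplicity when assembling $|\mathcal P'(2e,w_{x-1},w_{x+1})|$.
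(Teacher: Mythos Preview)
Your proof is correct and follows exactly the approach the paper intends: the paper states that the theorem follows immediately from Proposition~\ref{label-ibr-d}, and your argument spells this out by combining Theorem~\ref{basicset} (to identify $l(B)$ with $|\Irr(B)\cap\cE(G,\ell')|$) with Proposition~\ref{label-ibr-d} and then carrying out the straightforward combinatorial count of $|\mathcal P(B)|$ in each case. Your bookkeeping of the equivalence classes in $\mathcal P'(2e,w_1,w_2)$ is right, in particular the observation that $\sim$ identifies $(\alpha,\beta)$ only with $(\alpha^\dag,\beta^\dag)$ so that non-fixed classes have size exactly $2$.
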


Now we consider the Alperin weight conjecture for $G=\SO_{2n}^\pm(q)$.

\begin{prop}\label{alp-conj-case}
Let $B$ be an $\ell$-block corresponding to $(s,\kappa)\in i\Bl_\ell(G)$.
If
\begin{itemize}
	\item $1$ or $-1$ is not an eigenvalue of $s$, or
	\item $\kappa_{x-1}$ or $\kappa_{x+1}$ is degenerate,
\end{itemize}
then the Alperin weight conjecture \ref{weiconj} holds for $B$, \it{i.e.}, $l(B)=|\mathcal W_\ell(B)|$.
\end{prop}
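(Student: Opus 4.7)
The strategy is to count $l(B) = |\IBr_\ell(B)|$ via Theorem \ref{num-ibr} and $|\cW_\ell(B)|$ via the corrected conclusion of \cite[(4E)]{An94} recorded in Section \ref{(4H)} of Appendix A, and then verify the two counts agree under each hypothesis by a case analysis matching formulas on both sides.

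First I would treat the case where $1$ or $-1$ is not an eigenvalue of $s$. Using the orthogonal decomposition $V = V_0 \perp V_+$ associated to a defect group $D$ of $B$ and the concomitant Levi decomposition $\bar s = \bar s_0 \times \bar s_+$, this assumption forces $m_{x-1}(s_+) = 0$ or $m_{x+1}(s_+) = 0$, while the corresponding $\kappa_{x\pm 1}$ is the empty (non-degenerate) symbol with $w_{x\pm 1} = 0$. Then case (1)(a) of the corrected \cite[(4E)]{An94} gives $|\cW_\ell(B)| = \prod_\Gamma f_\Gamma$, while Theorem \ref{num-ibr}(i) gives $l(B) = \prod_\Gamma f_\Gamma$, completing this case.

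Next I would treat the case where $\kappa_{x-1}$ or $\kappa_{x+1}$ is degenerate, splitting into two sub-cases. If exactly one of $\kappa_{x\pm 1}$ is degenerate, Theorem \ref{num-ibr}(i) still produces $l(B) = \prod_\Gamma f_\Gamma$ since the other is non-degenerate; on the weight side, the canonical character $\vartheta_0$ of the defect-zero block $\mathbf b_0$ of $G_0 = \SO(V_0)$ associated under Jordan decomposition to $(\bar s_0, \kappa)$ is $\GO(V_0)$-invariant by the case analysis immediately following Lemma \ref{res-so} (specifically falling into cases (a) with $\mu_{x+1}$ degenerate, or (b)), placing us in case (1)(b) of the appendix with $|\cW_\ell(B)| = \prod_\Gamma f_\Gamma$. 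If both $\kappa_{x-1}$ and $\kappa_{x+1}$ are degenerate and $w_{x-1} = w_{x+1} = 0$ we reduce to the previous case; otherwise $\vartheta_0$ fails to be $\GO(V_0)$-invariant (case (c) of the same analysis) and we fall into case (2) of the appendix. A final check on the parity of $w_{x-1}, w_{x+1}$ matches sub-cases (2)(a) and (2)(b) of the appendix with the corresponding sub-cases of Theorem \ref{num-ibr}(ii), yielding the equality in each sub-case.

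The main obstacle in this plan is the sub-case where exactly one of $\kappa_{x\pm 1}$ is degenerate: here one must carefully identify $\vartheta_0$ as the character of $G_0$ arising by Jordan decomposition from $(\bar s_0, \kappa)$ and then invoke the correct sub-case of the analysis after Lemma \ref{res-so} to conclude its $\GO(V_0)$-invariance. Once this identification is secured, the remaining sub-cases reduce to routine comparisons of the explicit formulas in Theorem \ref{num-ibr} and the corrected \cite[(4E)]{An94}.
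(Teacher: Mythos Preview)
Your overall strategy coincides with the paper's: compare the weight count from \S\ref{(4H)} of Appendix~A with the Brauer-character count from Theorem~\ref{num-ibr}, using the criterion after Lemma~\ref{res-so} to decide whether $\vartheta_0$ is $\GO(V_0)$-invariant and hence which case of \S\ref{(4H)} applies. Your Case~2 analysis is correct.

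Your Case~1 treatment, however, contains two genuine errors. First, the empty symbol $\binom{-}{-}$ is \emph{degenerate}, not non-degenerate (its two rows coincide), so your appeal to Theorem~\ref{num-ibr}(i) via ``$\kappa_{x\pm1}$ non-degenerate'' fails. Second, condition~(1)(a) of \S\ref{(4H)} reads $m_{X\pm1}(s_+)=0$, which means \emph{both} $w_{x-1}=0$ and $w_{x+1}=0$; having just one of $\pm1$ absent from the spectrum of $s$ does not suffice. For instance, if $-1$ is not an eigenvalue of $s$ but $1$ is, with $w_{x-1}>0$ and $\kappa_{x-1}$ degenerate, neither side of your Case~1 computation is correct: on the weight side one is actually in case~(2) of \S\ref{(4H)} (since $\vartheta_0$, being labelled by $(s_0,\kappa)$ with $\kappa_{x-1}$ degenerate and $-1\notin\mathrm{spec}(s_0)$, lands in case~(c) of the analysis after Lemma~\ref{res-so}), and on the Brauer side one is in Theorem~\ref{num-ibr}(ii).

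The fix is simple: observe that if $\pm1$ is not an eigenvalue of $s$, then $\kappa_{x\mp1}$ is the empty symbol, which is degenerate. Hence the first hypothesis of the proposition is already contained in the second, and your (correct) Case~2 analysis covers everything. Either delete Case~1 entirely, or rewrite it to route through the $\GO(V_0)$-invariance criterion exactly as you do in Case~2.
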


\begin{proof}
Let $V_0$, $\vartheta_0$ and $s_0$ be defined as in \cite[(4H)]{An94}.	
Then by the proof of \cite[(4H)]{An94},
$\vartheta_0\in G_0$ and $\tilde\vartheta_0\in\Irr(\tilde G_0)$ with
$G_0=\SO(V_0)$ and
$\tilde G_0=\CSO(V_0)$ 	
such that $\tilde\vartheta_0=\tilde \chi_{t_0,\kappa}$,
where $t_0\in \tilde G_0^*$ satisfies that $s_0=\pi(t_0)$.
Then the assertion follows by  \S\ref{(4H)} and Theorem \ref{num-ibr} and the criterion for when $\vartheta_0$ is $\GO(V_0)$-invariant given in the statements after Lemma \ref{res-so}.
\end{proof}

Now we consider  the following properties about the action of $I=\GO_{2n}^\eps(q)$ on the characters of $G$.

\begin{itemize}
	\item[($\dagger$)] Let $\tilde\chi\in\Irr(\tG)$ and $\Delta=\Irr(G\mid\tilde\chi)$.
	Then $I_{\Delta}$ acts trivially on $\Delta$.
	\item[($\ddagger$)] Let $s$ be a semisimple element of $G$, $\mu\in\Psi(s)$
	and $\chi$ be a character of $G$ corresponding to $(s,\mu)$.
	Suppose that both $1$ and $-1$ are eigenvalues of $s$ and
	 both $\mu_{x-1}$ and $\mu_{x+1}$ are non-degenerate.
	Then $\chi$ is $I$-invariant.
\end{itemize}

Then by the proof of Proposition \ref{alp-conj-case} and the statements after Lemma \ref{res-so}, we have
\begin{thm}\label{alp-conj-SO2n}
\begin{enumerate}[(i)]
\item The Alperin weight conjecture \ref{weiconj} holds for every $\ell$-block of the special orthogonal group $G=\SO_{2n}^\eps(q)$ with every $n\ge 4$, odd $q$ and $\eps=\pm$ if ($\dagger$) is true for the special orthogonal group $G=\SO_{2n}^\eps(q)$ with every $n\ge 4$, odd $q$ and $\eps=\pm$.
\item ($\dagger$) holds if and only if ($\ddagger$) holds.
\end{enumerate}
\end{thm}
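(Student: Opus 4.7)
The plan is to prove (ii) first as an essentially formal equivalence by dissecting the $I$-action on $\Irr(\tG)$ recorded after Lemma \ref{res-so}, and then to deduce (i) by reducing via Proposition \ref{alp-conj-case} to a single remaining case of $(s,\kappa)$, where ($\ddagger$) turns out to be exactly what is needed to match the weight count from Appendix \ref{(4H)} with the Brauer-character count from Theorem \ref{num-ibr}.

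For (ii), I will first observe that ($\dagger$) carries no content when $|\Delta|=1$, and by Lemma \ref{res-so} the case $|\Delta|=2$ occurs precisely when both $1$ and $-1$ are eigenvalues of $\bar s$ and both $\mu_{x-1}$ and $\mu_{x+1}$ are non-degenerate. In this setting I will read off from the trichotomy after Lemma \ref{res-so} that we are in case (a), so $\tilde\chi^g=\tilde\chi$ for every $g\in I$ and therefore $I_\Delta=I$; hence ($\dagger$) reduces to the assertion that each of the two irreducible constituents of $\Res^\tG_G\tilde\chi$ is $I$-fixed, which is exactly ($\ddagger$). The reverse implication is immediate, giving the equivalence.

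For (i), I will fix an $\ell$-block $B$ corresponding to $(s,\kappa)\in i\Bl_\ell(G)$. Proposition \ref{alp-conj-case} disposes of all cases except when both $\pm 1$ are eigenvalues of $s$ and both $\kappa_{x-1}$ and $\kappa_{x+1}$ are non-degenerate. In this remaining situation Theorem \ref{blocks-so}(iii) produces two blocks $B^{(1)}$ and $B^{(-1)}$ of $G$ covered by $\tilde B=\tilde B(s,\kappa)$, and Theorem \ref{num-ibr}(i) gives $l(B^{(\pm 1)})=\prod_\Gamma f_\Gamma$. To count weights I will follow the setup of \cite[(4H)]{An94}, as amended in Appendix \ref{(4H)}: a defect group $D$ of $B^{(\pm 1)}$ determines an orthogonal decomposition $V=V_0\perp V_+$ and a canonical character $\vartheta_0=\tilde\chi_{t_0,\kappa}\in\Irr(\mathbf b_0)$ of an $\ell$-block $\mathbf b_0$ of $\SO(V_0)$. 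Using the non-degeneracy of $\kappa_{x\pm 1}$ together with ($\ddagger$) (equivalent to ($\dagger$) by (ii)), I will conclude that $\vartheta_0$ is $I(V_0)$-invariant, placing us in sub-case (1)(b) of Appendix \ref{(4H)}; the resulting weight count is $\prod_\Gamma f_\Gamma=l(B^{(\pm 1)})$, as required.

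The main obstacle will be to verify cleanly that the non-degeneracy of $\kappa_{x\pm 1}$ forces $I(V_0)$-invariance of $\vartheta_0$ in every sub-case of the trichotomy after Lemma \ref{res-so}, including the boundary situations in which one of $m_{x\pm 1}(\bar t_0)$ vanishes; in such situations the invariance will come directly from the first bullet of case (a) or from case (b) after Lemma \ref{res-so}, so that ($\ddagger$) is only genuinely invoked when both $\pm 1$ are eigenvalues of $\bar t_0$ and both $\kappa_{x\pm 1}$ are non-empty and non-degenerate. Once this bookkeeping is settled, the character count and weight count both equal $\prod_\Gamma f_\Gamma$ and Alperin's weight conjecture follows for $B^{(\pm 1)}$, completing the proof.
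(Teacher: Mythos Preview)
Your proposal is correct and follows essentially the same approach as the paper, which dispatches the theorem in a single line by referring back to the proof of Proposition~\ref{alp-conj-case} and the case analysis after Lemma~\ref{res-so}. One small simplification: the ``boundary situations'' you anticipate cannot actually arise, since a non-degenerate symbol for an even-dimensional orthogonal group necessarily has positive rank, so non-degeneracy of $\kappa_{x\pm 1}$ already forces $m_{x\pm 1}(s_0)>0$ and ($\ddagger$) applies directly without further case distinctions.
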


\end{appendices}

\section*{Acknowledgements}

The first author wishes to thank Gunter Malle for fruitful conversations, vital information and useful indications.
The authors thank him for commenting on an earlier version of this paper.
%In addition, the authors are very grateful to the anonymous referee for useful comments that helped to improve this paper.

%%%%%%%%%%%%%%%%%%%%%%%%%%%%%%%%%%%%%%%%%%%%%%%%%%%%%%%%%%%%%%%%%%%%%%%%%

\end{document}